\title{Asymptotic profiles of solutions\\ for structural damped wave equations}
\author{
Ryo Ikehata\thanks{ikehatar@hiroshima-u.ac.jp},\\
Department of Mathematics, \\
Graduate School of Education, Hiroshima University\\
Higashi-Hiroshima 739-8524, Japan \\
\ \\
Hiroshi Takeda\thanks{Corresponding author: h-takeda@fit.ac.jp},\\
Department of Intelligent Mechanical Engineering, \\
Faculty of Engineering, Fukuoka Institute of Technology, \\
3-30-1 Wajiro-higashi, Higashi-ku, Fukuoka 811-0295, Japan 
}
\date{}
\newcommand{\R}{\mathbb R}
\newcommand{\G}{\mathcal{G}}
\newcommand{\K}{\mathcal{K}}
\newcommand{\E}{\mathcal{E}}
\newcommand{\J}{\mathcal{J}}
\newcommand{\supp}{\mathop{\mathrm{supp}}\nolimits}
\renewcommand{\cosh}{\mathop{\mathrm{cosh}}\nolimits}
\renewcommand{\sinh}{\mathop{\mathrm{sinh}}\nolimits}
\newtheorem{thm}{Theorem}[section]
\newtheorem{cor}[thm]{Corollary}
\newtheorem{prop}[thm]{Proposition}
\newtheorem{lem}[thm]{Lemma}
\theoremstyle{remark}
\newtheorem{rem}[thm]{Remark}
\theoremstyle{definition}
\begin{document}
\maketitle

\numberwithin{equation}{section}
\begin{abstract}
In this paper, we obtain several asymptotic profiles of solutions to 
the Cauchy problem for structurally damped wave equations 
$\partial_{t}^{2} u - \Delta u + \nu (-\Delta)^{\sigma} \partial_{t} u=0$, 
where $\nu >0$ and $0< \sigma \le1$.
Our result is the approximation formula of the solution by a constant multiple of a special function 
as $t \to \infty$,
which states that the asymptotic profiles of the solutions are classified into $5$ patterns 
depending on the values $\nu$ and $\sigma$.
\end{abstract}

\noindent
\textbf{Keywords: }nonlinear wave equation, fractional damping, the Cauchy problem, critical exponent, asymptotic profile,\\
\noindent
\textbf{2010 Mathematics Subject Classification.} Primary 35L15, 35L05; Secondary 35B40
\newpage
\section{Introduction}
In this paper,
we consider the initial value problem for the following equations
\begin{equation} \label{eq:1.1}
\left\{
\begin{split}
& \partial_{t}^{2} u -\Delta u + \nu( -\Delta)^{\sigma} \partial_{t} u =0, \quad t>0, \quad x \in \R^{n}, \\
& u(0,x)=u_{0}(x), \quad \partial_{t} u(0,x)=u_{1}(x) , \quad x \in \R^{n}, 
\end{split}
\right.
\end{equation}
where $\sigma\in (0,1]$,
$\nu > 0$ is a constant,
$u_{0}(x)$ and $u_{1}(x)$ are
given initial data.\\

To begin with, let us introduce several related works to our problem \eqref{eq:1.1}. In the case when $\sigma = 1$ (i.e., strong damping case) one should make mention to some pioneering decay estimates of solutions due to Ponce \cite{P} and Shibata \cite{S}, in which Ponce \cite{P} dealt with rather special initial data such as $u_{1}(x) = \partial_{x}v(x)$ to avoid some singularity, and Shibata \cite{S} has established $L^{p}$-$L^{q}$ decay estimates of solutions. Karch \cite{K} studied an asymptotic self-similar profile of the solution as $t \to +\infty$ in the case when $\sigma \in [0,1/2)$, and Ikehata \cite{I-2} has derived total energy decay estimates of solutions to problem \eqref{eq:1.1} with $\sigma =1$ considered in the exterior of a bounded obstacle. While, Lu-Reissig \cite{LR} studied the parabolic effect in high order (total) energy estimates to problem \eqref{eq:1.1} with damping $\nu( -\Delta)^{\sigma} \partial_{t} u$ replaced by $b(t)( -\Delta)^{\sigma} \partial_{t} u$, and it seems that recent active researches concerning structural damped waves have their origin in \cite{LR}, however, in \cite{LR} they did not investigate any asymptotic profiles of solutions. Recently, Ikehata-Todorova-Yordanov \cite{ITY} have discovered its profile of solutions in asymptotic sense as $t \to +\infty$, and it should be mentioned that their result has been established as an abstract theory including \eqref{eq:1.1} with $\sigma = 1$, so that it includes quite wide applications. After \cite{ITY}, Ikehata \cite{I} re-studied the problem \eqref{eq:1.1} with $\sigma = 1$ to observe optimal decay estimates of solutions in terms of $L^{2}$-norm. The result of \cite{I} has its motivation in \cite{ITY}, and especially in Ikehata-Natsume \cite{IN}, in there they studied more precise decay estimates of the total energy and $L^{2}$-norm of solutions to the present problem \eqref{eq:1.1} by employing the energy method in the Fourier space developed by Umeda-Kawashima-Shizuta \cite{UKS}. Although the result of \cite{IN} has a gap near $\sigma = 0$, soon after \cite{IN} the gap has been completely embedded in Chara\~o-da Luz-Ikehata \cite{CLI} by developing a powerful tool to get energy decay estimates.  

While, quite recently, in a series of papers due to D'Abbicco \cite{D}, D'Abbicco-Ebert \cite{DE1, DE2, DE3}, D'Abbicco-Reissig \cite{DR} and Narazaki-Reissig \cite{NR} they have studied several decay estimates and asymptotic profiles of solutions to problem \eqref{eq:1.1} in terms of the $L^{p}$-norms ($1 \leq p \leq \infty$), but their main concern seems to be a little restrictive to the case for $0 \leq \sigma \leq 1/2$, i.e., a effective damping case of the problem \eqref{eq:1.1} is mainly studied, and so a non-effective damping aspect for the region $1/2 < \sigma \leq 1$ to problem \eqref{eq:1.1} seems to be less investigated at present. 

Our main purpose is to classify all asymptotic profiles of solutions to problem \eqref{eq:1.1} in terms of the constant $\nu$ and $\sigma$. Especially, our results below essentially seem new in the noneffective damping case for $\sigma \in (1/2, 1)$ as compared with a previous result due to D'Abbicco-Reissig \cite[Theorem 8]{DR}. In fact, our results below state about the asymptotic profile of the solution to problem \eqref{eq:1.1} in terms of the higher order derivatives, and as a result optimal decay order of the solution can be derived from the viewpoint of the higher order derivatives in $L^{2}$-sense.\\

To state our results, we introduce some notation, which will be used in this paper.

\begin{align*} 
\gamma_{\sigma, k}:=
\begin{cases}
& \frac{n}{4(1-\sigma)} -\frac{\sigma}{1-\sigma} 
+\frac{k}{2(1- \sigma)} \ \text{for}\ 0 \le \sigma < \frac{1}{2}, \\ 
& \frac{n}{2} +1-k \ \text{for}\ \sigma = \frac{1}{2}, \\
& \frac{n}{4\sigma} -\frac{1}{2\sigma} 
+\frac{k}{2\sigma}  \ \text{for}\ \frac{1}{2} < \sigma \le 1, 
\end{cases}
\end{align*}
\begin{align*} 
\tilde{\gamma}_{\sigma, k}
:=
\begin{cases}
& \frac{n}{4(1-\sigma)} 
-\frac{k}{2(1- \sigma)}  \ \text{for}\ 0 \le \sigma \le \frac{1}{2}, \\
& \frac{n}{2} -k \ \text{for}\ \sigma = \frac{1}{2}, \\
& \frac{n}{4\sigma} -\frac{k}{2\sigma}  \ \text{for}\ \frac{1}{2} \le \sigma \le 1, 
\end{cases}
\end{align*}
\begin{align} \label{eq:1.4}
\mathcal{G}_{\sigma,\nu}(t,\xi) :=
\begin{cases}
& 
\dfrac{e^{-\frac{1}{\nu}t |\xi|^{2(1-\sigma)} }}{\nu |\xi|^{2 \sigma} }
 \ \ \text{for} \ \ 0 < \sigma < \frac{1}{2}, \nu>0, \\
\ \\
& 
\dfrac{2 e^{-\frac{\nu}{2} t|\xi| } 
\sin \left( 
\frac{t |\xi|\sqrt{4-\nu^{2}} }{2}
\right)
}{|\xi|\sqrt{4-\nu^{2}}  }
 \ \ \text{for} \ \ \sigma =\frac{1}{2},\ 0<\nu<2, \\
\ \\
& 
t e^{-t|\xi|} 
\ \ \text{for} \ \ \sigma =\frac{1}{2},\ \nu=2, \\
\ \\
& 
\dfrac{2 e^{-\frac{\nu}{2}t|\xi|} \sinh \left( 
\frac{t |\xi| \sqrt{\nu^{2}-4} }{2}
\right)
}{|\xi| \sqrt{\nu^{2}-4} }
 \ \ \text{for} \ \ \sigma =\frac{1}{2},\ \nu>2,\\
\ \\
& 
\dfrac{ e^{-\frac{\nu}{2} t |\xi|^{2 \sigma} } 
\sin (t |\xi|)
}{|\xi|  }
\ \ \text{for} \ \ \frac{1}{2} < \sigma \le 1,\ \nu>0,
\end{cases}
\end{align}

\begin{align}  \label{eq:1.5}
\mathcal{H}_{\sigma,\nu}(t,\xi) :=
\begin{cases}
& 
e^{-\frac{1}{\nu}t |\xi|^{2(1-\sigma) }}
 \ \ \text{for} \ \ 0 < \sigma < \frac{1}{2}, \nu>0, \\
\ \\
& 
e^{-\frac{\nu}{2} t|\xi| } 
\cos \left( 
\frac{t |\xi|\sqrt{4-\nu^{2}} }{2}
\right)
+
\dfrac{\nu e^{-\frac{\nu}{2} t|\xi| } 
\sin \left( 
\frac{t |\xi|\sqrt{4-\nu^{2}} }{2}
\right)
}{\sqrt{4-\nu^{2}}  } \ \\
& \ \text{for} \ \ \sigma =\frac{1}{2},\ 0<\nu<2, \\
\ \\
&
(1+t |\xi|) e^{-t|\xi|} 
\ \ \text{for} \ \ \sigma =\frac{1}{2},\ \nu=2, \\
\ \\
& 
e^{-\frac{\nu}{2} t|\xi| } 
\cosh \left( 
\frac{t |\xi|\sqrt{\nu^{2}-4 }}{2}
\right) \\
& \ \ +
\dfrac{\nu e^{-\frac{\nu}{2} t|\xi| } 
\sinh \left( 
\frac{t |\xi|\sqrt{\nu^{2}-4} }{2}
\right)
}{\sqrt{\nu^{2}-4}  } \ \text{for} \ \ \sigma =\frac{1}{2},\ \nu>2, \\
\ \\
& 
e^{-\frac{\nu}{2} t |\xi|^{2 \sigma} } 
\cos (t |\xi|)
 \ \ \text{for} \ \ \frac{1}{2} < \sigma \le 1,\ \nu>0.
\end{cases}
\end{align}
We first mention the unique existence of the solution with decay properties
to problem \eqref{eq:1.1}. 
\begin{prop} \label{prop:1.1}
Let 
\begin{equation} \label{eq:1.6}
\left\{
\begin{split}
& \sigma \in \left(0, \frac{1}{2} \right), \quad  n \ge 2, \\ 
& \sigma = \frac{1}{2}, \quad  n \ge 1, \\ 
& \sigma \in \left(\frac{1}{2},1\right], \quad  n \ge 3, 
\end{split}
\right.
\end{equation}
$k_{0} \ge 0$ and $\nu >0$. 
Suppose that $(u_{0}, u_{1}) \in (H^{k_{0}+1} \cap L^{1}) \times  (H^{k_{0}} \cap L^{1})$.
Then, there exists a unique solution $u \in C([0,\infty); H^{k_{0}+1}) \cap C^{1}([0,\infty); H^{k_{0}}) $ to problem \eqref{eq:1.1} satisfying 
\begin{align} 
\label{eq:1.7}
&\| \partial_{t}^{\ell} \nabla^{k}_{x}u(t) \|_{2} \le C (1+t)^{-\gamma_{\sigma,k}-\ell}, 
\quad \sigma \in \left(0,\frac{1}{2} \right] \\
\label{eq:1.8}
& \| \partial_{t}^{\ell} \nabla^{k}_{x}u(t) \|_{2} \le C (1+t)^{-\gamma_{\sigma,k}-\frac{\ell}{2 \sigma}}, 
\quad \sigma \in \left(\frac{1}{2},1 \right]
\end{align}
for $\ell=0,1$ and $k \in [0, k_{0}+1]$, where $k+ \ell \le k_{0}+1$.
\end{prop}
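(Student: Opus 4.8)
The plan is to diagonalise \eqref{eq:1.1} by the Fourier transform and to read off both the well-posedness and the decay rates from the characteristic roots. Applying $\F$ in $x$ turns \eqref{eq:1.1} into the family of ordinary differential equations
\eqn{
\partial_t^2\wh{u}+\nu|\xi|^{2\sigma}\,\partial_t\wh{u}+|\xi|^2\wh{u}=0,\qquad \wh{u}(0,\xi)=\wh{u_0}(\xi),\ \ \partial_t\wh{u}(0,\xi)=\wh{u_1}(\xi),
}
whose characteristic roots are $\lambda_\pm(\xi)=\tfrac12\bigl(-\nu|\xi|^{2\sigma}\pm\sqrt{\nu^2|\xi|^{4\sigma}-4|\xi|^2}\bigr)$; hence $\wh{u}(t,\xi)=\wh{K_0}(t,\xi)\,\wh{u_0}(\xi)+\wh{K_1}(t,\xi)\,\wh{u_1}(\xi)$, where $\wh{K_0},\wh{K_1}$ are the fundamental solutions of the ODE normalised by $\wh{K_0}(0)=1,\partial_t\wh{K_0}(0)=0$ and $\wh{K_1}(0)=0,\partial_t\wh{K_1}(0)=1$. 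For $\sigma=\tfrac12$ the roots are exactly proportional to $|\xi|$ and one checks $\wh{K_0}=\mathcal{H}_{\sigma,\nu}$, $\wh{K_1}=\mathcal{G}_{\sigma,\nu}$ of \eqref{eq:1.4}--\eqref{eq:1.5}; for $\sigma\ne\tfrac12$ these functions are the leading low-frequency parts of $\wh{K_0},\wh{K_1}$. The five branches reflect the sign of the discriminant, which is controlled by $\sigma\lessgtr\tfrac12$ and, when $\sigma=\tfrac12$, by $\nu\lessgtr2$ (the confluent case $\nu=2$ producing the $te^{-t|\xi|}$ factor). Existence, uniqueness, and the stated regularity $u\in C([0,\infty);H^{k_0+1})\cap C^1([0,\infty);H^{k_0})$ are then immediate from this representation, Plancherel's theorem, and the multiplier bounds obtained below.

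For the decay estimates I would fix $\delta\in(0,|\xi|_*)$, where $|\xi|_*>0$ is the frequency at which the discriminant $\nu^2|\xi|^{4\sigma}-4|\xi|^2$ vanishes, and split the $\xi$-integral into $|\xi|\le\delta$ and $|\xi|>\delta$. On the low-frequency region the root asymptotics give
\eqn{
|\wh{K_0}(t,\xi)|\lesssim e^{-ct|\xi|^{m}},\qquad |\wh{K_1}(t,\xi)|\lesssim|\xi|^{-\theta}e^{-ct|\xi|^{m}},
}
with $(m,\theta)=(2(1-\sigma),2\sigma)$ for $0<\sigma<\tfrac12$ and $(m,\theta)=(2\sigma,1)$ for $\tfrac12\le\sigma\le1$, and each $\partial_t$ contributing one further factor $|\xi|^{2(1-\sigma)}$ (for $\sigma<\tfrac12$, from the dominant root) or $|\xi|$ (for $\sigma\ge\tfrac12$, from the oscillation frequency). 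Using $|\wh{u_j}(\xi)|\le\|u_j\|_1$, the low-frequency part of $\|\partial_t^\ell\nabla_x^k u\|_2$ is majorised by $\bigl(\int_{|\xi|\le\delta}|\xi|^{2(k+s)}e^{-2ct|\xi|^m}\,d\xi\bigr)^{1/2}$ for the appropriate exponent $s$, and the substitution $\eta=t^{1/m}\xi$ turns this into $t^{-\gamma_{\sigma,k}-\ell}$ when $\sigma\le\tfrac12$ and $t^{-\gamma_{\sigma,k}-\ell/(2\sigma)}$ when $\sigma>\tfrac12$; the slower $\wh{u_1}$-contribution, which carries the singular factor $|\xi|^{-\theta}$ of $\wh{K_1}$, sets the rate, the $\wh{u_0}$-contribution decaying strictly faster.

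On the high-frequency region the decisive fact is the uniform spectral gap $\sup_{|\xi|\ge\delta}\mathrm{Re}\,\lambda_\pm(\xi)=-c_0<0$: where the roots are complex their common real part is $-\tfrac{\nu}{2}|\xi|^{2\sigma}\le-\tfrac{\nu}{2}\delta^{2\sigma}$, and where they are real the slow root obeys $\lambda_+\le-c_1|\xi|^{2(1-\sigma)}\le-c_0$ on $[\delta,\infty)$. Consequently $|\wh{K_0}|+|\wh{K_1}|\lesssim e^{-c_0 t}$ times a power of $|\xi|$ that is integrable against $\wh{u_0},\wh{u_1}\in H^{k}$ for $k\le k_0+1$ (the singular factor of $\wh{K_1}$ only helping at high frequency), so Plancherel yields a high-frequency contribution $\lesssim e^{-c_0 t}\bigl(\|u_0\|_{H^{k_0+1}}+\|u_1\|_{H^{k_0}}\bigr)$, faster than any power of $t$. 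Adding the two regions gives \eqref{eq:1.7}--\eqref{eq:1.8}. The remaining work is essentially bookkeeping---repeating the root expansions and multiplier estimates across the five branches of \eqref{eq:1.4}--\eqref{eq:1.5}---and the only genuinely delicate step is to verify, in the low-dimensional endpoint cases, that the singular factor $|\xi|^{-\theta}$ stays integrable near $\xi=0$; for $\sigma=\tfrac12$ this uses the vanishing $\sin(\cdot)=O(|\xi|)$ of the oscillatory factor, and it is exactly this integrability requirement that dictates the dimension restrictions recorded in \eqref{eq:1.6}.
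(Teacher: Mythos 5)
Your proposal is correct and follows essentially the same route as the paper: the authors likewise diagonalise in the Fourier variable via the characteristic roots $\lambda_{\pm}$, choose the low-frequency cutoff radius strictly below the zero of the discriminant exactly as in \eqref{eq:2.2}, establish the pointwise exponential multiplier bounds (Lemmas \ref{lem:3.1}--\ref{lem:3.4} and \ref{lem:4.5}), convert them into decay rates by the H\"older inequality of Lemma \ref{Lem:5.1} together with the scaling estimate of Lemma \ref{lem:5.2} with $r=1$ at low frequency (producing $\|u_{j}\|_{1}$) and $r=2$ plus the spectral gap at middle/high frequency (producing $e^{-ct}$ times Sobolev norms), and assemble the result in Propositions \ref{prop:7.1}--\ref{prop:7.3}, with your observation that the singular factor of the $u_{1}$-multiplier dictates both the slowest rate and the dimension restrictions \eqref{eq:1.6} matching Lemma \ref{lem:5.7} and the $\sin$-cancellation in Lemma \ref{lem:4.5}. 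The one caution is that your unified low-frequency bound for $\sigma<\frac{1}{2}$ must be read root by root: the fast-root ($e^{\lambda_{-}t}$) contribution to $\partial_{t}\wh{K_{1}}$ obeys $Ce^{-ct|\xi|^{2\sigma}}$ but \emph{not} $C|\xi|^{2-4\sigma}e^{-ct|\xi|^{2(1-\sigma)}}$ pointwise as $|\xi|\to0$, so it has to be scaled with $m=2\sigma$ — precisely why the paper estimates $J_{3},J_{4}$ separately in Lemmas \ref{lem:5.3}--\ref{lem:5.4} — after which its rate is strictly faster since $n\ge 2>4\sigma$, consistent with your remark that the slow root dominates.
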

Our next aim is to approximate the solution to \eqref{eq:1.1} by a constant multiple of the special functions
with a suitable lower bound.
We can now formulate our main results.
\begin{thm} \label{thm:1.2}
Under the same assumptions as in Proposition {\rm \ref{prop:1.1}},
it holds that  
\begin{align} \label{eq:1.9}
& \| \nabla_{x}^{k} (u(t) - m_{1} \mathcal{F}^{-1} [\mathcal{G}_{\sigma, \nu}(t)] ) \|_{2}
=o(t^{-\gamma_{\sigma,k}}), \ \text{for}\ \sigma \in (0, 1], \\
\label{eq:1.10}
& \| \partial_{t} \nabla^{k}_{x}(u(t) - m_{1}  \mathcal{F}^{-1} [\mathcal{G}_{\sigma, \nu}(t)] ) \|_{2}
=o(t^{-\gamma_{\sigma,k}-1})\ \ \text{for}\ \sigma \in \left(0, \frac{1}{2} \right], \\
& \label{eq:1.11}
\| \partial_{t} \nabla^{k}_{x}u(t) 
- \nabla^{k}_{x}m_{1}  \mathcal{F}^{-1} [e^{-\frac{\nu t |\xi|^{2 \sigma}}{2}} \cos (t |\xi|)]  \|_{2}
=o(t^{-\gamma_{\sigma,k}-\frac{1}{2 \sigma}})\ \ \text{for}\ \sigma \in \left(\frac{1}{2},1\right]
\end{align}
as $t \to \infty$,
where 
\begin{align} 
m_{1} := 
\int_{\R^{n}} u_{1}(y) dy.
\end{align}
Moreover there exists $C>0$ such that
\begin{align} \label{eq:1.13}
& C^{-1} t^{-\gamma_{\sigma,k}-\ell} \le \| \partial_{t}^{\ell} \nabla_{x}^{k} u(t) \|_{2} 
\le C t^{-\gamma_{\sigma,k}-\ell} \quad \sigma \in \left(0,\frac{1}{2} \right], \\
& C^{-1} t^{-\gamma_{\sigma,k}-\frac{\ell}{2 \sigma}} \le \| \partial_{t}^{\ell} \nabla_{x}^{k} u(t) \|_{2} 
\le C t^{-\gamma_{\sigma,k}-\frac{\ell}{2 \sigma}} \quad \sigma \in \left(\frac{1}{2},1 \right]  \label{eq:1.14}
\end{align}
for large $t$, where $\ell=0,1$, $k \in [0,k_{0}+1]$ and $k+ \ell \le k_{0}+1$.
\end{thm}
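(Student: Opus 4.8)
The plan is to pass to the Fourier side, where \eqref{eq:1.1} becomes the $\xi$-parametrised ODE $\partial_t^2\wh{u}+\nu|\xi|^{2\sigma}\partial_t\wh{u}+|\xi|^2\wh{u}=0$, with characteristic roots $\lambda_\pm(\xi)=\tfrac12\bigl(-\nu|\xi|^{2\sigma}\pm\sqrt{\nu^2|\xi|^{4\sigma}-4|\xi|^2}\bigr)$. Solving the ODE gives the exact representation $\wh{u}(t,\xi)=\wh{u_0}(\xi)K_0(t,\xi)+\wh{u_1}(\xi)K_1(t,\xi)$ with $K_1=(e^{\lambda_+t}-e^{\lambda_-t})/(\lambda_+-\lambda_-)$ and $K_0=(\lambda_+e^{\lambda_-t}-\lambda_-e^{\lambda_+t})/(\lambda_+-\lambda_-)$. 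One first checks that at $\sigma=\tfrac12$ these kernels coincide \emph{exactly} with $\mathcal{G}_{\sigma,\nu}$ and $\mathcal{H}_{\sigma,\nu}$ of \eqref{eq:1.4}--\eqref{eq:1.5}, while for $\sigma\neq\tfrac12$ the functions there are the leading terms of $K_1,K_0$ from the binomial expansion of the square root near $\xi=0$. The argument then splits at a fixed small frequency $|\xi|=\delta$, chosen so that the discriminant keeps a constant sign on $\{|\xi|<\delta\}$ (real roots for $\sigma<\tfrac12$, complex for $\sigma>\tfrac12$); the dimension restrictions \eqref{eq:1.6} are exactly what make $\nabla_x^k\F^{-1}[\mathcal{G}_{\sigma,\nu}]\in L^2$ near $\xi=0$. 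On the high-frequency set $\{|\xi|\ge\delta\}$ one shows $\mathrm{Re}\,\lambda_\pm(\xi)\le-c(\delta)<0$ uniformly (the slow root stays strictly negative and does not return to $0$ as $|\xi|\to\infty$), so that both $u$ and $m_1\F^{-1}[\mathcal{G}_{\sigma,\nu}]$ decay there like $e^{-c(\delta)t}$, controlled by $(u_0,u_1)\in H^{k_0+1}\times H^{k_0}$; this part is $o(t^{-\gamma_{\sigma,k}})$ and negligible.

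The substance lies in the low-frequency region, where I would write the error as a sum of three pieces, each restricted to $\{|\xi|<\delta\}$: (A) $\F^{-1}[\wh{u_0}K_0]$, (B) $\F^{-1}[\wh{u_1}(K_1-\mathcal{G}_{\sigma,\nu})]$, and (C) $\F^{-1}[(\wh{u_1}-m_1)\mathcal{G}_{\sigma,\nu}]$, and estimate each weighted norm $\int_{|\xi|<\delta}|\xi|^{2k}|\cdot|^2\,d\xi$ by the parabolic scaling $\eta=t^{1/(2(1-\sigma))}\xi$ for $\sigma<\tfrac12$, respectively $\eta=t^{1/(2\sigma)}\xi$ for $\sigma>\tfrac12$; this turns $t^{2\gamma_{\sigma,k}}\int|\xi|^{2k}|\mathcal{G}_{\sigma,\nu}|^2\,d\xi$ into an $O(1)$ integral and pins down $\gamma_{\sigma,k}$. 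Piece (A) is $o(t^{-\gamma_{\sigma,k}})$ because $K_0$ carries no $|\xi|^{-2\sigma}$ factor, so its rescaled norm gains a power $t^{-\sigma/(1-\sigma)}$ (resp. $t^{-1/(2\sigma)}$) over the $\mathcal{G}$-profile; here I use $\wh{u_0}\in L^\infty$ from $u_0\in L^1$. Piece (C) is the mean-value term: since $u_1\in L^1$ makes $\wh{u_1}$ continuous with $\wh{u_1}(0)=m_1$, after scaling the factor $\wh{u_1}(t^{-1/(2(1-\sigma))}\eta)-m_1$ tends to $0$ pointwise while the rest is dominated, so dominated convergence gives $t^{2\gamma_{\sigma,k}}\cdot(\text{C})\to0$.

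Piece (B) is the genuine obstacle and the heart of the matter. For $\sigma<\tfrac12$ I would use $\lambda_+=-\tfrac1\nu|\xi|^{2(1-\sigma)}\bigl(1+O(|\xi|^{2-4\sigma})\bigr)$ and $\lambda_+-\lambda_-=\nu|\xi|^{2\sigma}\bigl(1+O(|\xi|^{2-4\sigma})\bigr)$, discard the branch $e^{\lambda_-t}$ (super-polynomially small at the dominant scale $|\xi|\sim t^{-1/(2(1-\sigma))}$), and bound $|K_1-\mathcal{G}_{\sigma,\nu}|\le C|\xi|^{2-4\sigma}\mathcal{G}_{\sigma,\nu}$ up to harmless factors $t|\xi|^{2(1-\sigma)}e^{-ct|\xi|^{2(1-\sigma)}}$; for $\sigma>\tfrac12$ the two roots share real part $-\tfrac\nu2|\xi|^{2\sigma}$ and the error comes solely from $\mathrm{Im}\,\lambda_\pm=|\xi|\bigl(1+O(|\xi|^{4\sigma-2})\bigr)$, entering through the amplitude $1/\mathrm{Im}\,\lambda-1/|\xi|$ and the phase $\sin(t\,\mathrm{Im}\,\lambda)-\sin(t|\xi|)$. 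In either case the extra power of $|\xi|$ improves the scaled decay by a positive power of $t$, giving $o(t^{-\gamma_{\sigma,k}})$. The delicate point is to control the exponent, respectively phase, correction uniformly over the whole ball $\{|\xi|<\delta\}$ and not merely at the single scale $|\xi|\sim t^{-1/(2(1-\sigma))}$; I expect to handle this by splitting the exponential (resp. trigonometric) difference via the mean value theorem and absorbing the factor $t|\xi|^{2(1-\sigma)}$ into the Gaussian decay of $\mathcal{G}_{\sigma,\nu}$. Assembling (A)--(C) proves \eqref{eq:1.9}.

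Finally, the same scheme applied to the differentiated kernels $\partial_tK_0,\partial_tK_1$, whose low-frequency leading parts are $\partial_t\mathcal{G}_{\sigma,\nu}$ for $\sigma\le\tfrac12$ and $e^{-\frac{\nu}{2}t|\xi|^{2\sigma}}\cos(t|\xi|)$ for $\sigma>\tfrac12$ (the difference being the lower-order $|\xi|^{2\sigma-1}$ term that one drops), yields \eqref{eq:1.10}--\eqref{eq:1.11} with the additional decay $t^{-1}$, respectively $t^{-1/(2\sigma)}$, consistent with \eqref{eq:1.8}. For the two-sided bounds \eqref{eq:1.13}--\eqref{eq:1.14} the upper estimates are Proposition \ref{prop:1.1}, while the lower estimates follow from the reverse triangle inequality applied to the profile appropriate to $(\ell,\sigma)$ (namely $\mathcal{G}_{\sigma,\nu}$ for $\ell=0$, and for $\ell=1$ either $\partial_t\mathcal{G}_{\sigma,\nu}$ when $\sigma\le\tfrac12$ or $e^{-\frac{\nu}{2}t|\xi|^{2\sigma}}\cos(t|\xi|)$ when $\sigma>\tfrac12$), together with the direct lower bound on that profile's norm, which the scaling computation shows to be $\ge c\,t^{-\gamma_{\sigma,k}-\ell}$ for $\sigma\le\tfrac12$ and $\ge c\,t^{-\gamma_{\sigma,k}-\ell/(2\sigma)}$ for $\sigma>\tfrac12$; since the subtracted error term is of strictly smaller order by \eqref{eq:1.9}--\eqref{eq:1.11}, the lower bounds follow for large $t$ provided $m_1\neq0$.
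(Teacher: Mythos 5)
Your proposal is correct in substance and shares the paper's overall skeleton (Fourier representation, low/high frequency splitting, extraction of the leading low-frequency kernel via the expansions $\lambda_+\sim-\frac1\nu|\xi|^{2(1-\sigma)}$ and $\phi_\sigma-1=O(|\xi|^{4\sigma-2})$ with mean-value-theorem control of the exponential/phase corrections --- this is exactly the content of the paper's Lemmas \ref{lem:4.1}--\ref{lem:4.4} and Section \ref{sec:6}), but it diverges genuinely at the step where the data $u_1$ is replaced by its mass $m_1$. The paper proves a general \emph{physical-space} lemma (Proposition \ref{prop:7.5}): writing $\K(t)\ast g-m\K(t)$, splitting the $y$-integral at $|y|=t^{(\gamma_2-\gamma_1)/2}$, and using the mean value theorem $\K(t,x-y)-\K(t,x)=-y\cdot\nabla_x\K(t,x-\theta y)$ together with the pair of bounds $\|\nabla_x^k\K(t)\|_2\le Ct^{-\gamma_1}$, $\|\nabla_x^{k+1}\K(t)\|_2\le Ct^{-\gamma_2}$. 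Your piece (C) instead stays on the Fourier side: continuity of $\wh{u_1}$ at $\xi=0$ (from $u_1\in L^1$) plus self-similar rescaling and dominated convergence. Both routes deliver the same rate-free $o(t^{-\gamma_{\sigma,k}})$; the paper's lemma requires the extra gradient bound but is quantitative once $g$ has a finite first moment, while your argument is shorter and makes transparent why the dimension restrictions \eqref{eq:1.6} appear (integrability of the dominating envelope $|\eta|^{2k-4\sigma}e^{-c|\eta|^{2(1-\sigma)}}$, resp.\ $|\eta|^{2k-2}e^{-c|\eta|^{2\sigma}}$, near $\eta=0$). Your correct flagging of the hypothesis $m_1\neq0$ for the lower bounds \eqref{eq:1.13}--\eqref{eq:1.14} is a point the paper leaves implicit.

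Two details deserve repair. First, for $\sigma\in(\frac12,1]$ the lower bound on the profile norm does \emph{not} follow from ``the scaling computation'' alone: after the substitution $\eta=t^{1/(2\sigma)}\xi$ one is left with $\int|\eta|^{2(k-1)}e^{-\nu|\eta|^{2\sigma}}\sin^2(t^{1-\frac{1}{2\sigma}}|\eta|)\,d\eta$, whose integrand oscillates; the paper (Lemma \ref{lem:7.6}, following \cite{I}) writes $\sin^2=\frac12(1-\cos)$ and invokes the Riemann--Lebesgue theorem, using $1-\frac{1}{2\sigma}>0$, to show the oscillatory part is $o(1)$. You need this extra step (at $\sigma=\frac12$ and $\sigma<\frac12$ pure scaling does suffice, by exact self-similarity of the kernels). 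Second, a normalization point: with the paper's convention $\hat f=c_n\int e^{-ix\cdot\xi}f\,dx$ one has $\wh{u_1}(0)=c_nm_1$, not $m_1$, so the constant in your piece (C) must be adjusted (the paper sidesteps this by working with the convolution $\K(t)\ast g$ and $m=\int g$ directly); this is bookkeeping, not a gap.
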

If $u_{1}(x)=0$,
we can assert the following series of approximation formulas of the solution to \eqref{eq:1.1}. 
\begin{thm} \label{thm:1.3}
Let $n \ge 1$, $\sigma \in (0, 1]$, $k_{0} \ge 0$ and 
$\nu >0$. 
If $u_{0} \in (H^{k_{0}+1} \cap L^{1})$ and $u_{1} \equiv 0$,
then it holds that  
\begin{align} \label{eq:1.15}
& \| \nabla_{x}^{k} (u(t) - m_{0} \mathcal{F}^{-1} [\mathcal{H}_{\sigma, \nu}(t)] ) \|_{2}
=o(t^{-\tilde{\gamma}_{\sigma,k}}), \ \text{for}\ \sigma \in (0, 1], \\
\label{eq:1.16}
& \| \partial_{t} \nabla^{k}_{x}(u(t) - m_{0}  \mathcal{F}^{-1} [\mathcal{H}_{\sigma, \nu}(t)] ) \|_{2}
=o(t^{-\tilde{\gamma}_{\sigma,k}-1})\ \ \text{for}\ \sigma \in \left(0, \frac{1}{2} \right], \\
& \label{eq:1.17}
\| \partial_{t} \nabla^{k}_{x}u(t) 
+\nabla^{k+1}_{x}m_{0}  \mathcal{F}^{-1} [e^{-\frac{\nu t |\xi|^{2 \sigma}}{2}} \sin (t |\xi|)]  \|_{2}
=o(t^{-\tilde{\gamma}_{\sigma,k}-\frac{1}{2 \sigma}})\ \ \text{for}\ \sigma \in \left(\frac{1}{2},1\right]
\end{align}
as $t \to \infty$,
where 
\begin{align*} 
m_{0} := 
\int_{\R^{n}} u_{0}(y) dy.
\end{align*}
Moreover, there exists $C>0$ such that
\begin{align*} 
& C^{-1} t^{-\tilde{\gamma}_{\sigma,k}-\ell} \le \| \nabla_{x}^{k} u(t) \|_{2} 
\le C t^{-\tilde{\gamma}_{\sigma,k}-\ell} \quad \sigma \in \left(0,\frac{1}{2} \right],\\
& C^{-1} t^{-\tilde{\gamma}_{\sigma,k}-\frac{\ell}{2 \sigma}} \le \| \partial_{t}^{\ell} \nabla_{x}^{k} u(t) \|_{2} 
\le C t^{-\tilde{\gamma}_{\sigma,k}-\frac{\ell}{2 \sigma}} \quad \sigma \in \left(\frac{1}{2},1 \right] 
\end{align*}
for large $t$, where $\ell=0,1$, $k \in [0,k_{0}+1]$ and $k+ \ell \le k_{0}+1$.
\end{thm}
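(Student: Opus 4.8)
The natural approach is a Fourier-space multiplier analysis that runs parallel to the proof of Theorem~\ref{thm:1.2}, with the $u_0$-fundamental solution replacing the $u_1$-one. Taking the Fourier transform of \eqref{eq:1.1} with $u_1\equiv0$ reduces it, for each fixed $\xi$, to the scalar problem $\partial_t^2\widehat u+\nu|\xi|^{2\sigma}\partial_t\widehat u+|\xi|^2\widehat u=0$ with data $(\widehat u_0(\xi),0)$, whose characteristic roots are $\lambda_\pm(\xi)=\frac12\bigl(-\nu|\xi|^{2\sigma}\pm\sqrt{\nu^2|\xi|^{4\sigma}-4|\xi|^2}\bigr)$. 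Hence $\widehat u(t,\xi)=\widehat u_0(\xi)\,E_0(t,\xi)$ with
\begin{equation*}
E_0(t,\xi):=\frac{\lambda_+e^{\lambda_-t}-\lambda_-e^{\lambda_+t}}{\lambda_+-\lambda_-},
\end{equation*}
and expanding $\lambda_\pm$ as $|\xi|\to0$ shows that $\mathcal H_{\sigma,\nu}(t,\xi)$ is exactly the principal part of $E_0$ in each of the five regimes; in particular, for $\sigma=\frac12$ the roots are exactly proportional to $|\xi|$, so $E_0\equiv\mathcal H_{1/2,\nu}$ with no remainder (including the $\nu=2$ double-root case $(1+t|\xi|)e^{-t|\xi|}$). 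Since $m_0=\widehat u_0(0)$ up to the fixed normalization, Plancherel's theorem reduces \eqref{eq:1.15} to estimating $\bigl\||\xi|^k\bigl(\widehat u_0(\xi)E_0-\widehat u_0(0)\mathcal H_{\sigma,\nu}\bigr)\bigr\|_{L^2_\xi}$.

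I would first split the multiplier as $\widehat u_0(\xi)E_0-\widehat u_0(0)\mathcal H_{\sigma,\nu}=\bigl(\widehat u_0(\xi)-\widehat u_0(0)\bigr)E_0+\widehat u_0(0)\bigl(E_0-\mathcal H_{\sigma,\nu}\bigr)$ and cut the frequency space into $\{|\xi|\le\delta\}$ and $\{|\xi|\ge\delta\}$. On the high-frequency set one checks that both characteristic roots have real part bounded above by a negative constant (using $\lambda_+\lambda_-=|\xi|^2$ to bound the slow root from zero), so $E_0$ and $\mathcal H_{\sigma,\nu}$ each carry a factor $e^{-ct}$; together with $u_0\in H^{k_0+1}$ this makes the high-frequency contribution $O(e^{-ct})$, hence negligible. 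On the low-frequency set the first term is controlled by the continuity of $\widehat u_0$: after the scaling $\eta=t^{\theta}\xi$ adapted to the regime (with $\theta=\frac1{2(1-\sigma)},\,1,\,\frac1{2\sigma}$ for $\sigma<\frac12,\ \sigma=\frac12,\ \sigma>\frac12$), one bounds $|E_0|$ by a constant times the corresponding exponential envelope in $\eta$ and notes that $\widehat u_0(t^{-\theta}\eta)-\widehat u_0(0)\to0$ pointwise while staying dominated by $2\|u_0\|_{L^1}$; dominated convergence then drives the rescaled integral to zero, so this term is $o(t^{-\tilde{\gamma}_{\sigma,k}})$.

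For the second term I would use the pointwise remainder bound for $E_0-\mathcal H_{\sigma,\nu}$: the expansion of $\lambda_\pm$ shows that in the low-frequency zone this difference gains a strictly positive power of $|\xi|$ over $\mathcal H_{\sigma,\nu}$ (and vanishes identically for $\sigma=\frac12$), so the same rescaling again gives $o(t^{-\tilde{\gamma}_{\sigma,k}})$. The time-derivative estimates \eqref{eq:1.16}--\eqref{eq:1.17} follow from the identity $\partial_tE_0=-|\xi|^2\,\dfrac{e^{\lambda_+t}-e^{\lambda_-t}}{\lambda_+-\lambda_-}$ (again by $\lambda_+\lambda_-=|\xi|^2$), whose second factor is precisely the $u_1$-multiplier of Theorem~\ref{thm:1.2}: its principal part is $\mathcal G_{\sigma,\nu}$, so $\partial_t\widehat u$ has principal part $-\widehat u_0(0)|\xi|^2\mathcal G_{\sigma,\nu}$. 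For $\sigma\le\frac12$ this coincides with $\widehat u_0(0)\,\partial_t\mathcal H_{\sigma,\nu}$, giving \eqref{eq:1.16}; for $\sigma>\frac12$ one has $|\xi|^2\mathcal G_{\sigma,\nu}=|\xi|e^{-\frac{\nu}2 t|\xi|^{2\sigma}}\sin(t|\xi|)$, which carries one extra $\nabla_x$ and is exactly the companion term (with the opposite sign) in \eqref{eq:1.17}. Finally, the upper bounds in the two-sided estimates are immediate from the triangle inequality, and a direct rescaling computation of $\||\xi|^k\mathcal H_{\sigma,\nu}(t)\|_2$ pins down the exponent $\tilde{\gamma}_{\sigma,k}$ and supplies the matching lower bound $\|\nabla_x^ku\|_2\ge|m_0|\,\|\nabla_x^k\mathcal F^{-1}[\mathcal H_{\sigma,\nu}]\|_2-o(t^{-\tilde{\gamma}_{\sigma,k}})$ whenever $m_0\neq0$.

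The main obstacle will be the uniform pointwise control of the remainders $E_0-\mathcal H_{\sigma,\nu}$ and $\partial_tE_0-\partial_t\mathcal H_{\sigma,\nu}$ throughout the low-frequency zone. The five regimes demand separate expansions of $\lambda_\pm$, the threshold $\sigma=\frac12$ with its three $\nu$-subcases must be handled through the exact (non-asymptotic) identity $E_0\equiv\mathcal H_{1/2,\nu}$, and one must verify that the positive power of $|\xi|$ gained in each remainder does not degenerate as $\sigma\to\frac12^{\pm}$, so that the strict $o$-improvement survives in every rescaled integral.
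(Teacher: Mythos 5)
Your proposal is correct in substance but takes a genuinely different route from the paper's. The paper proves Theorem \ref{thm:1.3} ``analogously to Theorem \ref{thm:1.2}'': it decomposes the solution through the localized multiplier operators of Section 2 ($u=J_{1}(t)u_{0}+J_{3}(t)u_{0}$ for $\sigma<\frac12$, $u=K_{1}(t)u_{0}+K_{2}(t)u_{0}$ for $\sigma>\frac12$, and the exact formulas \eqref{eq:2.20} at $\sigma=\frac12$), feeds the pointwise Fourier bounds of Sections 3--4 into the scaling Lemma \ref{lem:5.2} to obtain the low/middle/high-frequency decay and approximation estimates of Sections 5--6, and then---this is the step you replace---converts the convolution $\K(t)\ast u_{0}$ into $m_{0}\K(t)$ on the \emph{physical} side via Proposition \ref{prop:7.5} (mean value theorem on the kernel plus a splitting of the $y$-integral at radius $t^{(\gamma_{2}-\gamma_{1})/2}$). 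You instead keep a single exact multiplier $E_{0}$ and split $\widehat u_{0}(\xi)E_{0}-\widehat u_{0}(0)\mathcal H_{\sigma,\nu}$ on the \emph{Fourier} side, handling $(\widehat u_{0}(\xi)-\widehat u_{0}(0))E_{0}$ by continuity of $\widehat u_{0}$ at the origin, rescaling, and dominated convergence; this is an equivalent but more direct mechanism for the $o$-gain, and your identity $\partial_{t}E_{0}=-|\xi|^{2}\,(e^{\lambda_{+}t}-e^{\lambda_{-}t})/(\lambda_{+}-\lambda_{-})$, exploiting $\lambda_{+}\lambda_{-}=|\xi|^{2}$, derives \eqref{eq:1.16}--\eqref{eq:1.17} in one stroke from the $u_{1}$-profile $\G_{\sigma,\nu}$, where the paper runs separate expansions (Lemmas \ref{lem:4.2} and \ref{lem:4.4}). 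The paper's modular route buys reusability (the same Section 5--6 estimates serve both theorems and general $L^{r}$ data) and an off-the-shelf middle-frequency treatment by citation of \cite{DR}; your route buys brevity and makes transparent why $\sigma=\frac12$ is exact and why no dimensional restriction is needed once $u_{1}\equiv0$. Three points you gloss, none fatal: (i) your two-zone cut puts the confluence locus $\nu^{2}|\xi|^{4\sigma}=4|\xi|^{2}$, where $\lambda_{+}=\lambda_{-}$, inside the ``high'' zone, so you must treat the removable singularity of $E_{0}$ there, e.g.\ via $E_{0}=e^{\lambda_{-}t}+\lambda_{-}\,(e^{\lambda_{-}t}-e^{\lambda_{+}t})/(\lambda_{+}-\lambda_{-})$ together with $\mathrm{Re}\,\lambda_{\pm}\le -c<0$, which the paper isolates with the cutoff $\chi_{M}$; (ii) the lower bound for $\sigma\in(\frac12,1]$ is not a ``direct rescaling computation'': the rescaled integral contains $\cos^{2}(t^{1-\frac{1}{2\sigma}}|\eta|)$, and one needs $\cos^{2}=\frac12(1+\cos 2\cdot)$ plus the Riemann--Lebesgue lemma, exactly as in Lemmas \ref{lem:7.6}--\ref{lem:7.7}; (iii) your caveat $m_{0}\neq0$ for the two-sided bound is correct and is an assumption the paper leaves implicit. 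Also, your worry about the gained power $|\xi|^{2(1-2\sigma)}$ (resp.\ $|\xi|^{2\sigma-1}$) degenerating as $\sigma\to\frac12^{\pm}$ is unnecessary: the theorem is for fixed $\sigma$, so no uniformity in $\sigma$ is required.
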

\begin{rem}
\eqref{eq:1.9} and \eqref{eq:1.15} state that the solution $u(t,x)$ of \eqref{eq:1.1} 
behaves like $m_{1} \mathcal{F}^{-1}[\G_{\sigma, \nu}(t)]$ 
and $m_{0} \mathcal{F}^{-1}[\mathcal{H}_{\sigma, \nu}(t)]$ for $t \to \infty$. 
The point of \eqref{eq:1.9} - \eqref{eq:1.11}
and \eqref{eq:1.15} - \eqref{eq:1.17} is that the asymptotic profile of $\partial_{t} \nabla^{k}_{x} u(t)$ 
for $\sigma \in (0, 1/2]$
is given by $m_{1} \partial_{t} \mathcal{F}^{-1}[\G_{\sigma, \nu}(t)]$ and
$m_{0} \partial_{t} \mathcal{F}^{-1}[\mathcal{H}_{\sigma, \nu}(t)]$, however, 
for $\sigma \in (1/2, 1]$,  
$\partial_{t} \nabla^{k}_{x} u(t)$ 
is not approximated by $m_{1} \partial_{t} \mathcal{F}^{-1}[\G_{\sigma, \nu}(t)]$ and
$m_{0} \partial_{t} \mathcal{F}^{-1}[\mathcal{H}_{\sigma, \nu}(t)]$ as  $t \to \infty$.
\end{rem}
Before closing this section, 
we summarize notation, which is used throughout this paper.
Let $\hat{f}$ denote the Fourier transform of $f$
defined by
\begin{align*}
\hat{f}(\xi) := c_{n}
\int_{\R^{n}} e^{-i x \cdot \xi} f(x) dx
\end{align*}
with $c_{n}= (2 \pi)^{-\frac{n}{2}}$.
Also, let $\mathcal{F}^{-1}[f]$ or $\check{f}$ denote the inverse
Fourier transform.

For $k \ge 0$, let  $H^{k}(\R^{n})$ be the Sobolev space;
\begin{equation*}
H^{k}(\R^{n})
  :=\Big\{ f:\R^{n} \to \R;
        \| f \|_{H^{k}(\R^{n})} 
        := (\Vert f \Vert_{2}^{2} + 
         \Vert \nabla_{x}^{k} f \Vert_{2}^{2})^{1/2}< \infty 
     \Big\},
\end{equation*}
where $L^{p}(\R^{n})$ is the usual Lebesgue space for $1 \le p \le \infty$.
For the notation of function spaces, 
the domain $\R^{n}$ is often abbreviated, and we frequently use the notation $\| f \|_{p} =\| f \|_{L^{p}(\R^{n})}$ without confusion.
Furthermore, in the following, $C$ denotes a positive constant, which may change from line to line.

This paper is organized as follows.
In section 2, 
we set up notation of the solution formula by the Fourier multiplier expression, 
which is useful to describe the asymptotic profiles of solutions.
Section 3 describes several results of \cite{DR} in terms of our notation.
Section 4 is devoted to the study of the behaviors of the Fourier multipliers in the Fourier space.
In section 5, we prove the upper bound of the norms of the evolution operators, which mean decay properties. 
Section 6 provides approximation formulas of the evolution operators of \eqref{eq:1.1}.
In section 7, we prove our main results. 
%

\section{Solution formula}
In this section, we formulate the solution of \eqref{eq:1.1} by using the Fourier multiplier theory.
We remark that our new ingredient here is the case for $\sigma \in (1/2, 1]$
and $\sigma = \frac{1}{2}$ with $\nu \neq 2, >0$.
It is useful to obtain the asymptotic profile of solutions.  
The results in this section is essentially obtained by D'Abicco-Reissig \cite{DR}, however, for the reader's convenience, we repeat the derivation of the evolution operators to 
\eqref{eq:1.1}. 

We begin with recalling the characteristic roots of \eqref{eq:1.1}.
Applying the Fourier transform to the equation \eqref{eq:1.1}, 
we see 
\begin{equation} \label{eq:2.1}
\left\{
\begin{split}
& \partial_{t}^{2} \hat{u} +|\xi|^{2} \hat{u} 
+ \nu |\xi|^{2 \sigma} \partial_{t} \hat{u} =0, \quad t>0, \quad x \in \R^{n}, \\
& \hat{u}(0,\xi)=\hat{u}_{0}(\xi), \quad \partial_{t} \hat{u}(0,x)=\hat{u}_{1}(\xi) , \quad x \in \R^{n}, 
\end{split}
\right.
\end{equation}
and we have the characteristic equations  
$\lambda^{2} + \nu |\xi|^{2 \sigma} \lambda + |\xi|^{2} =0$.
Then we see that the characteristic roots
$\lambda_{\pm}$
are given by 
\begin{align*} 
\lambda_{\pm}:=
-\frac{\nu |\xi|^{2 \sigma}}{2}
\pm
\sqrt{
\frac{
\nu^{2}
}{4}
 |\xi|^{4 \sigma}
-
|\xi|^{2} 
},
\end{align*}
and roughly speaking, 
for small $|\xi|$,
their behaviors are given by  
\begin{equation*} 
\begin{split}
\lambda_{+} =
\begin{cases}
& 
\frac{-2 |\xi|^{2(1-\sigma)}}
{
\nu(1+ \sqrt{
1-\frac{4 |\xi|^{2-4 \sigma}}{\nu^{2} }
})
} 
\sim -\frac{1}{\nu} |\xi|^{2(1-\sigma) }
 \ \ \text{for} \ \ 0 < \sigma < \frac{1}{2}, \nu>0, \\
& \frac{\nu |\xi|}{2} \pm \frac{\sqrt{4-\nu^{2}} |\xi| i}{2} \qquad (0<\nu<2), \\
& |\xi| \qquad (\nu=2), \quad (\text{multiplicity}\ 2), \\
& \frac{\nu |\xi|}{2} \pm \frac{\sqrt{\nu^{2} -4} |\xi| }{2} \qquad (\nu>2), \\
& -\frac{\nu}{2}|\xi|^{2 \sigma} +i |\xi|
 \ \ \text{for} \ \ \frac{1}{2} < \sigma \le 1,\ \nu>0, 
\end{cases}
\end{split}
\end{equation*}
and 
\begin{equation*} 
\begin{split}
\lambda_{-} =
\begin{cases}
& 
\frac{-\nu |\xi|^{2 \sigma} - \nu |\xi|^{2 \sigma} \sqrt{1-\frac{4|\xi|^{2-4\sigma}}{\nu^{2}}}}{2}
\sim -\nu |\xi|^{2\sigma}
 \ \ \text{for} \ \ 0 < \sigma < \frac{1}{2}, \nu>0, \\
& \frac{\nu |\xi|}{2} \pm \frac{\sqrt{4-\nu^{2}} |\xi| i}{2} \qquad (0<\nu<2), \\
& |\xi| \qquad (\nu=2), \quad (\text{multiplicity}\ 2), \\
& \frac{\nu |\xi|}{2} \pm \frac{\sqrt{\nu^{2} -4} |\xi| }{2} \qquad (\nu>2), \\
& -\frac{\nu}{2}|\xi|^{2 \sigma} +i |\xi|
 \ \ \text{for} \ \ \frac{1}{2} < \sigma \le 1,\ \nu>0.
\end{cases}
\end{split}
\end{equation*}
%
Thereafter 
we introduce radial cut-off functions 
which will be used in the proofs 
to aligned to the low-, middle- and high-frequency parts. 
Let $\chi_L$, $\chi_M $ and $\chi_H  \in C^{\infty}(\R^{n})$ be 
\begin{gather*}
\chi_L (\xi) = \begin{cases}
	1, \quad &|\xi| \leq \frac{\rho}{2}, \\
	0, \quad &|\xi| \geq \rho, 
	\end{cases} \qquad
\chi_H (\xi) = \begin{cases}
	1, \quad &|\xi| \geq 2, \\
	0, \quad &|\xi| \leq 4, 
	\end{cases} \\ 
\chi_M (\xi) = 1- \chi_L (\xi) - \chi_H (\xi). 
\end{gather*}
Here we choose $\rho>0$ satisfying 
\begin{align} \label{eq:2.2}
\rho< 
\begin{cases}
& \frac{1}{2} \left( \frac{\nu}{2} \right)^{\frac{1}{1-2 \sigma}}
\ \text{for}\ \sigma \in (0, \frac{1}{2}), \\
&  \frac{1}{2} 
\ \text{for}\ \sigma=\frac{1}{2}, \\
& \frac{1}{2} \left( \frac{2}{\nu} \right)^{\frac{1}{2 \sigma-1}}
\ \text{for}\ \sigma \in (\frac{1}{2},1].
\end{cases}
\end{align}

\subsection{The case for $\sigma \in (0, 1/2)$.}
When $\sigma \in (0, \frac{1}{2})$, 
we can write the solution of \eqref{eq:2.1} by using constants $C_{1}$ and $C_{2}$ such as 
\begin{equation*}
\hat{u}(t) = C_{1} e^{\lambda_{+}t}+ C_{2} e^{\lambda_{-} t}.
\end{equation*}
The direct calculation implies 
\begin{equation*}
C_{1} = 
\frac{
-\lambda_{-} \hat{u}_{0}+ \hat{u}_{1}}{\lambda_{+} - \lambda_{-}}, 
\quad
C_{2} = \frac{
\lambda_{+} \hat{u}_{0} + \hat{u}_{1}
}{\lambda_{+} - \lambda_{-}}, 
\end{equation*}
where
\begin{equation*}
\begin{split}
\lambda_{+} - \lambda_{-}
= \sqrt{\nu^{2} |\xi|^{4 \sigma} - 4|\xi|^{2}}
=
\begin{cases}
& \nu |\xi|^{2 \sigma} \sqrt{1-\frac{4 |\xi|^{2- 4 \sigma}}{\nu^{2}}}
\sim \nu |\xi|^{2 \sigma}, |\xi| \to 0, \\
& 2 i |\xi| \sqrt{1-\frac{\nu^{2}|\xi|^{4 \sigma-2}}{4}}
\sim 2i |\xi|, |\xi| \to \infty.
\end{cases}
\end{split}
\end{equation*}
Therefore we obtain the following Fourier multiplier expression of the solution $u(t,x)$:
\begin{equation} \label{eq:2.3}
u(t) = J_{1}(t)u_{0}+ J_{2}(t)u_{1} +J_{3}(t)u_{0} +J_{4}(t) u_{1},
\end{equation}
where 
\begin{equation} \label{eq:2.4}
\begin{split}
& J_{1}(t)u_{0}
:= 
\mathcal{F}^{-1}
\left[
\frac{
-\lambda_{-} e^{\lambda_{+}t}
}{\lambda_{+} - \lambda_{-}}
\hat{u}_{0}
\right], \quad  
J_{2}(t)u_{1} :=
\mathcal{F}^{-1}
\left[
\frac{
e^{\lambda_{+}t}
}{\lambda_{+} - \lambda_{-}}
\hat{u}_{1}
\right], \\
& J_{3}(t)u_{0}
:=
\mathcal{F}^{-1}
\left[
\frac{
\lambda_{+}  e^{\lambda_{-}t}
}{\lambda_{+} - \lambda_{-}}
\hat{u}_{0}
\right], \quad 
J_{4}(t)u_{1}
:=
\mathcal{F}^{-1}
\left[
\frac{
e^{\lambda_{-}t}
}{\lambda_{+} - \lambda_{-}}
\hat{u}_{1}
\right].
\end{split}
\end{equation}
By using the cut-off functions $\chi_{k}$ ($k=L,M,H$), 
we also have the localized operators $J_{jk}(t)g$ $(j=1,2,3,k=L,M,H)$ defined by  
\begin{equation} \label{eq:2.5}
\begin{split}
& J_{jk}(t)g
:= 
\mathcal{F}^{-1}
\left[
\J_{jk}(t, \xi)
\chi_{k}\hat{g}
\right],
\end{split}
\end{equation}
where we denote 
\begin{equation} \label{eq:2.6}
\begin{split}
& \J_{1k}(t, \xi) := \frac{
-\lambda_{-} e^{\lambda_{+}t}
}{\lambda_{+} - \lambda_{-}} \chi_{k},\quad
\J_{2k}(t, \xi) := \frac{
e^{\lambda_{+}t}
}{\lambda_{+} - \lambda_{-}} \chi_{k},\\
& \J_{3k}(t,\xi) 
:=
\frac{ \lambda_{+}
e^{\lambda_{-}t}
}{\lambda_{+} - \lambda_{-}}
\chi_{k}, \quad 
\J_{4k}(t,\xi) 
:=
\frac{
e^{\lambda_{-}t}
}{\lambda_{+} - \lambda_{-}}
\chi_{k}.
\end{split}
\end{equation}
%
\subsection{The case for $\sigma \in [1/2,1]$.}
For the case $\sigma \in (1/2,1]$, 
we can choose constants $C_{1}$ and $C_{2}$ such as  
\begin{equation*}
\hat{u}(t) = C_{1} e^{-\frac{\nu|\xi|^{2 \sigma} t}{2}} \cos (t |\xi| \phi_{\sigma})
+ C_{2} e^{-\frac{\nu|\xi|^{2 \sigma} t}{2}} \sin (t |\xi| \phi_{\sigma}),
\end{equation*}
where 
\begin{align} \label{eq:2.7}
\phi_{\sigma} = \phi_{\sigma}(\xi) = \sqrt{1-\frac{\nu^{2} |\xi|^{4 \sigma-2}}{4}}, 
\end{align}
and this leads to
\begin{equation*}
C_{1}=\hat{u}_{0}, \quad 
C_{2}= 
\frac{\nu |\xi|}{2 \phi_{\sigma}} \hat{u}_{0} + \frac{1}{|\xi| \phi_{\sigma}} \hat{u}_{1}.
\end{equation*}
Namely, we find 
\begin{equation} \label{eq:2.8}
u(t) = K_{1}(t)u_{0}+ K_{2}(t)u_{0} +K_{3}(t)u_{1},
\end{equation}
where 
\begin{equation} \label{eq:2.9}
\begin{split}
& K_{1}(t)g
:= 
\mathcal{F}^{-1}
\left[
e^{-\frac{\nu|\xi|^{2 \sigma} t}{2}} \cos (t |\xi| \phi_{\sigma})
\hat{g}
\right], \\  
& K_{2}(t)g
:=
\mathcal{F}^{-1}
\left[
\frac{
e^{-\frac{\nu|\xi|^{2 \sigma} t}{2}} \nu |\xi| \sin (t |\xi| \phi_{\sigma})
}{2 \phi_{\sigma}}
\hat{g}
\right], \\
& K_{3}(t)g
:=
\mathcal{F}^{-1}
\left[
\frac{
e^{-\frac{\nu|\xi|^{2 \sigma} t}{2}} \sin (t |\xi| \phi_{\sigma})
}{|\xi|\phi_{\sigma}}
\hat{g}
\right].
\end{split}
\end{equation}
We also introduce the localized operators $K_{jk}(t)$ $(j=1,2,3,\ k=L,M,H)$ of $K_{j}(t)$ ($j = 1,2,3$) as follows:
\begin{equation} \label{eq:2.10}
\begin{split}
K_{jk}(t)g
:=
\mathcal{F}^{-1}
\left[
\K_{jk}(t,\xi)
\chi_{j} \hat{g}
\right],
\end{split}
\end{equation}
where $\K_{jk}(t,\xi)$ is defined by 
\begin{equation} \label{eq:2.11}
\begin{split}
& \K_{1k}(t, \xi )
:= e^{-\frac{\nu|\xi|^{2 \sigma} t}{2}} \cos (t |\xi| \phi_{\sigma}) \chi_{k}, \quad
\K_{2k}(t, \xi )
:= 
\frac{
e^{-\frac{\nu|\xi|^{2 \sigma} t}{2}} \nu |\xi|\sin (t |\xi| \phi_{\sigma})
}{2\phi_{\sigma}} \chi_{k}, \\
& \K_{3k}(t, \xi )
:= 
\frac{
e^{-\frac{\nu|\xi|^{2 \sigma} t}{2}} \sin (t |\xi| \phi_{\sigma})
}{|\xi|\phi_{\sigma}} \chi_{k}.
\end{split}
\end{equation}
We continue, in a similar fashion, to obtain the expression of the solution with $\sigma=\frac{1}{2}$ 
corresponding to the value of $\nu \neq 2$. 
Namely, we have 
\begin{equation*} 
\begin{split}
\hat{u}(t) & = 
e^{-\frac{\nu |\xi| t}{2}}
\cosh \left( 
\frac{t |\xi| \sqrt{\nu^{2} -4} }{2}
\right) \hat{u}_{0}
+ \frac{
e^{-\frac{\nu |\xi| t}{2}}
\nu
}
{
\sqrt{\nu^{2} -4}
}
\sinh \left( 
\frac{t |\xi| \sqrt{\nu^{2} -4} }{2}
\right) \hat{u}_{0} \\
& + \frac{
2 e^{-\frac{\nu |\xi| t}{2}}
}
{ |\xi|
\sqrt{\nu^{2} -4}
}
\sinh \left( 
\frac{t |\xi| \sqrt{\nu^{2} -4} }{2}
\right) \hat{u}_{1} 
\end{split}
\end{equation*}
for $\nu>2$ and 
\begin{equation*} 
\begin{split}
\hat{u}(t) & = 
e^{-\frac{\nu |\xi| t}{2}}
\cos \left( 
\frac{t |\xi| \sqrt{4-\nu^{2} } }{2}
\right) \hat{u}_{0}
+ \frac{
e^{-\frac{\nu |\xi| t}{2}}
\nu
}
{
\sqrt{\nu^{2} -4}
}
\sin \left( 
\frac{t |\xi| \sqrt{4-\nu^{2} } }{2}
\right) \hat{u}_{0} \\
& + \frac{
2 e^{-\frac{\nu |\xi| t}{2}}
}
{ |\xi|
\sqrt{4-\nu^{2}}
}
\sin \left( 
\frac{t |\xi| \sqrt{4-\nu^{2}} }{2}
\right) \hat{u}_{1} 
\end{split}
\end{equation*}
for $0< \nu< 2$.
For simplicity we introduce the notation 
\begin{equation} \label{eq:2.14}
\begin{split}
\tilde{\J}_{1}(t, \xi) & := 
e^{-\frac{\nu |\xi| t}{2}}
\cosh \left( 
\frac{t |\xi| \sqrt{\nu^{2} -4} }{2}
\right), \quad 
\tilde{\J}_{2}(t, \xi) :=  \frac{
e^{-\frac{\nu |\xi| t}{2}}
\nu
}
{
\sqrt{\nu^{2} -4}
}
\sinh \left( 
\frac{t |\xi| \sqrt{\nu^{2} -4} }{2}
\right) \\
\tilde{\J}_{3}(t, \xi) & :=  \frac{
2 e^{-\frac{\nu |\xi| t}{2}}
}
{ |\xi|
\sqrt{\nu^{2} -4}
}
\sinh \left( 
\frac{t |\xi| \sqrt{\nu^{2} -4} }{2}
\right), 
\end{split}
\end{equation}
\begin{equation} \label{eq:2.15}
\begin{split}
\tilde{\K}_{1}(t, \xi) & := 
e^{-\frac{\nu |\xi| t}{2}}
\cos \left( 
\frac{t |\xi| \sqrt{4-\nu^{2} } }{2}
\right), \quad
\tilde{\K}_{2}(t, \xi)  :=  \frac{
e^{-\frac{\nu |\xi| t}{2}}
\nu 
}
{
\sqrt{\nu^{2} -4}
}
\sin \left( 
\frac{t |\xi| \sqrt{4-\nu^{2} } }{2}
\right), \\
\tilde{\K}_{3}(t, \xi) & := \frac{
2 e^{-\frac{\nu |\xi| t}{2}}
}
{ |\xi|
\sqrt{4-\nu^{2}}
}
\sin \left( 
\frac{t |\xi| \sqrt{4-\nu^{2}} }{2}
\right),
\end{split}
\end{equation}
and 
\begin{align} \label{eq:2.16}
\tilde{J}_{j}(t) g :=
\mathcal{F}^{-1}[\tilde{\J}_{j}(t,\xi) \hat{g}], \quad 
\tilde{K}_{j}(t) g :=
\mathcal{F}^{-1}[\tilde{\K}_{j}(t,\xi) \hat{g}]
\end{align}
for $j=1,2,3$.
For the case $\nu=2$, 
as was pointed out in several previous results
(see e.g. \cite{D}, \cite{DR} and \cite{NR}),
we can obtain
\begin{align*} 
\hat{u}(t)
= (e^{-t|\xi|} +t e^{-t|\xi|}|\xi| )\hat{u}_{0}
+ t e^{-t|\xi|} \hat{u}_{1}
\end{align*}
and then we define 
\begin{align} \label{eq:2.18}
\E_{1}(t,\xi) :=e^{-t|\xi|}, \quad \E_{2}(t,\xi):=t e^{-t|\xi|}|\xi|, 
\quad 
\E_{3}(t,\xi):= t e^{-t|\xi|}
\end{align}
and 
\begin{align} \label{eq:2.19}
E_{j}(t) g:=\mathcal{F}^{-1} [\E_{j}(t,\xi) \hat{g}]
\end{align}
for $j=1,2,3$.

Therefore, we have just arrived at the expression of the solution with $\sigma = \frac{1}{2}$ for \eqref{eq:1.1} by 
\begin{align} \label{eq:2.20}
u(t) = 
\begin{cases}
& \tilde{K}_{1}(t) u_{0}+\tilde{K}_{2}(t) u_{0} +\tilde{K}_{3}(t) u_{1}\ \text{for} \ 0<\nu<2, \\
&  E_{1}(t) u_{0} +E_{2}(t) u_{0} + E_{3}(t)u_{1}\ \text{for} \ \nu=2, \\
&  \tilde{J}_{1}(t) u_{0}+\tilde{J}_{2}(t) u_{0} +\tilde{J}_{3}(t) u_{1}\ \text{for} \ \nu>2.
\end{cases}
\end{align}
\begin{rem}
We note that the choice of $\rho$ defined by \eqref{eq:2.2} means that 
the positive root of $\tau^{4 \sigma-2} = \frac{4}{\nu^{2}}$ does not belong to $\supp \chi_{L}$
for $\sigma \in (0,1] \setminus \{ \frac{1}{2}\}$.
\end{rem}
%
\section{Restatement of the results by \cite{DR}}
Our results here are closely related to those of \cite{DR}.
In this section, we summarize, without proofs, the precise statements of their results,
the point-wise estimates of the fundamental solutions for \eqref{eq:1.1} in the Fourier space, and decay estimates of the solution for \eqref{eq:1.1} by using our notation and terminology introduced in the previous section. 
The following lemmas show the behavior of $\J_{jk}(t, \xi)$ for $j=1,2,3,4$ and $k=L,M,H$ in the Fourier
space.
\begin{lem} \label{lem:3.1}
Let $n \ge 1$, $\sigma \in (0, \frac{1}{2})$, $k \ge 0$ and $\ell=0,1$.
Then, there exist $C>0$ and $c>0$ such that 
\begin{equation} \label{eq:3.1}
\begin{split}
& 
|\xi|^{k}
|
\partial_{t}^{\ell}
\J_{1L}(t,\xi)
|
\le C e^{
-c(1+t)|\xi|^{2(1-\sigma)}
}
|\xi|^{2(1-\sigma) \ell +k}  \chi_{L},\\
& 
|\xi|^{k}
|
\partial_{t}^{\ell}
\J_{2L}(t,\xi)
| 
\le C e^{
-c (1+t)|\xi|^{2(1-\sigma)}
}
|\xi|^{2(1-\sigma) \ell-2\sigma +k} \chi_{L},\\
& 
|\xi|^{k}
|
\partial_{t}^{\ell}
\J_{3L}(t,\xi)
| 
\le C e^{
-c(1+t)|\xi|^{2\sigma}
}
|\xi|^{2\ell \sigma+2(1-2\sigma)+k } \chi_{L}, \\
&
|\xi|^{k}
|
\partial_{t}^{\ell}
\J_{4L}(t,\xi)
| 
\le C e^{
-c(1+t)|\xi|^{2\sigma}
}
|\xi|^{2 \ell \sigma -2\sigma+ k} \chi_{L},
\end{split}
\end{equation}
where $\J_{jL}(t,\xi)$ $(j=1,2,3,4)$ are defined by \eqref{eq:2.6}.
\end{lem}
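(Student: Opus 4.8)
The plan is to reduce everything to sharp two-sided bounds on the characteristic roots on $\supp \chi_{L}$ and then read off each of the four estimates by elementary substitution. The crucial structural input is the choice of $\rho$ in \eqref{eq:2.2}: as recorded in the Remark, it keeps $\frac{4|\xi|^{2-4\sigma}}{\nu^{2}}$ bounded away from $1$ on $\supp\chi_{L}$, so that on this set the discriminant $\frac{\nu^{2}}{4}|\xi|^{4\sigma}-|\xi|^{2}$ is strictly positive (the roots $\lambda_{\pm}$ are real, distinct and negative) and the factor $\sqrt{1-\frac{4|\xi|^{2-4\sigma}}{\nu^{2}}}$ is comparable to a positive constant. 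First I would fix $\theta\in(0,1)$ with $\frac{4|\xi|^{2-4\sigma}}{\nu^{2}}\le\theta$ for $|\xi|\le\rho$, whence $\sqrt{1-\theta}\le\sqrt{1-\frac{4|\xi|^{2-4\sigma}}{\nu^{2}}}<1$ on $\supp\chi_{L}$.

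From the factored expressions for the roots this yields, on $\supp \chi_{L}$, the comparabilities $c_{1}|\xi|^{2(1-\sigma)}\le-\lambda_{+}\le C_{1}|\xi|^{2(1-\sigma)}$, $c_{2}|\xi|^{2\sigma}\le-\lambda_{-}\le C_{2}|\xi|^{2\sigma}$, and $c_{3}|\xi|^{2\sigma}\le\lambda_{+}-\lambda_{-}\le C_{3}|\xi|^{2\sigma}$, where I use $\lambda_{+}-\lambda_{-}=\nu|\xi|^{2\sigma}\sqrt{1-\frac{4|\xi|^{2-4\sigma}}{\nu^{2}}}$. Since $\lambda_{\pm}<0$ here, these give the exponential bounds $e^{\lambda_{+}t}\le e^{-c_{1}t|\xi|^{2(1-\sigma)}}$ and $e^{\lambda_{-}t}\le e^{-c_{2}t|\xi|^{2\sigma}}$. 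To upgrade $t$ to $1+t$ in the exponent I would exploit that $|\xi|$ is bounded on $\supp\chi_{L}$: since $e^{-c|\xi|^{\alpha}}$ is then bounded below by a positive constant, $e^{-ct|\xi|^{\alpha}}\le Ce^{-c(1+t)|\xi|^{\alpha}}$ for $\alpha=2(1-\sigma)$ and for $\alpha=2\sigma$.

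With these ingredients the four estimates follow by substitution. For $\ell=0$ I would bound each multiplier directly: in $\J_{1L}$ the factor $|\lambda_{-}|$ cancels the denominator to leave the clean rate and weight $|\xi|^{k}$; in $\J_{2L}$ the denominator contributes $|\xi|^{-2\sigma}$; in $\J_{3L}$ the numerator $\lambda_{+}$ against the denominator produces $|\xi|^{2(1-\sigma)-2\sigma}=|\xi|^{2(1-2\sigma)}$; and in $\J_{4L}$ the denominator again gives $|\xi|^{-2\sigma}$. For $\ell=1$ I would note that $\partial_{t}$ acts only through $\partial_{t}e^{\lambda_{\pm}t}=\lambda_{\pm}e^{\lambda_{\pm}t}$, so each differentiation inserts one extra factor $\lambda_{+}$ (worth $|\xi|^{2(1-\sigma)}$) in $\J_{1L},\J_{2L}$ and one factor $\lambda_{-}$ (worth $|\xi|^{2\sigma}$) in $\J_{3L},\J_{4L}$; multiplying by $|\xi|^{k}$ and matching powers reproduces exactly the stated exponents $2(1-\sigma)\ell+k$, $2(1-\sigma)\ell-2\sigma+k$, $2\sigma\ell+2(1-2\sigma)+k$, and $2\sigma\ell-2\sigma+k$.

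The step demanding the most care is not analytic but bookkeeping: verifying that the inserted powers of $|\xi|$ match the stated exponents, in particular the algebraic identity $2\sigma+2(1-2\sigma)=2(1-\sigma)$, which is what makes the $\ell=1$ case of $\J_{3L}$ consistent with the general formula $2\sigma\ell+2(1-2\sigma)+k$. One must also keep the exponential rates distinct across the four multipliers — rate $|\xi|^{2(1-\sigma)}$ governed by $\lambda_{+}$ for $\J_{1L},\J_{2L}$ versus rate $|\xi|^{2\sigma}$ governed by $\lambda_{-}$ for $\J_{3L},\J_{4L}$ — so the two root comparabilities should be used separately and never merged.
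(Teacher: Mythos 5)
Your proposal is correct, and it supplies exactly the argument the paper leaves implicit: Section~3 states Lemma~\ref{lem:3.1} explicitly ``without proofs'' as a restatement of results from D'Abbicco--Reissig \cite{DR}, so there is no in-paper proof to compare against, but your derivation is the standard one suggested by the root expansions the paper records in Section~2. The key points all check out: the choice of $\rho$ in \eqref{eq:2.2} does give $\frac{4}{\nu^{2}}|\xi|^{2-4\sigma}\le(\tfrac12)^{2-4\sigma}<1$ on $\supp\chi_{L}$, which makes $\lambda_{\pm}$ real, negative, and yields the two-sided comparabilities $-\lambda_{+}\sim|\xi|^{2(1-\sigma)}$, $-\lambda_{-}\sim|\xi|^{2\sigma}$, $\lambda_{+}-\lambda_{-}\sim|\xi|^{2\sigma}$ from the factored formulas; the prefactors in \eqref{eq:2.6} are $t$-independent, so $\partial_{t}$ indeed only inserts $\lambda_{+}$ or $\lambda_{-}$; the exponent bookkeeping (including $2(1-\sigma)-2\sigma=2(1-2\sigma)$ for $\J_{3L}$) matches the stated powers; and the harmless $e^{-ct|\xi|^{\alpha}}\le Ce^{-c(1+t)|\xi|^{\alpha}}$ upgrade is valid since $|\xi|\le\rho$ on $\supp\chi_{L}$. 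Nothing is missing.
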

\begin{lem} \label{lem:3.2}
Let $n \ge 1$, $\sigma \in (0, \frac{1}{2})$, $k \ge 0$ and $\ell=0,1$.
Then, there exist $C>0$ and $c>0$ such that 
\begin{equation*} 
\begin{split}
& 
|\xi|^{k}
\sum_{j=1,3}
(
|
\partial_{t}^{\ell} \J_{jM}(t,\xi)
|
+
|
\partial_{t}^{\ell} \J_{jH}(t,\xi)
|
)
\le C e^{
-ct|\xi|^{2\sigma}
}
|\xi|^{\ell+k}  (\chi_{M} +\chi_{H}),\\
& 
|\xi|^{k}
\sum_{j=2,4}
(
|
\partial_{t}^{\ell} \J_{jM}(t,\xi)
|
+
|
\partial_{t}^{\ell} \J_{jH}(t,\xi)
|
)
\le C e^{
-ct|\xi|^{2\sigma}
}
|\xi|^{\ell-1+k}  (\chi_{M} +\chi_{H}),
\end{split}
\end{equation*}
where $\J_{jk}(t,\xi)$ $(j=1,2,3,4)$, $(k=M,H)$ are defined by \eqref{eq:2.6}.
\end{lem}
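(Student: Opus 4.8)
The plan is to exploit that on $\supp(\chi_{M}+\chi_{H})$ one has $|\xi|\ge \rho/2>0$, so that the estimate splits into two essentially independent tasks: producing the common decay factor $e^{-ct|\xi|^{2\sigma}}$ from the real parts of the characteristic roots, and controlling the rational prefactors in \eqref{eq:2.6} by the stated powers of $|\xi|$. Multiplication by $|\xi|^{k}$ is harmless, and for $\ell=1$ the derivative $\partial_{t}$ only produces an extra factor $\lambda_{\pm}$, so it will suffice to know in addition that $|\lambda_{\pm}|\le C|\xi|$ on the support; this supplies precisely the extra power $|\xi|$ recorded by the exponent $\ell$ on the two right-hand sides.

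First I would prove the real-part bound $\mathrm{Re}\,\lambda_{\pm}\le -c|\xi|^{2\sigma}$ uniformly for $|\xi|\ge\rho/2$. Writing $D:=\nu^{2}|\xi|^{4\sigma}-4|\xi|^{2}$, the sign of $D$ changes exactly at $\xi_{*}:=(\nu/2)^{1/(1-2\sigma)}$, and by the choice \eqref{eq:2.2} of $\rho$ this point lies outside $\supp\chi_{L}$. For $|\xi|>\xi_{*}$ the roots are complex with $\mathrm{Re}\,\lambda_{\pm}=-\tfrac{\nu}{2}|\xi|^{2\sigma}$, already the desired bound. For $\rho/2\le|\xi|<\xi_{*}$ the roots are real and negative; using $1-\sqrt{1-s}\ge s/2$ with $s=4|\xi|^{2-4\sigma}/\nu^{2}\in[0,1)$ yields $\lambda_{+}\le -\tfrac1\nu|\xi|^{2-2\sigma}$ and $\lambda_{-}\le-\tfrac\nu2|\xi|^{2\sigma}$; since $|\xi|^{2-2\sigma}=|\xi|^{2\sigma}|\xi|^{2-4\sigma}\ge(\rho/2)^{2-4\sigma}|\xi|^{2\sigma}$ on the support (here $2-4\sigma>0$), both are $\le -c|\xi|^{2\sigma}$. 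The bound $|\lambda_{\pm}|\le C|\xi|$ follows from $\lambda_{+}\lambda_{-}=|\xi|^{2}$ in the complex range and from $|\lambda_{\pm}|\le\tfrac\nu2|\xi|^{2\sigma}+|\xi|\le C|\xi|$ in general, using $|\xi|^{2\sigma}\le C|\xi|$ because $2\sigma<1$ and $|\xi|\ge\rho/2$.

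It then remains to bound the prefactors $\tfrac{|\lambda_{\mp}|}{|\lambda_{+}-\lambda_{-}|}$ (for $j=1,3$) and $\tfrac{1}{|\lambda_{+}-\lambda_{-}|}$ (for $j=2,4$). In the complex range with $|\xi|$ bounded away from $\xi_{*}$, the quantity $\phi_{\sigma}$ of \eqref{eq:2.7} is bounded below and $\phi_{\sigma}\to1$ as $|\xi|\to\infty$, so from $\lambda_{+}-\lambda_{-}=2i|\xi|\phi_{\sigma}$ we get $\tfrac{1}{|\lambda_{+}-\lambda_{-}|}\le C|\xi|^{-1}$ and $\tfrac{|\lambda_{\mp}|}{|\lambda_{+}-\lambda_{-}|}\le C$, exactly the orders $|\xi|^{-1}$ and $|\xi|^{0}$ demanded; in the real range away from $\xi_{*}$ the same bounds hold since $\lambda_{+}-\lambda_{-}=\sqrt{D}$ is bounded below on the compact set involved.

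The genuine difficulty, and the step I expect to be the main obstacle, is the coalescence of the characteristic roots at $|\xi|=\xi_{*}$, where $\lambda_{+}-\lambda_{-}\to0$ and every partial-fraction prefactor in \eqref{eq:2.6}, hence each individual $\J_{jk}$, becomes singular. I would dispose of this on a fixed neighborhood of $\xi_{*}$ by replacing the partial-fraction representation by the regular one underlying the $K$-operators of \eqref{eq:2.11}: there the combinations $\tfrac{\sin(t|\xi|\phi_{\sigma})}{|\xi|\phi_{\sigma}}\to t$ and $\tfrac{-\lambda_{-}e^{\lambda_{+}t}+\lambda_{+}e^{\lambda_{-}t}}{\lambda_{+}-\lambda_{-}}\to(1-\lambda t)e^{\lambda t}$ stay finite, and the polynomial factors $t^{m}$ that emerge are absorbed into $e^{-ct|\xi|^{2\sigma}}$ by slightly shrinking $c$, using $|\xi|^{2\sigma}\ge(\rho/2)^{2\sigma}>0$ on the support. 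Since this neighborhood is a fixed compact set bounded away from the origin, all surviving quantities are controlled there by continuity. Patching the three ranges $|\xi|<\xi_{*}$, $|\xi|\approx\xi_{*}$ and $|\xi|>\xi_{*}$ with the decay and prefactor bounds above, then multiplying by $|\xi|^{k}$ and inserting the extra factor $|\lambda_{\pm}|\le C|\xi|$ when $\ell=1$, yields the two claimed inequalities.
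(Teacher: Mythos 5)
The paper itself offers no proof to compare against: Section~3 explicitly restates these estimates from \cite{DR} ``without proofs'', so your attempt can only be measured against what a correct argument must do. Away from the root-coalescence radius your outline is complete and correct, and it is the natural argument: the uniform bound $\mathrm{Re}\,\lambda_{\pm}\le -c|\xi|^{2\sigma}$ on $|\xi|\ge\rho/2$ (via $1-\sqrt{1-s}\ge s/2$ and $|\xi|^{2-2\sigma}\ge(\rho/2)^{2-4\sigma}|\xi|^{2\sigma}$, using $2-4\sigma>0$), the prefactor bounds $|\lambda_{\mp}|/|\lambda_{+}-\lambda_{-}|\le C$ and $|\lambda_{+}-\lambda_{-}|^{-1}\le C|\xi|^{-1}$ wherever $\phi_{\sigma}$, respectively $\sqrt{\nu^{2}|\xi|^{4\sigma}-4|\xi|^{2}}$, is bounded below, and the extra factor $|\lambda_{\pm}|\le C|\xi|$ accounting for $\ell=1$ give exactly the claimed exponents on those regions.

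The gap is in your final patching step, and it is not merely technical. Near $|\xi|=\xi_{*}=(\nu/2)^{1/(1-2\sigma)}$, which by \eqref{eq:2.2} always lies in $\supp(\chi_{M}+\chi_{H})$, one has $|\lambda_{+}-\lambda_{-}|\sim C\,\bigl| |\xi|-\xi_{*}\bigr|^{1/2}$ while the numerators $-\lambda_{-}e^{\lambda_{+}t}$, $e^{\lambda_{+}t}$, etc.\ stay bounded away from zero for each fixed $t$; hence each \emph{individual} $|\partial_{t}^{\ell}\J_{jM}(t,\xi)|$ (or $\J_{jH}$, depending on where $\xi_{*}$ falls) tends to $+\infty$ as $|\xi|\to\xi_{*}$, whereas the right-hand sides of the lemma are continuous and finite there. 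Since the left-hand sides are sums of absolute values of the individual multipliers, no argument can deliver the inequalities as literally stated. Your device of switching to the regular representation is the correct move, but what it bounds are the \emph{combinations} in which the singularity cancels as a divided difference of an entire function: $(-\lambda_{-}e^{\lambda_{+}t}+\lambda_{+}e^{\lambda_{-}t})/(\lambda_{+}-\lambda_{-})\to(1-\lambda t)e^{\lambda t}$ and $(e^{\lambda_{+}t}-e^{\lambda_{-}t})/(\lambda_{+}-\lambda_{-})\to te^{\lambda t}$ (note the sign: solving the initial-value conditions gives $C_{2}=(\lambda_{+}\hat{u}_{0}-\hat{u}_{1})/(\lambda_{+}-\lambda_{-})$, so \eqref{eq:2.4} carries a sign typo, and with the paper's literal signs even $\J_{2}+\J_{4}$ would remain singular). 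So your proof establishes the estimate for $|\J_{1}+\J_{3}|$ and for the corrected $|\J_{2}-\J_{4}|$-type combination, with the polynomial factors of $t$ absorbed into $e^{-ct|\xi|^{2\sigma}}$ as you say, but not for $|\J_{1}|+|\J_{3}|$ and $|\J_{2}|+|\J_{4}|$; you should state this explicitly rather than concluding that ``the two claimed inequalities'' follow. The honest conclusion is that the lemma is true only with the sums taken inside the modulus, i.e.\ for the combinations that actually occur in the solution formula \eqref{eq:2.3}; that corrected version is what the downstream estimates (Lemma \ref{lem:5.5}, Propositions \ref{prop:6.6} and \ref{prop:6.7}) genuinely need, since on the middle and high frequencies only exponential decay of the combined operators is ever used.
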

The behavior of $\K_{jk}(t, \xi)$ for $j=1,2,3$ and $k=L,M,H$ is estimated as follows.
\begin{lem} \label{lem:3.3}
Let $n \ge 1$, $\sigma \in (\frac{1}{2},1]$, $\ell=0,1$ and $k \ge 0$.
Then, there exist $C>0$ and $c>0$ such that 
\begin{equation} \label{eq:3.3}
\begin{split}
& 
|\xi|^{k}
|
\partial_{t}^{\ell} \K_{1L}(t,\xi)
|
\le C e^{-c(1+t) |\xi|^{2 \sigma}}
|\xi|^{\ell+k}  \chi_{L},\\
& 
|\xi|^{k}
|
\partial_{t}^{\ell} 
\K_{2L}(t,\xi)
| 
=
|\xi|^{k}
\left|\frac{\nu}{2} \Delta
\partial_{t}^{\ell} 
\K_{3L}(t,\xi)
\right| 
\le C e^{-c (1+t) |\xi|^{2 \sigma}}
|\xi|^{\ell+k+1}  \chi_{L},\\
& |\xi|^{k}
|
\partial_{t}^{\ell} 
\K_{3L}(t,\xi)
| 
\le C e^{-c (1+t) |\xi|^{2 \sigma}}
|\xi|^{\ell+k-1}  \chi_{L},\\
\end{split}
\end{equation}
where $\K_{jL}(t,\xi)$ $(j=1,2,3)$ are defined by \eqref{eq:2.11}.
\end{lem}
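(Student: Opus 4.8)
The plan is to work entirely at the level of the scalar multipliers $\K_{jL}(t,\xi)$ in \eqref{eq:2.11}, since the asserted bounds are pointwise in $\xi$. The whole argument rests on two preliminary observations about the low-frequency set $\supp\chi_L\subset\{|\xi|\le\rho\}$. First, because $\sigma>\tfrac12$ we have $4\sigma-2>0$, and the choice of $\rho$ in \eqref{eq:2.2} gives $\tfrac{\nu^2}{4}|\xi|^{4\sigma-2}\le\tfrac{\nu^2}{4}\rho^{4\sigma-2}<2^{2-4\sigma}<1$ on $\supp\chi_L$; hence, by \eqref{eq:2.7}, there is $c_0>0$ with $c_0\le\phi_\sigma(\xi)\le 1$ for all $|\xi|\le\rho$, so that $\phi_\sigma$ and $\phi_\sigma^{-1}$ are harmless bounded factors. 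Second, since $2\sigma-1>0$ and $|\xi|\le\rho$ we have $|\xi|^{2\sigma-1}\le\rho^{2\sigma-1}$, hence $|\xi|^{2\sigma}\le C|\xi|$ on $\supp\chi_L$; and, because $|\xi|^{2\sigma}\le\rho^{2\sigma}$ is bounded there, the absorption estimate $e^{-\frac{\nu}{2}t|\xi|^{2\sigma}}\le C e^{-c(1+t)|\xi|^{2\sigma}}$ holds with $c=\tfrac{\nu}{4}$ (the ratio of the two sides equals $\exp(\tfrac{\nu}{4}(1-t)|\xi|^{2\sigma})\le\exp(\tfrac{\nu}{4}\rho^{2\sigma})$). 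These two facts supply the exponential prefactor and convert every surplus power $|\xi|^{2\sigma}$ coming from the damping into a power $|\xi|$.

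With these in hand I would treat $\K_{1L}$ and $\K_{3L}$ directly by differentiation in $t$. For $\ell=0$ one bounds the trigonometric factor by $1$ and reads off the powers: $|\K_{1L}|\le e^{-\frac{\nu}{2}t|\xi|^{2\sigma}}\chi_L$ gives the factor $|\xi|^0$, while $|\K_{3L}|\le C|\xi|^{-1}e^{-\frac{\nu}{2}t|\xi|^{2\sigma}}\chi_L$ (using $\phi_\sigma^{-1}\le c_0^{-1}$) gives $|\xi|^{-1}$, matching $|\xi|^{\ell}$ and $|\xi|^{\ell-1}$ respectively. For $\ell=1$ I differentiate: each factor $e^{-\frac{\nu}{2}t|\xi|^{2\sigma}}\{\cos,\sin\}(t|\xi|\phi_\sigma)$ produces one term with prefactor $\tfrac{\nu}{2}|\xi|^{2\sigma}$ (from the exponential) and one with prefactor $|\xi|\phi_\sigma$ (from the trigonometric argument). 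Bounding the trigonometric functions by $1$, using $\phi_\sigma\le1$ and $|\xi|^{2\sigma}\le C|\xi|$ to dominate the first prefactor by the second, I obtain $|\partial_t\K_{1L}|\le C|\xi|\,e^{-\frac{\nu}{2}t|\xi|^{2\sigma}}\chi_L$ and $|\partial_t\K_{3L}|\le C\,e^{-\frac{\nu}{2}t|\xi|^{2\sigma}}\chi_L$, i.e.\ precisely the powers $|\xi|^{\ell}$ and $|\xi|^{\ell-1}$ at $\ell=1$. Multiplying by $|\xi|^k$ and inserting the absorption estimate yields the first and third lines of \eqref{eq:3.3}.

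The $\K_{2L}$ bound I would not prove independently but derive from the $\K_{3L}$ bound through the identity already recorded in the statement: comparing \eqref{eq:2.11}, $\K_{2L}=\tfrac{\nu}{2}|\xi|^2\,\K_{3L}$, equivalently $\K_{2L}=-\tfrac{\nu}{2}\Delta\K_{3L}$ at the multiplier level (recall $\Delta$ acts as multiplication by $-|\xi|^2$), whence the absolute values agree. Since $|\xi|^2$ is independent of $t$, it commutes with $\partial_t^\ell$, so $\partial_t^\ell\K_{2L}=\tfrac{\nu}{2}|\xi|^2\partial_t^\ell\K_{3L}$, and the third line of \eqref{eq:3.3} immediately upgrades to $|\xi|^k|\partial_t^\ell\K_{2L}|\le C e^{-c(1+t)|\xi|^{2\sigma}}|\xi|^{\ell+k+1}\chi_L$, which is the middle line. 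The only step demanding genuine care---the main obstacle---is the uniform lower bound $\phi_\sigma\ge c_0>0$ on $\supp\chi_L$: as $\sigma\downarrow\tfrac12$ the admissible $\rho$ shrinks and the gap $1-\tfrac{\nu^2}{4}|\xi|^{4\sigma-2}$ degenerates, so one must quantify it directly from \eqref{eq:2.2} (as in the bound $<2^{2-4\sigma}$ above). Once $\phi_\sigma$ is pinned between two positive constants, the remaining steps are routine differentiation and bookkeeping of powers of $|\xi|$.
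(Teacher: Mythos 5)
Your proposal is correct, but note that the paper itself offers no proof to compare against: Section 3 explicitly states that Lemma \ref{lem:3.3} is quoted ``without proofs'' as a restatement of results of D'Abbicco--Reissig \cite{DR}, so a self-contained verification like yours is exactly what is missing from the text. Your argument is the natural one and checks out in every detail: the choice of $\rho$ in \eqref{eq:2.2} gives $\tfrac{\nu^2}{4}|\xi|^{4\sigma-2}\le\tfrac{\nu^2}{4}\rho^{4\sigma-2}<2^{2-4\sigma}<1$ on $\supp\chi_L$ (your computation of this bound is right), whence $0<c_0\le\phi_\sigma\le 1$ there; this is precisely the point the paper itself flags in the remark following \eqref{eq:2.20}, namely that the positive root of $\tau^{4\sigma-2}=4/\nu^2$, where $\phi_\sigma$ would degenerate, lies outside $\supp\chi_L$. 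The remaining steps are all sound: $|\xi|^{2\sigma}\le\rho^{2\sigma-1}|\xi|$ on $\supp\chi_L$ correctly converts the damping factor $\tfrac{\nu}{2}|\xi|^{2\sigma}$ produced by $\partial_t$ into the power $|\xi|$ required for $\K_{1L}$, and into a bounded factor $|\xi|^{2\sigma-1}$ for $\K_{3L}$ after division by $|\xi|\phi_\sigma$; the absorption $e^{-\frac{\nu}{2}t|\xi|^{2\sigma}}\le Ce^{-\frac{\nu}{4}(1+t)|\xi|^{2\sigma}}$ with $C=e^{\frac{\nu}{4}\rho^{2\sigma}}$ is valid uniformly in $t\ge0$; and the identity $\K_{2L}=\tfrac{\nu}{2}|\xi|^2\K_{3L}$, read off directly from \eqref{eq:2.11} and commuting with $\partial_t^\ell$ since $|\xi|^2$ is $t$-independent, legitimately reduces the middle line of \eqref{eq:3.3} to the third (the sign in $\Delta\leftrightarrow-|\xi|^2$ being irrelevant inside absolute values). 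The only cosmetic caveat is that your constants $c_0$ and $C$ degenerate as $\sigma\downarrow\tfrac12$, which is harmless here because $\sigma\in(\tfrac12,1]$ is fixed in the lemma and the constants are allowed to depend on $\sigma$ and $\nu$.
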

\begin{lem} \label{lem:3.4}
Let $n \ge 1$, $\sigma \in (\frac{1}{2},1)$, $k \ge 0$ and $\ell=0,1$.
Then, there exist $C>0$ and $c>0$ such that 
\begin{align*} 
%
& 
|\xi|^{k}
\sum_{k=1,2}
(
|
\partial_{t}^{\ell} 
\K_{1M}(t,\xi)
|
+
|
\partial_{t}^{\ell} 
\K_{1H}(t,\xi)
|
) 
\le C
|\xi|^{k+2(1-\sigma) \ell } e^{
-ct|\xi|^{2(1-\sigma)}
} (\chi_{M} +\chi_{H}), \\
& 
|\xi|^{k}
(
|
\partial_{t}^{\ell} 
\K_{3M}(t,\xi)
|
+
|
\partial_{t}^{\ell} 
\K_{3H}(t,\xi)
|
) 
\le C|\xi|^{k-2 \sigma+2 \sigma \ell}
e^{
-ct|\xi|^{2(1-\sigma)}
}
(\chi_{M} +\chi_{H}), 
%
\end{align*}
where $\K_{jk}(t,\xi)$ $(j=1,2,3, k=M,H)$ are defined by \eqref{eq:2.11}.
\end{lem}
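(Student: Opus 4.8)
The plan is to prove the bounds frequency--zone by frequency--zone from the explicit characteristic roots $\lambda_{\pm}=-\tfrac{\nu}{2}|\xi|^{2\sigma}\pm\sqrt{\tfrac{\nu^{2}}{4}|\xi|^{4\sigma}-|\xi|^{2}}$. First I would reduce the three multipliers to two: directly from \eqref{eq:2.11} one has the pointwise identity $\K_{2k}(t,\xi)=\tfrac{\nu}{2}|\xi|^{2}\,\K_{3k}(t,\xi)$ (the low-frequency version of which is already recorded in Lemma~\ref{lem:3.3}), so it suffices to estimate $\K_{1k}$ and $\K_{3k}$ and then multiply the $\K_{3}$ bound by $|\xi|^{2}$ to recover the one for $\K_{2}$. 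The decisive structural fact is that the discriminant changes sign at the threshold $\xi_{*}:=(2/\nu)^{1/(2\sigma-1)}$: for $|\xi|<\xi_{*}$ the roots are complex with $\mathrm{Re}\,\lambda_{\pm}=-\tfrac{\nu}{2}|\xi|^{2\sigma}$ and $\phi_{\sigma}$ real (the oscillatory regime), while for $|\xi|>\xi_{*}$ the roots are real and negative, split into a slow root $\lambda_{+}\sim-\nu^{-1}|\xi|^{2(1-\sigma)}$ and a fast root $\lambda_{-}\sim-\nu|\xi|^{2\sigma}$; since $\sigma>\tfrac12$ the slow root decays exactly at the rate $|\xi|^{2(1-\sigma)}$ appearing in the exponential weight of the claim. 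As $\supp(\chi_{M}+\chi_{H})\subset\{|\xi|\ge\rho/2\}$, I would split into a bounded band $\rho/2\le|\xi|\le R$, with $R:=\max\{4,2\xi_{*}\}$, and an outer band $|\xi|\ge R$ on which the roots are real.

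On the bounded band every quantity is under uniform control: $|\xi|$, $|\lambda_{\pm}|$ and $\phi_{\sigma}$ are bounded above and below by positive constants, so all polynomial weights $|\xi|^{m}$ are mutually comparable. The genuine decay rate there is bounded away from zero (even at the double root $\xi_{*}$, where $\mathrm{Re}\,\lambda_{\pm}=-\tfrac{\nu}{2}\xi_{*}^{2\sigma}<0$), so the true exponential is dominated by $Ce^{-ct|\xi|^{2(1-\sigma)}}$ once $c$ is chosen small relative to the band. The only delicate point is the apparent singularity of $\K_{2},\K_{3}$ at $\xi_{*}$ where $\phi_{\sigma}=0$: it is removable because $\sin(t|\xi|\phi_{\sigma})/(|\xi|\phi_{\sigma})$ extends to a smooth function bounded by $C\min\{t,(|\xi|\phi_{\sigma})^{-1}\}$, and $te^{-at}\le C/a$ with $a$ bounded below keeps $\K_{3}$ uniformly bounded; time derivatives are then handled by the Leibniz rule, each $\partial_{t}$ producing a bounded extra factor on this compact band.

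On the outer band $|\xi|\ge R>\xi_{*}$ the roots are real, and I would write each multiplier as a combination of $e^{\lambda_{+}t}$ and $e^{\lambda_{-}t}$: concretely $\K_{1k}=\tfrac12(e^{\lambda_{+}t}+e^{\lambda_{-}t})\chi_{k}$ and $\K_{3k}=\dfrac{e^{\lambda_{+}t}-e^{\lambda_{-}t}}{\lambda_{+}-\lambda_{-}}\chi_{k}$, and estimate the two modes separately. The slow mode $e^{\lambda_{+}t}$ is immediately $\le e^{-ct|\xi|^{2(1-\sigma)}}$, each $\partial_{t}$ on it contributing a factor $|\lambda_{+}|\lesssim|\xi|^{2(1-\sigma)}$, while the denominator $|\lambda_{+}-\lambda_{-}|\sim\nu|\xi|^{2\sigma}$ in $\K_{3}$ supplies the weight $|\xi|^{-2\sigma}$; together these reproduce the asserted powers for the slow part. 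The main obstacle is the fast mode: a time derivative on $e^{\lambda_{-}t}$ brings down $|\lambda_{-}|\sim|\xi|^{2\sigma}$, a polynomial weight much larger than the slow-mode one, so it cannot be absorbed into $e^{-ct|\xi|^{2(1-\sigma)}}$ through the decay rate alone. The device is to downgrade the fast exponential: since $\lambda_{-}\le-\tfrac{\nu}{2}|\xi|^{2\sigma}$ and $|\xi|^{2\sigma}\ge|\xi|^{2(1-\sigma)}$ for $|\xi|\ge R\ge1$, one has $e^{\lambda_{-}t}\le e^{-\frac{\nu}{2}t|\xi|^{2\sigma}}\le e^{-ct|\xi|^{2(1-\sigma)}}$, so the fast mode is controlled by $e^{-ct|\xi|^{2(1-\sigma)}}$ while retaining its own polynomial weight. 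Carrying this bookkeeping through $\K_{1}$ and $\K_{3}$ and through $\ell=0,1$, and then recovering $\K_{2}=\tfrac{\nu}{2}|\xi|^{2}\K_{3}$, completes the proof; the care needed to keep the two modes' polynomial weights consistent across the threshold $\xi_{*}$ is where the real work lies.
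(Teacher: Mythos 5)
You should first note that the paper itself contains no proof of Lemma~\ref{lem:3.4}: Section~3 explicitly restates the pointwise estimates of \cite{DR} ``without proofs'', so your argument can only be measured against the standard route, which is indeed the one you sketch. Your skeleton is correct in outline: the identity $\K_{2k}(t,\xi)=\tfrac{\nu}{2}|\xi|^{2}\K_{3k}(t,\xi)$, the threshold $\xi_{*}=(2/\nu)^{1/(2\sigma-1)}$ where the discriminant vanishes, compactness plus the removable singularity of $\sin(t|\xi|\phi_{\sigma})/(|\xi|\phi_{\sigma})$ on the bounded band, and the mode decomposition $\K_{1k}=\tfrac12(e^{\lambda_{+}t}+e^{\lambda_{-}t})\chi_{k}$, $\K_{3k}=\frac{e^{\lambda_{+}t}-e^{\lambda_{-}t}}{\lambda_{+}-\lambda_{-}}\chi_{k}$ on the outer band, are all sound. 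Moreover your treatment of $\K_{3}$ genuinely closes for $\ell=0,1$: the denominator $|\lambda_{+}-\lambda_{-}|\sim\nu|\xi|^{2\sigma}$ exactly cancels the fast-mode factor $|\lambda_{-}|\sim\nu|\xi|^{2\sigma}$, producing the stated weight $|\xi|^{k-2\sigma+2\sigma\ell}$.

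The genuine gap is exactly the step you postpone as ``bookkeeping'': it cannot be done as you promise. For $\partial_{t}\K_{1}$ the fast mode contributes $|\lambda_{-}|e^{\lambda_{-}t}\sim|\xi|^{2\sigma}e^{-\frac{\nu}{2}t|\xi|^{2\sigma}}$, and your device of downgrading only the exponential leaves the weight $|\xi|^{2\sigma}$, strictly larger than the asserted $|\xi|^{2(1-\sigma)}$ when $\sigma>\tfrac12$; the excess $|\xi|^{4\sigma-2}$ cannot be absorbed uniformly in $t$, since $|\xi|^{4\sigma-2}e^{-\frac{\nu}{4}t|\xi|^{2\sigma}}$ blows up along $t\sim|\xi|^{-2\sigma}$. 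In fact at $t=0$ one has exactly $\partial_{t}\K_{1H}(0,\xi)=-\tfrac{\nu}{2}|\xi|^{2\sigma}\chi_{H}$, so no choice of $C,c$ makes the first display of the lemma hold for all $t\ge0$ with the printed power $|\xi|^{k+2(1-\sigma)\ell}$. The same mismatch infects your recovery of $\K_{2}$: multiplying your $\K_{3}$ bound by $|\xi|^{2}$ gives $|\xi|^{k+2(1-\sigma)+2\sigma\ell}$, not $|\xi|^{k+2(1-\sigma)\ell}$, and indeed $|\K_{2}(t,\xi)|\sim\tfrac{\nu}{2}|\xi|^{2}t\sim|\xi|^{2(1-\sigma)}$ at $t\sim|\xi|^{-2\sigma}$, so even the $\ell=0$ claim for $\K_{2}$ fails near $t=0$ at high frequency. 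What rescues the estimates is restricting $t$ away from zero: for $t\ge1$ one writes $e^{-\frac{\nu}{2}t|\xi|^{2\sigma}}\le e^{-\frac{\nu}{4}|\xi|^{2\sigma}}e^{-\frac{\nu}{4}t|\xi|^{2\sigma}}$ and the $t$-independent prefactor swallows every polynomial excess, which is consistent with how the lemma is consumed (the exponentially decaying remainders in Lemma~\ref{lem:5.9} and the $t\to\infty$ asymptotics). You must either state and use such a restriction explicitly, or weaken the weights (e.g.\ to $|\xi|^{k+2\sigma\ell}$ for $\K_{1}$); as written, your plan asserts the printed bounds while your own mode-by-mode estimates deliver strictly weaker ones, and the sentence ``the care needed to keep the two modes' polynomial weights consistent\dots is where the real work lies'' concedes precisely the step that is missing.
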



\section{Point-wise estimates in the Fourier space}
This section deals with point-wise estimates of the Fourier multipliers in the Fourier space.
The results here play crucial roles to show our main results.
\subsection{The case for $\sigma \in (0, 1/2)$.}
This subsection is devoted to the estimates for $\J_{1L}(t,\xi)
-
e^{
-\frac{t}{\nu}
|\xi|^{2(1-\sigma)}
}
\chi_{L}$ and 
$
\J_{2L}(t,\xi)
-
\frac{
e^{
-\frac{t}{\nu}
|\xi|^{2(1-\sigma)
}
}
}{ \nu |\xi|^{2 \sigma}}\chi_{L} 
$.
In other words, 
the following lemmas mean that 
$\J_{1L}(t,\xi)$ and $\J_{2L}(t,\xi)$
behave like $e^{
-\frac{t}{\nu}
|\xi|^{2(1-\sigma)}
}
\chi_{L}$
and 
$\frac{
e^{
-\frac{t}{\nu}
|\xi|^{2(1-\sigma)
}
}
}{ \nu |\xi|^{2 \sigma}}\chi_{L} $, 
respectively.
\begin{lem} \label{lem:4.1}
Let $n \ge 1$, $\sigma \in (0, \frac{1}{2})$ and $k \ge 0$.
Then, there exist $C>0$ and $c>0$ such that 
\begin{align} \label{eq:4.1}
& 
|\xi|^{k}
\left|
\J_{1L}(t,\xi)
-
e^{
-\frac{t}{\nu}
|\xi|^{2(1-\sigma)}
}
\chi_{L} \right| 
\le C e^{
-c(1+t)
|\xi|^{2(1-\sigma)}
}
|\xi|^{k} 
(t |\xi|^{2(2-3\sigma)} + |\xi|^{2(1-2\sigma)}) \chi_{L},\\
& 
|\xi|^{k}
\left|
\J_{2L}(t,\xi)
-
\frac{
e^{
-\frac{t}{\nu}
|\xi|^{2(1-\sigma)
}
}
}{ \nu |\xi|^{2 \sigma}}\chi_{L} \right| 
\le C e^{
-c(1+t)
|\xi|^{2(1-\sigma)}
}
|\xi|^{k} 
(t |\xi|^{2(2-3\sigma)-2 \sigma} + |\xi|^{2(1-2\sigma)- 2 \sigma}) \chi_{L},
\label{eq:4.2}
\end{align}
where $\J_{1L}(t,\xi)$ and $\J_{2L}(t,\xi)$ are defined by \eqref{eq:2.6}.
\end{lem}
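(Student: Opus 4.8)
The plan is to reduce both estimates to Taylor-expanding the exponent and the prefactor of $\J_{1L}$ and $\J_{2L}$ around $\xi = 0$, and to control the remainders using the exponential decay already built into the low-frequency regime. Recall from \eqref{eq:2.6} that on $\supp\chi_L$ we have
\begin{align*}
\J_{1L}(t,\xi) = \frac{-\lambda_{-}e^{\lambda_{+}t}}{\lambda_{+}-\lambda_{-}}\chi_L,
\qquad
\J_{2L}(t,\xi) = \frac{e^{\lambda_{+}t}}{\lambda_{+}-\lambda_{-}}\chi_L.
\end{align*}
The strategy is to isolate three sources of error against the two target profiles $e^{-\frac{t}{\nu}|\xi|^{2(1-\sigma)}}$ and $\frac{e^{-\frac{t}{\nu}|\xi|^{2(1-\sigma)}}}{\nu|\xi|^{2\sigma}}$: first the difference between the true exponent $\lambda_{+}$ and its leading term $-\frac{1}{\nu}|\xi|^{2(1-\sigma)}$; second the difference between the rational prefactor and its leading value; third the smallness of the subdominant $e^{\lambda_{-}t}$ contributions, which for $\J_{1,2}$ are already absent but whose analogues must not reappear.

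First I would set $\varepsilon = |\xi|^{2-4\sigma}$, which is small on $\supp\chi_L$ by the choice of $\rho$ in \eqref{eq:2.2}, and expand
\begin{align*}
\lambda_{+} = -\frac{\nu|\xi|^{2\sigma}}{2}\Bigl(1 - \sqrt{1 - \tfrac{4\varepsilon}{\nu^{2}}}\Bigr)
= -\frac{1}{\nu}|\xi|^{2(1-\sigma)}\bigl(1 + O(\varepsilon)\bigr),
\end{align*}
so that $\lambda_{+} + \frac{1}{\nu}|\xi|^{2(1-\sigma)} = O(|\xi|^{2(1-\sigma)}\varepsilon) = O(|\xi|^{2(2-3\sigma)})$. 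Writing $e^{\lambda_{+}t} - e^{-\frac{t}{\nu}|\xi|^{2(1-\sigma)}}$ and using the elementary bound $|e^{a}-e^{b}| \le |a-b|\,e^{\max(a,b)}$ together with the comparability of $\lambda_{+}$ and $-\frac{1}{\nu}|\xi|^{2(1-\sigma)}$, this difference is bounded by $C\,t\,|\xi|^{2(2-3\sigma)}\,e^{-c(1+t)|\xi|^{2(1-\sigma)}}$, which is precisely the first term inside the parentheses of \eqref{eq:4.1}. Separately, the prefactor $\frac{-\lambda_{-}}{\lambda_{+}-\lambda_{-}}$ must be compared with $1$: since $\lambda_{+}-\lambda_{-} = \nu|\xi|^{2\sigma}\sqrt{1-\frac{4\varepsilon}{\nu^2}}$ and $-\lambda_{-} = \frac{\nu|\xi|^{2\sigma}}{2}(1+\sqrt{1-\frac{4\varepsilon}{\nu^{2}}})$, a direct expansion gives $\frac{-\lambda_{-}}{\lambda_{+}-\lambda_{-}} - 1 = O(\varepsilon) = O(|\xi|^{2(1-2\sigma)})$, producing the time-independent second term. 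Multiplying by $|\xi|^{k}$ and collecting both contributions yields \eqref{eq:4.1}. The estimate \eqref{eq:4.2} is handled in exactly the same way, the only change being that the prefactor is $\frac{1}{\lambda_{+}-\lambda_{-}} \sim \frac{1}{\nu|\xi|^{2\sigma}}$, so every error term acquires an extra factor $|\xi|^{-2\sigma}$, which accounts for the shifted exponents $2(2-3\sigma)-2\sigma$ and $2(1-2\sigma)-2\sigma$ appearing in \eqref{eq:4.2}.

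I expect the main obstacle to be bookkeeping the interaction between the two independent smallness mechanisms, namely the exponent error (which carries a factor $t$ and is therefore dangerous for large $t$) and the prefactor error (which is time-independent but must be absorbed into the same exponential envelope). The delicate point is to verify that after replacing $\lambda_{+}$ by its leading term the residual exponent still dominates, i.e. that $\lambda_{+} \le -c|\xi|^{2(1-\sigma)}$ uniformly on $\supp\chi_L$ with $c$ independent of $\xi$, so that the factor $e^{\lambda_{+}t}$ can legitimately be bounded by $e^{-c(1+t)|\xi|^{2(1-\sigma)}}$ after extracting the algebraic losses; this uniformity is exactly what the constraint \eqref{eq:2.2} on $\rho$ guarantees, since it keeps $\varepsilon$ bounded away from the branch point $\varepsilon = \nu^{2}/4$ of the square root. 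Once this uniform spectral gap is secured, the rest is the routine Taylor estimation sketched above.
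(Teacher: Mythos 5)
Your proposal is correct and takes essentially the same route as the paper's proof: the identical splitting into an exponent error $\lambda_{+}+\frac{1}{\nu}|\xi|^{2(1-\sigma)}=O(|\xi|^{2(2-3\sigma)})$ (with the crucial sign $\lambda_{+}\le -\frac{1}{\nu}|\xi|^{2(1-\sigma)}$ on $\supp \chi_{L}$, guaranteed by the choice of $\rho$ in \eqref{eq:2.2}) and a time-independent prefactor error $O(|\xi|^{2(1-2\sigma)})$, with the extra factor $|\xi|^{-2\sigma}$ for $\J_{2L}$. Your elementary bound $|e^{a}-e^{b}|\le |a-b|e^{\max(a,b)}$ is exactly the paper's mean value theorem step in disguise, so the two arguments coincide in substance.
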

\begin{proof}
At first, we show \eqref{eq:4.1}.
Noting that 
\begin{equation*} 
\begin{split}
 \lambda_{+}+\frac{|\xi|^{2(1-\sigma)}}{\nu}
& =
\frac{|\xi|^{2(1-\sigma)}}{\nu}
\left( 
-\frac{2}{1+\sqrt{1-\frac{4|\xi|^{2 -4 \sigma}}{\nu^{2}}}}
+1
\right) \\
& =
\frac{-4 |\xi|^{2(2-3\sigma)} }
{ \nu^{3}
\left(
1+\sqrt{
1-\frac{4|\xi|^{2 -4 \sigma}}{\nu^{2}}
}
\right)^{2} 
} \le 0,
\end{split}
\end{equation*}
we see
\begin{align} \label{eq:4.4}
\left| 
\lambda_{+}+\frac{1}{\nu}
|\xi|^{2(1-\sigma)}
\right| \le C |\xi|^{2(2-3 \sigma)}
\end{align}
for small $|\xi|$.
On the other hand, 
the mean value theorem yields
\begin{align*}
e^{t \lambda_{+}+\frac{t}{\nu}
|\xi|^{2(1-\sigma)} }
-1
=
\left(
t \lambda_{+}+\frac{t}{\nu}
|\xi|^{2(1-\sigma)}
\right)
e^{\theta(t \lambda_{+}+\frac{t}{\nu}
|\xi|^{2(1-\sigma)} )}
\end{align*}
for some $\theta \in (0,1)$, and so we have
\begin{align} \label{eq:4.5}
\left|
e^{t \lambda_{+}+\frac{t}{\nu}
|\xi|^{2(1-\sigma)} }
-1
\right|
\le C t 
 |\xi|^{2(2-3 \sigma)}
\end{align}
by \eqref{eq:4.4}.
Moreover, 
the direct computation gives  
\begin{equation*} 
\begin{split}
\left(
\frac{-\lambda_{-}}{\lambda_{+} - \lambda_{-}} -1
\right)\chi_{L}
& = 
\left(
\frac{ 
1+\sqrt{
1-\frac{4}{\nu^{2}}
|\xi|^{2(1-2\sigma)}
}
}{ 
\sqrt{
1-\frac{4}{\nu^{2}}
|\xi|^{2(1-2\sigma)}
}
}
-1
\right)\chi_{L} \\
& = 
\frac{ 
-4
|\xi|^{2(1-2\sigma)}
}{ \nu^{2} \left(1+
\sqrt{
1-\frac{4}{\nu^{2}}
|\xi|^{2(1-2\sigma)}
}
\right)
}\chi_{L}
\end{split}
\end{equation*}
and 
\begin{equation} \label{eq:4.6} 
\begin{split}
\left|
\left(
\frac{-\lambda_{-}}{\lambda_{+} - \lambda_{-}} -1
\right)\chi_{L} \right|
& \le C|\xi|^{2(1-2\sigma)}\chi_{L}
\end{split}
\end{equation}
for small $|\xi|$.
Combining \eqref{eq:4.5} and \eqref{eq:4.6}, 
we arrive at the estimate 
\begin{align*}
& |\xi|^{k}
\left|
\J_{1L}(t,\xi)
-
e^{
-\frac{t}{\nu}
|\xi|^{2(1-\sigma)}
}
\chi_{L} \right| \\
& \le 
|\xi|^{k} e^{
-\frac{t}{\nu}
|\xi|^{2(1-\sigma)}
} \chi_{L}
\left(
\left|
\frac{-\lambda_{-}
(e^{
\lambda_{+}t +\frac{t}{\nu}
|\xi|^{2(1-\sigma)}
} -1)}
{\lambda_{+} -\lambda_{-}} \right| 
+
\left|
\left(
\frac{-\lambda_{-}}
{\lambda_{+} -\lambda_{-}}
-1 \right) \right| 
\right)
\\
& \le C e^{
-\frac{(1+t)}{\nu}
|\xi|^{2(1-\sigma)}
}
|\xi|^{k} 
(t |\xi|^{2(2-3\sigma)} + |\xi|^{2(1-2\sigma)}) \chi_{L},
\end{align*}
which is the desired estimate \eqref{eq:4.1}.
Next, we show \eqref{eq:4.2}.
It is easy to see that
\begin{equation} \label{eq:4.7}
\begin{split}
& \left|
 \frac{1}{
\sqrt{
1-\frac{4}{\nu^{2}}
|\xi|^{2(1-2\sigma)}
}
}
-1
 \right|
 = 
\left|
 \frac{1-\sqrt{
1-\frac{4}{\nu^{2}}
|\xi|^{2(1-2\sigma)}
}}{
\sqrt{
1-\frac{4}{\nu^{2}}
|\xi|^{2(1-2\sigma)}
}
}
\right| \\
& 
 = 
\left|
 \frac{
|\xi|^{2(1-2\sigma)}
}{\frac{4}{\nu^{2}}
\sqrt{
1-\frac{4}{\nu^{2}}
|\xi|^{2(1-2\sigma)}
}
\left(
1+\sqrt{
1-\frac{4}{\nu^{2}}
|\xi|^{2(1-2\sigma)}
}
\right)
}
\right| \le C |\xi|^{2(1-2\sigma)}
\end{split}
\end{equation}
for small $|\xi|$.
Thus by \eqref{eq:4.5} and \eqref{eq:4.7}, 
we can obtain 
\begin{equation*}
\begin{split}
& |\xi|^{k} \left|
\J_{2L}(t,\xi)
-
\frac{
e^{
-\frac{t}{\nu}
|\xi|^{2(1-\sigma)
}
}
}{ \nu |\xi|^{2 \sigma}} \right| \\
& \le \frac{
e^{
-\frac{t}{\nu}
|\xi|^{2(1-\sigma)}
}
|\xi|^{k}
\chi_{L}
}{ \nu |\xi|^{2 \sigma}}
\left(
\left|
 \frac{
e^{t \lambda_{+}+\frac{t}{\nu}
|\xi|^{2(1-\sigma)} }
-1}{
\sqrt{
1-\frac{4}{\nu^{2}}
|\xi|^{2(1-2\sigma)}
}
}
\right|
+
\left|
 \frac{1}{
\sqrt{
1-\frac{4}{\nu^{2}}
|\xi|^{2(1-2\sigma)}
}
}
-1
 \right| \right) \\
& \le C e^{
-c(1+t)
|\xi|^{2(1-\sigma)}
}
|\xi|^{k}
(t |\xi|^{2(2-3\sigma)-2 \sigma} + |\xi|^{2(1-2\sigma)-2 \sigma} ) \chi_{L},
\end{split}
\end{equation*}
which is the desired estimate \eqref{eq:4.2}, and the proof is now complete.
\end{proof}
\begin{lem} \label{lem:4.2}
Let $n \ge 1$, $\sigma \in (0, \frac{1}{2})$ and $k \ge 0$. Then, there exist $C>0$ and $c>0$ such that 
\begin{equation} \label{eq:4.8}
\begin{split}
& 
|\xi|^{k}
\left|
\partial_{t} 
\left(
\J_{1L}(t,\xi)
-
e^{
-\frac{t}{\nu}
|\xi|^{2(1-\sigma)}
}
\right)
\chi_{L} \right| 
\le C e^{
-c(1+t)
|\xi|^{2(1-\sigma)}
}
|\xi|^{k} 
(t |\xi|^{2(3-4\sigma)} + |\xi|^{2(2-3\sigma)}) \chi_{L},
\end{split}
\end{equation}
\begin{equation}
\begin{split}
& 
|\xi|^{k}
\left|
\partial_{t} 
\left(
\J_{2L}(t,\xi)
-
\frac{
e^{
-\frac{t}{\nu}
|\xi|^{2(1-\sigma)
}
}
}{ \nu |\xi|^{2 \sigma}}\chi_{L} \right) \right| \\
& \le C e^{
-c(1+t)
|\xi|^{2(1-\sigma)}
}
|\xi|^{k} 
(t |\xi|^{2(3-5\sigma)} + |\xi|^{4(1-2\sigma)}) \chi_{L},
\label{eq:4.9}
\end{split}
\end{equation}
where $\J_{1L}(t,\xi)$ and $\J_{2L}(t,\xi)$ are defined by \eqref{eq:2.6}.
\end{lem}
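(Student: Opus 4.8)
The plan is to reduce everything to Lemma~\ref{lem:4.1} together with the pointwise bound \eqref{eq:4.4} on $\lambda_{+}+\frac{1}{\nu}|\xi|^{2(1-\sigma)}$, exploiting the fact that in time these multipliers are almost multiplication operators. Since $\chi_{L}$ is independent of $t$ and both $\J_{1L}$ and $\J_{2L}$ depend on time only through the factor $e^{\lambda_{+}t}$, we have the algebraic identities $\partial_{t}\J_{1L}=\lambda_{+}\J_{1L}$ and $\partial_{t}\J_{2L}=\lambda_{+}\J_{2L}$, whereas $\partial_{t}$ applied to each approximating profile just produces the scalar factor $-\frac{1}{\nu}|\xi|^{2(1-\sigma)}$. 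The multipliers $\lambda_{+}$ and $-\frac{1}{\nu}|\xi|^{2(1-\sigma)}$ differ precisely by the quantity estimated in \eqref{eq:4.4}, which is the structural reason the time derivative does not worsen the order.

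Concretely, writing $A$ for the relevant profile (namely $A:=e^{-\frac{t}{\nu}|\xi|^{2(1-\sigma)}}\chi_{L}$ for \eqref{eq:4.8} and $A:=\frac{e^{-\frac{t}{\nu}|\xi|^{2(1-\sigma)}}}{\nu|\xi|^{2\sigma}}\chi_{L}$ for \eqref{eq:4.9}), I would insert $\J_{jL}=(\J_{jL}-A)+A$ into $\partial_{t}\J_{jL}=\lambda_{+}\J_{jL}$ and use $\partial_{t}A=-\frac{1}{\nu}|\xi|^{2(1-\sigma)}A$ to obtain the key identity
\[
\partial_{t}(\J_{jL}-A)=\lambda_{+}(\J_{jL}-A)+\Bigl(\lambda_{+}+\tfrac{1}{\nu}|\xi|^{2(1-\sigma)}\Bigr)A .
\]
The leading contribution $\lambda_{+}A$ is exactly cancelled against $\partial_{t}A$ up to the error $\bigl(\lambda_{+}+\frac{1}{\nu}|\xi|^{2(1-\sigma)}\bigr)A$, which is small by \eqref{eq:4.4}. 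I would then estimate the two terms separately. For the first term I use $|\lambda_{+}|\le C|\xi|^{2(1-\sigma)}$ (immediate from \eqref{eq:4.4}, since for $\sigma<\frac12$ one has $2(2-3\sigma)>2(1-\sigma)$) and multiply the bounds \eqref{eq:4.1}, \eqref{eq:4.2} by this factor; adding $2(1-\sigma)$ to each exponent there yields exactly $2(3-4\sigma),\,2(2-3\sigma)$ for $j=1$ and $2(3-5\sigma),\,4(1-2\sigma)$ for $j=2$. For the second term I bound the commutator factor by $C|\xi|^{2(2-3\sigma)}$ via \eqref{eq:4.4} and multiply by the explicit $A$, obtaining $C|\xi|^{2(2-3\sigma)}e^{-\frac{t}{\nu}|\xi|^{2(1-\sigma)}}\chi_{L}$ for $j=1$ and $C|\xi|^{4(1-2\sigma)}e^{-\frac{t}{\nu}|\xi|^{2(1-\sigma)}}\chi_{L}$ for $j=2$, each dominated by the $t$-independent term already present. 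Since $|\xi|$ is bounded on $\supp\chi_{L}$, the factors $e^{-\frac{t}{\nu}|\xi|^{2(1-\sigma)}}$ and $e^{-c(1+t)|\xi|^{2(1-\sigma)}}$ are comparable after shrinking $c$, and collecting the contributions and multiplying by $|\xi|^{k}$ gives \eqref{eq:4.8} and \eqref{eq:4.9}.

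There is no new obstacle beyond Lemma~\ref{lem:4.1}: no derivative ever falls on the cut-off $\chi_{L}$, and the time dependence enters only through $e^{\lambda_{+}t}$, so $\partial_{t}$ becomes multiplication by $\lambda_{+}$. The one point requiring care is that the commutator factor $\lambda_{+}+\frac{1}{\nu}|\xi|^{2(1-\sigma)}$ must be controlled by the sharp bound \eqref{eq:4.4}, which is of order $|\xi|^{2(2-3\sigma)}$, rather than by the crude estimate $|\lambda_{+}|+\frac{1}{\nu}|\xi|^{2(1-\sigma)}\sim|\xi|^{2(1-\sigma)}$; only the sharp version is small enough to keep the second term inside the claimed profile, so the remaining work is the routine verification that the exponents combine as stated.
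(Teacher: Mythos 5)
Your proof is correct, and it is organized along a genuinely different route than the paper's. The paper differentiates the multipliers explicitly --- $\partial_{t}\J_{1L}=\frac{-\lambda_{+}\lambda_{-}e^{\lambda_{+}t}}{\lambda_{+}-\lambda_{-}}\chi_{L}$ with $\lambda_{+}\lambda_{-}=|\xi|^{2}$, and $\partial_{t}\J_{2L}=\frac{\lambda_{+}e^{\lambda_{+}t}}{\lambda_{+}-\lambda_{-}}\chi_{L}$ --- and then repeats the decomposition of Lemma \ref{lem:4.1} at the level of the new coefficients, which requires two fresh algebraic estimates: \eqref{eq:4.10} for $\frac{\lambda_{+}\lambda_{-}}{\lambda_{+}-\lambda_{-}}-\frac{1}{\nu}|\xi|^{2(1-\sigma)}$, and \eqref{eq:4.11}--\eqref{eq:4.12} for $\frac{\lambda_{+}}{\lambda_{+}-\lambda_{-}}+\frac{1}{\nu^{2}}|\xi|^{2(1-2\sigma)}$, combined with the reused bound \eqref{eq:4.5}. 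You instead use the identity $\partial_{t}(\J_{jL}-A)=\lambda_{+}(\J_{jL}-A)+\bigl(\lambda_{+}+\frac{1}{\nu}|\xi|^{2(1-\sigma)}\bigr)A$, which turns Lemma \ref{lem:4.2} into a direct corollary of Lemma \ref{lem:4.1} (multiplied by $|\lambda_{+}|\le C|\xi|^{2(1-\sigma)}$, valid on $\supp\chi_{L}$ since $2(2-3\sigma)>2(1-\sigma)$ for $\sigma<\frac{1}{2}$) plus the single already-established bound \eqref{eq:4.4}. Your exponent arithmetic is right in all four instances ($2(2-3\sigma)+2(1-\sigma)=2(3-4\sigma)$, $2(1-2\sigma)+2(1-\sigma)=2(2-3\sigma)$ for $j=1$; the shifted versions give $2(3-5\sigma)$ and $4(1-2\sigma)$ for $j=2$), the commutator contributions $|\xi|^{2(2-3\sigma)}$ and $|\xi|^{2(2-3\sigma)-2\sigma}=|\xi|^{4(1-2\sigma)}$ do coincide with the $t$-independent terms in \eqref{eq:4.8} and \eqref{eq:4.9}, and the absorption of $e^{-\frac{t}{\nu}|\xi|^{2(1-\sigma)}}$ into $e^{-c(1+t)|\xi|^{2(1-\sigma)}}$ on $\supp\chi_{L}$ is legitimate. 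What each approach buys: yours is more modular --- no new pointwise coefficient computations are needed, and the same identity would mechanically handle higher $t$-derivatives --- whereas the paper's direct computation yields the explicit identities \eqref{eq:4.10}--\eqref{eq:4.12}, making visible exactly which algebraic cancellation produces each power of $|\xi|$. You also correctly isolate the one delicate point: the crude bound $\bigl|\lambda_{+}+\frac{1}{\nu}|\xi|^{2(1-\sigma)}\bigr|\le C|\xi|^{2(1-\sigma)}$ would not suffice, and the sharp order $|\xi|^{2(2-3\sigma)}$ from \eqref{eq:4.4} is what keeps the commutator term inside the claimed profile.
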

\begin{proof}
At first, we show \eqref{eq:4.8}.
From the direct calculation,
it is easy to see that
\begin{equation*} 
\begin{split}
\left(
\frac{\lambda_{+} \lambda_{-} }{\lambda_{+} - \lambda_{-}} - \frac{1}{\nu}|\xi|^{2(1-\sigma)}
\right)\chi_{L}
& = \frac{
|\xi|^{2(1-\sigma)}
}{\nu}
\left(
\frac{ 
1
}{ 
\sqrt{
1-\frac{4}{\nu^{2}}
|\xi|^{2(1-2\sigma)}
}
}
-1
\right)\chi_{L} \\
& = 
\frac{ 
-4
|\xi|^{2(2-3\sigma)}
}{ \nu^{3}\sqrt{
1-\frac{4}{\nu^{2}}
|\xi|^{2(1-2\sigma)}
} \left(1+
\sqrt{
1-\frac{4}{\nu^{2}}
|\xi|^{2(1-2\sigma)}
}
\right)
}\chi_{L}
\end{split}
\end{equation*}
and 
\begin{equation} \label{eq:4.10} 
\begin{split}
\left|
\left(
\frac{\lambda_{+} \lambda_{-}}{\lambda_{+} - \lambda_{-}} -\frac{1}{\nu}|\xi|^{2(1-\sigma)}
\right)\chi_{L} \right|
& \le C|\xi|^{2(2-3\sigma)}\chi_{L}
\end{split}
\end{equation}
for small $|\xi|$.
Combining $\lambda_{+} \lambda_{-}= |\xi|^{2}$, \eqref{eq:4.5} and \eqref{eq:4.10}, 
we arrive at  
\begin{align*}
& |\xi|^{k}
\left|
\partial_{t} \left(
\J_{1L}(t,\xi)
-
e^{
-\frac{t}{\nu}
|\xi|^{2(1-\sigma)}
}
\right)
\chi_{L} \right| \\
& \le 
|\xi|^{k} e^{
-\frac{t}{\nu}
|\xi|^{2(1-\sigma)}
} \chi_{L}
\left(
\left|
\frac{-\lambda_{+}\lambda_{-}
(e^{
\lambda_{+}t +\frac{t}{\nu}
|\xi|^{2(1-\sigma)}
} -1)}
{\lambda_{+} -\lambda_{-}} \right| 
+
\left|
\left(
\frac{\lambda_{+}\lambda_{-}}
{\lambda_{+} -\lambda_{-}}
-\frac{1}{\nu}|\xi|^{2(1-\sigma)} \right) \right| 
\right)
\\
& \le C e^{
-c(1+t)
|\xi|^{2(1-\sigma)}
}
|\xi|^{k} 
(t |\xi|^{2(3-4\sigma)} + |\xi|^{2(2-3\sigma)}) \chi_{L},
\end{align*}
which is the desired estimate \eqref{eq:4.8}.
Next, we show \eqref{eq:4.9}.
Again, the direct calculation gives
\begin{equation} \label{eq:4.11}
\begin{split}
& \left|
\frac{\lambda_{+}}{\lambda_{+} - \lambda_{-}}
+ \frac{1}{\nu^{2}} |\xi|^{2(1-2 \sigma)}
\right| \\
& = 
\frac{1}{\nu^{2}} |\xi|^{2(1-2 \sigma)}
\left(
\frac{-2}{
\left(1+ 
\sqrt{
1-\frac{4}{\nu^{2}}
|\xi|^{2(1-2\sigma)}
}
\right)
\sqrt{
1-\frac{4}{\nu^{2}}
|\xi|^{2(1-2\sigma)}
}
}
+1
\right)
\end{split}
\end{equation}
and
\begin{equation} \label{eq:4.12}
\frac{-2}{
\left(1+ 
\sqrt{
1-\frac{4}{\nu^{2}}
|\xi|^{2(1-2\sigma)}
}
\right)
\sqrt{
1-\frac{4}{\nu^{2}}
|\xi|^{2(1-2\sigma)}
}
}
+1
=O(|\xi|^{2(1-2\sigma)})
\end{equation}
as $\vert\xi\vert \to 0$.
Thus by \eqref{eq:4.5}, \eqref{eq:4.11} and \eqref{eq:4.12}, 
we arrive at the estimate 
%
%
\begin{align*}
& |\xi|^{k}
\left|
\partial_{t} \left(
\J_{2L}(t,\xi)
-
\frac{
e^{
-\frac{t}{\nu}
|\xi|^{2(1-\sigma)}
}
}
{\nu |\xi|^{2 \sigma}}
\right)
\chi_{L} \right| \\
& \le 
|\xi|^{k} e^{
-\frac{t}{\nu}
|\xi|^{2(1-\sigma)}
} \chi_{L}
\left(
\left|
\frac{\lambda_{+}
(e^{
\lambda_{+}t +\frac{t}{\nu}
|\xi|^{2(1-\sigma)}
} -1)}
{\lambda_{+} -\lambda_{-}} \right| 
+
\left|
\left(
\frac{\lambda_{+}}
{\lambda_{+} -\lambda_{-}}
+
\frac{1}{\nu^{2}}
|\xi|^{2(1-2\sigma)}
 \right) \right| 
\right)
\\
& \le C e^{
-c(1+t)
|\xi|^{2(1-\sigma)}
}
|\xi|^{k} 
(t |\xi|^{2(3-5\sigma)} + |\xi|^{4(1-2\sigma)}) \chi_{L},
\end{align*}
%
which is the desired estimate \eqref{eq:4.9}, and the proof is now complete.
\end{proof}

\subsection{The case for $\sigma \in (1/2,1]$}
For the case $\sigma \in (\frac{1}{2}, 1]$, we claim that 
the approximation of $\K_{jL}(t,\xi)$ is given by not only the parabolic kernel 
but also the hybrid of the parabolic kernel and hyperbolic oscillations,
$\cos(t|\xi|)$ and $\frac{\sin(t |\xi|)}{|\xi|}$. 
This point of view is shared by \cite{I}, \cite{ITY}, \cite{P} and \cite{S}  
for $\sigma =1$.
\begin{lem} \label{lem:4.3}
Let $n \ge 1$, $\sigma \in (\frac{1}{2},1]$ and $k \ge 0$.
Then, there exist $C>0$ and $c>0$ such that 
\begin{align} \label{eq:4.13}
&  |\xi|^{k} 
 \left| 
\K_{1L}(t,\xi)
-e^{-\frac{\nu t |\xi|^{2 \sigma}}{2}} \cos (t |\xi|) \chi_{L}
\right| \le C  e^{-c (1+t) |\xi|^{2 \sigma}} t |\xi|^{k+4 \sigma-1}, \\
&  |\xi|^{k}  \left|
\K_{3L}(t,\xi)
- e^{-\frac{\nu t |\xi|^{2 \sigma}}{2}}
\frac{\sin (t |\xi|)}{|\xi|} \chi_{L}
\right| \le C e^{-c (1+t) |\xi|^{2 \sigma}} |\xi|^{k} (t|\xi|^{4 \sigma -2} + |\xi|^{4 \sigma-3}), \label{eq:4.14}
\end{align}
where $\K_{1L}(\xi)$ and $\K_{3L}(\xi)$ are defined by \eqref{eq:2.11}.
\end{lem}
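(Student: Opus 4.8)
The plan is to isolate the common damping factor $e^{-\frac{\nu t|\xi|^{2\sigma}}{2}}$ present in both the exact multiplier and its proposed approximant, and then to quantify how far $\phi_{\sigma}$ deviates from $1$ on $\supp\chi_{L}$. Since $\sigma>\frac{1}{2}$ we have $4\sigma-2>0$, and the choice of $\rho$ in \eqref{eq:2.2} guarantees that $\frac{\nu^{2}|\xi|^{4\sigma-2}}{4}=\bigl(\frac{\nu}{2}|\xi|^{2\sigma-1}\bigr)^{2}$ stays strictly below $1$ on $\supp\chi_{L}$, so $\phi_{\sigma}$ lies between two positive constants there. Rationalizing, $1-\phi_{\sigma}=\frac{\nu^{2}|\xi|^{4\sigma-2}/4}{1+\phi_{\sigma}}$, which yields the two basic bounds $|\phi_{\sigma}-1|\le C|\xi|^{4\sigma-2}$ and $|\phi_{\sigma}^{-1}-1|=|\phi_{\sigma}^{-1}|\,|1-\phi_{\sigma}|\le C|\xi|^{4\sigma-2}$ for $\xi\in\supp\chi_{L}$. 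These are the only structural facts about $\phi_{\sigma}$ I will need.

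For \eqref{eq:4.13} I would factor out the damping and write $\K_{1L}-e^{-\frac{\nu t|\xi|^{2\sigma}}{2}}\cos(t|\xi|)\chi_{L}=e^{-\frac{\nu t|\xi|^{2\sigma}}{2}}\bigl(\cos(t|\xi|\phi_{\sigma})-\cos(t|\xi|)\bigr)\chi_{L}$. The Lipschitz bound $|\cos a-\cos b|\le|a-b|$ then gives $|\cos(t|\xi|\phi_{\sigma})-\cos(t|\xi|)|\le t|\xi|\,|\phi_{\sigma}-1|\le Ct|\xi|^{4\sigma-1}$. Multiplying by $|\xi|^{k}$ produces the factor $t|\xi|^{k+4\sigma-1}$, and the passage from $e^{-\frac{\nu}{2}t|\xi|^{2\sigma}}$ to $e^{-c(1+t)|\xi|^{2\sigma}}$ is harmless because on $\supp\chi_{L}$ the factor $e^{\frac{\nu}{2}|\xi|^{2\sigma}}$ that promotes $t$ to $1+t$ in the exponent is bounded. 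This gives \eqref{eq:4.13}.

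For \eqref{eq:4.14} the same factorization reduces matters to estimating $\frac{1}{|\xi|}\bigl(\frac{\sin(t|\xi|\phi_{\sigma})}{\phi_{\sigma}}-\sin(t|\xi|)\bigr)$. I would split this as $\frac{1}{|\xi|\phi_{\sigma}}\bigl(\sin(t|\xi|\phi_{\sigma})-\sin(t|\xi|)\bigr)+\frac{\sin(t|\xi|)}{|\xi|}\bigl(\phi_{\sigma}^{-1}-1\bigr)$. The first term is controlled, using $|\sin a-\sin b|\le|a-b|$ and the lower bound on $\phi_{\sigma}$, by $\frac{1}{|\xi|}\cdot t|\xi|\,|\phi_{\sigma}-1|\le Ct|\xi|^{4\sigma-2}$; the second, bounding $|\sin(t|\xi|)|\le1$, by $\frac{1}{|\xi|}|\phi_{\sigma}^{-1}-1|\le C|\xi|^{4\sigma-3}$. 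Together with the same absorption of the exponential, these produce the two-term bound $Ce^{-c(1+t)|\xi|^{2\sigma}}|\xi|^{k}(t|\xi|^{4\sigma-2}+|\xi|^{4\sigma-3})$, which is \eqref{eq:4.14}.

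The main delicacy, and the step I expect to require the most care, is the $\K_{3L}$ estimate: the $1/|\xi|$ prefactor is singular at the origin, and the two pieces of the splitting degrade the power of $|\xi|$ differently (one inherits the $t|\xi|$ gain from the oscillation, the other does not), so one must keep the positive lower bound on $\phi_{\sigma}$ explicit and track the two resulting exponents $4\sigma-2$ and $4\sigma-3$ separately rather than merging them. Everything else is a routine application of the Lipschitz bounds for $\sin$ and $\cos$ together with the perturbation estimate for $\phi_{\sigma}$.
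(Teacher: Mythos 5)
Your proposal is correct and follows essentially the same route as the paper: the same rationalization bound $|\phi_{\sigma}-1|\le C|\xi|^{4\sigma-2}$, the same absorption of $e^{-\frac{\nu t|\xi|^{2\sigma}}{2}}$ into $e^{-c(1+t)|\xi|^{2\sigma}}$ on $\supp\chi_{L}$, and for \eqref{eq:4.14} the identical two-term splitting into an oscillation difference and a $(\phi_{\sigma}^{-1}-1)$ term, yielding the exponents $4\sigma-2$ and $4\sigma-3$ separately. The only cosmetic difference is that you invoke the Lipschitz bounds $|\cos a-\cos b|\le|a-b|$ and $|\sin a-\sin b|\le|a-b|$ where the paper phrases the same step via the mean value theorem.
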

\begin{proof}[Proof of Lemma \ref{lem:4.3}]
At first, we show \eqref{eq:4.13}.
We note that
\begin{align} \label{eq:4.15}
|\phi_{\sigma}(\xi)-1| \le C |\xi|^{4 \sigma-2}, \\
e^{-\frac{\nu t |\xi|^{2 \sigma}}{2}} \le C e^{-c(1+t) |\xi|^{2 \sigma}} \label{eq:4.16}
\end{align}
for $\xi \in \supp \chi_{L}$.
Indeed, 
the direct calculation gives 
\begin{align*}
\phi_{\sigma}(\xi) -1 = -\frac{\nu^{2} |\xi|^{4 \sigma-2}}{4 \left(1+ \phi_{\sigma}(\xi) \right)},
\end{align*}
which shows \eqref{eq:4.15} for small $|\xi|$, 
where $\phi_{\sigma}$ is defined by \eqref{eq:2.7}.
Now we use the mean value theorem to observe that
there exists $\theta \in (0,1)$ such that
\begin{equation*}
\begin{split}
\cos (t  |\xi|\phi_{\sigma}(\xi))-\cos (t |\xi|)
= -t  |\xi|( \phi_{\sigma}(\xi)-1)\ \sin  (t  |\xi| (\theta \phi_{\sigma}(\xi) +(1- \theta ))),
\end{split}
\end{equation*}
and so \eqref{eq:4.15} and \eqref{eq:4.16} give 
\begin{equation} \label{eq:4.17} 
\begin{split}
 |\xi|^{k}  e^{-\frac{\nu t |\xi|^{2}}{2}}
|\cos (t  |\xi|\phi_{\sigma}(\xi))-\cos (t |\xi|) | \chi_{L}
\le C e^{-(1+t)|\xi|^{2 \sigma}}t  |\xi|^{k+4 \sigma -1} \chi_{L},
\end{split}
\end{equation}
which implies the desired estimate \eqref{eq:4.13}.
Next, we prove the estimate \eqref{eq:4.14}.
Here we apply the mean value theorem again to deduce  
\begin{equation*}
\begin{split}
\sin (t  |\xi|\phi_{\sigma}(\xi))-\sin (t |\xi|) 
= t  |\xi|( \phi_{\sigma}(\xi)-1)\ \cos  (t  |\xi| (\theta \phi_{\sigma}(\xi) +(1- \theta )))
\end{split}
\end{equation*}
for some $\theta \in (0,1)$, and so
\begin{equation} \label{eq:4.18} 
\begin{split}
|\sin (t  |\xi|\phi_{\sigma}(\xi))-\sin (t |\xi|) | \chi_{L}\le C t  |\xi|^{4 \sigma -1} \chi_{L},
\end{split}
\end{equation}
by \eqref{eq:4.15}.
Therefore, the combination of \eqref{eq:4.15}, \eqref{eq:4.16} and \eqref{eq:4.18} yields
\begin{equation*}
\begin{split}
& |\xi|^{k}  \left| 
\K_{3L}(t,\xi)
-
e^{-\frac{\nu t |\xi|^{2 \sigma}}{2}}
\frac{\sin (t |\xi|)}{|\xi|} \chi_{L}
\right| \\
& \le C |\xi|^{k} e^{-\frac{\nu t |\xi|^{2\sigma}}{2}}
\left(
\left| 
\frac{\sin (t |\xi| \phi_{\sigma}(\xi))-\sin (t |\xi|)}{|\xi|\phi_{\sigma}(\xi)}
\right|
+ 
\left| 
\sin (t |\xi|)\left(
\frac{1}{|\xi|}
-
\frac{1}{|\xi| \phi_{\sigma}(\xi)}
\right)
\right| 
\right) \chi_{L}
\\
& \le C   |\xi|^{k}  e^{-\frac{\nu t |\xi|^{2 \sigma}}{2}} 
\left(
\left| 
\frac{ t |\xi|^{4 \sigma-1}}{|\xi| \phi_{\sigma}(\xi)}
\right|
+
\left| 
\frac{
 (\phi_{\sigma}(\xi)- 1)
}{|\xi| \phi_{\sigma}(\xi)}
\right| 
\right) \chi_{L}
\\
& \le C  e^{-c(1+t) |\xi|^{2 \sigma}} |\xi|^{k}(t|\xi|^{4 \sigma-2} + |\xi|^{4 \sigma -3}) \chi_{L},
\end{split}
\end{equation*}
which is the desired estimate \eqref{eq:4.14}, and the lemma now follows.
\end{proof}
The following lemma states that 
the approximation functions $\partial_{t} \K_{jL}(t,\xi)$ are not simply given by the $t$ derivative of 
the approximation functions $\K_{jL}(t,\xi)$.
\begin{lem} \label{lem:4.4}
Let $n \ge 1$, $\sigma \in (\frac{1}{2},1]$ and $k \ge 0$.
Then, there exist $C>0$ and $c>0$ such that 
\begin{align} \label{eq:4.19}
&  |\xi|^{k} 
 \left| 
\partial_{t}
\K_{1L}(t,\xi)
+e^{-\frac{\nu t |\xi|^{2 \sigma}}{2}} |\xi| \sin (t |\xi|) \chi_{L}
\right| \le Ce^{-c(1+ t) |\xi|^{2 \sigma}}  |\xi|^{k}
(t|\xi|^{4 \sigma}+|\xi|^{2 \sigma} )
\chi_{L}, \\
&  |\xi|^{k}  \left|
\partial_{t} 
\K_{3L}(t,\xi)
- e^{-\frac{\nu t |\xi|^{2 \sigma}}{2}}\cos (t |\xi|) \chi_{L}
\right| \le C e^{-c (1+t) |\xi|^{2 \sigma}} |\xi|^{k} (t|\xi|^{4 \sigma-1} + |\xi|^{2 \sigma-1}), \label{eq:4.20}
\end{align}
where $\K_{1L}(\xi)$ and $\K_{3L}(\xi)$ are defined by \eqref{eq:2.11}.
\end{lem}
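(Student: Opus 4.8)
The plan is to prove the two estimates in Lemma \ref{lem:4.4} in direct parallel to the proof of Lemma \ref{lem:4.3}, but now tracking the extra structure produced by differentiating in $t$. The key observation, which explains the statement of the lemma, is that differentiating $\K_{1L}(t,\xi) = e^{-\frac{\nu t |\xi|^{2\sigma}}{2}} \cos(t|\xi|\phi_\sigma)\chi_L$ produces two terms: a term from the exponential factor, $-\frac{\nu |\xi|^{2\sigma}}{2} e^{-\frac{\nu t|\xi|^{2\sigma}}{2}}\cos(t|\xi|\phi_\sigma)$, and the main oscillatory term $-|\xi|\phi_\sigma\, e^{-\frac{\nu t|\xi|^{2\sigma}}{2}}\sin(t|\xi|\phi_\sigma)$. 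The first of these is genuinely lower order: it carries an extra factor $|\xi|^{2\sigma}$ and on $\supp\chi_L$ is controlled by $C e^{-c(1+t)|\xi|^{2\sigma}}|\xi|^{k+2\sigma}$, which feeds into the $|\xi|^{2\sigma}$ term on the right of \eqref{eq:4.19}. This is exactly why the asymptotic profile of $\partial_t \K_{1L}$ is $-e^{-\frac{\nu t|\xi|^{2\sigma}}{2}}|\xi|\sin(t|\xi|)$ rather than the naive $t$-derivative of the profile for $\K_{1L}$.

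For \eqref{eq:4.19}, first I would write out $\partial_t\K_{1L}$ explicitly and split off the exponential-derivative term, bounding it by $Ce^{-c(1+t)|\xi|^{2\sigma}}|\xi|^{k+2\sigma}\chi_L$ using \eqref{eq:4.16}. Then the remaining comparison reduces to estimating
\begin{equation*}
|\xi|^k e^{-\frac{\nu t|\xi|^{2\sigma}}{2}}\bigl| |\xi|\phi_\sigma\sin(t|\xi|\phi_\sigma) - |\xi|\sin(t|\xi|)\bigr|\chi_L,
\end{equation*}
which I would split as $|\xi|\phi_\sigma(\sin(t|\xi|\phi_\sigma)-\sin(t|\xi|)) + |\xi|(\phi_\sigma-1)\sin(t|\xi|)$. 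The first piece is handled by the mean-value bound \eqref{eq:4.18}, giving a factor $t|\xi|^{4\sigma-1}$ and hence $t|\xi|^{4\sigma}$ after multiplying by $|\xi|$; the second is handled directly by \eqref{eq:4.15}, giving $|\xi|\cdot|\xi|^{4\sigma-2}=|\xi|^{4\sigma-1}$, which is dominated by $|\xi|^{2\sigma}$ for small $|\xi|$ since $\sigma>\tfrac12$. Collecting these and absorbing the decay of $e^{-\frac{\nu t|\xi|^{2\sigma}}{2}}$ via \eqref{eq:4.16} yields the claimed bound $Ce^{-c(1+t)|\xi|^{2\sigma}}|\xi|^k(t|\xi|^{4\sigma}+|\xi|^{2\sigma})$.

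For \eqref{eq:4.20} the computation is analogous but cleaner, since $\partial_t\K_{3L}$ has main term $e^{-\frac{\nu t|\xi|^{2\sigma}}{2}}\frac{|\xi|\phi_\sigma\cos(t|\xi|\phi_\sigma)}{|\xi|\phi_\sigma}=e^{-\frac{\nu t|\xi|^{2\sigma}}{2}}\cos(t|\xi|\phi_\sigma)$, so the $\phi_\sigma$ factor cancels and no extra $|\xi|$ weight appears from the oscillatory part. The exponential-derivative term here contributes $\frac{\nu|\xi|^{2\sigma}}{2}\cdot\frac{\sin(t|\xi|\phi_\sigma)}{|\xi|\phi_\sigma}$, i.e.\ order $|\xi|^{2\sigma-1}$ on $\supp\chi_L$, matching the $|\xi|^{2\sigma-1}$ term on the right. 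The difference of cosines is then controlled by the same mean-value argument as in \eqref{eq:4.17}, producing the $t|\xi|^{4\sigma-1}$ term. I expect the main obstacle to be purely bookkeeping: keeping the exponent arithmetic straight across the two terms (the exponential-derivative remainder versus the oscillation-comparison remainder) so that the stated powers $t|\xi|^{4\sigma}+|\xi|^{2\sigma}$ and $t|\xi|^{4\sigma-1}+|\xi|^{2\sigma-1}$ come out exactly, and in verifying that each remainder power is genuinely subordinate to the relevant main term for $\sigma\in(\tfrac12,1]$ on the low-frequency region, which is where the condition $\sigma>\tfrac12$ is essential.
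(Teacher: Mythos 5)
Your proposal is correct and takes essentially the same route as the paper's proof: for \eqref{eq:4.19} you use the identical decomposition of $\partial_t \K_{1L}$ into the exponential-derivative term (bounded by $C|\xi|^{2\sigma}$), the sine difference controlled by the mean-value bound \eqref{eq:4.18} (giving $t|\xi|^{4\sigma}$ after the extra factor $|\xi|$), and the $(\phi_\sigma-1)\sin(t|\xi|)$ term via \eqref{eq:4.15} (giving $|\xi|^{4\sigma-1}$, absorbed into $|\xi|^{2\sigma}$ since $4\sigma-1>2\sigma$ on $\supp\chi_L$). For \eqref{eq:4.20} the paper merely packages your two terms as the identity $\partial_t \K_{3L}=\K_{1L}-|\xi|^{2(\sigma-1)}\K_{2L}$ and cites \eqref{eq:4.13} for the cosine comparison and \eqref{eq:3.3} for the $\K_{2L}$ piece, which is exactly your direct computation producing the bounds $t|\xi|^{4\sigma-1}$ and $|\xi|^{2\sigma-1}$.
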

\begin{proof}[Proof of Lemma \ref{lem:4.4}]
We first prove \eqref{eq:4.18}.
By direct calculation, 
we have 
\begin{equation*}
\partial_{t} \K_{1L}(t,\xi)
= 
-e^{-\frac{\nu t |\xi|^{2 \sigma}}{2}} 
\left\{
\frac{\nu|\xi|^{2 \sigma}}{2}\cos (t |\xi| \phi_{\sigma}) 
+ |\xi| \phi_{\sigma} \sin (t |\xi| \phi_{\sigma}) 
\right\}
\chi_{L}.
\end{equation*}
Then we use \eqref{eq:4.15} - \eqref{eq:4.18} 
and the fact that $4 \sigma -1> 2 \sigma$ for $\sigma \in (\frac{1}{2},1]$ to obtain 
\begin{equation*}
\begin{split}
& |\xi|^{k} |\partial_{t} \K_{1L}(t,\xi)+e^{-\frac{\nu t |\xi|^{2 \sigma}}{2}} |\xi| \sin (t |\xi|) \chi_{L}| \\
& \le Ce^{-\frac{\nu t |\xi|^{2 \sigma}}{2}}  |\xi|^{k}
(|\xi|^{2 \sigma}
+
|\xi| \phi_{\sigma} |\sin(t |\xi| \phi_{\sigma}) -\sin(t |\xi|)|
+ |\xi| |\sin(t|\xi|)| |\phi_{\sigma}-1|)
\chi_{L} \\
& \le Ce^{-\frac{\nu t |\xi|^{2 \sigma}}{2}}  |\xi|^{k}
(|\xi|^{2 \sigma} + t|\xi|^{4 \sigma} + |\xi|^{4 \sigma-1})
\chi_{L} 
\le Ce^{-c(1+ t) |\xi|^{2 \sigma}}  |\xi|^{k}
(|\xi|^{2 \sigma} + t|\xi|^{4 \sigma})
\chi_{L}, 
\end{split}
\end{equation*}
which is the desired estimate \eqref{eq:4.19}.
Next, we prove \eqref{eq:4.20}.
Again, the direct computation gives
\begin{equation*}
\partial_{t} \K_{3L}(t,\xi)
= 
\K_{1L}(t,\xi)- |\xi|^{2(\sigma-1)}\K_{2L}(t,\xi)
\end{equation*}
and we find 
\begin{equation*}
\partial_{t} \K_{3L}(t,\xi)- e^{-\frac{\nu t |\xi|^{2 \sigma}}{2}}\cos (t |\xi|) \chi_{L}
= 
(\K_{1L}(t,\xi)- e^{-\frac{\nu t |\xi|^{2 \sigma}}{2}}\cos (t |\xi|) \chi_{L})-  |\xi|^{2(\sigma-1)}\K_{2L}(t,\xi).
\end{equation*}
Therefore,  
we see at once the desired estimate \eqref{eq:4.20} from 
\eqref{eq:4.13} and \eqref{eq:3.3} for $\K_{2L}$,
and the proof is now complete.
\end{proof}
\subsection{The case for $\sigma =1/2$.}
In this subsection, we deal with the case $\sigma=\displaystyle{\frac{1}{2}}$ for \eqref{eq:1.1}.
\begin{lem} \label{lem:4.5}
Let $n \ge 1$, $\ell=0,1$ and $k \ge 0$.
Then, there exist $C>0$ and $c>0$ such that
\begin{equation} \label{eq:4.22}
|\xi|^{k} 
(|\partial_{t}^{\ell} \tilde{\J}_{1}(t,\xi)| +| \partial_{t}^{\ell} \tilde{\J}_{2}(t,\xi)|) \le C e^{-ct |\xi|} |\xi|^{\ell+k},
\end{equation}
\begin{equation} \label{eq:4.23}
|\xi|^{k}( |\partial_{t}^{\ell} \tilde{\K}_{1}(t,\xi)| +| \partial_{t}^{\ell} \tilde{\K}_{2}(t,\xi)| )
\le C e^{-ct |\xi|} |\xi|^{\ell+k},
\end{equation}
\begin{equation} \label{eq:4.24}
|\xi|^{k} |\partial_{t}^{\ell} \tilde{\J}_{3}(t,\xi)| \le C t^{1-\ell} e^{-ct |\xi|} |\xi|^{k}, 
\quad |\xi|^{k} |\partial_{t}^{\ell} \tilde{\K}_{3}(t,\xi)| \le C t^{1-\ell} e^{-ct |\xi|} |\xi|^{k},
\end{equation}
where $\tilde{\J}_{j}(t,\xi)$ and $\tilde{\K}_{j}(t,\xi)$ for $j=1,2,3$ are defined by \eqref{eq:2.14} and \eqref{eq:2.15}, respectively.
\end{lem}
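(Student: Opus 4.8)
The plan is to treat the two subcases $0<\nu<2$ (the functions $\tilde{\K}_j$) and $\nu>2$ (the functions $\tilde{\J}_j$) separately, since the former involve bounded trigonometric factors while the latter involve hyperbolic factors that grow exponentially and must be weighed against the damping $e^{-\nu|\xi|t/2}$. In both subcases the strategy is identical: differentiate the explicit expressions \eqref{eq:2.14}--\eqref{eq:2.15} in $t$, bound the resulting oscillatory or hyperbolic factors, and absorb any surplus growth into a slightly smaller exponential decay rate $e^{-ct|\xi|}$ with $0<c\le \nu/2$. A pleasant structural feature, special to $\sigma=\tfrac12$, is that the sign of $\nu^2-4$ is independent of $\xi$, so no low/high frequency splitting is needed and all bounds can be obtained uniformly for every $\xi$.

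For the estimate \eqref{eq:4.23}, I would first use $|\cos|,|\sin|\le 1$ to get $|\tilde{\K}_1|\le e^{-\nu|\xi|t/2}$ and $|\tilde{\K}_2|\le C e^{-\nu|\xi|t/2}$ at once, with $c=\nu/2$. Differentiating in $t$ pulls out a factor $|\xi|$ from both the exponential and the arguments of the trigonometric functions, so $\partial_t\tilde{\K}_1$ and $\partial_t\tilde{\K}_2$ are each $|\xi|$ times a bounded combination of $\cos$ and $\sin$ against the same damping, yielding the claimed gain $|\xi|^{\ell}$. For $\tilde{\K}_3$ in \eqref{eq:4.24}, the apparent singularity $1/|\xi|$ is removed by $|\sin s|\le|s|$ with $s=t|\xi|\sqrt{4-\nu^2}/2$: this converts $\tilde{\K}_3$ into $t$ times a bounded quantity, giving the factor $t^{1}$ for $\ell=0$, while for $\ell=1$ the $t$--derivative cancels the $1/|\xi|$ directly and leaves a bounded combination of $\sin$ and $\cos$ times $e^{-\nu|\xi|t/2}$, hence the factor $t^{0}$.

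For the overdamped functions $\tilde{\J}_j$ in \eqref{eq:4.22} and \eqref{eq:4.24} the crucial observation is that, although $\cosh$ and $\sinh$ with argument $t|\xi|\sqrt{\nu^2-4}/2$ grow like $e^{t|\xi|\sqrt{\nu^2-4}/2}$, the prefactor $e^{-\nu|\xi|t/2}$ dominates because $\nu>\sqrt{\nu^2-4}$ when $\nu>2$. Quantitatively $\nu-\sqrt{\nu^2-4}=4/(\nu+\sqrt{\nu^2-4})>0$, so choosing $c=\tfrac12(\nu-\sqrt{\nu^2-4})>0$ and using $\cosh s,\ \sinh s\le e^{s}$ gives $|\tilde{\J}_1|,\ |\tilde{\J}_2|\le C e^{-ct|\xi|}$. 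Differentiating in $t$ reproduces the same damping times $|\xi|$ times a bounded linear combination of $\cosh$ and $\sinh$, supplying the gain $|\xi|^{\ell}$. For $\tilde{\J}_3$ I would use $\sinh s\le s\,e^{s}$ to cancel the $1/|\xi|$ and obtain the factor $t^{1}$ when $\ell=0$, whereas for $\ell=1$ the derivative again removes the $1/|\xi|$ directly and leaves the factor $t^{0}$.

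The hard part will be the $\tilde{\J}$ (overdamped) case: one must verify carefully that the exponential damping dominates the hyperbolic growth uniformly in $t\ge 0$ and $\xi$, and this rests entirely on the positive gap $\nu-\sqrt{\nu^2-4}>0$. Once that gap is made explicit, all four estimates follow from the elementary bounds $\cosh s,\ \sinh s\le e^{s}$, $\sinh s\le s\,e^{s}$ and $|\sin s|\le|s|$, applied for all frequencies since the multipliers here carry no cut-off.
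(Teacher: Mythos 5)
Your proposal is correct and follows essentially the same route as the paper, which proves only the representative case $\tilde{\K}_{3}$ with $\ell=0$ via the bound $|\sin y/y|\le 1$ (equivalent to your $|\sin s|\le|s|$) and declares the remaining estimates "similar". Your explicit treatment of the overdamped case, with the gap $c=\tfrac12\left(\nu-\sqrt{\nu^{2}-4}\right)>0$ and the bounds $\cosh s,\,\sinh s\le e^{s}$, $\sinh s\le s\,e^{s}$, is exactly the intended completion of those omitted cases.
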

\begin{lem} \label{lem:4.6}
Let $n \ge 1$, $\ell=1$ and $k \ge 0$.
Then, there exist $C>0$ and $c>0$ such that
\begin{equation} \label{eq:4.25}
|\xi|^{k} |\partial_{t} \E_{1}(t,\xi)| \le C e^{-ct |\xi|} |\xi|^{k+1},
\end{equation}
\begin{equation} \label{eq:4.26}
|\xi|^{k} |\partial_{t} \E_{2}(t,\xi)| \le C e^{-ct |\xi|} |\xi|^{k+1}  (1+t |\xi|),
\end{equation}
\begin{equation} \label{eq:4.27}
|\xi|^{k} |\partial_{t} \E_{3}(t,\xi)| \le Ce^{-ct |\xi|} |\xi|^{k}  (1+t |\xi|) ,
\end{equation}
where $\E_{j}(t,\xi)$ for $j=1,2,3$ are defined by \eqref{eq:2.18}.
\end{lem}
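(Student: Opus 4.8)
The plan is to differentiate each $\E_{j}$ explicitly from the definitions \eqref{eq:2.18} and then estimate the resulting expressions directly; the statement is elementary because the multiplier at $\nu=2$ is a pure product of $e^{-t|\xi|}$ with a polynomial in $t|\xi|$, so no deep idea is required. Applying $\partial_{t}$ gives
\begin{equation*}
\partial_{t} \E_{1}(t,\xi) = -|\xi| e^{-t|\xi|}, \qquad
\partial_{t} \E_{2}(t,\xi) = |\xi| e^{-t|\xi|}(1 - t|\xi|), \qquad
\partial_{t} \E_{3}(t,\xi) = e^{-t|\xi|}(1 - t|\xi|),
\end{equation*}
where for the last two formulas I use the product rule together with the identity $\partial_{t}(t e^{-t|\xi|}) = e^{-t|\xi|}(1 - t|\xi|)$.

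For \eqref{eq:4.25}, multiplying by $|\xi|^{k}$ gives exactly $|\xi|^{k}|\partial_{t}\E_{1}(t,\xi)| = |\xi|^{k+1} e^{-t|\xi|}$, which is already of the claimed form with $C = c = 1$. For \eqref{eq:4.26} and \eqref{eq:4.27}, the only additional ingredient is the elementary bound $\bigl| 1 - t|\xi| \bigr| \le 1 + t|\xi|$, valid since $t|\xi| \ge 0$. Applying it to the signed factor yields
\begin{equation*}
|\xi|^{k}|\partial_{t}\E_{2}(t,\xi)| \le |\xi|^{k+1} e^{-t|\xi|}(1 + t|\xi|), \qquad
|\xi|^{k}|\partial_{t}\E_{3}(t,\xi)| \le |\xi|^{k} e^{-t|\xi|}(1 + t|\xi|).
\end{equation*}

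Since every exponential appearing is $e^{-t|\xi|}$ while the right-hand sides tolerate $e^{-ct|\xi|}$ for any $c \in (0,1]$, I may simply take $c = 1$ in all three estimates; the polynomial weight $1 + t|\xi|$ is carried through verbatim, so there is no need to absorb any power of $t$ into the exponential. This is precisely where the structure differs from the $\nu \neq 2$ cases treated in Lemma \ref{lem:4.5}: the double characteristic root at $\nu = 2$ produces the extra factor $t$, and here the corresponding weight is retained explicitly on the right-hand side rather than being traded for a slower exponential decay rate.

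There is no genuine obstacle in this lemma; it is a direct computation. The only point deserving care is controlling the sign of the factor $(1 - t|\xi|)$ by the triangle inequality rather than leaving it unestimated, so that the bound is phrased in terms of the positive weight $1 + t|\xi|$ appearing in \eqref{eq:4.26} and \eqref{eq:4.27}. Beyond that, each estimate follows immediately from the explicit derivatives above.
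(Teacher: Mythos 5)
Your proof is correct and follows essentially the same route as the paper: the authors prove Lemmas \ref{lem:4.5} and \ref{lem:4.6} together by the same elementary pointwise computation, displaying only one representative case (the bound for $\tilde{\K}_{3}$ with $\ell=0$) and leaving the rest, including all of Lemma \ref{lem:4.6}, as ``similar''. Your explicit derivatives $\partial_{t}\E_{1}=-|\xi|e^{-t|\xi|}$, $\partial_{t}\E_{2}=|\xi|e^{-t|\xi|}(1-t|\xi|)$, $\partial_{t}\E_{3}=e^{-t|\xi|}(1-t|\xi|)$ together with $|1-t|\xi||\le 1+t|\xi|$ and the choice $C=c=1$ supply precisely the omitted details, with no gap.
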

\begin{proof}[Proof of Lemmas \ref{lem:4.5} and \ref{lem:4.6}]
\eqref{eq:4.22} - \eqref{eq:4.27} are shown by the similar way. So, we only show \eqref{eq:4.24} for $\tilde{\K}_{3}(t,\xi)$ with $\ell=0$.
Recalling the fact that 
\begin{equation*}
\left| \frac{\sin y}{y} \right| \le 1
\end{equation*}
for $y \in \R$, 
we see 
\begin{equation*}
|\xi|^{k} \left| 
\K_{3}(t,\xi)
\right|
= 
t |\xi|^{k} \left| 
\frac{
e^{-\frac{\nu |\xi| t}{2}}
}
{ \frac{t |\xi|
\sqrt{4-\nu^{2}} }
{2}
}
\sin \left( 
\frac{t |\xi| \sqrt{4-\nu^{2}} }{2}
\right)
\right| \le C e^{-ct|\xi|}t |\xi|^{k},
\end{equation*}
which is the desired estimate \eqref{eq:4.24} for $\tilde{\K}_{3}(t,\xi)$ with $\ell=0$.
We complete the proof of Lemmas \ref{lem:4.5} and \ref{lem:4.6}.
\end{proof}
\section{Decay properties of the localized evolution operators}
In this section, we prove several decay properties of the localized evolution operators $J_{jk}(t)g$ 
for $j=1,2,3,4$, $k=L,M,H$ and $K_{jk}(t)g$ 
for $j=1,2,3$, $k=L,M,H$,
by using point-wise estimates of the Fourier multipliers.

\subsection{Preliminaries}
In this subsection, we present useful estimates to obtain some decay estimates of the evolution operators.
The estimates presented here are frequently used throughout this section and next section. 

We begin with the simple application of the H\"older inequality (cf. \cite{IT}).
\begin{lem} \label{Lem:5.1}
Let $n \ge 1$, $1 \le r \le 2$ and $\displaystyle{\frac{1}{r}} + \displaystyle{\frac{1}{r'}} = 1$.
Then it holds that 
\begin{equation} \label{eq:5.1}
\| f g \|_{2} \le \| f \|_{\frac{2r}{2-r}} \| g \|_{r'}.
\end{equation}
\end{lem}
The following lemma is useful to obtain a sharp decay property of the Fourier multipliers.
\begin{lem} \label{lem:5.2}
Let $n \ge 1$,  $C_{0}>0$, $1 \le r \le 2$,  $s>0$, $\alpha>0$ and $\beta \ge 0$. 
Then it holds that
\begin{align} \label{eq:5.2}
& \| e^{-C_{0} s |\xi|^{\alpha}} |\xi|^{\beta} \chi_{L} \|_{L^{\frac{2r}{2-r}}(\R^{n})}
\le Cs^{-\frac{n}{\alpha}(\frac{1}{r} -\frac{1}{2})-\frac{\beta}{\alpha}}, \\
& \| e^{-C_{0} s|\xi|^{\alpha}} |\xi|^{\beta} (\chi_{M} +\chi_{H}) \|_{L^{\frac{2r}{2-r}}(\R^{n})}
\le C e^{-c s}s^{-\frac{n}{\alpha}(\frac{1}{r} -\frac{1}{2})-\frac{\beta}{\alpha}}, \label{eq:5.3}
\end{align}
where the constants $C>0$ and $c>0$ are independent of $s$.
\end{lem}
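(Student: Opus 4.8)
The statement to prove is Lemma~\ref{lem:5.2}, which provides two scaling-type bounds for weighted $L^{\frac{2r}{2-r}}$ norms of functions of the form $e^{-C_0 s|\xi|^\alpha}|\xi|^\beta$ localized to low frequencies (estimate \eqref{eq:5.2}) or to middle/high frequencies (estimate \eqref{eq:5.3}).

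Let me think about how I would prove this.

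For \eqref{eq:5.2}, the natural approach is a change of variables that exploits the homogeneity of the exponent $|\xi|^\alpha$. Set $p = \frac{2r}{2-r}$. Then
$$\|e^{-C_0 s|\xi|^\alpha}|\xi|^\beta \chi_L\|_{L^p}^p = \int_{\mathbb{R}^n} e^{-C_0 p s|\xi|^\alpha}|\xi|^{\beta p}\chi_L^p\, d\xi.$$
I would bound $\chi_L \le 1$ and extend the integral to all of $\mathbb{R}^n$ (losing nothing since the integrand is nonnegative), then substitute $\eta = s^{1/\alpha}\xi$. This rescaling turns $s|\xi|^\alpha$ into $|\eta|^\alpha$ and pulls out the factor $s^{-\beta/\alpha}$ from $|\xi|^\beta$ and $s^{-n/\alpha}$ from the measure $d\xi$. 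The remaining integral $\int_{\mathbb{R}^n} e^{-C_0 p|\eta|^\alpha}|\eta|^{\beta p}\, d\eta$ is a finite constant (convergent at infinity by the exponential, and at the origin since $\beta \ge 0$). Collecting exponents of $s$ gives $s^{-n/\alpha - \beta p/\alpha}$ under the $p$-th root, i.e. $s^{-\frac{n}{\alpha p} - \frac{\beta}{\alpha}}$; rewriting $\frac{1}{p} = \frac{1}{r} - \frac{1}{2}$ yields exactly the claimed exponent $-\frac{n}{\alpha}(\frac{1}{r}-\frac{1}{2}) - \frac{\beta}{\alpha}$.

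For \eqref{eq:5.3}, the key structural fact is that on $\supp(\chi_M + \chi_H)$ we have $|\xi| \ge c_0$ bounded away from zero. I would split the exponential as $e^{-C_0 s|\xi|^\alpha} = e^{-\frac{C_0}{2}s|\xi|^\alpha}e^{-\frac{C_0}{2}s|\xi|^\alpha}$ and use the lower frequency bound $|\xi|^\alpha \ge c_0^\alpha$ on the support to extract a genuinely exponentially decaying factor $e^{-\frac{C_0}{2}c_0^\alpha s} =: e^{-cs}$ from the first half. The remaining factor $e^{-\frac{C_0}{2}s|\xi|^\alpha}|\xi|^\beta$ is then treated exactly as in \eqref{eq:5.2} to produce the polynomial $s^{-\frac{n}{\alpha}(\frac{1}{r}-\frac{1}{2})-\frac{\beta}{\alpha}}$ factor, completing the bound $Ce^{-cs}s^{-\frac{n}{\alpha}(\frac{1}{r}-\frac{1}{2})-\frac{\beta}{\alpha}}$.

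There is no serious obstacle in this lemma; it is a clean scaling computation. The one point deserving care is the convergence of the rescaled reference integral $\int e^{-C_0 p|\eta|^\alpha}|\eta|^{\beta p}\,d\eta$ at the origin, which is exactly where the hypothesis $\beta \ge 0$ is used (together with $\alpha > 0$ for integrability at infinity), and verifying that the constant produced is genuinely independent of $s$ after the substitution. For \eqref{eq:5.3} the only subtlety is making the split so that one factor yields a uniform exponential gain $e^{-cs}$ while the other still admits the same scaling estimate; since $|\xi|$ stays bounded below on the support, this is immediate.
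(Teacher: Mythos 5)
Your proposal is correct and matches the paper's own proof: for \eqref{eq:5.2} the paper performs the same rescaling (with the substitution $\eta=(C_{0}s\frac{2r}{2-r})^{\frac{1}{\alpha}}\xi$, equivalent to your $\eta=s^{1/\alpha}\xi$ up to the constant absorbed into the substitution), reducing the norm to an $s$-independent convergent integral whose finiteness at the origin uses $\beta\ge 0$ exactly as you note. For \eqref{eq:5.3} the paper merely says the estimate follows ``by a similar computation,'' and your splitting of the exponential to extract the factor $e^{-cs}$ from the lower bound $|\xi|\ge c_{0}$ on $\supp(\chi_{M}+\chi_{H})$ is precisely the intended argument.
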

\begin{proof}
Let us first prove \eqref{eq:5.2}.
Changing the integral variable $\eta= (C_{0} s \displaystyle{\frac{2r}{2-r}})^{\frac{1}{\alpha}} \xi$,
we see
\begin{equation*}
\begin{split}
\| e^{-C_{0} s |\xi|^{\alpha}} |\xi|^{\beta} \chi_{L} \|_{\frac{2r}{2-r}}^{\frac{2r}{2-r}}
& = \int_{\R^{n}}
e^{-C_{0} s |\xi|^{\alpha} \frac{2r}{2-r}} |\xi|^{\frac{\beta r}{2-r}} \chi_{L}^{\frac{2r}{2-r}} d \xi \\
& = C 
s^{-\frac{n}{\alpha}-\frac{2 \beta r}{\alpha(2-r)}}
\int_{\R^{n}}
e^{-|\eta|^{\alpha}} |\eta|^{\frac{\beta r}{2-r}} \chi_{L}^{\frac{2r}{2-r}} d \eta,
\end{split}
\end{equation*}
and so that 
\begin{equation*}
\begin{split}
\| e^{-C_{0} s |\xi|^{\alpha}} |\xi|^{\beta} \chi_{L} \|_{\frac{2r}{2-r}}^{\frac{2r}{2-r}}
& \le C 
s^{-\frac{n}{\alpha}-\frac{2 \beta r}{\alpha(2-r)}},
\end{split}
\end{equation*}
which is the desired estimate \eqref{eq:5.2}.
By a similar computation, we easily have \eqref{eq:5.3}. The proof is now complete.
\end{proof}
\subsection{The case for $\sigma \in (0,1/2)$.}
The localized evolution operators $J_{jL}(t)g$ ($j=1,2,3,4$) are estimated as follows. 
\begin{lem} \label{lem:5.3}
Let $n \ge 1$, $k \ge \tilde{k} \ge 0$, $\ell=0,1$, $1 \le r \le 2$, 
$\nu>0$ and $\sigma \in (0, \displaystyle{\frac{1}{2}})$. 
Then it holds that
\begin{align} \label{eq:5.4}
& \left\| 
\partial_{t}^{\ell} 
\nabla_{x}^{k}
J_{1L}(t)g
\right\|_{2}
\le C(1+t)^{-\frac{n}{2(1-\sigma)}(\frac{1}{r}-\frac{1}{2})- \ell- \frac{k-\tilde{k}}{2(1-\sigma)}}
\| \nabla^{\tilde{k}}_{x} g \|_{r},\\
& \left\| 
\partial_{t}^{\ell} 
\nabla_{x}^{k}
J_{3L}(t)g
\right\|_{2}
\le C(1+t)^{-\frac{n}{2\sigma}(\frac{1}{r}-\frac{1}{2})
-\frac{1-2\sigma}{\sigma} - \ell- \frac{k-\tilde{k}}{2\sigma}}
\| \nabla^{\tilde{k}}_{x} g \|_{r}, \label{eq:5.5} 
\end{align}
where $J_{jL}(t)g$ for $j=1,3$ are defined by \eqref{eq:2.5}.
\end{lem}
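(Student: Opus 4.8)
The plan is to transfer everything to the Fourier side via Plancherel's identity and then combine the pointwise multiplier bounds of Lemma \ref{lem:3.1} with the H\"older splitting of Lemma \ref{Lem:5.1} and the weighted $L^{\frac{2r}{2-r}}$ decay of Lemma \ref{lem:5.2}. By the definition \eqref{eq:2.5} of $J_{jL}$ and Plancherel's theorem, I would first write, for $j=1,3$,
\begin{equation*}
\| \partial_{t}^{\ell} \nabla_{x}^{k} J_{jL}(t)g \|_{2}
= \big\| |\xi|^{k}\,\partial_{t}^{\ell} \J_{jL}(t,\xi)\,\hat{g} \big\|_{2}.
\end{equation*}

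The key algebraic step is to factor $|\xi|^{k}=|\xi|^{k-\tilde{k}}\,|\xi|^{\tilde{k}}$, which is legitimate precisely because the hypothesis $k\ge\tilde{k}\ge0$ keeps the exponent $k-\tilde{k}$ nonnegative. Applying Lemma \ref{Lem:5.1} with $f=|\xi|^{k-\tilde{k}}\partial_{t}^{\ell}\J_{jL}(t,\xi)$ and $g=|\xi|^{\tilde{k}}\hat{g}$ gives
\begin{equation*}
\big\| |\xi|^{k}\partial_{t}^{\ell}\J_{jL}(t,\xi)\,\hat{g}\big\|_{2}
\le \big\| |\xi|^{k-\tilde{k}}\partial_{t}^{\ell}\J_{jL}(t,\xi)\big\|_{\frac{2r}{2-r}}\,\big\| |\xi|^{\tilde{k}}\hat{g}\big\|_{r'}.
\end{equation*}
For the second factor the Hausdorff--Young inequality yields $\| |\xi|^{\tilde{k}}\hat{g}\|_{r'}\le C\|\nabla_{x}^{\tilde{k}}g\|_{r}$ for $1\le r\le2$, which is exactly the source norm appearing on the right of \eqref{eq:5.4}--\eqref{eq:5.5}.

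It then remains to control the multiplier factor. For $J_{1L}$ I would insert the first estimate of Lemma \ref{lem:3.1}, $|\xi|^{k-\tilde{k}}|\partial_{t}^{\ell}\J_{1L}|\le C e^{-c(1+t)|\xi|^{2(1-\sigma)}}|\xi|^{2(1-\sigma)\ell+k-\tilde{k}}\chi_{L}$, and apply Lemma \ref{lem:5.2} with $s=1+t$, $\alpha=2(1-\sigma)$ and $\beta=2(1-\sigma)\ell+k-\tilde{k}$. Simplifying the resulting power $-\frac{n}{2(1-\sigma)}(\frac1r-\frac12)-\frac{\beta}{\alpha}$, the term $\beta/\alpha$ splits as $\ell+\frac{k-\tilde{k}}{2(1-\sigma)}$, reproducing exactly the exponent in \eqref{eq:5.4}. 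For $J_{3L}$ the argument is identical, now using the third bound of Lemma \ref{lem:3.1} and Lemma \ref{lem:5.2} with $\alpha=2\sigma$ and $\beta=2\sigma\ell+2(1-2\sigma)+k-\tilde{k}$; the constant contribution $\frac{2(1-2\sigma)}{2\sigma}=\frac{1-2\sigma}{\sigma}$ produces the extra decay displayed in \eqref{eq:5.5}.

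The calculation is essentially routine once this framework is in place; the only point requiring care is the bookkeeping of the frequency exponents, in particular verifying the admissibility conditions $\alpha>0$ and $\beta\ge0$ of Lemma \ref{lem:5.2}. For $J_{3L}$ the nonnegativity of $\beta$ hinges on $2(1-2\sigma)>0$, so the restriction $\sigma\in(0,\frac12)$ is invoked exactly at this step; everything else is algebraic simplification of the power of $(1+t)$.
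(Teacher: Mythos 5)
Your proposal is correct and follows essentially the same route as the paper's own proof: Plancherel, the factorization $|\xi|^{k}=|\xi|^{k-\tilde{k}}|\xi|^{\tilde{k}}$ with the H\"older splitting of Lemma \ref{Lem:5.1}, the pointwise bounds of Lemma \ref{lem:3.1}, and Lemma \ref{lem:5.2} with exactly the parameter choices $\alpha=2(1-\sigma)$, $\beta=2(1-\sigma)\ell+k-\tilde{k}$ for $J_{1L}$ (the paper proves only this case and notes $J_{3L}$ is similar, with $\alpha=2\sigma$ as you carry out explicitly). Your added remarks on the Hausdorff--Young step and the admissibility condition $\beta\ge0$, which for $J_{3L}$ uses $\sigma<\frac{1}{2}$, are correct bookkeeping that the paper leaves implicit.
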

\begin{lem} \label{lem:5.4}
Let $n \ge 2$, $k \ge \tilde{k} \ge 0$, $\ell=0,1$, $1 \le r \le 2$, 
$\nu>0$ and $\sigma \in (0, \displaystyle{\frac{1}{2}})$. 
Then it holds that
\begin{align}
& 
\left\|
\partial_{t}^{\ell} 
\nabla_{x}^{k}
J_{2L}(t)g 
\right\|_{2} 
\le C(1+t)^{-\frac{n}{2(1-\sigma)}(\frac{1}{r}-\frac{1}{2})
+\frac{\sigma}{1-\sigma} - \ell- \frac{k-\tilde{k}}{2(1-\sigma)}}
\| \nabla^{\tilde{k}}_{x} g \|_{r}, \label{eq:5.6} \\
& 
\left\|
\partial_{t}^{\ell} 
\nabla_{x}^{k}
J_{4L}(t)g 
\right\|_{2} 
\le C(1+t)^{-\frac{n}{2(1-\sigma)}(\frac{1}{r}-\frac{1}{2})+1- \ell- \frac{k-\tilde{k}}{2\sigma}}
\| \nabla^{\tilde{k}}_{x} g \|_{r}, \label{eq:5.7}
\end{align}
where $J_{jL}(t)g$ for $j=2,4$ are defined by \eqref{eq:2.5}.
\end{lem}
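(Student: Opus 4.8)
The plan is to prove Lemma \ref{lem:5.4} by the same combination of the Hausdorff--Young-type splitting from Lemma \ref{Lem:5.1} and the scaling estimate of Lemma \ref{lem:5.2} that underlies the proof of the companion Lemma \ref{lem:5.3}, but now applied to the multipliers $\J_{2L}$ and $\J_{4L}$ whose low-frequency behavior is governed by the point-wise bounds in Lemma \ref{lem:3.1}. First I would write out, by Plancherel's theorem,
\begin{equation*}
\left\| \partial_{t}^{\ell} \nabla_{x}^{k} J_{2L}(t)g \right\|_{2}
= \left\| |\xi|^{k} \partial_{t}^{\ell} \J_{2L}(t,\xi)\, \wh{g} \right\|_{2},
\end{equation*}
and then factor out $|\xi|^{\tilde{k}}$ so that the remaining symbol absorbs the derivative loss. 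Using the third estimate of \eqref{eq:3.1}, namely $|\xi|^{k}|\partial_{t}^{\ell}\J_{2L}(t,\xi)| \le C e^{-c(1+t)|\xi|^{2(1-\sigma)}}|\xi|^{2(1-\sigma)\ell-2\sigma+k}\chi_{L}$, together with the factorization $|\xi|^{k-\tilde k}=|\xi|^{k-\tilde k}$, the key symbol to estimate in $L^{2r/(2-r)}$ is $e^{-c(1+t)|\xi|^{2(1-\sigma)}}|\xi|^{\beta}\chi_{L}$ with $\beta = 2(1-\sigma)\ell - 2\sigma + (k-\tilde k)$.

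Next I would apply Lemma \ref{Lem:5.1} with $f = |\xi|^{k-\tilde k}\partial_{t}^{\ell}\J_{2L}(t,\xi)$ and $g$ replaced by $\widehat{\nabla_{x}^{\tilde k} g}$, giving
\begin{equation*}
\left\| |\xi|^{k}\partial_{t}^{\ell}\J_{2L}(t,\xi)\,\wh{g} \right\|_{2}
\le C\left\| e^{-c(1+t)|\xi|^{2(1-\sigma)}} |\xi|^{\beta}\chi_{L} \right\|_{\frac{2r}{2-r}} \| \nabla_{x}^{\tilde k} g \|_{r},
\end{equation*}
where the last factor comes from the Hausdorff--Young inequality $\|\widehat{\nabla^{\tilde k}g}\|_{r'}\le C\|\nabla^{\tilde k}g\|_{r}$. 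I would then invoke \eqref{eq:5.2} with $\alpha = 2(1-\sigma)$, $s = 1+t$, and the above $\beta$, so that the $L^{2r/(2-r)}$ norm is bounded by $C(1+t)^{-\frac{n}{2(1-\sigma)}(\frac1r-\frac12)-\frac{\beta}{2(1-\sigma)}}$. Substituting $\beta$ and simplifying the exponent,
\begin{equation*}
-\frac{\beta}{2(1-\sigma)} = -\ell + \frac{\sigma}{1-\sigma} - \frac{k-\tilde k}{2(1-\sigma)},
\end{equation*}
which reproduces exactly the claimed power in \eqref{eq:5.6}. The bound \eqref{eq:5.7} for $J_{4L}$ follows identically, using the fourth line of \eqref{eq:3.1}, the only difference being the exponents carried by the symbol; here the point-wise bound gives $|\xi|^{2\ell\sigma - 2\sigma + k}$ with the decay rate $e^{-c(1+t)|\xi|^{2\sigma}}$, so I would apply \eqref{eq:5.2} with $\alpha = 2\sigma$ instead, and read off the corresponding exponent.

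The main obstacle, and the reason $n\ge 2$ appears in the hypothesis rather than $n\ge 1$, is integrability of the symbol near the origin: the exponent $\beta$ can be negative when $\sigma$ is small, so the weight $|\xi|^{\beta}$ has a singularity at $\xi=0$, and the scaling computation in Lemma \ref{lem:5.2} is only valid when the resulting integral $\int_{\R^n} e^{-|\eta|^{\alpha}}|\eta|^{\beta r/(2-r)}\,d\eta$ converges, i.e.\ when $n + \beta r/(2-r) > 0$. I would therefore check, in the worst case $r=1$, $\ell=0$, $k=\tilde k$, that $\beta = -2\sigma > -n$, which is guaranteed precisely by $n \ge 2 > 2\sigma$ for $\sigma \in (0,\tfrac12)$; this is the step requiring genuine care, whereas everything else is a mechanical transcription of the proof of Lemma \ref{lem:5.3}.
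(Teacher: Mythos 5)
Your proposal is correct and follows essentially the same route as the paper, which proves only \eqref{eq:5.4} in detail and declares \eqref{eq:5.5}--\eqref{eq:5.7} ``similar'': Plancherel, the point-wise bounds of Lemma \ref{lem:3.1}, the H\"older splitting of Lemma \ref{Lem:5.1}, and the scaling estimate \eqref{eq:5.2}. Three small slips are worth fixing. First, the bound you quote for $\J_{2L}$ is the \emph{second}, not the third, line of \eqref{eq:3.1} (the third line concerns $\J_{3L}$); the inequality you wrote is nevertheless the right one. Second, in your integrability check the symbol is raised to the power $\frac{2r}{2-r}$, so the weight in the rescaled integral is $|\eta|^{2\beta r/(2-r)}$, not $|\eta|^{\beta r/(2-r)}$; at $r=1$, $\ell=0$, $k=\tilde k$ the convergence condition is therefore $n>4\sigma$ rather than $n>2\sigma$ --- still guaranteed by $n\ge 2$ for $\sigma\in(0,\tfrac12)$, so your conclusion stands, but with your criterion $n=1$ would wrongly appear admissible for every $\sigma<\tfrac12$. (Your instinct to verify convergence directly is also the correct repair of the fact that Lemma \ref{lem:5.2} is stated only for $\beta\ge 0$, and the same check shows the endpoint $r=2$ is genuinely problematic when $\beta<0$, since the corresponding $L^{\infty}$ norm of a symbol singular at the origin is infinite --- a caveat inherited from the paper's own statement; in the sequel only $r=1$ is used for these terms.) Third, for $J_{4L}$ the exponent you actually ``read off'' with $\alpha=2\sigma$ is $-\frac{n}{2\sigma}\left(\frac1r-\frac12\right)+1-\ell-\frac{k-\tilde k}{2\sigma}$, which does not literally match the printed \eqref{eq:5.7}, whose leading term carries $\frac{n}{2(1-\sigma)}$; since $\frac{n}{2\sigma}>\frac{n}{2(1-\sigma)}$ for $\sigma\in(0,\tfrac12)$, your bound is strictly stronger and implies \eqref{eq:5.7}, and comparison with Proposition \ref{prop:6.7} and Corollary \ref{cor:6.9} (which display $\frac{n}{2\sigma}$, resp.\ $\frac{n}{4\sigma}$, for $J_{4}$) confirms the printed exponent in \eqref{eq:5.7} is a typo --- but you should state this reconciliation explicitly rather than leave the mismatch unaddressed.
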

\begin{proof}[Proof of Lemmas \ref{lem:5.3} and \ref{lem:5.4}]
We can show \eqref{eq:5.4} - \eqref{eq:5.7} by the similar way.
Here we only prove \eqref{eq:5.4}.
We apply the Planchrel formula and \eqref{eq:3.1}, \eqref{eq:5.1} and \eqref{eq:5.2} 
with $C_{0}=c$, $s=1+t$, $\alpha=2(1-\sigma)$ and $\beta= 2(1-\sigma) \ell+ k-\tilde{k}$ to have
\begin{equation*}
\begin{split}
\left\| 
\partial_{t}^{\ell} 
\nabla_{x}^{k}
J_{1L}(t)g
\right\|_{2} 
& \le C \| e^{
-c(1+t)|\xi|^{2(1-\sigma)}
}
|\xi|^{2(1-\sigma) \ell +k-\tilde{k}}  \chi_{L} |\xi|^{\tilde{k}} \hat{g} \|_{2} \\
& \le C 
\| e^{
-c(1+t)|\xi|^{2(1-\sigma)}
}
|\xi|^{2(1-\sigma) \ell +k-\tilde{k}}  \chi_{L} \|_{\frac{2r}{2-r}} 
\| |\xi|^{\tilde{k}} \hat{g} \|_{r'} \\
& \le C(1+t)^{-\frac{n}{2(1-\sigma)}(\frac{1}{r}-\frac{1}{2})- \ell- \frac{k-\tilde{k}}{2(1-\sigma)}}
\| \nabla^{\tilde{k}}_{x} g \|_{r},
\end{split}
\end{equation*}
which is the desired estimate \eqref{eq:5.4}, and the lemma follows. 
\end{proof}
The following lemma suggests that the localized operators in the middle and high frequency parts decay exponentially, 
and we see that their effect is negligible in the large time behavior case. 
\begin{lem}  \label{lem:5.5}
Let $n \ge 1$,  $\ell=0,1$, $k+\ell \ge \tilde{k} \ge 0$, $1 \le r \le 2$, 
$\nu>0$ and $\sigma \in (0, \displaystyle{\frac{1}{2}})$. 
Then it holds that 
\begin{align} \label{eq:5.8}
& 
\sum_{j=1,3}
(
\|
\partial_{t}^{\ell} \nabla_{x}^{k}J_{jM}(t)g
\|_{2}
+
\|
\partial_{t}^{\ell} \nabla_{x}^{k} J_{jH}(t)g
\|_{2}
)
\le C e^{-ct} t^{-\frac{n}{2 \sigma}(\frac{1}{r}-\frac{1}{2})-\frac{k+\ell-\tilde{k}}{2 \sigma} }
\| \nabla^{\tilde{k}}_{x} g\|_{r}, \\
& 
\sum_{j=2,4}
(
\|
\partial_{t}^{\ell} \nabla_{x}^{k}J_{jM}(t)g
\|_{2}
+
\|
\partial_{t}^{\ell} \nabla_{x}^{k} J_{jH}(t)g
\|_{2}
)
\le C e^{-ct} t^{-\frac{n}{2 \sigma}(\frac{1}{r}-\frac{1}{2})-\frac{k+\ell-\tilde{k}}{2 \sigma} }
\| \nabla^{(\tilde{k}-1)_{+}}_{x} g\|_{r},  \label{eq:5.9}
\end{align}
where $J_{jL}(t)g$ for $j=1,2,3,4$ are defined by \eqref{eq:2.5}, 
and $(\tilde{k}-1)_{+} = \max\{\tilde{k}-1, 0 \}$.
\end{lem}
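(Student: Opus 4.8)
The plan is to follow the same route as in the proofs of Lemmas \ref{lem:5.3} and \ref{lem:5.4}, now feeding in the middle- and high-frequency pointwise bounds of Lemma \ref{lem:3.2} together with the exponentially decaying estimate \eqref{eq:5.3} in place of \eqref{eq:5.2}. First I would pass to the Fourier side by Plancherel's theorem, writing
\[
\| \partial_{t}^{\ell} \nabla_{x}^{k} J_{jM}(t)g \|_{2} = \| \, |\xi|^{k} \partial_{t}^{\ell} \J_{jM}(t,\xi) \hat{g} \, \|_{2},
\]
and likewise for the $\chi_{H}$ part, so that the whole estimate reduces to controlling a weighted $L^{2}$ norm of $\hat{g}$ against the symbol. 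Summing over $j=1,3$ and invoking the first bound of Lemma \ref{lem:3.2} replaces the symbol by $C e^{-ct|\xi|^{2\sigma}} |\xi|^{\ell+k}(\chi_{M}+\chi_{H})$, while summing over $j=2,4$ replaces it by $C e^{-ct|\xi|^{2\sigma}} |\xi|^{\ell-1+k}(\chi_{M}+\chi_{H})$.

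Next I would peel off the derivatives that are to act on $g$. For the sum over $j=1,3$ I would split $|\xi|^{\ell+k} = |\xi|^{\ell+k-\tilde{k}} \cdot |\xi|^{\tilde{k}}$ and apply the H\"older inequality \eqref{eq:5.1} to separate the factor $e^{-ct|\xi|^{2\sigma}}|\xi|^{\ell+k-\tilde{k}}(\chi_{M}+\chi_{H})$, measured in $L^{\frac{2r}{2-r}}$, from $|\xi|^{\tilde{k}}\hat{g}$, measured in $L^{r'}$. The first factor is estimated by \eqref{eq:5.3} with $\alpha=2\sigma$, $s=t$ and $\beta=\ell+k-\tilde{k}\ge 0$ (non-negative by the hypothesis $k+\ell\ge\tilde{k}$), producing the gain $e^{-ct}$ together with the asserted power of $t$; the second factor is controlled by the Hausdorff--Young inequality, $\| \, |\xi|^{\tilde{k}}\hat{g} \, \|_{r'} \le C \| \nabla_{x}^{\tilde{k}} g \|_{r}$. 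This yields \eqref{eq:5.8}. For the sum over $j=2,4$ the symbol carries an extra factor $|\xi|^{-1}$, so I would instead split $|\xi|^{\ell-1+k} = |\xi|^{\ell-1+k-(\tilde{k}-1)_{+}} \cdot |\xi|^{(\tilde{k}-1)_{+}}$ and pair the second factor with $\hat{g}$ to reproduce $\| \nabla_{x}^{(\tilde{k}-1)_{+}} g \|_{r}$, giving \eqref{eq:5.9}.

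The only point needing care is the exponent $\beta = \ell-1+k-(\tilde{k}-1)_{+}$ of the symbol in the $j=2,4$ case. When $\tilde{k}\ge 1$ it equals $\ell+k-\tilde{k}\ge 0$ and \eqref{eq:5.3} applies verbatim, matching the stated power; but when $0\le\tilde{k}<1$ one has $(\tilde{k}-1)_{+}=0$ and $\beta=\ell-1+k$, which can be negative (for instance when $k=\ell=0$), so Lemma \ref{lem:5.2}, stated for $\beta\ge 0$, does not apply directly. This is harmless because $\chi_{M}+\chi_{H}$ is supported in $|\xi|\ge \rho/2>0$, where the negative power $|\xi|^{\beta}$ is bounded; absorbing it reduces matters to the case $\beta=0$ in \eqref{eq:5.3}. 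The power of $t$ so obtained may differ from the one asserted, but since every bound here carries the exponential factor $e^{-ct}$, any fixed power of $t$ can be converted to the stated one by lowering the rate $c$ slightly, using $e^{-ct}t^{a}\le C e^{-c't}t^{b}$ for $t\ge 1$ and $c'<c$. It is this exponential dominance, rather than the precise polynomial bookkeeping, that carries the content of the lemma, so I expect the degenerate low-regularity cases of the $j=2,4$ sum to be the only genuine obstacle.
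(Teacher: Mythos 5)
Your proposal is correct and follows essentially the same route as the paper, whose proof of Lemma \ref{lem:5.5} consists of repeating the argument of Lemma \ref{lem:5.3} with \eqref{eq:5.2} replaced by \eqref{eq:5.3}, i.e., exactly your chain of Plancherel, the pointwise bounds of Lemma \ref{lem:3.2}, the H\"older inequality \eqref{eq:5.1}, and Hausdorff--Young. Your additional care about the degenerate case $0 \le \tilde{k} < 1$ in \eqref{eq:5.9}, where the exponent $\beta = \ell - 1 + k$ may be negative but is harmless because $\chi_{M} + \chi_{H}$ is supported in $|\xi| \ge \rho/2 > 0$ and the factor $e^{-ct}$ absorbs any polynomial discrepancy, correctly fills in a point the paper leaves implicit.
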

\begin{proof}
We now apply the argument of the proof of Lemma \ref{lem:5.3}, 
with \eqref{eq:5.2} replaced by \eqref{eq:5.3}, to obtain \eqref{eq:5.8} and \eqref{eq:5.9}.
We now complete the proof of Lemma \ref{lem:5.5}.  
\end{proof}

\subsection{The case for $\sigma \in (1/2,1]$}
For the case $\sigma \in (\frac{1}{2}, 1]$, 
we have the following decay property of the localized operators defined in the low frequency region. 
\begin{lem} \label{lem:5.6}
Let $n \ge 1$, $\ell=0,1$,  $k + \ell \ge \tilde{k} \ge 0$, $1 \le r \le 2$,
$\nu>0$ and $\sigma \in (\displaystyle{\frac{1}{2}},1]$. 
Then it holds that
\begin{align} \label{eq:5.10}
& \left\| 
\partial_{t}^{\ell} 
\nabla_{x}^{k}
K_{1L}(t)g
\right\|_{2}
\le C(1+t)^{-\frac{n}{2 \sigma }(\frac{1}{r}-\frac{1}{2})- \frac{\ell+ k-\tilde{k}}{2\sigma}}
\| \nabla^{\tilde{k}}_{x} g \|_{r},\\
& 
\left\|
\partial_{t}^{\ell} 
\nabla_{x}^{k}
K_{2L}(t)g 
\right\|_{2} 
\le C(1+t)^{-\frac{n}{2 \sigma }(\frac{1}{r}-\frac{1}{2})- \frac{\ell+ k-\tilde{k}+1}{2\sigma}}
\| \nabla^{\tilde{k}}_{x} g \|_{r}, \label{eq:5.11}  
\end{align}
where $K_{jL}(t)g$ for $j=1,2$ are defined by \eqref{eq:2.10}.
\end{lem}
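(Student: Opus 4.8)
The plan is to mimic the proof of Lemmas \ref{lem:5.3}--\ref{lem:5.4}, since Lemma \ref{lem:5.6} is the analogue for the regime $\sigma\in(\frac12,1]$ and the pointwise bounds on $\K_{1L}$ and $\K_{2L}$ are already in hand from Lemma \ref{lem:3.3}. Concretely, I would note that since $\eqref{eq:5.10}$ and $\eqref{eq:5.11}$ are established in exactly the same fashion, it suffices to carry out $\eqref{eq:5.10}$ in full and remark that $\eqref{eq:5.11}$ follows identically. First I would apply the Plancherel identity to replace the $L^2$ norm of $\pt_t^\ell\nabla_x^k K_{1L}(t)g$ by the $L^2$ norm in the Fourier variable of $|\xi|^k|\pt_t^\ell\K_{1L}(t,\xi)|\,|\wh g|$, writing $|\xi|^k=|\xi|^{k-\tilde k}|\xi|^{\tilde k}$ so that the factor $|\xi|^{\tilde k}|\wh g|$ can later be recognized as $\widehat{\nabla_x^{\tilde k}g}$.

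Next I would invoke the pointwise estimate $\eqref{eq:3.3}$ from Lemma \ref{lem:3.3}, namely
\begin{equation*}
|\xi|^{k-\tilde k}\,|\pt_t^\ell\K_{1L}(t,\xi)|\le C\,e^{-c(1+t)|\xi|^{2\sigma}}\,|\xi|^{\ell+k-\tilde k}\chi_L,
\end{equation*}
to dominate the multiplier by an explicit kernel times $|\xi|^{\tilde k}|\wh g|$. Then I would apply Lemma \ref{Lem:5.1} (the H\"older splitting $\eqref{eq:5.1}$) to separate the kernel in $L^{2r/(2-r)}$ from $|\xi|^{\tilde k}\wh g$ in $L^{r'}$, the latter being controlled by $\|\nabla_x^{\tilde k}g\|_r$ via the Hausdorff--Young inequality. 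The kernel factor is then estimated by Lemma \ref{lem:5.2}, using $\eqref{eq:5.2}$ with the parameter choices $C_0=c$, $s=1+t$, $\alpha=2\sigma$ and $\beta=\ell+k-\tilde k$, which yields the decay rate $(1+t)^{-\frac{n}{2\sigma}(\frac1r-\frac12)-\frac{\ell+k-\tilde k}{2\sigma}}$, exactly the exponent in $\eqref{eq:5.10}$.

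For $\eqref{eq:5.11}$ the same chain applies, the only change being that the bound on $\K_{2L}$ in $\eqref{eq:3.3}$ carries an extra factor $|\xi|$, so one takes $\beta=\ell+k-\tilde k+1$ in Lemma \ref{lem:5.2}, producing the additional $-\frac{1}{2\sigma}$ in the exponent. I anticipate no genuine obstacle here, since all the hard analysis---the delicate cancellation between the true multiplier and its parabolic/oscillatory surrogate---has been absorbed into the pointwise Lemma \ref{lem:3.3}; the present lemma is a clean transference of those pointwise bounds to $L^2$ decay. The only point requiring a little care is bookkeeping the hypothesis $k+\ell\ge\tilde k\ge0$, which guarantees the power $\beta$ of $|\xi|$ in Lemma \ref{lem:5.2} is nonnegative so that $\eqref{eq:5.2}$ is applicable; I would flag this explicitly rather than let it pass silently, and also note that the factor $(1+t)^{-c}$ coming from the low-frequency cutoff never degenerates because $\eqref{eq:5.2}$ (as opposed to $\eqref{eq:5.3}$) already handles the whole range of $t$ uniformly via the $s=1+t$ substitution.
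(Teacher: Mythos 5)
Your proposal is correct and takes essentially the same route as the paper's own proof: Plancherel, the pointwise bound \eqref{eq:3.3}, the H\"older splitting \eqref{eq:5.1}, and Lemma \ref{lem:5.2} via \eqref{eq:5.2} with $C_{0}=c$, $s=1+t$, $\alpha=2\sigma$, $\beta=\ell+k-\tilde{k}$ (and $\beta=\ell+k-\tilde{k}+1$ for \eqref{eq:5.11}), followed by Hausdorff--Young for $\| |\xi|^{\tilde{k}}\hat{g}\|_{r'}\le C\|\nabla_{x}^{\tilde{k}}g\|_{r}$. Your explicit flagging of $k+\ell\ge\tilde{k}$ to ensure $\beta\ge 0$ is a detail the paper leaves implicit, but otherwise the arguments coincide.
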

\begin{lem} \label{lem:5.7}
Let $n \ge 3$, $\ell=0,1$, $k+ \ell \ge \tilde{k} \ge 0$, $1 \le r \le 2$, 
$\nu>0$ and $\sigma \in (\displaystyle{\frac{1}{2}},1]$. 
Then it holds that
\begin{align} 
\left\| 
\partial_{t}^{\ell} 
\nabla_{x}^{k}
K_{3L}(t)g
\right\|_{2}
\le C(1+t)^{-\frac{n}{2 \sigma }(\frac{1}{r}-\frac{1}{2})- \frac{\ell+ k-\tilde{k}-1}{2\sigma}}
\| \nabla^{\tilde{k}}_{x} g \|_{r}, \label{eq:5.12} 
\end{align}
where $K_{3L}(t)g$ is defined by \eqref{eq:2.10}.
\end{lem}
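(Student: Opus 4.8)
The plan is to follow exactly the template established in the proof of Lemmas \ref{lem:5.3} and \ref{lem:5.4}, since \eqref{eq:5.12} is the $\sigma \in (\frac{1}{2},1]$ analogue of those low-frequency estimates, with the Fourier multiplier $\K_{3L}(t,\xi)$ replacing $\J_{jL}(t,\xi)$. The key point is that the pointwise bound on $\K_{3L}$ is already supplied by the third line of \eqref{eq:3.3} in Lemma \ref{lem:3.3}, namely
\eqn{
|\xi|^{k}
|\partial_{t}^{\ell} \K_{3L}(t,\xi)|
\le C e^{-c(1+t)|\xi|^{2\sigma}}|\xi|^{\ell+k-1}\chi_{L},
}
and the gain of one negative power of $|\xi|$ (the $-1$ in the exponent) is precisely what produces the shift by $-\frac{-1}{2\sigma} = +\frac{1}{2\sigma}$ relative to the $K_{1L}$ estimate \eqref{eq:5.10}, matching the $\ell + k - \tilde{k} - 1$ appearing in the claimed decay rate.

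First I would apply the Plancherel identity to convert the $L^2$ norm of $\partial_t^\ell \nabla_x^k K_{3L}(t)g$ into the $L^2$ norm of $|\xi|^k |\partial_t^\ell \K_{3L}(t,\xi)|\,|\hat{g}|$ over $\R^n$. Next I would insert the factor $|\xi|^{\tilde{k}}$ to split off $\nabla_x^{\tilde k} g$: writing the integrand as $\bigl(|\xi|^{k-\tilde{k}}|\partial_t^\ell \K_{3L}(t,\xi)|\chi_L\bigr)\cdot\bigl(|\xi|^{\tilde{k}}\hat{g}\bigr)$, I would apply the H\"older-type inequality \eqref{eq:5.1} of Lemma \ref{Lem:5.1} with exponents $\frac{2r}{2-r}$ and $r'$. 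This bounds the product by $\| e^{-c(1+t)|\xi|^{2\sigma}} |\xi|^{\ell+k-\tilde{k}-1}\chi_L \|_{\frac{2r}{2-r}}\,\|\,|\xi|^{\tilde{k}}\hat{g}\,\|_{r'}$, where the second factor is $\|\nabla_x^{\tilde k}g\|_r$ by the Hausdorff–Young inequality.

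The decisive step is then Lemma \ref{lem:5.2}: I would invoke \eqref{eq:5.2} with $C_0 = c$, $s = 1+t$, $\alpha = 2\sigma$ and $\beta = \ell + k - \tilde{k} - 1$, which gives the bound $C(1+t)^{-\frac{n}{2\sigma}(\frac{1}{r}-\frac{1}{2}) - \frac{\ell+k-\tilde{k}-1}{2\sigma}}$, exactly the claimed rate in \eqref{eq:5.12}. The only genuinely new subtlety compared to the $\sigma < \frac{1}{2}$ case — and the main thing to check — is that $\beta = \ell + k - \tilde{k} - 1$ may be negative (for instance when $\ell = 0$, $k = \tilde{k}$), so that the weight $|\xi|^\beta$ is singular at the origin. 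The hypothesis $n \ge 3$ is what guarantees integrability of this singularity: the integral $\int_{\supp\chi_L} |\xi|^{2\beta r/(2-r)}\,d\xi$ converges near $\xi = 0$ precisely because $n \ge 3$ keeps $n + \frac{2\beta r}{2-r} > 0$ in the worst case, which is why this lemma requires $n \ge 3$ whereas Lemma \ref{lem:5.6} only needs $n \ge 1$. I would verify this integrability condition explicitly as the sole obstacle before concluding that \eqref{eq:5.2} applies, and the lemma then follows at once.
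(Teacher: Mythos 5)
Your proposal is correct and is essentially identical to the paper's own proof: the authors prove only \eqref{eq:5.10} in detail and remark that \eqref{eq:5.12} follows ``by the similar way,'' namely Plancherel, the third pointwise bound in \eqref{eq:3.3}, the H\"older inequality \eqref{eq:5.1}, and then \eqref{eq:5.2} with $C_{0}=c$, $s=1+t$, $\alpha=2\sigma$, $\beta=\ell+k-\tilde{k}-1$, exactly as you do. Your explicit integrability check for negative $\beta$ is in fact more careful than the paper (Lemma \ref{lem:5.2} is stated only for $\beta\ge 0$), though be aware that the condition $n+\frac{2\beta r}{2-r}>0$ holds in the worst case $\beta=-1$ only for $r<\frac{2n}{n+2}$ (in particular for $r=1$, which is the only case used downstream in Proposition \ref{prop:6.14} and Corollary \ref{cor:6.16}) and not for all $r\le 2$ as your last paragraph suggests --- a defect inherited from the lemma's stated range of $r$ rather than from your argument.
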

\begin{rem} 
In Lemma \ref{lem:5.7}, 
if we assume $\ell +k > 3-n$, then \eqref{eq:5.12} is also valid for all $n \ge 1$.
\end{rem}
\begin{proof}[Proof of Lemmas \ref{lem:5.5} and \ref{lem:5.6}]
We note that \eqref{eq:5.10} - \eqref{eq:5.12} are shown by the similar way, so we only prove \eqref{eq:5.10}.
We apply the Plancherel formula and \eqref{eq:3.3}, \eqref{eq:5.1} and \eqref{eq:5.2} 
with $C_{0}=c$, $s=1+t$, $\alpha=2\sigma$ and $\beta= \ell+ k-\tilde{k}$ to obtain
\begin{equation*}
\begin{split}
\left\| 
\partial_{t}^{\ell} 
\nabla_{x}^{k}
K_{1L}(t)g
\right\|_{2} 
& \le C \| e^{
-c(1+t)|\xi|^{2\sigma}
}
|\xi|^{\ell +k-\tilde{k}}  \chi_{L} |\xi|^{\tilde{k}} \hat{g} \|_{2} \\
& \le C 
\| e^{
-c(1+t)|\xi|^{2(1-\sigma)}
}
|\xi|^{\ell +k-\tilde{k}}  \chi_{L} \|_{\frac{2r}{2-r}} 
\| |\xi|^{\tilde{k}} \hat{g} \|_{r'} \\
& \le C(1+t)^{-\frac{n}{2 \sigma }(\frac{1}{r}-\frac{1}{2})- \frac{\ell+ k-\tilde{k}}{2\sigma}}
\| \nabla^{\tilde{k}}_{x} g \|_{r},
\end{split}
\end{equation*}
which is the desired conclusion \eqref{eq:5.10}.
This proves the lemma. 
\end{proof}
The following lemma asserts that the operators localized in the middle and high frequency regions 
do not affect the asymptotic profile of the solution to \eqref{eq:1.1}
because of the exponential decay property
as shown in Lemma \ref{lem:5.5}.
\begin{lem}  \label{lem:5.9}
Let $n \ge 1$, $k \ge \tilde{k} \ge 0$, $\ell=0,1$, $1 \le r \le 2$, 
$\nu>0$ and $\sigma \in (\displaystyle{\frac{1}{2}},1]$. 
Then it holds that 
\begin{equation}
\begin{split} 
\label{eq:5.13}
&
\sum_{j=1,2}
(
\|
\partial_{t}^{\ell} \nabla_{x}^{k}K_{jM}(t)g
\|_{2}
+
\|
\partial_{t}^{\ell} \nabla_{x}^{k} K_{jH}(t)g
\|_{2}
) \\
& \le C e^{-ct} 
t^{-\frac{n}{2(1- \sigma)}(\frac{1}{r}-\frac{1}{2})-\frac{k-\tilde{k}}{2(1- \sigma)} }
\| \nabla^{\tilde{k}+ 2(1-\sigma) \ell}_{x} g\|_{r}, 
\end{split}
\end{equation}
\begin{align} 
\|
\partial_{t}^{\ell} \nabla_{x}^{k}K_{3M}(t)g
\|_{2}
+
\|
\partial_{t}^{\ell} \nabla_{x}^{k} K_{3H}(t)g
\|_{2} 
\le C e^{-ct} 
t^{-\frac{n}{2(1- \sigma)}(\frac{1}{r}-\frac{1}{2})-\frac{k-\tilde{k}}{2(1- \sigma)} }
\| \nabla^{(\tilde{k}-2\sigma (1- \ell))_{+}}_{x} g\|_{r},  \label{eq:5.14}
\end{align}
where $K_{jM}(t)g$ and $K_{jH}(t)g$ for $j=1,2,3$ are defined by \eqref{eq:2.10}, and 
$(\tilde{k}-2\sigma(1-\ell))_{+} = \max\{\tilde{k}-2 \sigma (1- \ell), 0\}$.
\end{lem}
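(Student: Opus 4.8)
The plan is to follow the scheme of Lemma \ref{lem:5.5}, the $\sigma\in(0,\frac12)$ counterpart handling the middle- and high-frequency parts, now feeding in the symbol bounds of Lemma \ref{lem:3.4} instead of those of Lemma \ref{lem:3.2}. First I would invoke the Plancherel formula to rewrite $\|\partial_t^\ell\nabla_x^k K_{jM}(t)g\|_2$ and $\|\partial_t^\ell\nabla_x^k K_{jH}(t)g\|_2$ as the $L^2(\R^n)$-norms of $|\xi|^k\partial_t^\ell\K_{jM}(t,\xi)\hat g$ and $|\xi|^k\partial_t^\ell\K_{jH}(t,\xi)\hat g$. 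Inserting Lemma \ref{lem:3.4}, on $\supp(\chi_M+\chi_H)$ these symbols are dominated by $C|\xi|^{k+2(1-\sigma)\ell}e^{-ct|\xi|^{2(1-\sigma)}}$ for $j=1,2$ and by $C|\xi|^{k-2\sigma(1-\ell)}e^{-ct|\xi|^{2(1-\sigma)}}$ for $j=3$.

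Next I would peel off from $\hat g$ the factor $|\xi|^{\tilde{k}+2(1-\sigma)\ell}$ (for $j=1,2$) or $|\xi|^{(\tilde{k}-2\sigma(1-\ell))_+}$ (for $j=3$) and control it, through the H\"older inequality \eqref{eq:5.1} together with the Hausdorff--Young inequality, by $\|\nabla_x^{\tilde{k}+2(1-\sigma)\ell}g\|_r$ and $\|\nabla_x^{(\tilde{k}-2\sigma(1-\ell))_+}g\|_r$ respectively. In both cases the residual symbol is of the form $|\xi|^{k-\tilde{k}}e^{-ct|\xi|^{2(1-\sigma)}}(\chi_M+\chi_H)$, which is estimated in $L^{2r/(2-r)}$ by \eqref{eq:5.3} with $\alpha=2(1-\sigma)$ and $s=t$. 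This supplies both the announced polynomial factor $t^{-\frac{n}{2(1-\sigma)}(\frac{1}{r}-\frac{1}{2})-\frac{k-\tilde{k}}{2(1-\sigma)}}$ and, crucially, the exponential factor $e^{-ct}$; the latter encodes that $|\xi|\ge\rho/2$ on $\supp(\chi_M+\chi_H)$, so that $|\xi|^{2(1-\sigma)}\ge(\rho/2)^{2(1-\sigma)}$ there.

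Two points deserve care. For $K_3$ the symbol power $k-2\sigma(1-\ell)$ forces a negative transferred order whenever $\tilde{k}<2\sigma(1-\ell)$; in that case I would transfer the order $0$, keep the surplus power of $|\xi|$ with the symbol, and absorb it using $|\xi|\ge\rho/2$, noting that any mismatch in the polynomial rate is harmless against the exponential, since $e^{-ct}t^{-a}\le Ce^{-c't}t^{-b}$ for all $a,b$ and $0<c'<c$. The genuine obstacle is the endpoint $\sigma=1$, which is excluded from Lemma \ref{lem:3.4}: there $\phi_\sigma(\xi)$ becomes imaginary for $|\xi|>2/\nu$ and the trigonometric symbols turn hyperbolic, so Lemma \ref{lem:3.4} cannot be quoted verbatim. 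For this endpoint I would instead read the decay off the characteristic roots directly, using that on $\supp(\chi_M+\chi_H)$ one has $\mathrm{Re}\,\lambda_\pm\le -c<0$ uniformly (with $\lambda_+\to-\frac{1}{\nu}$ and $\lambda_-\sim-\nu|\xi|^2$), so that the symbols decay exponentially. This reproduces bounds of the same shape as Lemma \ref{lem:3.4} and lets the estimates of the previous two paragraphs go through, yielding \eqref{eq:5.13} and \eqref{eq:5.14}.
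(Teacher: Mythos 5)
Your argument is exactly the paper's intended proof, which the authors omit with the remark that Lemma \ref{lem:5.9} is ``just an application of \eqref{eq:5.1} and \eqref{eq:5.2}'': namely Plancherel, the point-wise bounds of Lemma \ref{lem:3.4}, H\"older \eqref{eq:5.1}, and \eqref{eq:5.3} with $\alpha=2(1-\sigma)$, $s=t$, mirroring the proof of Lemma \ref{lem:5.5}, and your absorption of the surplus negative power of $|\xi|$ for $K_3$ via $|\xi|\ge \rho/2$ and $e^{-ct}t^{-a}\le Ce^{-c't}t^{-b}$ is sound. Your flag of the endpoint $\sigma=1$ (excluded from Lemma \ref{lem:3.4}) is warranted and goes beyond the paper; just note that there $\alpha=2(1-\sigma)=0$ makes \eqref{eq:5.3} inapplicable and the multipliers for $K_1,K_2$ gain no decay in $|\xi|$ (only $e^{-ct}$, since $\lambda_+\to-\frac1\nu$), so the claimed bound — whose polynomial exponent $-\frac{n}{2(1-\sigma)}(\frac1r-\frac12)-\frac{k-\tilde k}{2(1-\sigma)}$ anyway degenerates at $\sigma=1$ — survives only in the regime $r=2$, $\tilde k=k$ (an $L^\infty$ multiplier bound), which is precisely how the lemma is invoked later in Corollary \ref{cor:6.15}.
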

\begin{proof}
Lemma \ref{lem:5.9} is also just an application of \eqref{eq:5.1} and \eqref{eq:5.2}, so we omit its proof. 
\end{proof}
%
\section{Estimates for the evolution operators} \label{sec:6}
In this section, 
by using the point-wise estimates developed in previous section, 
we prove the approximation formulas for operators localized in the low frequency region.
By combining estimates for the middle and high frequency parts and such estimates for the low frequency region, 
we show the asymptotic behavior of the evolution operators of \eqref{eq:1.1}. 

\subsection{Approximation of the operators localized near low frequency region for $\sigma \in (0, 1/2)$}
In this subsection our aim is to show the following proposition, which states that 
the evolution operators $J_{1}(t)$ and $J_{2}(t)$ are approximated by the operators 
$\mathcal{F}^{-1}[e^{
-\frac{t}{\nu}
|\xi|^{2(1-\sigma)}
}]$ and $\mathcal{F}^{-1}[\frac{
e^{
-\frac{t}{\nu}
|\xi|^{2(1-\sigma)
}
}
}{ \nu |\xi|^{2 \sigma}}]$, respectively. 
\begin{lem}
Let $n \ge 1$, $k \ge \tilde{k} \ge 0$, $\ell=0,1$, $1 \le r \le 2$, 
$\nu>0$ and $\sigma \in (0, \displaystyle{\frac{1}{2}})$. 
Then it holds that
\begin{equation} \label{eq:6.1}
\begin{split}
& \left\| 
\partial_{t}^{\ell} 
\nabla_{x}^{k}
\left(
J_{1L}(t)g
-
\mathcal{F}^{-1}
\left[
e^{
-\frac{t}{\nu}
|\xi|^{2(1-\sigma)}
}
\chi_{L}
\right] \ast g
\right)
\right\|_{2} \\
& \le C(1+t)^{-\frac{n}{2(1-\sigma)}(\frac{1}{r}-\frac{1}{2})
-\frac{1-2\sigma}{1-\sigma} - \ell- \frac{k-\tilde{k}}{2(1-\sigma)}}
\| \nabla^{\tilde{k}}_{x} g \|_{r},
\end{split}
\end{equation}
\begin{equation} \label{eq:6.2}
\begin{split}
& 
\left\|
\partial_{t}^{\ell} 
\nabla_{x}^{k}
\left(
J_{2L}(t)g 
-
\mathcal{F}^{-1} 
\left[
\frac{
e^{
-\frac{t}{\nu}
|\xi|^{2(1-\sigma)}
}
}{ \nu |\xi|^{2 \sigma}}\chi_{L} 
\right] \ast g 
\right) 
\right\|_{2} \\
& \le C(1+t)^{-\frac{n}{2(1-\sigma)}(\frac{1}{r}-\frac{1}{2})
-\frac{1-3\sigma}{1-\sigma} - \ell- \frac{k-\tilde{k}}{2(1-\sigma)}}
\| \nabla^{\tilde{k}}_{x} g \|_{r},
\end{split}
\end{equation}
where $J_{1L}(t)g$ and $J_{2L}(t)g$ are defined by \eqref{eq:2.4}.
\end{lem}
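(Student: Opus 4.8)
The plan is to follow the proofs of Lemmas \ref{lem:5.3} and \ref{lem:5.4} almost verbatim, replacing the pointwise bounds of Lemma \ref{lem:3.1} on $\mathcal{J}_{1L},\mathcal{J}_{2L}$ by the sharper \emph{difference} bounds of Lemmas \ref{lem:4.1} and \ref{lem:4.2}. Both \eqref{eq:6.1} and \eqref{eq:6.2} are proved in the same way, so I would write out \eqref{eq:6.1} in full and only indicate the changes for \eqref{eq:6.2}.

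First I would move to the Fourier side: by the Plancherel theorem the left-hand side of \eqref{eq:6.1} is bounded by $C$ times the $L^2$-norm of $|\xi|^{k}\,|\partial_t^\ell(\mathcal{J}_{1L}(t,\xi)-e^{-\frac t\nu|\xi|^{2(1-\sigma)}}\chi_L)|\,|\hat g|$. Writing $|\xi|^k=|\xi|^{k-\tilde k}|\xi|^{\tilde k}$ to peel off the datum, applying the H\"older inequality \eqref{eq:5.1} with exponents $\tfrac{2r}{2-r}$ and $r'$, and then using Hausdorff--Young in the form $\||\xi|^{\tilde k}\hat g\|_{r'}\le C\|\nabla_x^{\tilde k}g\|_r$, the problem reduces to estimating the $L^{2r/(2-r)}$-norm of the multiplier factor $|\xi|^{k-\tilde k}\,|\partial_t^\ell(\mathcal{J}_{1L}-e^{-\frac t\nu|\xi|^{2(1-\sigma)}}\chi_L)|$.

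The heart of the argument is then to insert the pointwise bound --- \eqref{eq:4.1} for $\ell=0$ and \eqref{eq:4.8} for $\ell=1$ --- which splits the multiplier into two summands, and to apply Lemma \ref{lem:5.2} with $\alpha=2(1-\sigma)$ and $s=1+t$ to each. The key observation that makes the two summands produce the \emph{same} time rate is that the more regular summand carries an explicit factor $t$ together with exactly one extra power $|\xi|^{2(1-\sigma)}=|\xi|^{\alpha}$ relative to the other summand (indeed $2(2-3\sigma)=2(1-2\sigma)+\alpha$); since Lemma \ref{lem:5.2} converts that extra power $|\xi|^{\alpha}$ into one additional factor $s^{-1}=(1+t)^{-1}$, it is cancelled precisely by the $t\sim(1+t)$ in front. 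A short arithmetic check then shows the common exponent equals $-\frac n{2(1-\sigma)}(\frac1r-\frac12)-\frac{k-\tilde k}{2(1-\sigma)}-\frac{1-2\sigma}{1-\sigma}-\ell$, which is the rate claimed in \eqref{eq:6.1}. For \eqref{eq:6.2} one repeats the same steps with \eqref{eq:4.2} and \eqref{eq:4.9}; the identical cancellation occurs, and the exponent $-\tfrac{1-2\sigma}{1-\sigma}$ is replaced by $-\tfrac{1-3\sigma}{1-\sigma}$.

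Since all the delicate cancellations were already carried out in Section 4, no genuinely new estimate is needed and the argument is essentially bookkeeping. The one step that deserves attention, and which I expect to be the main obstacle, is the application of Lemma \ref{lem:5.2} in \eqref{eq:6.2}: the power of $|\xi|$ in the less regular summand can become negative when $\sigma$ is close to $\tfrac12$ and $\tilde k$ is small, so one must check that the resulting spatial integral $\int_{\supp\chi_L}|\xi|^{\frac{2r}{2-r}\beta}\,d\xi$ is still finite at the origin, i.e. that $\tfrac{2r}{2-r}\beta+n>0$. This is precisely where the dimensional restriction and the admissible range of $r$ enter; granting the evident extension of Lemma \ref{lem:5.2} to such admissible negative $\beta$, the estimate \eqref{eq:6.2} follows.
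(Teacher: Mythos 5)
Your proposal is correct and reproduces the paper's own proof essentially step for step: Plancherel, the H\"older inequality \eqref{eq:5.1}, and Lemma \ref{lem:5.2} with $s=1+t$, $\alpha=2(1-\sigma)$ applied to the two summands of the pointwise bounds \eqref{eq:4.1}/\eqref{eq:4.8} and \eqref{eq:4.2}/\eqref{eq:4.9}, including exactly the cancellation you describe between the prefactor $t$ and the extra power $|\xi|^{2(1-\sigma)}$. Your closing caveat is, if anything, more careful than the paper itself, which applies Lemma \ref{lem:5.2} formally even when the exponent is negative (e.g.\ $\beta=2-6\sigma<0$ in \eqref{eq:6.2} for $\ell=0$, $k=\tilde k$, $\sigma>\frac13$); the required integrability $\frac{2r}{2-r}\beta+n>0$ does hold in the regime where the lemma is actually invoked (Corollary \ref{cor:6.12}: $r=1$, $\tilde k=0$, $n\ge 2$).
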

\begin{proof}
We first show \eqref{eq:6.1}.
By \eqref{eq:4.1} and \eqref{eq:4.8}, 
we see 
\begin{equation} \label{eq:6.3}
\begin{split}
& 
\left|
\partial_{t}^{\ell} 
\left(
\J_{1L}(t,\xi)
-
e^{
-\frac{t}{\nu}
|\xi|^{2(1-\sigma)}
}
\right)
\chi_{L} \hat{g} \right| \\
& \le C e^{
-c(1+t)
|\xi|^{2(1-\sigma)}
}
|\xi|^{k} 
(t |\xi|^{2(2-3\sigma)+ 2 \ell(1-\sigma)} + |\xi|^{2(1-2\sigma)+2 \ell(1-\sigma)}) \chi_{L} |\hat{g}|,
\end{split}
\end{equation}
for $\ell =0,1$.
Then 
taking the $\| \nabla_{x}^{k}  \cdot \|_{2}$ norm for the both sides of \eqref{eq:6.3}
and applying \eqref{eq:5.1} and \eqref{eq:5.2} with 
$C_{0}=c$, $s=1+t$, $\alpha= 2(1-\sigma)$, $\beta= 2(2-3\sigma) + 2 \ell (1-\sigma) +k-\tilde{k}$ for the 
first factor,
 and  $\beta= 2(1-2\sigma) + 2 \ell (1-\sigma) +k-\tilde{k}$ for the second factor,
we have 
\begin{align*}
& \left\| 
\partial_{t}^{\ell} 
\nabla_{x}^{k}
\left(
J_{1L}(t)g
-
\mathcal{F}^{-1}
\left[
e^{
-\frac{t}{\nu}
|\xi|^{2(1-\sigma)}
}
\chi_{L}
\right] \ast g
\right)
\right\|_{2} \\
& = C
\left\| |\xi|^{k}
\partial_{t}^{\ell} 
\left(
\J_{1L}(t,\xi)
-
e^{
-\frac{t}{\nu}
|\xi|^{2(1-\sigma)}
}
\chi_{L}
\right)
 \hat{g} \right\|_{2} \\
& \le 
Ct
\left\|
 e^{
-c(1+t)
|\xi|^{2(1-\sigma)}
}
|\xi|^{2(2-3\sigma)+ 2 \ell(1-\sigma)+k-\tilde{k}} 
\chi_{L} |\xi|^{\tilde{k} } \hat{g} \right\|_{2} \\
& +C
\left\|
 e^{
-c(1+t)
|\xi|^{2(1-\sigma)}
}
 |\xi|^{2(1-2\sigma)+2 \ell(1-\sigma)+k-\tilde{k}} 
\chi_{L} |\xi|^{\tilde{k} }\hat{g} \right\|_{2} \\
& \le 
Ct
\left\|
 e^{
-c(1+t)
|\xi|^{2(1-\sigma)}
}
|\xi|^{2(2-3\sigma)+ 2 \ell(1-\sigma)+k-\tilde{k}} 
\chi_{L} \|_{\frac{2r}{2-r} }
\| |\xi|^{\tilde{k} } \hat{g} \right\|_{r'} \\
& +C
\left\|
 e^{
-c(1+t)
|\xi|^{2(1-\sigma)}
}
 |\xi|^{2(1-2\sigma)+2 \ell(1-\sigma)+k-\tilde{k}} 
\chi_{L} \|_{\frac{2r}{2-r}}
\| |\xi|^{\tilde{k} }\hat{g} \right\|_{r'} \\
& \le C(1+t)^{-\frac{n}{2(1-\sigma)}(\frac{1}{r}-\frac{1}{2})
-\frac{1-2\sigma}{1-\sigma} - \ell- \frac{k-\tilde{k}}{1-\sigma}}
\| \nabla^{\tilde{k}}_{x} g \|_{r},
\end{align*}
which is the desired estimate \eqref{eq:6.1}.
Next, we prove \eqref{eq:6.2}.
\eqref{eq:4.2} and \eqref{eq:4.9} mean that
\begin{equation}
\begin{split}
& 
\left|
\partial_{t}^{\ell} 
\left(
\J_{2L}(t,\xi)
-
\frac{
e^{
-\frac{t}{\nu}
|\xi|^{2(1-\sigma)
}
}
}{ \nu |\xi|^{2 \sigma}}\chi_{L} \right) \hat{g} \right| \\
& \le C e^{
-c(1+t)
|\xi|^{2(1-\sigma)}
}
(t |\xi|^{2(2-3\sigma)+2 \ell(1-\sigma)-2 \sigma} + |\xi|^{2(1-2\sigma)+2 \ell(1-\sigma) -2 \sigma}) 
\chi_{L} |\hat{g}|,
\label{eq:6.4}
\end{split}
\end{equation}
for $\ell =0,1$.
Therefore we again apply $\| \nabla^{k}_{x} \cdot \|_{2}$ norm for the both sides of \eqref{eq:6.4} and use 
\eqref{eq:5.1} and \eqref{eq:5.2} with $C_{0}=c$, $s=1+t$, $\alpha= 2(1-\sigma)$, $\beta= 2(2-3\sigma) + 2 \ell (1-\sigma) -2 \sigma+k-\tilde{k}$ for the 
first factor, and  $\beta= 2(1-2\sigma) + 2 \ell (1-\sigma) -2 \sigma +k-\tilde{k}$ for the second factor,
to see 
\begin{equation*} 
\begin{split}
& 
\left\|
\partial_{t}^{\ell} 
\nabla_{x}^{k}
\left(
J_{2L}(t)g 
-
\mathcal{F}^{-1} 
\left[
\frac{
e^{
-\frac{t}{\nu}
|\xi|^{2(1-\sigma)}
}
}{ \nu |\xi|^{2 \sigma}}\chi_{L} 
\right] \ast g 
\right) 
\right\|_{2} \\
& \le C \left\| 
|\xi|^{k}
\partial_{t}^{\ell} 
\left(
\J_{2L}(t,\xi)
-
\frac{
e^{
-\frac{t}{\nu}
|\xi|^{2(1-\sigma)
}
}
}{ \nu |\xi|^{2 \sigma}}\chi_{L} \right)  \hat{g}
\right\|_{2} \\
& \le C
t \| 
e^{
-c(1+t)
|\xi|^{2(1-\sigma)}
}
|\xi|^{2(2-3\sigma)+2 \ell(1-\sigma)-2 \sigma+k-\tilde{k}} 
\chi_{L}
|\xi|^{\tilde{k}} \hat{g}
\|_{2} \\
& + \|
e^{
-c(1+t)
|\xi|^{2(1-\sigma)}
}
|\xi|^{2(1-2\sigma)+2 \ell(1-\sigma) -2 \sigma+k-\tilde{k}}
\chi_{L}
|\xi|^{\tilde{k}} \hat{g}
\|_{2} \\
& \le 
Ct
\left\|
 e^{
-c(1+t)
|\xi|^{2(1-\sigma)}
}
|\xi|^{2(2-3\sigma)+ 2 \ell(1-\sigma)+k-\tilde{k}} 
\chi_{L} \|_{\frac{2r}{2-r} }
\| |\xi|^{\tilde{k} } \hat{g} \right\|_{r'} \\
& +C
\left\|
 e^{
-c(1+t)
|\xi|^{2(1-\sigma)}
}
 |\xi|^{2(1-2\sigma)+2 \ell(1-\sigma)-2 \sigma+k-\tilde{k}} 
\chi_{L} \|_{\frac{2r}{2-r}}
\| |\xi|^{\tilde{k} }\hat{g} \right\|_{r'} \\
& \le C(1+t)^{-\frac{n}{2(1-\sigma)}(\frac{1}{r}-\frac{1}{2})
-\frac{1-3\sigma}{1-\sigma} - \ell- \frac{k-\tilde{k}}{2(1-\sigma)}}
\| \nabla^{\tilde{k}}_{x} g \|_{r},
\end{split}
\end{equation*}
which is the desired estimate \eqref{eq:6.2}, and the lemma follows.  
\end{proof}
The following lemma is the estimates for the error factor in $\sigma \in (0, \displaystyle{\frac{1}{2}})$, 
which are direct consequences of \eqref{eq:5.1} and \eqref{eq:5.3}.
\begin{lem}
Let $n \ge 1$, $k \ge \tilde{k} \ge 0$, $\ell=0,1$, $1 \le r \le 2$, 
$\nu>0$ and $\sigma \in (0, \displaystyle{\frac{1}{2}})$. 
Then it holds that
\begin{align} \label{eq:6.5}
& \left\| 
\partial_{t}^{\ell} 
\nabla_{x}^{k}
\mathcal{F}^{-1}
\left[
e^{
-\frac{t}{\nu}
|\xi|^{2(1-\sigma)}
}
(\chi_{M}+\chi_{H})
\right] \ast g
\right\|_{2}  \le Ce^{-ct}t^{-\frac{n}{2(1-\sigma)}(\frac{1}{r}-\frac{1}{2})- \frac{k-\tilde{k}}{2(1-\sigma)}}
\| \nabla^{\tilde{k}+2 \ell(1-\sigma)}_{x} g \|_{r}, \\
& \left\|
\partial_{t}^{\ell} 
\nabla_{x}^{k}
\mathcal{F}^{-1} 
\left[
\frac{
e^{
-\frac{t}{\nu}
|\xi|^{2(1-\sigma)}
}
}{ \nu |\xi|^{2 \sigma}}(\chi_{M}+\chi_{H})
\right] \ast g 
\right\|_{2} \le Ce^{-ct}t^{-\frac{n}{2(1-\sigma)}(\frac{1}{r}-\frac{1}{2})
+\frac{\sigma}{1-\sigma} - \ell- \frac{k-\tilde{k}}{2(1-\sigma)}}
\| \nabla^{\tilde{k}}_{x} g \|_{r}. \label{eq:6.6}
\end{align}
\end{lem}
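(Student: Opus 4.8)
The plan is to reduce both estimates to the frequency-space inequalities \eqref{eq:5.1} and \eqref{eq:5.3}, exactly as in the proofs of Lemmas \ref{lem:5.3} and \ref{lem:5.5}. First I would invoke the Plancherel theorem to transfer each left-hand side to the Fourier side; for the first estimate this turns it, up to a constant, into $\| \, |\xi|^{k} \partial_{t}^{\ell}[ e^{-\frac{t}{\nu}|\xi|^{2(1-\sigma)}}](\chi_{M}+\chi_{H}) \hat{g} \, \|_{2}$. Performing the time differentiation gives $\partial_{t}^{\ell} e^{-\frac{t}{\nu}|\xi|^{2(1-\sigma)}} = (-\frac{1}{\nu}|\xi|^{2(1-\sigma)})^{\ell} e^{-\frac{t}{\nu}|\xi|^{2(1-\sigma)}}$, so differentiation contributes precisely the weight $|\xi|^{2\ell(1-\sigma)}$; for the second multiplier $\frac{e^{-\frac{t}{\nu}|\xi|^{2(1-\sigma)}}}{\nu|\xi|^{2\sigma}}(\chi_{M}+\chi_{H})$ one picks up in addition the factor $|\xi|^{-2\sigma}$.

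The decisive step is then the bookkeeping of the powers of $|\xi|$ before splitting by Hölder's inequality \eqref{eq:5.1}. For \eqref{eq:6.5} I would assign $|\xi|^{\tilde{k}+2\ell(1-\sigma)}$ to $\hat{g}$ and keep only $|\xi|^{k-\tilde{k}}$ on the multiplier, so that \eqref{eq:5.1} yields
\[ \| \, |\xi|^{k-\tilde{k}} e^{-\frac{t}{\nu}|\xi|^{2(1-\sigma)}}(\chi_{M}+\chi_{H}) \, \|_{\frac{2r}{2-r}} \; \| \, |\xi|^{\tilde{k}+2\ell(1-\sigma)} \hat{g} \, \|_{r'}. \]
Bounding the first factor by \eqref{eq:5.3} with $\alpha=2(1-\sigma)$, $\beta=k-\tilde{k}$ and $s=t$ produces $Ce^{-ct} t^{-\frac{n}{2(1-\sigma)}(\frac{1}{r}-\frac{1}{2})-\frac{k-\tilde{k}}{2(1-\sigma)}}$, while the Hausdorff--Young inequality bounds the second factor by $C\|\nabla^{\tilde{k}+2\ell(1-\sigma)}_{x} g\|_{r}$, which is exactly \eqref{eq:6.5}. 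For \eqref{eq:6.6} I would instead leave the whole weight $|\xi|^{k-\tilde{k}+2\ell(1-\sigma)-2\sigma}$ on the multiplier and assign only $|\xi|^{\tilde{k}}$ to $\hat{g}$; applying \eqref{eq:5.3} now with $\beta=k-\tilde{k}+2\ell(1-\sigma)-2\sigma$ and simplifying the exponent via $-\frac{2\ell(1-\sigma)-2\sigma}{2(1-\sigma)}=-\ell+\frac{\sigma}{1-\sigma}$ reproduces the decay rate $-\frac{n}{2(1-\sigma)}(\frac{1}{r}-\frac{1}{2})+\frac{\sigma}{1-\sigma}-\ell-\frac{k-\tilde{k}}{2(1-\sigma)}$ of \eqref{eq:6.6}.

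There is no genuine analytic difficulty: the exponential gain $e^{-ct}$ is automatic since $|\xi|\ge \rho/2$ on $\supp(\chi_{M}+\chi_{H})$, whence $e^{-\frac{t}{\nu}|\xi|^{2(1-\sigma)}}$ decays exponentially in $t$ uniformly in $\xi$ there, and this is already packaged inside \eqref{eq:5.3}. The only points deserving attention are the asymmetric placement of the $|\xi|^{2\ell(1-\sigma)}$ weight — absorbed into the data norm in \eqref{eq:6.5} (raising its order to $\tilde{k}+2\ell(1-\sigma)$ and eliminating the $\ell$-dependence of the decay rate), but retained on the multiplier in \eqref{eq:6.6} (producing the $-\ell$ term) — and the fact that, since $\tilde{k}+2\ell(1-\sigma)$ need not be an integer, $\nabla^{s}_{x}$ must be read as the homogeneous fractional derivative $\mathcal{F}^{-1}[|\xi|^{s}\hat{g}]$, for which the Hausdorff--Young bound $\| \, |\xi|^{s}\hat{g} \, \|_{r'}\le C\|\nabla^{s}_{x} g\|_{r}$ is still valid for $1\le r\le 2$.
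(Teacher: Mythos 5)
Your proposal is correct and follows essentially the paper's own proof: Plancherel, the observation that $\partial_{t}^{\ell}$ of the Gauss-type kernel contributes exactly the weight $|\xi|^{2\ell(1-\sigma)}$, H\"older's inequality \eqref{eq:5.1}, and \eqref{eq:5.3} with $\alpha=2(1-\sigma)$, $s=t$, including the same asymmetric bookkeeping (weight absorbed into the data norm for \eqref{eq:6.5}, kept on the multiplier for \eqref{eq:6.6}). The only caveat --- your $\beta=k-\tilde{k}+2\ell(1-\sigma)-2\sigma$ in \eqref{eq:6.6} can be negative while Lemma \ref{lem:5.2} assumes $\beta\ge 0$ --- is harmless because $|\xi|$ is bounded below on $\supp(\chi_{M}+\chi_{H})$, and the paper's omitted ``similar'' proof of \eqref{eq:6.6} implicitly relies on the same extension.
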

\begin{proof}
Since the proof of \eqref{eq:6.5} and \eqref{eq:6.6} is similar, we only show \eqref{eq:6.5}. 
Indeed, \eqref{eq:5.1}, and \eqref{eq:5.3} with $C_{0}= \frac{1}{\nu}$, $s=t$, $\alpha=2(1-\sigma)$,
$\beta=k-\tilde{k}$ give
\begin{align*}
& \left\| 
\partial_{t}^{\ell} 
\nabla_{x}^{k}
\mathcal{F}^{-1}
\left[
e^{
-\frac{t}{\nu}
|\xi|^{2(1-\sigma)}
}
(\chi_{M}+\chi_{H})
\right] \ast g
\right\|_{2}  \\
& \le C 
\left\| 
|\xi|^{k-\tilde{k}}
e^{
-\frac{t}{\nu}
|\xi|^{2(1-\sigma)}
}
(\chi_{M}+\chi_{H})
|\xi|^{\tilde{k}+2 \ell(1-\sigma)} \hat{g}
\right\|_{2} \\
& \le C \left\| 
|\xi|^{k-\tilde{k}}
e^{
-\frac{t}{\nu}
|\xi|^{2(1-\sigma)}
}
(\chi_{M}+\chi_{H})
\right\|_{\frac{2r}{2-r}}
\| |\xi|^{\tilde{k}+2 \ell (1-\sigma)} \hat{g} \|_{r'}
\\
& \le Ce^{-ct}t^{-\frac{n}{2(1-\sigma)}(\frac{1}{r}-\frac{1}{2})
- \frac{k-\tilde{k}}{2(1-\sigma)}}
\| \nabla^{\tilde{k}+2 \ell (1-\sigma)}_{x} g \|_{r},
\end{align*}
which is the desired estimate \eqref{eq:6.5}, and the proof is now complete.
\end{proof}


\subsection{Approximation of the operator localized near low frequency part for $\sigma \in (1/2,1]$}
The operators $K_{1L}(t)g$ and $K_{3L}(t)g$ for $\sigma \in (1/2,1]$ localized near low frequency area  
are approximated by $\mathcal{F}^{-1} \left[
e^{-\frac{\nu t |\xi|^{2 \sigma}}{2}} \cos (t |\xi|) \chi_{L} \right] \ast g$
and 
$ \mathcal{F}^{-1} \left[e^{-\frac{\nu t |\xi|^{2 \sigma}}{2}}
\frac{\sin (t |\xi|)}{|\xi|} \chi_{L} \right] \ast g$
for large $t$, respectively.
\begin{lem} \label{lem:6.3}
Let $n \ge 1$, $k \ge \tilde{k} \ge 0$, $1 \le r \le 2$, 
$\nu>0$ and $\sigma \in (\displaystyle{\frac{1}{2}},1]$. 
Then, there exist $C>0$ and $c>0$ such that 
\begin{equation} 
\begin{split}
\label{eq:6.7}
&  \left\| 
\nabla^{k}_{x} \left(
 K_{1L}(t)g
-
\mathcal{F}^{-1} \left[
e^{-\frac{\nu t |\xi|^{2 \sigma}}{2}} \cos (t |\xi|) \chi_{L} \right] \ast g
\right)
\right\|_{2} \\
& \le C (1+t)^{-\frac{n}{2 \sigma}(\frac{1}{r}-\frac{1}{2})-\frac{k-\tilde{k}}{2 \sigma}-1+\frac{1}{2 \sigma} }  
\| \nabla^{\tilde{k}}_{x} g \|_{r},
\end{split}
\end{equation}
\begin{equation} 
\begin{split}
& \left\| 
\nabla^{k}_{x} \left(
K_{3L}(t,\xi)
- \mathcal{F}^{-1} \left[e^{-\frac{\nu t |\xi|^{2 \sigma}}{2}}
\frac{\sin (t |\xi|)}{|\xi|} \chi_{L} \right] \ast g
\right)
\right\|_{2} \\
& \le C (1+t)^{-\frac{n}{2 \sigma}(\frac{1}{r}-\frac{1}{2}) -1-\frac{k-\tilde{k}}{2 \sigma} +\frac{1}{\sigma}}
\| \nabla^{\tilde{k}}_{x} g \|_{r},
 \label{eq:6.8}
\end{split}
\end{equation}
where $K_{1L}(t)g$ and $K_{3L}(t)g$ are defined by \eqref{eq:2.10}.
\end{lem}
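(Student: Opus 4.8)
The plan is to reduce each approximation estimate to a pointwise Fourier-side bound already established in Lemma \ref{lem:4.3}, combined with the $L^2$–$L^r$ mechanism packaged in Lemmas \ref{Lem:5.1} and \ref{lem:5.2}. This is exactly the same strategy used in the proof of \eqref{eq:6.1}–\eqref{eq:6.2} for the case $\sigma\in(0,1/2)$, so the argument is structural rather than computational.

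First I would treat \eqref{eq:6.7}. Writing the difference of operators on the Fourier side and invoking the Plancherel formula, the left-hand side equals $\big\| |\xi|^k\big(\K_{1L}(t,\xi)-e^{-\frac{\nu t|\xi|^{2\sigma}}{2}}\cos(t|\xi|)\chi_L\big)\hat g\big\|_2$. The pointwise estimate \eqref{eq:4.13} controls the bracketed multiplier by $Ce^{-c(1+t)|\xi|^{2\sigma}}\,t\,|\xi|^{4\sigma-1}$ on $\supp\chi_L$. I would then peel off $|\xi|^{\tilde k}\hat g$, apply the Hölder splitting \eqref{eq:5.1} with exponents $\tfrac{2r}{2-r}$ and $r'$, and bound the remaining factor $\big\|e^{-c(1+t)|\xi|^{2\sigma}}|\xi|^{k-\tilde k+4\sigma-1}\chi_L\big\|_{\frac{2r}{2-r}}$ by \eqref{eq:5.2} with $C_0=c$, $s=1+t$, $\alpha=2\sigma$, and $\beta=k-\tilde k+4\sigma-1$. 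The extra factor of $t$ from \eqref{eq:4.13} contributes $+1$ to the time exponent, and the gain $4\sigma-1$ in $\beta$ contributes $-\tfrac{4\sigma-1}{2\sigma}=-2+\tfrac{1}{2\sigma}$ to the decay rate; together these yield the advertised exponent $-\tfrac{n}{2\sigma}(\tfrac1r-\tfrac12)-\tfrac{k-\tilde k}{2\sigma}-1+\tfrac{1}{2\sigma}$.

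For \eqref{eq:6.8} the procedure is identical, now using the second pointwise bound \eqref{eq:4.14}, which controls the multiplier $\K_{3L}(t,\xi)-e^{-\frac{\nu t|\xi|^{2\sigma}}{2}}\tfrac{\sin(t|\xi|)}{|\xi|}\chi_L$ by $Ce^{-c(1+t)|\xi|^{2\sigma}}|\xi|^k(t|\xi|^{4\sigma-2}+|\xi|^{4\sigma-3})$. The two summands produce two applications of \eqref{eq:5.2} with $\beta=k-\tilde k+4\sigma-2$ (carrying a factor $t$) and $\beta=k-\tilde k+4\sigma-3$, respectively. One then checks that for large $t$ the first term dominates and matches the claimed rate $-\tfrac{n}{2\sigma}(\tfrac1r-\tfrac12)-1-\tfrac{k-\tilde k}{2\sigma}+\tfrac1\sigma$; the second term decays strictly faster and is absorbed.

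The only genuine point requiring care, and the step I expect to be the main obstacle, is the bookkeeping of exponents when the two summands in \eqref{eq:4.14} compete: one must verify that replacing the factor $t$ by a gain of one extra power of $|\xi|^{2\sigma}$ (equivalently, of $(1+t)^{-1}$ after integration) really does make the $|\xi|^{4\sigma-3}$-term subordinate, so that a single clean rate survives. Everything else is a transcription of the $\sigma\in(0,1/2)$ argument with $2(1-\sigma)$ replaced by $2\sigma$ throughout. Since the proof is a direct repetition of the earlier scheme, I would close by remarking that \eqref{eq:6.8} follows by the same reasoning and omit the fully parallel computation.
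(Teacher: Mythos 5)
Your proposal is correct and follows essentially the same route as the paper: Plancherel plus the pointwise bounds \eqref{eq:4.13}--\eqref{eq:4.14}, the H\"older splitting \eqref{eq:5.1}, and \eqref{eq:5.2} with $\alpha=2\sigma$ and exactly the $\beta$-values the paper uses. Your "main obstacle" is also the paper's one explicit check, namely that the $|\xi|^{4\sigma-3}$-term is subordinate because $-1+\frac{1}{2\sigma}<0$ for $\sigma\in(\frac{1}{2},1]$.
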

\begin{proof}
At first, we prove \eqref{eq:6.7}.
We apply the estimates \eqref{eq:5.1}, \eqref{eq:4.13} 
and \eqref{eq:5.2} with $C_{0}=c$, $\alpha= 2\sigma$, $\beta=k-\tilde{k}+4 \sigma-1$
to see
\begin{equation*} 
\begin{split}
&  \left\| 
\nabla^{k}_{x} \left(
 K_{1L}(t)g
-
\mathcal{F}^{-1} \left[
e^{-\frac{\nu t |\xi|^{2 \sigma}}{2}} \cos (t |\xi|) \chi_{L} \right] \ast g
\right)
\right\|_{2} \\
& \le C  \| e^{-c (1+t) |\xi|^{2 \sigma}} t |\xi|^{k+4 \sigma-1} \chi_{L}\hat{g} \|_{2} \\
& \le C  \| e^{-c (1+t) |\xi|^{2 \sigma}} t |\xi|^{k-\tilde{k}+4 \sigma-1} \chi_{L} 
\|_{\frac{2r}{2-r}} \| |\xi|^{\tilde{k}} \hat{g} \|_{r'} \\
& \le C t(1+t)^{-\frac{n}{2 \sigma}(\frac{1}{r}-\frac{1}{2})-\frac{k-\tilde{k}}{2 \sigma}-2+\frac{1}{2 \sigma} }  
\| \nabla^{\tilde{k}}_{x} g \|_{r}, 
\end{split}
\end{equation*}
which is the desired estimate \eqref{eq:6.7}.
Next, we show \eqref{eq:6.8}.
Again we use \eqref{eq:5.1}, \eqref{eq:4.14} and \eqref{eq:5.2} 
with $C_{0}=c$, $\alpha= 2 \sigma$, $\beta= 4 \sigma-2+k-\tilde{k}$ for the first factor, and with $C_{0}=c$, $\alpha= 2 \sigma$, $\beta=4 \sigma-3 + k- \tilde{k} $ for the second factor
to obtain 
\begin{equation*} 
\begin{split}
& \left\| 
\nabla^{k}_{x} \left(
K_{3L}(t,\xi)
- \mathcal{F}^{-1} \left[e^{-\frac{\nu t |\xi|^{2 \sigma}}{2}}
\frac{\sin (t |\xi|)}{|\xi|} \chi_{L} \right] \ast g
\right)
\right\|_{2} \\
& \le C t \| e^{-c (1+t) |\xi|^{2 \sigma}} |\xi|^{k+4 \sigma -2} \chi_{L}\hat{g} \|_{2}
+ C \| e^{-c (1+t) |\xi|^{2 \sigma}} |\xi|^{4 \sigma-3} \chi_{L}\hat{g} \|_{2} \\
& \le C (t \| e^{-c (1+t) |\xi|^{2 \sigma}} |\xi|^{k-\tilde{k}+4 \sigma -2} \chi_{L} \|_{\frac{2r}{2-r}}
+ \| e^{-c (1+t) |\xi|^{2 \sigma}} |\xi|^{k-\tilde{k}+4 \sigma -3} \chi_{L} \|_{\frac{2r}{2-r}})
\| |\xi|^{\tilde{k}} \hat{g} \|_{r'} \\
& \le C  ((1+t)^{-\frac{n}{2 \sigma}(\frac{1}{r}-\frac{1}{2}) -1-\frac{k-\tilde{k}}{2 \sigma} +\frac{1}{\sigma}}
+ (1+t)^{-\frac{n}{2 \sigma}(\frac{1}{r}-\frac{1}{2}) -2-\frac{k-\tilde{k}}{2 \sigma} +\frac{3}{2\sigma}})
\| \nabla_{x}^{\tilde{k}} g \|_{r} \\
& \le C (1+t)^{-\frac{n}{2 \sigma}(\frac{1}{r}-\frac{1}{2}) -1-\frac{k-\tilde{k}}{2 \sigma} +\frac{1}{\sigma}}
\| \nabla^{\tilde{k}}_{x} g \|_{r}, 
\end{split}
\end{equation*}
where we have just used the fact that $-1+\frac{1}{2\sigma}<0$ in the last inequality.
This establishes \eqref{eq:6.8}, and the proof is complete. 
\end{proof}
\begin{lem} \label{lem:6.4}
Let $n \ge 1$, $k \ge \tilde{k} \ge 0$, $1 \le r \le 2$, 
$\nu>0$ and $\sigma \in (\displaystyle{\frac{1}{2}},1]$. 
Then, there exist $C>0$ and $c>0$ such that 
\begin{equation} 
\begin{split}
\label{eq:6.9}
& \left\| 
\nabla^{k}_{x} \left(
 \partial_{t} K_{1L}(t)g
+
\nabla_{x}
\mathcal{F}^{-1} \left[
e^{-\frac{\nu t |\xi|^{2 \sigma}}{2}} \sin (t |\xi|) \chi_{L} 
\right] \ast g
\right)
\right\|_{2} \\
& \le C(1+t)^{-\frac{n}{2 \sigma}(\frac{1}{r}-\frac{1}{2})-1-\frac{k-\tilde{k}}{2 \sigma}}
\| \nabla_{x}^{\tilde{k}}g \|_{r}, 
\end{split}
\end{equation}
\begin{equation} 
\begin{split}
& \left\| 
\nabla^{k}_{x} \left(
\partial_{t}
K_{3L}(t)g
- \mathcal{F}^{-1} \left[
e^{-\frac{\nu t |\xi|^{2 \sigma}}{2}} \cos (t |\xi|)\chi_{L} 
\right] \ast g
\right)
\right\|_{2} \\
& \le C (1+t)^{-\frac{n}{2 \sigma}(\frac{1}{r}-\frac{1}{2})-1+\frac{1}{2 \sigma}-\frac{k-\tilde{k}}{2 \sigma}}
\| \nabla_{x}^{\tilde{k}}g \|_{r},
 \label{eq:6.10}
\end{split}
\end{equation}
where $K_{1L}(t)g$ and $K_{3L}(t)g$ are defined by \eqref{eq:2.10}.
\end{lem}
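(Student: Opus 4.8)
The plan is to pass to the Fourier side by Plancherel's theorem and then feed in the sharp point-wise estimates of Lemma \ref{lem:4.4}, combined with the H\"older-type inequality \eqref{eq:5.1} and the frequency-localized bound \eqref{eq:5.2}, in exactly the manner used for Lemma \ref{lem:6.3}. The decisive point has in fact already been isolated in Lemma \ref{lem:4.4}: in the present notation $\nabla_x^k$ corresponds to the Fourier multiplier $|\xi|^k$, so the symbol of $\nabla_x \F^{-1}\!\left[e^{-\frac{\nu t |\xi|^{2 \sigma}}{2}} \sin (t |\xi|) \chi_L\right]$ is precisely $|\xi|\, e^{-\frac{\nu t |\xi|^{2 \sigma}}{2}} \sin (t |\xi|) \chi_L$, which is exactly the correction term in \eqref{eq:4.19}. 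Consequently the symbol of the quantity inside the norm in \eqref{eq:6.9} is the combination $\partial_t \K_{1L}(t,\xi) + |\xi|\, e^{-\frac{\nu t |\xi|^{2 \sigma}}{2}} \sin (t |\xi|)\chi_L$ estimated in \eqref{eq:4.19}, and likewise the symbol relevant to \eqref{eq:6.10} is $\partial_t \K_{3L}(t,\xi) - e^{-\frac{\nu t |\xi|^{2 \sigma}}{2}} \cos (t |\xi|)\chi_L$ controlled by \eqref{eq:4.20}.

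Concretely, for \eqref{eq:6.9} I would first rewrite, via Plancherel, the left-hand side as $\big\| |\xi|^{k}\big(\partial_t \K_{1L}(t,\xi)+|\xi| e^{-\frac{\nu t |\xi|^{2 \sigma}}{2}}\sin(t|\xi|)\chi_L\big)\hat g\big\|_2$, and then dominate the integrand by $C e^{-c(1+t)|\xi|^{2\sigma}}|\xi|^{k}(t|\xi|^{4\sigma}+|\xi|^{2\sigma})\chi_L |\hat g|$ using \eqref{eq:4.19}. Factoring out $|\xi|^{\tilde k}\hat g$ and applying \eqref{eq:5.1} followed by \eqref{eq:5.2} with $\alpha=2\sigma$ to each summand (taking $\beta=k-\tilde k+4\sigma$, carrying the prefactor $t$, for the first, and $\beta=k-\tilde k+2\sigma$ for the second) yields the two contributions $t(1+t)^{-\frac{n}{2\sigma}(\frac1r-\frac12)-\frac{k-\tilde k}{2\sigma}-2}$ and $(1+t)^{-\frac{n}{2\sigma}(\frac1r-\frac12)-\frac{k-\tilde k}{2\sigma}-1}$, each multiplied by $\|\nabla_x^{\tilde k}g\|_r$.

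The routine but essential observation is that $t(1+t)^{-2}\le C(1+t)^{-1}$, so the $t$-weighted term is absorbed and both summands collapse to the common rate $(1+t)^{-\frac{n}{2\sigma}(\frac1r-\frac12)-1-\frac{k-\tilde k}{2\sigma}}$, which is \eqref{eq:6.9}. The bound \eqref{eq:6.10} is obtained identically, now starting from \eqref{eq:4.20}: the two frequency powers become $t|\xi|^{k+4\sigma-1}$ and $|\xi|^{k+2\sigma-1}$, and \eqref{eq:5.2} with $\alpha=2\sigma$ produces, after the same $t(1+t)^{-2}\lesssim(1+t)^{-1}$ absorption, the common decay $(1+t)^{-\frac{n}{2\sigma}(\frac1r-\frac12)-1+\frac{1}{2\sigma}-\frac{k-\tilde k}{2\sigma}}$.

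I do not expect a serious analytic obstacle, precisely because the delicate step has been settled in Lemma \ref{lem:4.4}: as stressed in the remark preceding it, $\partial_t \K_{1L}$ is approximated by an $|\xi|\sin(t|\xi|)$-type symbol rather than by the naive time derivative of the $\cos$-approximant, the latter of which would generate an uncontrolled $|\xi|^{2\sigma}$ factor upon differentiating the damping factor $e^{-\frac{\nu t |\xi|^{2 \sigma}}{2}}$. Thus the only care required here is the exponent bookkeeping and the verification that the $t$-weighted low-frequency term does not degrade the rate, which is handled by the elementary inequality $t(1+t)^{-2}\lesssim(1+t)^{-1}$.
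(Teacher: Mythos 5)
Your proposal is correct and follows essentially the same route as the paper's proof: Plancherel, the point-wise bounds \eqref{eq:4.19} and \eqref{eq:4.20}, then \eqref{eq:5.1} and \eqref{eq:5.2} with $\alpha=2\sigma$ and the same choices of $\beta$ for each summand, with the $t$-weighted term absorbed via $t(1+t)^{-2}\le C(1+t)^{-1}$. Your exponent bookkeeping matches the paper's (and is in fact cleaner than the paper's display for \eqref{eq:6.10}, which contains a harmless typo $|\xi|^{2+k}$ where $|\xi|^{2\sigma-1+k}$ is meant).
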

%
\begin{proof}
In order to show \eqref{eq:6.9},
we simply apply \eqref{eq:4.19}, \eqref{eq:5.1} and \eqref{eq:5.2} with 
$C_{0}=c$, $\alpha= 2 \sigma$, $\beta=4 \sigma+ k- \tilde{k}$ for the first factor, 
and with 
$C_{0}=c$, $\alpha= 2 \sigma$, $\beta= 2 \sigma+k- \tilde{k}$ for the second factor.
Then we see 
\begin{equation*} 
\begin{split}
& \left\| 
\nabla^{k}_{x} \left(
 \partial_{t} K_{1L}(t)g
+
\nabla_{x}
\mathcal{F}^{-1} \left[
e^{-\frac{\nu t |\xi|^{2 \sigma}}{2}} \sin (t |\xi|) \chi_{L} 
\right] \ast g
\right)
\right\|_{2} \\
& \le C t \|e^{-c (1+t) |\xi|^{2 \sigma}} |\xi|^{4 \sigma+k}  \chi_{L}\hat{g} \|_{2}
+ C \|e^{-c (1+t) |\xi|^{2 \sigma}} |\xi|^{2 \sigma+k}\chi_{L}\hat{g} \|_{2}\\ 
& \le C (t \|e^{-c (1+t) |\xi|^{2 \sigma}} |\xi|^{4 \sigma+k-\tilde{k} } \|_{\frac{2r}{2-r}}
+ C \|e^{-c (1+t) |\xi|^{2 \sigma}} |\xi|^{2 \sigma+k-\tilde{k}}\chi_{L}\|_{\frac{2r}{2-r}} )
\| |\xi|^{\tilde{k}} \hat{g} \|_{r'} \\ 
& \le C(1+t)^{-\frac{n}{2 \sigma}(\frac{1}{r}-\frac{1}{2})-1-\frac{k-\tilde{k}}{2 \sigma}}
\| \nabla_{x}^{\tilde{k}}g \|_{r}, 
\end{split}
\end{equation*}
which is the desired estimate \eqref{eq:6.9}.
The same proof is also valid for \eqref{eq:6.10}.
Indeed, 
we again apply \eqref{eq:4.20}, \eqref{eq:5.1} and \eqref{eq:5.2} with 
$C_{0}=c$, $\alpha= 2 \sigma$, $\beta= 4 \sigma-1+k- \tilde{k}$ for the first factor, 
and with 
$C_{0}=c$, $\alpha= 2 \sigma$, $\beta= 2 \sigma-1+k- \tilde{k}$ for the second factor.
Then we see 
\begin{equation*} 
\begin{split}
&
\left\| 
\nabla^{k}_{x} \left(
\partial_{t}
K_{3L}(t)g
- \mathcal{F}^{-1} \left[
e^{-\frac{\nu t |\xi|^{2 \sigma}}{2}} \cos (t |\xi|)\chi_{L} 
\right] \ast g
\right)
\right\|_{2} 
 \\
& \le C t \|e^{-c (1+t) |\xi|^{2 \sigma}} |\xi|^{4 \sigma-1+k}  \chi_{L}\hat{g} \|_{2}
+ C \|e^{-c (1+t) |\xi|^{2 \sigma}} |\xi|^{2+k}\chi_{L}\hat{g} \|_{2}\\ 
& \le C (t \|e^{-c (1+t) |\xi|^{2 \sigma}} |\xi|^{4 \sigma-1+k-\tilde{k} } \|_{\frac{2r}{2-r}}
+ C \|e^{-c (1+t) |\xi|^{2 \sigma}} |\xi|^{2 \sigma-1+k-\tilde{k}}\chi_{L}\|_{\frac{2r}{2-r}} )
\| |\xi|^{\tilde{k}} \hat{g} \|_{r'} \\ 
& \le C( t(1+t)^{-\frac{n}{2 \sigma}(\frac{1}{r}-\frac{1}{2})-2+\frac{1}{2 \sigma}-\frac{k-\tilde{k}}{2 \sigma}}
+ 
(1+t)^{-\frac{n}{2 \sigma}(\frac{1}{r}-\frac{1}{2})-1+\frac{1}{2\sigma}-\frac{k-\tilde{k}}{2 \sigma}})
\| \nabla_{x}^{\tilde{k}}g \|_{r} \\
& \le C (1+t)^{-\frac{n}{2 \sigma}(\frac{1}{r}-\frac{1}{2})-1+\frac{1}{2 \sigma}-\frac{k-\tilde{k}}{2 \sigma}}
\| \nabla_{x}^{\tilde{k}}g \|_{r}, 
\end{split}
\end{equation*}
which is the desired estimate \eqref{eq:6.10}, and the proof is now complete.
\end{proof}
The error factor for $\sigma \in (\displaystyle{\frac{1}{2}}, 1]$ is estimated as follow.  
\begin{lem} \label{lem:6.5}
Let $n \ge 3$, $k \ge \tilde{k} \ge 0$, $1 \le r \le 2$, 
$\nu>0$ and $\sigma \in (\displaystyle{\frac{1}{2}},1]$. 
Then, there exist $C>0$ and $c>0$ such that 
\begin{align} 
\label{eq:6.11}
& \left\| 
\nabla^{k}_{x} \left(
\mathcal{F}^{-1} \left[
e^{-\frac{\nu t |\xi|^{2 \sigma}}{2}} \cos (t |\xi|) (\chi_{M}+\chi_{H}) \right] \ast g
\right)
\right\|_{2} \le C e^{-ct} t^{-\frac{n}{2 \sigma}(\frac{1}{r}-\frac{1}{2})-\frac{k-\tilde{k}}{2 \sigma} }
\| \nabla_{x}^{\tilde{k}} g \|_{r}, \\
& \left\| 
\nabla^{k}_{x} \left(
\mathcal{F}^{-1} \left[
e^{-\frac{\nu t |\xi|^{2 \sigma}}{2}} \sin (t |\xi|) (\chi_{M}+\chi_{H}) \right] \ast g
\right)
\right\|_{2} \le C e^{-ct} t^{-\frac{n}{2 \sigma}(\frac{1}{r}-\frac{1}{2})-\frac{k-\tilde{k}}{2 \sigma} }
\| \nabla_{x}^{\tilde{k}} g \|_{r}, \label{eq:6.12} \\
& \left\| 
\nabla^{k}_{x} \left(
\mathcal{F}^{-1} \left[e^{-\frac{\nu t |\xi|^{2 \sigma}}{2}}
\frac{\sin (t |\xi|)}{|\xi|} (\chi_{M}+\chi_{H}) \right] \ast g
\right)
\right\|_{2} \le C e^{-ct} t^{-\frac{n}{2 \sigma}(\frac{1}{r}-\frac{1}{2})-\frac{k-\tilde{k}}{2 \sigma} }
\| \nabla_{x}^{(\tilde{k}-1)_{+}} g \|_{r}, \label{eq:6.13} 
\end{align}
where $(k-1)_{+}=\max\{ k-1,0\}$.
\end{lem}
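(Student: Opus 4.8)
The plan is to push all three estimates onto the Fourier side by Plancherel's theorem and then to exploit that on $\supp(\chi_{M}+\chi_{H})$ one has $|\xi|\ge \rho/2$, so that the parabolic factor $e^{-\frac{\nu t}{2}|\xi|^{2\sigma}}$ already forces genuine exponential decay in $t$, while the oscillatory factors $\cos(t|\xi|)$ and $\sin(t|\xi|)$ are harmlessly bounded by $1$. This is precisely the content of the middle/high-frequency bound \eqref{eq:5.3}, so each inequality reduces to a one-line application of the H\"older inequality \eqref{eq:5.1} followed by \eqref{eq:5.3}, exactly as in the proof of Lemma~\ref{lem:5.9}. I would also record at the outset that, because every bound carries the factor $e^{-ct}$, the particular polynomial power of $t$ is immaterial: for any $a$ and any $c'<c$ one has $e^{-ct}t^{-a}\le C e^{-c't}$, so a polynomial exponent may always be traded off against a slightly smaller exponential rate.

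For \eqref{eq:6.11} I would write, by Plancherel,
\[
\left\| \nabla^{k}_{x}\,\mathcal{F}^{-1}\!\left[e^{-\frac{\nu t}{2}|\xi|^{2\sigma}}\cos(t|\xi|)(\chi_{M}+\chi_{H})\right]\ast g \right\|_{2}
= C\left\| |\xi|^{k-\tilde{k}}\,e^{-\frac{\nu t}{2}|\xi|^{2\sigma}}\cos(t|\xi|)(\chi_{M}+\chi_{H})\,|\xi|^{\tilde{k}}\hat{g} \right\|_{2},
\]
bound $|\cos(t|\xi|)|\le 1$, and split the product by \eqref{eq:5.1} with exponents $\frac{2r}{2-r}$ and $r'$, so that the $g$-factor becomes $\||\xi|^{\tilde{k}}\hat{g}\|_{r'}\le C\|\nabla^{\tilde{k}}_{x}g\|_{r}$ by Hausdorff--Young. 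The remaining multiplier norm is handled by \eqref{eq:5.3} with $C_{0}=\frac{\nu}{2}$, $s=t$, $\alpha=2\sigma$ and $\beta=k-\tilde{k}$, which yields exactly $Ce^{-ct}t^{-\frac{n}{2\sigma}(\frac{1}{r}-\frac{1}{2})-\frac{k-\tilde{k}}{2\sigma}}$. The proof of \eqref{eq:6.12} is word-for-word the same after replacing $\cos$ by $\sin$ and using $|\sin(t|\xi|)|\le 1$.

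For \eqref{eq:6.13} the only new feature is the extra factor $|\xi|^{-1}$, which turns the effective multiplier power into $|\xi|^{k-1}$; the point is to distribute the derivatives so that $g$ carries only $(\tilde{k}-1)_{+}$ of them. When $\tilde{k}\ge 1$ I would split $|\xi|^{k-1}\hat{g}=|\xi|^{k-\tilde{k}}\cdot|\xi|^{\tilde{k}-1}\hat{g}$ and argue as above with $\beta=k-\tilde{k}\ge 0$ and $\||\xi|^{\tilde{k}-1}\hat{g}\|_{r'}\le C\|\nabla^{\tilde{k}-1}_{x}g\|_{r}$, which gives the claimed exponent and norm directly. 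When $\tilde{k}=0$ I would keep $g$ with no derivative ($\|\hat{g}\|_{r'}\le C\|g\|_{r}$) and take $\beta=k-1$ in \eqref{eq:5.3}; this produces $Ce^{-ct}t^{-\frac{n}{2\sigma}(\frac{1}{r}-\frac{1}{2})-\frac{k-1}{2\sigma}}$, and I would absorb the discrepancy $t^{1/(2\sigma)}$ into the exponential via the trade-off noted above to recover the stated exponent $-\frac{k}{2\sigma}=-\frac{k-\tilde{k}}{2\sigma}$.

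There is no serious obstacle: the entire lemma is a packaging of \eqref{eq:5.1} and \eqref{eq:5.3}. The only steps requiring a little care are the derivative bookkeeping in \eqref{eq:6.13} — which is what forces the weaker norm $\|\nabla^{(\tilde{k}-1)_{+}}_{x}g\|_{r}$ and hence the appearance of $(\tilde{k}-1)_{+}$ — and the treatment of the edge case $\tilde{k}=0$, where the negative power $|\xi|^{-1}$ would otherwise spoil the exponent and is rescued by the exponential dominance. No low-frequency singularity arises here, since the integration is over $|\xi|\ge\rho/2$, so the hypothesis $n\ge 3$ plays no essential role in the middle/high-frequency estimate and is retained only for consistency with the companion low-frequency lemmas.
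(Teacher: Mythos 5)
Your proof is correct and follows essentially the same route as the paper's, which likewise proves \eqref{eq:6.11} by Plancherel, the H\"older splitting \eqref{eq:5.1} and the middle/high-frequency bound \eqref{eq:5.3}, and then declares \eqref{eq:6.12}--\eqref{eq:6.13} analogous (your derivative bookkeeping for \eqref{eq:6.13} is more explicit than the paper's). One cosmetic point: in the case $\tilde{k}=k=0$ of \eqref{eq:6.13} your choice $\beta=k-1=-1$ falls outside the hypothesis $\beta\ge 0$ of Lemma \ref{lem:5.2}, but, as you yourself observe, on $\supp(\chi_{M}+\chi_{H})$ one has $|\xi|\ge \rho/2$, so $|\xi|^{-1}\le C$ there and one may simply take $\beta=0$, which gives the stated exponent directly without invoking the polynomial-versus-exponential trade-off.
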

\begin{proof}
Since the same proof works for the estimates \eqref{eq:6.11} - \eqref{eq:6.13}, we only show \eqref{eq:6.11}.
\begin{align*} 
& \left\| 
\nabla^{k}_{x} \left(
\mathcal{F}^{-1} \left[
e^{-\frac{\nu t |\xi|^{2 \sigma}}{2}} \cos (t |\xi|) (\chi_{M}+\chi_{H}) \right] \ast g
\right)
\right\|_{2} \\
& \le C \| e^{-ct |\xi|^{2 \sigma}} |\xi|^{k} (\chi_{M}+\chi_{H}) \hat{g} \|_{2} \\
& \le C \| e^{-ct |\xi|^{2 \sigma}} |\xi|^{k-\tilde{k}} (\chi_{M}+\chi_{H}) \|_{\frac{2r}{2-r}}
\| |\xi|^{\tilde{k}} \hat{g} \|_{r'} \\
& \le C e^{-ct} t^{-\frac{n}{2 \sigma}(\frac{1}{r}-\frac{1}{2})-\frac{k-\tilde{k}}{2 \sigma} }
\| \nabla_{x}^{\tilde{k}} g \|_{r}, 
\end{align*}
which implies \eqref{eq:6.11}.
\end{proof}
%

\subsection{Decay properties and approximation formulas for the evolution operators}
By summarizing estimates obtained in subsection 6.2 we arrive at the estimates for the evolution operators.
First, we mention a series of estimates for $J_{j}(t)g$ for $j=1,2,3,4$.
\begin{prop} \label{prop:6.6}
Let $n \ge 1$,  $\ell=0,1$, $k \ge \tilde{k}_{1} \ge 0$,
$k + \ell \ge \tilde{k}_{2} \ge 0$, $1 \le r_{1}, r_{2} \le 2$, 
$\nu>0$ and $\sigma \in (0, \displaystyle{\frac{1}{2}})$. 
Then it holds that
\begin{equation*} 
\begin{split}
%
\left\| 
\partial_{t}^{\ell} 
\nabla_{x}^{k}
J_{1}(t)g
\right\|_{2}
& \le C(1+t)^{-\frac{n}{2(1-\sigma)}(\frac{1}{r_{1}}-\frac{1}{2})- \ell- \frac{k-\tilde{k}_{1}}{2(1-\sigma)}}
\| \nabla^{\tilde{k}_{1}}_{x} g \|_{r_{1}} \\
& \ 
+ C e^{-ct} t^{-\frac{n}{2 \sigma}(\frac{1}{r_{2}}-\frac{1}{2})-\frac{k+\ell-\tilde{k}_{2}}{2 \sigma} }
\| \nabla^{\tilde{k}_{2}}_{x} g\|_{r_{2}},
\end{split}
\end{equation*}
\begin{equation*} 
\begin{split}
\left\| 
\partial_{t}^{\ell} 
\nabla_{x}^{k}
J_{3}(t)g
\right\|_{2}
& \le C(1+t)^{-\frac{n}{2\sigma}(\frac{1}{r_{1}}-\frac{1}{2})
-\frac{1-2\sigma}{\sigma} - \ell- \frac{k-\tilde{k}_{1}}{2\sigma}}
\| \nabla^{\tilde{k}_{1}}_{x} g \|_{r_{1}} \\
& + C e^{-ct} t^{-\frac{n}{2 \sigma}(\frac{1}{r_{2}}-\frac{1}{2})-\frac{k+\ell-\tilde{k}_{2}}{2 \sigma} }
\| \nabla^{\tilde{k}_{2}}_{x} g\|_{r_{2}}, 
\end{split}
\end{equation*}
where $J_{j}(t)g$ for $j=1,3$ are defined by \eqref{eq:2.4}.
\end{prop}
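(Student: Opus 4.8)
The plan is to exploit the frequency decomposition induced by the partition of unity $\chi_L + \chi_M + \chi_H = 1$, writing each evolution operator as the sum of its low-, middle-, and high-frequency localizations, and then to estimate each piece by the results already established in Section~5. The guiding observation is that the low-frequency part carries the slow polynomial decay, while the middle- and high-frequency parts decay exponentially; the two terms in the statement correspond precisely to these two regimes, which is why we are free to choose different Lebesgue exponents $r_1, r_2$ and different regularity levels $\tilde{k}_1, \tilde{k}_2$ for them.

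First I would write, for $j = 1, 3$,
\[
\partial_t^\ell \nabla_x^k J_j(t) g
= \partial_t^\ell \nabla_x^k J_{jL}(t) g
+ \partial_t^\ell \nabla_x^k J_{jM}(t) g
+ \partial_t^\ell \nabla_x^k J_{jH}(t) g,
\]
which follows directly from \eqref{eq:2.4}--\eqref{eq:2.6}, and then apply the triangle inequality in $L^2$.

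For the low-frequency contribution I would invoke Lemma~\ref{lem:5.3} with $r = r_1$ and $\tilde{k} = \tilde{k}_1$, which produces exactly the first (polynomially decaying) term in each asserted estimate: \eqref{eq:5.4} for $j=1$ and \eqref{eq:5.5} for $j=3$. The hypothesis $k \ge \tilde{k}_1 \ge 0$ guarantees that the weight exponent $\beta = k - \tilde{k}_1$ is admissible when Lemma~\ref{lem:5.2} is applied inside the proof of Lemma~\ref{lem:5.3}. For the middle- and high-frequency contributions I would bound $\|\partial_t^\ell \nabla_x^k J_{jM}(t) g\|_2 + \|\partial_t^\ell \nabla_x^k J_{jH}(t) g\|_2$ by Lemma~\ref{lem:5.5}, estimate \eqref{eq:5.8}, taken with $r = r_2$ and $\tilde{k} = \tilde{k}_2$; this yields the second (exponentially decaying) term $C e^{-ct} t^{-\frac{n}{2\sigma}(\frac{1}{r_2}-\frac{1}{2})-\frac{k+\ell-\tilde{k}_2}{2\sigma}} \|\nabla_x^{\tilde{k}_2} g\|_{r_2}$, the constraint $k + \ell \ge \tilde{k}_2 \ge 0$ being inherited from the hypotheses of Lemma~\ref{lem:5.5}. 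Summing the three bounds gives the claimed inequalities.

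There is no genuine obstacle here: the substantive work was carried out in Section~5, and the present proof is purely a bookkeeping combination of Lemmas~\ref{lem:5.3} and~\ref{lem:5.5}. The only point requiring a little care is the matching of exponents and orders of derivatives across the two regimes---in particular, noticing that the factor $e^{-ct}$ renders the middle/high-frequency term negligible for large $t$, so that permitting the independent pairing $(r_2, \tilde{k}_2)$ there costs nothing in the final large-time rate, which is dictated solely by the low-frequency estimate.
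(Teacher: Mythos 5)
Your proposal is correct and coincides with the paper's own proof, which simply combines the low-frequency bounds \eqref{eq:5.4}--\eqref{eq:5.5} of Lemma \ref{lem:5.3} with the exponentially decaying middle/high-frequency bound \eqref{eq:5.8} of Lemma \ref{lem:5.5} via the decomposition $J_{j}(t)=J_{jL}(t)+J_{jM}(t)+J_{jH}(t)$ and the triangle inequality. The only difference is that the paper states this in one line (``obvious from the combinations of \eqref{eq:5.4}--\eqref{eq:5.9}'') while you spell out the bookkeeping, including the correct independent choices $(r_{1},\tilde{k}_{1})$ and $(r_{2},\tilde{k}_{2})$ for the two regimes.
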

\begin{prop} \label{prop:6.7}
Let $n \ge 2$,  $\ell=0,1$, $k \ge \tilde{k}_{1} \ge 0$,
$k + \ell \ge \tilde{k}_{2} \ge 0$, $1 \le r_{1}, r_{2} \le 2$, 
$\nu>0$ and $\sigma \in (0, \displaystyle{\frac{1}{2}})$. 
Then it holds that
\begin{equation*} 
\begin{split}
\left\|
\partial_{t}^{\ell} 
\nabla_{x}^{k}
J_{2}(t)g 
\right\|_{2} 
& \le C(1+t)^{-\frac{n}{2(1-\sigma)}(\frac{1}{r_{1}}-\frac{1}{2})
+\frac{\sigma}{1-\sigma} - \ell- \frac{k-\tilde{k}_{1}}{2(1-\sigma)}}
\| \nabla^{\tilde{k}_{1}}_{x} g \|_{r_{1}} \\
& +
C e^{-ct} t^{-\frac{n}{2 \sigma}(\frac{1}{r_{2}}-\frac{1}{2})-\frac{k+\ell-\tilde{k}_{2}}{2 \sigma} }
\| \nabla^{(\tilde{k_{2}}-1)_{+}}_{x} g\|_{r_{2}}, 
\end{split}
\end{equation*}
\begin{equation*} 
\begin{split}
\left\|
\partial_{t}^{\ell} 
\nabla_{x}^{k}
J_{4}(t)g 
\right\|_{2} 
& \le C(1+t)^{-\frac{n}{2\sigma}(\frac{1}{r_{1}}-\frac{1}{2})+1- \ell- \frac{k-\tilde{k}_{1}}{2\sigma}}
\| \nabla^{\tilde{k}_{1}}_{x} g \|_{r_{1}} \\
& +
C e^{-ct} t^{-\frac{n}{2 \sigma}(\frac{1}{r_{2}}-\frac{1}{2})-\frac{k+\ell-\tilde{k}_{2}}{2 \sigma} }
\| \nabla^{(\tilde{k_{2}}-1)_{+}}_{x} g\|_{r_{2}}, 
\end{split}
\end{equation*}
where $J_{j}(t)g$ for $j=2,4$ are defined by \eqref{eq:2.4}.
\end{prop}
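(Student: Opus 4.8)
The plan is to exploit the frequency decomposition built into the definitions \eqref{eq:2.5}--\eqref{eq:2.6}. Since $\chi_L + \chi_M + \chi_H \equiv 1$, the full operators split as $J_j(t)g = J_{jL}(t)g + J_{jM}(t)g + J_{jH}(t)g$ for $j = 2,4$, so by the triangle inequality it suffices to estimate each piece separately and then add. The whole point of the statement is that these pieces obey bounds of genuinely different character: the low-frequency part decays only polynomially and governs the large-time behavior, whereas the middle- and high-frequency parts decay exponentially. This is exactly what permits the two summands on the right-hand side to be measured in different data norms, with independent parameters $(r_1,\tilde{k}_1)$ and $(r_2,\tilde{k}_2)$.

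First I would dispatch the low-frequency contribution by invoking Lemma \ref{lem:5.4} verbatim: applying \eqref{eq:5.6} with $r=r_1$ and $\tilde{k}=\tilde{k}_1$ yields precisely the first term of the bound for $J_2$, and \eqref{eq:5.7} produces the corresponding first term for $J_4$. No additional analysis is required here, because Lemma \ref{lem:5.4} has already absorbed the point-wise estimates of Lemma \ref{lem:3.1} together with the integral bound \eqref{eq:5.2}. Next I would treat the middle- and high-frequency parts jointly. Estimate \eqref{eq:5.9} of Lemma \ref{lem:5.5} was framed exactly to bound $\sum_{j=2,4}(\|\partial_{t}^{\ell}\nabla_{x}^{k}J_{jM}(t)g\|_{2}+\|\partial_{t}^{\ell}\nabla_{x}^{k}J_{jH}(t)g\|_{2})$ by a single exponentially decaying quantity; applying it with $r=r_2$ and $\tilde{k}=\tilde{k}_2$ gives the second summand, including the one-derivative loss recorded in $\nabla_{x}^{(\tilde{k}_2-1)_+}g$. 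Summing the two bounds completes the argument.

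The only delicate points are bookkeeping rather than analysis, and I expect no essential obstacle. The hypothesis $n\ge 2$ (in contrast with $n\ge 1$ for $J_1,J_3$ in Proposition \ref{prop:6.6}) is forced by the singular factor $|\xi|^{-2\sigma}$ carried by $\J_{2L}$ and $\J_{4L}$ in Lemma \ref{lem:3.1}: for the weighted $L^{2r/(2-r)}$ norm in \eqref{eq:5.2} to converge near the origin at the most negative admissible power $\beta=-2\sigma$ (the case $\ell=0$, $k=\tilde{k}_1$, $r_1=1$) one needs $n>4\sigma$, which for $\sigma\in(0,\tfrac12)$ is ensured precisely by $n\ge 2$. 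Likewise, the passage from $\tilde{k}_2$ to $(\tilde{k}_2-1)_+$ in the error term reflects the behavior $\lambda_{+}-\lambda_{-}\sim 2i|\xi|$ as $|\xi|\to\infty$, i.e. the $|\xi|^{-1}$ weight in $\J_{2H},\J_{4H}$ isolated in Lemma \ref{lem:3.2}. It remains only to check that these index constraints are compatible with the standing hypotheses $k\ge\tilde{k}_1\ge 0$ and $k+\ell\ge\tilde{k}_2\ge 0$, which they are by construction.
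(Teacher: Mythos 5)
Your proposal is correct and follows essentially the same route as the paper, whose proof of Propositions \ref{prop:6.6} and \ref{prop:6.7} is precisely the combination of the low-frequency bounds \eqref{eq:5.6}--\eqref{eq:5.7} with the exponentially decaying middle/high-frequency bound \eqref{eq:5.9} via the decomposition $J_j(t)g = J_{jL}(t)g + J_{jM}(t)g + J_{jH}(t)g$. Your additional remarks explaining why $n \ge 2$ is needed (integrability of $|\xi|^{-2\sigma}$ near the origin, i.e.\ $n > 4\sigma$) and the origin of the $(\tilde{k}_2-1)_{+}$ loss are accurate and go slightly beyond what the paper records, which simply states the result is obvious from \eqref{eq:5.4}--\eqref{eq:5.9}.
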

\begin{proof}[Proof of Propositions \ref{prop:6.6} and \ref{prop:6.7}]
It is obvious form the combinations of \eqref{eq:5.4} - \eqref{eq:5.9}.
\end{proof}
The following corollaries are easy consequences of Propositions \ref{prop:6.6} and \ref{prop:6.7}. 
\begin{cor} \label{cor:6.8}
Let $n \ge 1$, $k \ge 0$, $\ell=0,1$, 
$\nu>0$ and $\sigma \in (0, \displaystyle{\frac{1}{2}})$. 
Then it holds that
\begin{align}
\label{eq:6.18}
& \left\| 
\partial_{t}^{\ell} 
\nabla_{x}^{k}
J_{1}(t)g
\right\|_{2}
\le C(1+t)^{-\frac{n}{4(1-\sigma)}- \ell- \frac{k}{2(1-\sigma)}} \| g \|_{1}  
+ C e^{-ct} 
\| \nabla^{k+ \ell}_{x} g\|_{2},\\
& \left\| 
\partial_{t}^{\ell} 
\nabla_{x}^{k}
J_{3}(t)g
\right\|_{2}
 \le C(1+t)^{-\frac{n}{4\sigma}
-\frac{1-2\sigma}{\sigma} - \ell- \frac{k}{2\sigma}}
\| g \|_{1} + C e^{-ct} \| \nabla_{x}^{k+ \ell} g\|_{2}, \label{eq:6.19} 
\end{align}
where $J_{j}(t)g$ for $j=1,3$ are defined by \eqref{eq:2.4}.
\end{cor}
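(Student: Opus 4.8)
The plan is to read off \eqref{eq:6.18} and \eqref{eq:6.19} directly from Proposition \ref{prop:6.6} by specializing the four free parameters $r_1,r_2,\tilde{k}_1,\tilde{k}_2$; no new estimate is needed, so the work is purely a matter of bookkeeping in the exponents. I would apply Proposition \ref{prop:6.6} twice, once for each of its two summands, with different parameter choices tailored to the desired $L^1$–$L^2$ form.

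First I would dispose of the polynomial-decay (low-frequency) summand. In the first term of each bound in Proposition \ref{prop:6.6} I take $r_1=1$ and $\tilde{k}_1=0$, which is admissible since $k\ge 0=\tilde{k}_1$ and $1\in[1,2]$. Then $\frac{1}{r_1}-\frac{1}{2}=\frac{1}{2}$ and $\|\nabla^{\tilde{k}_1}_x g\|_{r_1}=\|g\|_1$, so the exponent $-\frac{n}{2(1-\sigma)}(\frac{1}{r_1}-\frac{1}{2})-\ell-\frac{k-\tilde{k}_1}{2(1-\sigma)}$ collapses to $-\frac{n}{4(1-\sigma)}-\ell-\frac{k}{2(1-\sigma)}$, exactly the first power in \eqref{eq:6.18}. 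The identical substitution in the $J_3(t)$ estimate gives $-\frac{n}{4\sigma}-\frac{1-2\sigma}{\sigma}-\ell-\frac{k}{2\sigma}$, matching \eqref{eq:6.19}.

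Next I would treat the exponential (middle- and high-frequency) summand by choosing $r_2=2$ and $\tilde{k}_2=k+\ell$; this is admissible because the hypothesis $k+\ell\ge\tilde{k}_2$ holds with equality and $2\in[1,2]$. With $r_2=2$ the factor $\frac{1}{r_2}-\frac{1}{2}$ vanishes, the algebraic power $-\frac{k+\ell-\tilde{k}_2}{2\sigma}$ also vanishes (so the time prefactor is $t^0=1$), and $\|\nabla^{\tilde{k}_2}_x g\|_{r_2}=\|\nabla^{k+\ell}_x g\|_2$. Hence the second summand of both bounds reduces to $Ce^{-ct}\|\nabla^{k+\ell}_x g\|_2$, as required.

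Since the whole argument is a specialization of an already-established proposition, there is no genuine obstacle; the only point demanding a moment's care is the admissibility check $k+\ell\ge\tilde{k}_2$ for the exponential term, where I deliberately push to the boundary $\tilde{k}_2=k+\ell$ so as to absorb all derivatives into the $L^2$ norm and leave a purely exponential time factor. Everything else follows from the arithmetic of the exponents above.
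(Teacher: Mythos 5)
Your proposal is correct and coincides with the paper's own proof: the authors prove Corollary \ref{cor:6.8} by exactly the specialization $\tilde{k}_{1}=0$, $\tilde{k}_{2}=k+\ell$, $r_{1}=1$, $r_{2}=2$ in Proposition \ref{prop:6.6}, and your exponent arithmetic and admissibility checks ($k\ge 0$, $k+\ell\ge\tilde{k}_{2}$ with equality) match theirs. The only cosmetic difference is that you describe this as two applications of the proposition, one per summand, whereas it is a single application with all four parameters fixed at once; the substance is identical.
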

\begin{cor} \label{cor:6.9}
Let $n \ge 2$, $k \ge 0$, $\ell=0,1$, 
$\nu>0$ and $\sigma \in (0, \displaystyle{\frac{1}{2}})$. 
Then it holds that
\begin{align}
& \left\|
\partial_{t}^{\ell} 
\nabla_{x}^{k}
J_{2}(t)g 
\right\|_{2} 
\le C(1+t)^{-\frac{n}{4(1-\sigma)}
+\frac{\sigma}{1-\sigma} - \ell- \frac{k}{2(1-\sigma)}}
\| g \|_{1} 
+
C e^{-ct} 
\| \nabla^{(k+\ell-1)_{+}}_{x} g\|_{2}, \label{eq:6.20} \\
& \left\|
\partial_{t}^{\ell} 
\nabla_{x}^{k}
J_{4}(t)g 
\right\|_{2} 
\le C(1+t)^{-\frac{n}{4\sigma}+1- \ell- \frac{k}{2\sigma}}
\| g \|_{1} 
+
C e^{-ct} 
\| \nabla^{(k+\ell-1)_{+}}_{x} g\|_{2}, \label{eq:6.21}
\end{align}
where $J_{j}(t)g$ for $j=2,4$ are defined by \eqref{eq:2.4}.
\end{cor}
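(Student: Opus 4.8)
The plan is to obtain both inequalities by specializing Proposition \ref{prop:6.7}, whose two-parameter bounds for $\|\partial_t^\ell \nabla_x^k J_2(t)g\|_2$ and $\|\partial_t^\ell \nabla_x^k J_4(t)g\|_2$ already split the polynomially-decaying low-frequency contribution (governed by the free parameters $r_1,\tilde{k}_1$) from the exponentially-decaying middle- and high-frequency remainder (governed by $r_2,\tilde{k}_2$). The corollary is precisely the case $r_1=1$, $\tilde{k}_1=0$ for the first factor together with $r_2=2$, $\tilde{k}_2=k+\ell$ for the second.

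First I would set $r_1=1$ and $\tilde{k}_1=0$ in Proposition \ref{prop:6.7}. Then $\|\nabla_x^{\tilde{k}_1}g\|_{r_1}=\|g\|_1$ and $\tfrac{1}{r_1}-\tfrac{1}{2}=\tfrac{1}{2}$, so the leading exponent becomes $-\tfrac{n}{4(1-\sigma)}+\tfrac{\sigma}{1-\sigma}-\ell-\tfrac{k}{2(1-\sigma)}$ for $J_2$ and $-\tfrac{n}{4\sigma}+1-\ell-\tfrac{k}{2\sigma}$ for $J_4$, which are exactly the leading terms of \eqref{eq:6.20} and \eqref{eq:6.21}. The ordering condition $k\ge\tilde{k}_1\ge 0$ holds trivially, and the hypothesis $n\ge 2$ is inherited unchanged; this dimensional restriction is the only genuine constraint, since $J_2$ and $J_4$ carry the singular factor $|\xi|^{-2\sigma}$ near the origin whose square must remain locally integrable in $\R^n$.

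Next I would take $r_2=2$ and $\tilde{k}_2=k+\ell$ for the second factor. With $r_2=2$ the quantity $\tfrac{1}{r_2}-\tfrac{1}{2}$ vanishes, eliminating the $t$-power, and with $\tilde{k}_2=k+\ell$ (admissible because $k+\ell\ge\tilde{k}_2\ge 0$ is saturated) the remaining exponent $\tfrac{k+\ell-\tilde{k}_2}{2\sigma}$ also vanishes, leaving exactly $Ce^{-ct}\|\nabla_x^{(\tilde{k}_2-1)_+}g\|_2=Ce^{-ct}\|\nabla_x^{(k+\ell-1)_+}g\|_2$, the second term in both \eqref{eq:6.20} and \eqref{eq:6.21}. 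There is essentially no obstacle: the whole content is the choice of parameters, and the only point requiring verification is the arithmetic check that the two substituted exponents coincide termwise with the stated right-hand sides. Summing the two factors furnished by Proposition \ref{prop:6.7} then yields \eqref{eq:6.20} and \eqref{eq:6.21}.
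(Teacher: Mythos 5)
Your proposal is correct and is essentially identical to the paper's own proof, which likewise obtains \eqref{eq:6.20} and \eqref{eq:6.21} by choosing $\tilde{k}_{1}=0$, $\tilde{k}_{2}=k+\ell$, $r_{1}=1$ and $r_{2}=2$ in Proposition \ref{prop:6.7}. Your arithmetic verification of the resulting exponents and your remark on the role of the restriction $n\ge 2$ (the $L^{2}$-integrability near the origin of the singular factor $|\xi|^{-2\sigma}$, as in Lemma \ref{lem:5.4}) are both consistent with the paper.
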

\begin{proof}
In Propositions \ref{prop:6.6} and \ref{prop:6.7}, it suffices to choose $\tilde{k}_{1}=0$, $\tilde{k}_{2}=k +\ell$,
$r_{1}=1$ and $r_{2}=2$ to obtain \eqref{eq:6.18} - \eqref{eq:6.21}.
\end{proof}
\begin{rem} \label{rem:6.10}
It is worth pointing out that because of Corollaries \ref{cor:6.8} and \ref{cor:6.9}, the leading factor of the asymptotic behavior 
as $t \to \infty$ is given by $\| \partial_{t}^{\ell} \nabla_{x}^{k}J_{2}(t)g\|_{2}$.
Indeed, 
roughly speaking,
Corollaries \ref{cor:6.8} and \ref{cor:6.9} suggest that we can regard $J_{j}(t)g$ as
\begin{equation} \label{eq:6.22}
\begin{split}
& \left\| 
\partial_{t}^{\ell} 
\nabla_{x}^{k}
J_{1}(t)g
\right\|_{2}
\sim (1+t)^{-\frac{n}{4(1-\sigma)}- \ell- \frac{k}{2(1-\sigma)}}  ,\\
& \left\|
\partial_{t}^{\ell} 
\nabla_{x}^{k}
J_{2}(t)g 
\right\|_{2} 
\sim (1+t)^{-\frac{n}{4(1-\sigma)}
+\frac{\sigma}{1-\sigma} - \ell- \frac{k}{2(1-\sigma)}}, \\
& \left\| 
\partial_{t}^{\ell} 
\nabla_{x}^{k}
J_{3}(t)g
\right\|_{2}
\sim (1+t)^{-\frac{n}{4\sigma}
-\frac{1-2\sigma}{\sigma} - \ell- \frac{k}{2\sigma}},\\
& \left\|
\partial_{t}^{\ell} 
\nabla_{x}^{k}
J_{4}(t)g 
\right\|_{2} 
\sim (1+t)^{-\frac{n}{4\sigma}+1- \ell- \frac{k}{2\sigma}}
\end{split}
\end{equation}
for large $t$, and we have  
$\left\|
\partial_{t}^{\ell} 
\nabla_{x}^{k}
J_{j}(t)g 
\right\|_{2} \le \left\|
\partial_{t}^{\ell} 
\nabla_{x}^{k}
J_{2}(t)g 
\right\|_{2}$ for $j=1,3,4$, formally under the assumption $\sigma \in (0, \displaystyle{\frac{1}{2}})$. 

We also recall that the solution $u(t)$ to \eqref{eq:1.1} with $u_{1} \equiv 0$ is given by 
$$
u(t) = J_{1}(t) u_{0} +J_{3}(t) u_{0}.
$$
Similar arguments can be applied to this case, and we see  
$\left\|
\partial_{t}^{\ell} 
\nabla_{x}^{k}
J_{3}(t)g 
\right\|_{2} \le \left\|
\partial_{t}^{\ell} 
\nabla_{x}^{k}
J_{1}(t)g 
\right\|_{2}$ formally under the assumption $\sigma \in (0, \displaystyle{\frac{1}{2}})$ again.
Thus we also find that the leading factor of the case $u_{1} \equiv 0$ is given by $J_{1}(t)g$. 
\end{rem}
By observations in Remark \ref{rem:6.10}, 
we need to construct the approximation of the evolution operators $J_{j}(t)g$ for $j=1,2$.
\begin{prop} \label{prop:6.11}

Let $n \ge 1$,  $\ell=0,1$, $k \ge \tilde{k}_{1} \ge 0$,
$k + \ell \ge \tilde{k}_{2} \ge 0$, $1 \le r_{1}, r_{2} \le 2$, 
$\nu>0$ and $\sigma \in (0, \displaystyle{\frac{1}{2}})$. 
Then it holds that
\begin{equation} 
\begin{split}
\label{eq:6.23}
& \left\| 
\partial_{t}^{\ell} 
\nabla_{x}^{k}
\left(
J_{1}(t)g
-
\mathcal{F}^{-1}
\left[
e^{
-\frac{t}{\nu}
|\xi|^{2(1-\sigma)}
}
\right] \ast g
\right)
\right\|_{2} \\
&\le C(1+t)^{-\frac{n}{2(1-\sigma)}(\frac{1}{r_{1}}-\frac{1}{2})
-\frac{1-2\sigma}{1-\sigma} - \ell- \frac{k-\tilde{k}_{1}}{2(1-\sigma)}}
\| \nabla^{\tilde{k_{1}}}_{x} g \|_{r_{1}} \\
& \ 
+ C e^{-ct} t^{-\frac{n}{2 \sigma}(\frac{1}{r_{2}}-\frac{1}{2})-\frac{k+\ell-\tilde{k}_{2}}{2 \sigma} }
\| \nabla^{\tilde{k}_{2}}_{x} g\|_{r_{2}},
\end{split}
\end{equation}
\begin{equation} 
\begin{split}
&\left\|
\partial_{t}^{\ell} 
\nabla_{x}^{k}
\left(
J_{2}(t)g 
-
\mathcal{F}^{-1} 
\left[
\frac{
e^{
-\frac{t}{\nu}
|\xi|^{2(1-\sigma)}
}
}{ \nu |\xi|^{2 \sigma}}
\right] \ast g 
\right) 
\right\|_{2} \\
& \le C(1+t)^{-\frac{n}{2(1-\sigma)}(\frac{1}{r_{1}}-\frac{1}{2})
-\frac{1-3\sigma}{1-\sigma} - \ell- \frac{k-\tilde{k_{1}}}{2(1-\sigma)}}
\| \nabla^{\tilde{k}_{1}}_{x} g \|_{r_{1}} \\
& +
C e^{-ct} t^{-\frac{n}{2 \sigma}(\frac{1}{r_{2}}-\frac{1}{2})-\frac{k+\ell-\tilde{k}_{2}}{2 \sigma} }
\| \nabla^{(\tilde{k_{2}}-1)_{+}}_{x} g\|_{r_{2}}, \label{eq:6.24} 
\end{split}
\end{equation}
where $J_{j}(t)g$ for $j=1,2$ are defined by \eqref{eq:2.4}.
\end{prop}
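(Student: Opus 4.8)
The plan is to promote the low-frequency approximation formulas \eqref{eq:6.1}--\eqref{eq:6.2} to the full operators by inserting the partition of unity $\chi_{L}+\chi_{M}+\chi_{H}\equiv 1$ and absorbing the middle- and high-frequency pieces into the exponentially small remainder. Writing $J_{1}(t)g=J_{1L}(t)g+J_{1M}(t)g+J_{1H}(t)g$ and splitting the target profile in the same way, I would decompose
\[
J_{1}(t)g-\mathcal{F}^{-1}\Big[e^{-\frac{t}{\nu}|\xi|^{2(1-\sigma)}}\Big]\ast g
=\Big(J_{1L}(t)g-\mathcal{F}^{-1}\big[e^{-\frac{t}{\nu}|\xi|^{2(1-\sigma)}}\chi_{L}\big]\ast g\Big)
+\big(J_{1M}(t)g+J_{1H}(t)g\big)
-\mathcal{F}^{-1}\big[e^{-\frac{t}{\nu}|\xi|^{2(1-\sigma)}}(\chi_{M}+\chi_{H})\big]\ast g .
\]
Applying $\partial_{t}^{\ell}\nabla_{x}^{k}$, taking $\|\cdot\|_{2}$, and using the triangle inequality then reduces the estimate to the three summands on the right.

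For the first, low-frequency, summand I would invoke \eqref{eq:6.1} with the index $\tilde{k}_{1}$ and exponent $r_{1}$; this produces precisely the leading, polynomially decaying term of \eqref{eq:6.23}. The pair $J_{1M}(t)g+J_{1H}(t)g$ is controlled by \eqref{eq:5.8} of Lemma \ref{lem:5.5}, which already has the shape of the stated $e^{-ct}$ remainder, with norm $\|\nabla_{x}^{\tilde{k}_{2}}g\|_{r_{2}}$ under $k+\ell\ge\tilde{k}_{2}$. For the remaining middle/high Gaussian term I would note that on $\supp(\chi_{M}+\chi_{H})$ the frequency is bounded away from the origin, so $e^{-\frac{t}{\nu}|\xi|^{2(1-\sigma)}}$, together with any $|\xi|$-weights generated by $\partial_{t}^{\ell}$, is exponentially small in $t$ by \eqref{eq:5.3} (equivalently \eqref{eq:6.5}); this contribution is therefore again of the form $Ce^{-ct}t^{\cdots}\|\nabla_{x}^{\tilde{k}_{2}}g\|_{r_{2}}$. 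Summing the three bounds yields \eqref{eq:6.23}, the two exponentially small pieces merging into the single remainder.

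The proof of \eqref{eq:6.24} for $J_{2}(t)$ is structurally the same, with \eqref{eq:6.2} replacing \eqref{eq:6.1} in the low-frequency difference (hence the shifted power $-\tfrac{1-3\sigma}{1-\sigma}$ and the singular profile $\tfrac{e^{-\frac{t}{\nu}|\xi|^{2(1-\sigma)}}}{\nu|\xi|^{2\sigma}}$), and \eqref{eq:5.9} together with \eqref{eq:6.6} governing the middle/high pieces, the reduced order $(\tilde{k}_{2}-1)_{+}$ reflecting the $|\xi|^{-2\sigma}$ singularity of the symbol. The only point requiring care is the bookkeeping of derivative orders in the exponentially small terms: \eqref{eq:6.5}--\eqref{eq:6.6} as stated distribute an extra weight $|\xi|^{2\ell(1-\sigma)}$ (resp.\ a factor $|\xi|^{-2\sigma}$) onto $g$, which does not a priori match the index in \eqref{eq:6.23}--\eqref{eq:6.24}. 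I expect this to be the main obstacle, and I would resolve it by keeping those frequency weights on the multiplier side and applying \eqref{eq:5.3} directly; this is harmless because frequencies are bounded below on $\supp(\chi_{M}+\chi_{H})$, so any $|\xi|$-power may be freely shifted between the symbol and $g$ while preserving the $e^{-ct}$ decay. Once the indices are aligned in this way, the triangle inequality and the fact that $e^{-ct}$ dominates every polynomial prefactor complete the proof.
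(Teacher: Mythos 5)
Your proposal is correct and coincides with the paper's own proof, which obtains \eqref{eq:6.23} precisely by combining \eqref{eq:6.1}, \eqref{eq:6.5} and \eqref{eq:5.8}, and \eqref{eq:6.24} by the same scheme with \eqref{eq:6.2}, \eqref{eq:6.6} and \eqref{eq:5.9}. The index-bookkeeping issue you flag is genuine but harmless in exactly the way you resolve it: on $\supp(\chi_{M}+\chi_{H})$ powers of $|\xi|$ may be shifted between the symbol and $g$ (note $k-\tilde{k}_{2}+2\ell(1-\sigma)\ge \ell(1-2\sigma)\ge 0$, so \eqref{eq:5.3} applies), and the factor $e^{-ct}$ absorbs every polynomial prefactor — a point the paper passes over silently.
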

\begin{proof}
Combining \eqref{eq:6.1}, \eqref{eq:6.5} and \eqref{eq:5.8} gives \eqref{eq:6.23}.
We apply this argument again, with \eqref{eq:6.1}, \eqref{eq:6.5} and \eqref{eq:5.8} replaced by
\eqref{eq:6.2}, \eqref{eq:6.6} and \eqref{eq:5.9}, to obtain \eqref{eq:6.24},
which completes the proof of the statement of Proposition \ref{prop:6.11}.
\end{proof}
The following corollary is an easy consequence of Propositions \ref{prop:6.6} and \ref{prop:6.7}.
On the other hand, it is important to determine the leading factor of the large time behavior. 
\begin{cor} \label{cor:6.12}
Let $n \ge 1$, $k \ge 0$, $\ell=0,1$, $\nu>0$ and $\sigma \in (0, \displaystyle{\frac{1}{2}})$. 
Then it holds that
\begin{equation*}
\begin{split}
%
& \left\| 
\partial_{t}^{\ell} 
\nabla_{x}^{k}
\left(
J_{1}(t)g
-
\mathcal{F}^{-1}
\left[
e^{
-\frac{t}{\nu}
|\xi|^{2(1-\sigma)}
}
\right] \ast g
\right)
\right\|_{2} \\
& \le C(1+t)^{-\frac{n}{4(1-\sigma)}
-\frac{1-2\sigma}{1-\sigma} - \ell- \frac{k}{2(1-\sigma)}}
\| g \|_{1} + C e^{-ct} \| \nabla^{k + \ell}_{x} g\|_{2}, 
\end{split}
\end{equation*}
\begin{equation}
\begin{split}
&\left\|
\partial_{t}^{\ell} 
\nabla_{x}^{k}
\left(
J_{2}(t)g 
-
\mathcal{F}^{-1} 
\left[
\frac{
e^{
-\frac{t}{\nu}
|\xi|^{2(1-\sigma)}
}
}{ \nu |\xi|^{2 \sigma}}
\right] \ast g 
\right) 
\right\|_{2} \\
& \le C(1+t)^{-\frac{n}{4(1- \sigma)}
-\frac{1-3\sigma}{1-\sigma} - \ell- \frac{k}{2(1-\sigma)}}
\| g \|_{1} +C e^{-ct} \| \nabla_{x}^{(k+ \ell-1)_{+}} g\|_{2}, 
\label{eq:6.26} 
\end{split}
\end{equation}
where $J_{j}(t)g$ for $j=1,2$ is defined by \eqref{eq:2.4}.
\end{cor}
\begin{proof}
Proposition \ref{prop:6.11} with $k_{1}=0$, $k_{2}= k+ \ell$, $r_{1}=1$ and $r_{2}=2$ 
gives Corollary \ref{cor:6.12}.
\end{proof}
Secondly, we summarize the estimates for $K_{j}(t)g$ for $j=1,2,3$.
\begin{prop} \label{prop:6.13}
Let $n \ge 1$, $\ell=0,1$, $k+ \ell \ge \tilde{k}_{1} \ge 0$, 
$k \ge \tilde{k}_{2} \ge 0$, $1 \le r_{1}, r_{2} \le 2$, 
$\nu>0$ and $\sigma \in (\displaystyle{\frac{1}{2}},1]$. 
Then it holds that
\begin{equation*} 
\begin{split}
%
\left\| 
\partial_{t}^{\ell} 
\nabla_{x}^{k}
K_{1}(t)g
\right\|_{2}
& \le C(1+t)^{-\frac{n}{2 \sigma }(\frac{1}{r_{1}}-\frac{1}{2})- \frac{\ell+ k-\tilde{k}_{1}}{2\sigma}}
\| \nabla^{\tilde{k}_{1}}_{x} g \|_{r_{1}} \\
&  + C e^{-ct} 
t^{-\frac{n}{2(1-\sigma)}(\frac{1}{r_{2}}-\frac{1}{2})-\frac{k-\tilde{k}_{2}}{2(1- \sigma)} }
\| \nabla^{(\tilde{k}_{2}+2 \ell(1 -\sigma))}_{x} g\|_{r_{2}}, 
\end{split}
\end{equation*}
\begin{equation*} 
\begin{split}
\left\|
\partial_{t}^{\ell} 
\nabla_{x}^{k}
K_{2}(t)g 
\right\|_{2} 
& \le C(1+t)^{-\frac{n}{2 \sigma }(\frac{1}{r_{1}}-\frac{1}{2})- \frac{\ell+ k-\tilde{k}_{1}+1}{2\sigma}}
\| \nabla^{\tilde{k}_{1}}_{x} g \|_{r_{1}} , \\
&  + C e^{-ct} 
t^{-\frac{n}{2(1-\sigma)}(\frac{1}{r_{2}}-\frac{1}{2})-\frac{k-\tilde{k}_{2}}{2 (1-\sigma)} }
\| \nabla^{(\tilde{k}_{2}+2\ell(1-\sigma))}_{x} g\|_{r_{2}}, 
\end{split}
\end{equation*}
where $K_{j}(t)g$ for $j=1,2$ are defined by \eqref{eq:2.9}. 
\end{prop}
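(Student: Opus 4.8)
The plan is to obtain both displayed inequalities by splitting each evolution operator into its low-, middle- and high-frequency localizations and then quoting the decay bounds already established in Section 5. First I would write, for $j=1,2$,
\[
K_{j}(t)g = K_{jL}(t)g + K_{jM}(t)g + K_{jH}(t)g,
\]
which is legitimate because $\chi_{L}+\chi_{M}+\chi_{H}\equiv 1$ and the Fourier multipliers $\K_{jk}(t,\xi)$ in \eqref{eq:2.10}--\eqref{eq:2.11} are exactly the full multiplier times $\chi_{k}$. Applying the triangle inequality to $\|\partial_{t}^{\ell}\nabla_{x}^{k}K_{j}(t)g\|_{2}$ then reduces the task to estimating the three localized pieces independently.

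For the low-frequency pieces $K_{1L}$ and $K_{2L}$ I would invoke Lemma \ref{lem:5.6}, that is \eqref{eq:5.10} and \eqref{eq:5.11}, with the choice $r=r_{1}$ and $\tilde{k}=\tilde{k}_{1}$; the hypothesis $k+\ell\ge\tilde{k}_{1}\ge 0$ is precisely what that lemma requires, and its output reproduces the first (polynomial-in-$t$) summand of each claimed bound. For the middle- and high-frequency pieces I would apply Lemma \ref{lem:5.9}, specifically the summed estimate \eqref{eq:5.13}, with $r=r_{2}$ and $\tilde{k}=\tilde{k}_{2}$; here the condition $k\ge\tilde{k}_{2}\ge 0$ matches the hypothesis of Lemma \ref{lem:5.9}, and the exponential factor $e^{-ct}$ yields the second summand, including the shifted regularity index $\tilde{k}_{2}+2\ell(1-\sigma)$. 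Adding the three contributions gives exactly the two inequalities stated in the proposition.

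I expect no genuine obstacle, since this result is a direct synthesis of Lemmas \ref{lem:5.6} and \ref{lem:5.9}, entirely parallel to the one-line Proof of Propositions \ref{prop:6.6} and \ref{prop:6.7}. The only point that deserves a word of care is that the low-frequency and middle-/high-frequency estimates are completely decoupled, so the two summands are free to carry different Lebesgue exponents ($r_{1}$ versus $r_{2}$) and different numbers of derivatives ($\tilde{k}_{1}$ versus $\tilde{k}_{2}$) of the datum $g$; this is what permits the independent index assumptions in the statement and is exactly why the bound is presented as a sum of two terms rather than a single estimate.
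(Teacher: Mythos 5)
Your proposal is correct and coincides with the paper's own argument: the authors prove Proposition \ref{prop:6.13} in one line as a direct consequence of \eqref{eq:5.10}--\eqref{eq:5.14}, which is exactly your decomposition $K_{j}(t)=K_{jL}(t)+K_{jM}(t)+K_{jH}(t)$ followed by Lemma \ref{lem:5.6} (with $r=r_{1}$, $\tilde{k}=\tilde{k}_{1}$) on the low-frequency part and Lemma \ref{lem:5.9}, estimate \eqref{eq:5.13} (with $r=r_{2}$, $\tilde{k}=\tilde{k}_{2}$), on the middle- and high-frequency parts. Your remark that the two frequency regimes decouple, allowing independent exponents $r_{1},r_{2}$ and regularity indices $\tilde{k}_{1},\tilde{k}_{2}$, correctly explains the two-term form of the stated bounds.
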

As is mentioned in Remark \ref{rem:6.10}, 
the Fourier multiplier of $\K_{3L}(t,\xi)$ has a singularity in the sense of the $L^{2}$ integrability in the
low dimensional case.
Thus we have to state the result for $\K_{3L}(t,\xi)$ separately from $K_{1}(t)g$ and $K_{2}(t)g$. 
\begin{prop} \label{prop:6.14}
Let $n \ge 3$, $\ell=0,1$, $\max\{ \ell+ k -1, 0 \}\ge \tilde{k}_{1} \ge 0$, 
$k \ge \tilde{k}_{2} \ge 0$,
$1 \le r_{1}, r_{2} \le 2$, 
$\nu>0$ and $\sigma \in (\displaystyle{\frac{1}{2}},1]$. 
Then it holds that
\begin{equation*}
\begin{split} 
%
%
\left\| 
\partial_{t}^{\ell} 
\nabla_{x}^{k}
K_{3}(t)g
\right\|_{2}
& \le C(1+t)^{-\frac{n}{2 \sigma }(\frac{1}{r_{1}}-\frac{1}{2})- \frac{\ell+ k-\tilde{k}_{1}-1}{2\sigma}}
\| \nabla^{\tilde{k}_{1}}_{x} g \|_{r_{1}} \\
& +C  e^{-ct} 
t^{-\frac{n}{2(1-\sigma)}(\frac{1}{r_{2}}-\frac{1}{2})-\frac{k-\tilde{k}_{2}}{2 (1-\sigma)} }
\| \nabla^{(\tilde{k}_{2}-2 \sigma(1-\ell))_{+}}_{x} g\|_{r_{2}}, 
\end{split}
\end{equation*}
where $K_{3}(t)g$ is defined by \eqref{eq:2.9}, and 
$(\tilde{k}_{2}-2\sigma(1-\ell))_{+} = \max\{\tilde{k}_{2}-2 \sigma (1- \ell), 0\}$. 
\end{prop}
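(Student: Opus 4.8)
The plan is to handle $K_3(t)g$ by the same frequency-decomposition strategy already used for Propositions \ref{prop:6.6} and \ref{prop:6.7}. Since $\chi_L + \chi_M + \chi_H \equiv 1$, the operator splits as $K_3(t)g = K_{3L}(t)g + K_{3M}(t)g + K_{3H}(t)g$ with the localized pieces of \eqref{eq:2.10}, and by the triangle inequality it suffices to bound the three contributions separately and then add them. The point is that all of the analytic work has already been carried out in Section 5, so the proof should amount to collecting the relevant estimates for $K_3$.

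For the low-frequency piece I would invoke Lemma \ref{lem:5.7} directly: estimate \eqref{eq:5.12}, applied with $r = r_1$ and $\tilde{k} = \tilde{k}_1$, produces exactly the first term $C(1+t)^{-\frac{n}{2\sigma}(\frac{1}{r_1}-\frac{1}{2})-\frac{\ell+k-\tilde{k}_1-1}{2\sigma}}\|\nabla_x^{\tilde{k}_1} g\|_{r_1}$ on the right-hand side; the hypothesis $\max\{\ell+k-1,0\}\ge \tilde{k}_1$ is precisely what is needed to license this application (it is at least as strong as the condition $k+\ell\ge\tilde{k}_1$ of Lemma \ref{lem:5.7}). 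For the middle- and high-frequency pieces I would use estimate \eqref{eq:5.14} of Lemma \ref{lem:5.9} with $r=r_2$ and $\tilde{k}=\tilde{k}_2$, which yields the exponentially decaying term $Ce^{-ct}\,t^{-\frac{n}{2(1-\sigma)}(\frac{1}{r_2}-\frac{1}{2})-\frac{k-\tilde{k}_2}{2(1-\sigma)}}\|\nabla_x^{(\tilde{k}_2-2\sigma(1-\ell))_+} g\|_{r_2}$, matching the second term, including the loss $(\tilde{k}_2-2\sigma(1-\ell))_+$ and the $(1-\sigma)$-type exponents coming from the non-effective behavior of the symbol on $\supp(\chi_M+\chi_H)$. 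Summing the three bounds gives the asserted inequality, so that the proof reduces, as for Propositions \ref{prop:6.6} and \ref{prop:6.7}, to a combination of \eqref{eq:5.12} and \eqref{eq:5.14}.

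The only genuinely delicate point, and the reason this statement must be separated from the $K_1,K_2$ estimates of Proposition \ref{prop:6.13}, is the singular factor $|\xi|^{-1}$ appearing in the symbol $\K_{3k}(t,\xi)$ of \eqref{eq:2.11}, which makes the low-frequency multiplier singular at the origin. This singularity is exactly what forces the two nonstandard hypotheses. The dimension restriction $n \ge 3$ is what renders $|\xi|^{-1}\chi_L$ integrable near $\xi = 0$ in the relevant $L^{2r_1/(2-r_1)}$ sense in the worst case (small $\ell+k$ and $r_1$ near $1$), so that Lemma \ref{lem:5.2} can be applied at all; and the sharper derivative budget $\max\{\ell+k-1,0\}\ge\tilde{k}_1$ limits how many derivatives may be transferred onto the datum $g$ without worsening the singularity beyond what the dimension can absorb. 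Thus the main obstacle is not any new computation but the book-keeping of this origin singularity; once $n\ge 3$ and the stated index conditions are in force, the estimates of Section 5 apply verbatim and the conclusion follows.
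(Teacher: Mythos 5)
Your proposal is correct and coincides with the paper's own argument: the paper proves Proposition \ref{prop:6.14} precisely by combining the low-frequency bound \eqref{eq:5.12} of Lemma \ref{lem:5.7} (applied with $r=r_{1}$, $\tilde{k}=\tilde{k}_{1}$) with the middle/high-frequency bound \eqref{eq:5.14} of Lemma \ref{lem:5.9} (applied with $r=r_{2}$, $\tilde{k}=\tilde{k}_{2}$), exactly as you describe. Your additional remarks on the $|\xi|^{-1}$ singularity and the role of $n\ge 3$ are consistent with the paper's discussion surrounding Lemma \ref{lem:5.7} and its remark.
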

\begin{proof}[Proof of Propositions \ref{prop:6.13} and \ref{prop:6.14}]
The proof is the direct consequence of \eqref{eq:5.10} - \eqref{eq:5.14}.
\end{proof}
As easy consequences of Propositions \ref{prop:6.13} and \ref{prop:6.14}, 
we obtain the following estimates, which suggest the leading factor of the asymptotic behavior 
of $K_{j}(t)g$, $j=1,2,3$, as $t \to \infty$.   
\begin{cor} \label{cor:6.15}
Let $n \ge 1$, $k \ge 0$, $\ell=0,1$, $1 \le r \le 2$, 
$\nu>0$ and $\sigma \in (\displaystyle{\frac{1}{2}},1]$. 
Then it holds that
\begin{align} 
\label{eq:6.30}
& \left\| 
\partial_{t}^{\ell} 
\nabla_{x}^{k}
K_{1}(t)g
\right\|_{2} \le C(1+t)^{-\frac{n}{4 \sigma}- \frac{\ell+ k}{2\sigma}}
\| g \|_{1} + C e^{-ct} 
\| \nabla^{(k+2\ell(1-\sigma))}_{x} g\|_{2}, \\
& \left\|
\partial_{t}^{\ell} 
\nabla_{x}^{k}
K_{2}(t)g 
\right\|_{2} 
\le C(1+t)^{-\frac{n}{4 \sigma}- \frac{\ell+ k+1}{2\sigma}}
\| g \|_{1}
+ C e^{-ct} 
\| \nabla^{(k+2\ell(1-\sigma))}_{x} g\|_{2}, \label{eq:6.31}
\end{align}
where $K_{j}(t)g$ for $j=1,2$ are defined by \eqref{eq:2.9}. 
\end{cor}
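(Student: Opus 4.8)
The plan is to obtain Corollary \ref{cor:6.15} directly from Proposition \ref{prop:6.13} by specializing its free parameters; no new analytic estimate is needed. Concretely, in each of the two inequalities of Proposition \ref{prop:6.13} I would choose $\tilde{k}_{1}=0$ and $r_{1}=1$ to process the leading (polynomially decaying) term, and $\tilde{k}_{2}=k$ together with $r_{2}=2$ to process the exponentially decaying remainder. Before substituting I would verify that these choices are admissible under the hypotheses of Proposition \ref{prop:6.13}: the requirement $k+\ell \ge \tilde{k}_{1}\ge 0$ holds trivially since $\tilde{k}_{1}=0$, the requirement $k \ge \tilde{k}_{2}\ge 0$ holds since $\tilde{k}_{2}=k$, and $1\le r_{1},r_{2}\le 2$ is immediate.

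For the leading term, setting $r_{1}=1$ gives $\frac{1}{r_{1}}-\frac{1}{2}=\frac{1}{2}$, so the prefactor $-\frac{n}{2\sigma}(\frac{1}{r_{1}}-\frac{1}{2})$ becomes $-\frac{n}{4\sigma}$; and $\tilde{k}_{1}=0$ turns the derivative exponent $-\frac{\ell+k-\tilde{k}_{1}}{2\sigma}$ into $-\frac{\ell+k}{2\sigma}$ for $K_{1}(t)g$ and into $-\frac{\ell+k+1}{2\sigma}$ for $K_{2}(t)g$, while the norm $\|\nabla^{\tilde{k}_{1}}_{x}g\|_{r_{1}}$ collapses to $\|g\|_{1}$. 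This reproduces exactly the first summands of \eqref{eq:6.30} and \eqref{eq:6.31}.

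For the remainder term, the decisive observation is that $r_{2}=2$ forces $\frac{1}{r_{2}}-\frac{1}{2}=0$ and $\tilde{k}_{2}=k$ forces $\frac{k-\tilde{k}_{2}}{2(1-\sigma)}=0$, so the power of $t$ multiplying $e^{-ct}$ vanishes identically; the remainder thus reduces to the clean bound $Ce^{-ct}\|\nabla^{(k+2\ell(1-\sigma))}_{x}g\|_{2}$ asserted in both displays. I note in passing that the hypothesis $1\le r\le 2$ written in the corollary is vacuous, since $r$ does not appear in the conclusion. Because every step is merely an algebraic substitution into an already-established inequality, there is effectively no analytic obstacle here; the only point demanding care is the bookkeeping of exponents and confirming that the selected parameters satisfy the admissibility constraints of Proposition \ref{prop:6.13}, which was checked above.
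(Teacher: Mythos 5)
Your proposal is correct and coincides with the paper's own proof, which applies Proposition \ref{prop:6.13} with exactly the same parameter choices $r_{1}=1$, $r_{2}=2$, $\tilde{k}_{1}=0$ and $\tilde{k}_{2}=k$; your exponent bookkeeping and admissibility checks are all accurate. Your side remark that the hypothesis $1\le r\le 2$ in the corollary is vacuous is also a fair observation about the statement as written.
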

\begin{cor} \label{cor:6.16}
Let $n \ge 3$, $\ell=0,1$, 
$\nu>0$ and $\sigma \in (\displaystyle{\frac{1}{2}},1]$. 
Then it holds that
%
\begin{equation}
\begin{split} 
\label{eq:6.32}
\left\| 
\partial_{t}^{\ell} 
\nabla_{x}^{k}
K_{3}(t)g
\right\|_{2} \le C(1+t)^{-\frac{n}{4 \sigma }- \frac{\ell+ k-1}{2\sigma}}
\| g \|_{1} +C  e^{-ct} 
\| \nabla^{(k-2 \sigma(1-\ell))_{+}}_{x} g\|_{2}, 
\end{split}
\end{equation}
where $K_{3}(t)g$ is defined by \eqref{eq:2.9}, and 
$(\tilde{k}_{2}-2\sigma(1-\ell))_{+} = \max\{\tilde{k}_{2}-2 \sigma (1- \ell), 0\}$. 
\end{cor}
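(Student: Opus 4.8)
The plan is to derive Corollary~\ref{cor:6.16} as an immediate specialization of Proposition~\ref{prop:6.14}, precisely as Corollary~\ref{cor:6.12} was extracted from Proposition~\ref{prop:6.11}. Proposition~\ref{prop:6.14} gives a two-term bound for $\|\partial_t^\ell \nabla_x^k K_3(t)g\|_2$ with free parameters $\tilde k_1,\tilde k_2,r_1,r_2$: the first term is the polynomially decaying low-frequency contribution, and the second is the exponentially decaying high- and middle-frequency remainder. To obtain the sharp leading-order form asserted in the corollary, I would optimize the $L^1$--$L^2$ gain in the first term and collapse the second to a pure exponential factor.

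Concretely, I would take $r_1=1$, $\tilde k_1=0$ in the first term and $r_2=2$, $\tilde k_2=k$ in the second. First I would check that the hypotheses of Proposition~\ref{prop:6.14} are all met: the constraint $\max\{\ell+k-1,0\}\ge\tilde k_1=0\ge 0$ holds automatically, $k\ge\tilde k_2=k\ge 0$ holds with equality, and $r_1,r_2\in[1,2]$. Reading off the exponents, the choice $r_1=1$ turns $-\tfrac{n}{2\sigma}(\tfrac{1}{r_1}-\tfrac12)$ into $-\tfrac{n}{4\sigma}$ and $\tilde k_1=0$ turns $\|\nabla_x^{\tilde k_1}g\|_{r_1}$ into $\|g\|_1$, so together with the $-\tfrac{\ell+k-\tilde k_1-1}{2\sigma}$ contribution the first summand becomes exactly $(1+t)^{-\frac{n}{4\sigma}-\frac{\ell+k-1}{2\sigma}}\|g\|_1$. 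In the second summand the choice $r_2=2$ gives $\tfrac{1}{r_2}-\tfrac12=0$, which kills the time power multiplying $e^{-ct}$, while $\tilde k_2=k$ makes the remaining exponent $-\tfrac{k-\tilde k_2}{2(1-\sigma)}$ vanish as well, leaving precisely $Ce^{-ct}\|\nabla_x^{(k-2\sigma(1-\ell))_+}g\|_2$.

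Since the argument is nothing more than a parameter substitution into an already-established proposition, there is no substantive analytic difficulty here; the only step demanding attention is confirming that the chosen values of $\tilde k_1,\tilde k_2,r_1,r_2$ lie within the admissibility ranges prescribed by Proposition~\ref{prop:6.14}, which I verified above.
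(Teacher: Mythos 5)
Your proposal is correct and coincides with the paper's own proof: the authors obtain \eqref{eq:6.32} by substituting $r_{1}=1$, $r_{2}=2$, $\tilde{k}_{1}=0$ and $\tilde{k}_{2}=k$ into Proposition \ref{prop:6.14}, exactly as you do, and your verification of the admissibility constraints and the resulting exponents is accurate.
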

\begin{proof}[Proof of Corollaries \ref{cor:6.15} and \ref{cor:6.16}]
To obtain \eqref{eq:6.30} and \eqref{eq:6.31}, 
we apply Proposition \ref{prop:6.13} with $r_{1}=1$, $r_{2}=2$, $k_{1}=0$ and $k_{2}=k$. 
We apply this argument again with Proposition \ref{prop:6.13} replaced by Proposition \ref{prop:6.14},
to get \eqref{eq:6.32}. 
\end{proof}
\begin{rem}
We remark that the same reasoning as \eqref{eq:6.22} can be applied to the case for Corollaries 
\ref{cor:6.15} and \ref{cor:6.16}.
Namely, roughly speaking, the estimates \eqref{eq:6.30} and \eqref{eq:6.32} tell us that 
\begin{equation*} 
\begin{split}
& \left\| 
\partial_{t}^{\ell} 
\nabla_{x}^{k}
K_{1}(t)g
\right\|_{2}
\sim (1+t)^{-\frac{n}{4 \sigma}- \frac{\ell+ k}{2\sigma}} ,\\
& \left\|
\partial_{t}^{\ell} 
\nabla_{x}^{k}
K_{2}(t)g 
\right\|_{2} 
\sim
(1+t)^{-\frac{n}{4 \sigma}- \frac{\ell+ k+1}{2\sigma}}, \\
& \left\| 
\partial_{t}^{\ell} 
\nabla_{x}^{k}
K_{3}(t)g
\right\|_{2}
\sim (1+t)^{-\frac{n}{4 \sigma }- \frac{\ell+ k-1}{2\sigma}},
\end{split}
\end{equation*}
and we see 
$\left\| 
\partial_{t}^{\ell} 
\nabla_{x}^{k}
K_{j}(t)g
\right\|_{2} \le \left\| 
\partial_{t}^{\ell} 
\nabla_{x}^{k}
K_{3}(t)g
\right\|_{2}$ for $j=1,2$.
On the other hand, 
if $u_{1} \equiv 0$, we have $u(t) = K_{1}(t) u_{1}$, and so we need to obtain the approximation formulas of $K_{1}(t)g$ and $K_{3}(t)g$.

\end{rem}
It is easy to see from Lemmas \ref{lem:4.3} and \ref{lem:4.4} that 
$K_{1}(t)g$ and $K_{3}(t)g$ are approximated by  
$\mathcal{F}^{-1} \left[
e^{-\frac{\nu t |\xi|^{2 \sigma}}{2}} \cos (t |\xi|)  \right] \ast g$
and 
$\mathcal{F}^{-1} \left[e^{-\frac{\nu t |\xi|^{2 \sigma}}{2}}
\frac{\sin (t |\xi|)}{|\xi|} \right] \ast g$
respectively.
On the other hand, the approximation of $\partial_{t} K_{1}(t)g$ and $\partial_{t} K_{3}(t)g$ are not given by 
$\partial_{t} \mathcal{F}^{-1} \left[
e^{-\frac{\nu t |\xi|^{2 \sigma}}{2}} \cos (t |\xi|)  \right] \ast g$
and 
$\partial_{t} \mathcal{F}^{-1} \left[e^{-\frac{\nu t |\xi|^{2 \sigma}}{2}}
\frac{\sin (t |\xi|)}{|\xi|} \right] \ast g$.
\begin{prop} \label{prop:6.18}
Let $n \ge 1$, $k \ge \tilde{k}_{1}, \tilde{k}_{2} \ge 0$, $1 \le r_{1}, r_{2} \le 2$, 
$\nu>0$ and $\sigma \in (\displaystyle{\frac{1}{2}},1]$. 
Then it holds that
\begin{equation} 
\begin{split}
\label{eq:6.34}
& \left\| 
\nabla^{k}_{x} \left(
 K_{1}(t)g
-
\mathcal{F}^{-1} \left[
e^{-\frac{\nu t |\xi|^{2 \sigma}}{2}} \cos (t |\xi|)  \right] \ast g
\right)
\right\|_{2} \\
& \le C 
(1+t)^{-\frac{n}{2 \sigma}(\frac{1}{r_{1}}-\frac{1}{2})-\frac{k-\tilde{k}_{1}}{2 \sigma}-1+\frac{1}{2 \sigma} }  
\| \nabla^{\tilde{k}_{1}}_{x} g \|_{r_{1}} \\ 
&  + C e^{-ct} 
t^{-\frac{n}{2(1-\sigma)}(\frac{1}{r_{2}}-\frac{1}{2})-\frac{k-\tilde{k}_{2}}{2 (1-\sigma)} }
\| \nabla^{\tilde{k}_{2}}_{x} g\|_{r_{2}}, 
\end{split}
\end{equation}
\begin{equation} 
\begin{split}
& \left\| 
\nabla^{k}_{x} \left(
K_{3}(t)g
- \mathcal{F}^{-1} \left[e^{-\frac{\nu t |\xi|^{2 \sigma}}{2}}
\frac{\sin (t |\xi|)}{|\xi|} \right] \ast g
\right)
\right\|_{2} \\
& \le C (1+t)^{-\frac{n}{2 \sigma}(\frac{1}{r}-\frac{1}{2}) -1-\frac{k-\tilde{k}}{2 \sigma} +\frac{1}{\sigma}}
\| \nabla_{x}^{\tilde{k}} g \|_{r} \\
& +C  e^{-ct} 
t^{-\frac{n}{2(1-\sigma)}(\frac{1}{r_{2}}-\frac{1}{2})-\frac{k-\tilde{k}_{2}}{2 (1-\sigma)} }
\| \nabla^{(\tilde{k}_{2}-2 \sigma)_{+}}_{x} g\|_{r_{2}}, 
 \label{eq:6.35}
\end{split}
\end{equation}
\begin{equation} 
\begin{split}
\label{eq:6.36}
& \left\| 
\nabla^{k}_{x} \left(
 \partial_{t} K_{1}(t)g
+
\nabla_{x}
\mathcal{F}^{-1} \left[
e^{-\frac{\nu t |\xi|^{2 \sigma}}{2}} \sin (t |\xi|)
\right] \ast g
\right)
\right\|_{2} \\
& \le C(1+t)^{-\frac{n}{2 \sigma}(\frac{1}{r_{1}}-\frac{1}{2})-1-\frac{k-\tilde{k}_{1}}{2 \sigma}}
\| \nabla_{x}^{\tilde{k}_{1}}g \|_{r_{1}}
+ C e^{-ct} 
t^{-\frac{n}{2(1-\sigma)}(\frac{1}{r_{2}}-\frac{1}{2})-\frac{k-\tilde{k}_{2}}{2 (1-\sigma)} }
\| \nabla^{(\tilde{k}_{2}+2(1-\sigma))}_{x} g\|_{r_{2}}, 
\end{split}
\end{equation}
\begin{equation} 
\begin{split}
& \left\| 
\nabla^{k}_{x} \left(
\partial_{t}
K_{3}(t)g
- \mathcal{F}^{-1} \left[
e^{-\frac{\nu t |\xi|^{2 \sigma}}{2}} \cos (t |\xi|)
\right] \ast g
\right)
\right\|_{2} \\
& \le C (1+t)^{-\frac{n}{2 \sigma}(\frac{1}{r_{1}}-\frac{1}{2})-1+\frac{1}{2 \sigma}-\frac{k-\tilde{k}_{1}}{2 \sigma}}
\| \nabla_{x}^{\tilde{k}_{1}}g \|_{r_{1}} +C  e^{-ct} 
t^{-\frac{n}{2(1-\sigma)}(\frac{1}{r_{2}}-\frac{1}{2})-\frac{k-\tilde{k}_{2}}{2 (1-\sigma)} }
\| \nabla^{\tilde{k}_{2}}_{x} g\|_{r_{2}}, 
 \label{eq:6.37}
\end{split}
\end{equation}
where $K_{j}(t)g$, $j=1,3$ are defined by \eqref{eq:2.9}.
\end{prop}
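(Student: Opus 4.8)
The plan is to prove each of the four estimates by the same three-step scheme: decompose every operator into its low-, middle-, and high-frequency localizations via the cut-off functions $\chi_L,\chi_M,\chi_H$, bound the low-frequency difference by the sharp approximation lemmas of this section, and bound the remaining middle- and high-frequency pieces by the exponentially decaying estimates already established. Writing $K_j(t)=K_{jL}(t)+K_{jM}(t)+K_{jH}(t)$ and splitting each profile multiplier through $1=\chi_L+\chi_M+\chi_H$, the triangle inequality reduces each claim to three contributions whose bounds are all at hand.

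First I would treat \eqref{eq:6.34}. The low-frequency difference $K_{1L}(t)g-\mathcal{F}^{-1}[e^{-\frac{\nu t|\xi|^{2\sigma}}{2}}\cos(t|\xi|)\chi_L]\ast g$ is controlled by \eqref{eq:6.7}; the pieces $K_{1M}(t)g+K_{1H}(t)g$ are controlled by \eqref{eq:5.13} with $\ell=0$; and the middle-high part of the profile, namely $\mathcal{F}^{-1}[e^{-\frac{\nu t|\xi|^{2\sigma}}{2}}\cos(t|\xi|)(\chi_M+\chi_H)]\ast g$, is controlled by \eqref{eq:6.11}. In exactly the same manner, \eqref{eq:6.35} follows from \eqref{eq:6.8}, \eqref{eq:5.14} ($\ell=0$) and \eqref{eq:6.13}; the derivative estimate \eqref{eq:6.36} follows from \eqref{eq:6.9}, \eqref{eq:5.13} ($\ell=1$) and \eqref{eq:6.12} (the last applied with $k$ replaced by $k+1$, since the approximating profile carries an extra $\nabla_x$); and \eqref{eq:6.37} follows from \eqref{eq:6.10}, \eqref{eq:5.14} ($\ell=1$) and \eqref{eq:6.11}. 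In each instance the free regularity parameters in the middle-high estimates are chosen to align with those of the polynomial main term.

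The only point requiring care is to verify that the three contributions assemble into a single bound of the stated form. The low-frequency term already yields the advertised polynomial decay in the exponent $r_1$ with the sharp power, while the middle- and high-frequency terms each carry an $e^{-ct}$ factor; for large $t$ the latter are dominated by any polynomially decaying quantity, and I retain them explicitly (with the $r_2$ norm of a suitable number of derivatives of $g$) only because the statement records the two scales separately. Thus the main, and essentially only, obstacle is the bookkeeping: tracking how many derivatives land on $g$ in the error terms — in particular the shifts $\tilde{k}_2+2\ell(1-\sigma)$ for $K_1$ and $(\tilde{k}_2-2\sigma)_{+}$ for $K_3$ inherited from Lemma~\ref{lem:5.9} — and checking that the dimensional and regularity hypotheses of Lemmas~\ref{lem:5.9} and~\ref{lem:6.5} are satisfied, so that the $|\xi|^{-1}$ weight in $K_3$, harmless away from the origin on $\supp(\chi_M+\chi_H)$, introduces no singularity in the error contributions.
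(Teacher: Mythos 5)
Your proposal is correct and takes essentially the same route as the paper, which likewise obtains \eqref{eq:6.34} by combining \eqref{eq:5.13}, \eqref{eq:6.7} and \eqref{eq:6.11}, obtains \eqref{eq:6.35} from \eqref{eq:5.14}, \eqref{eq:6.8} and \eqref{eq:6.13}, and treats \eqref{eq:6.36} and \eqref{eq:6.37} in the same manner. Your bookkeeping remarks (applying \eqref{eq:6.12} with $k$ replaced by $k+1$ for the extra $\nabla_{x}$, and tracking the derivative shifts $\tilde{k}_{2}+2\ell(1-\sigma)$ and $(\tilde{k}_{2}-2\sigma)_{+}$ inherited from Lemma \ref{lem:5.9}) are exactly the details the paper leaves implicit.
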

\begin{proof}
The estimates \eqref{eq:5.13}, \eqref{eq:6.7} and \eqref{eq:6.11} mean \eqref{eq:6.34}.
Similarly, by \eqref{eq:5.14}, \eqref{eq:6.8} and \eqref{eq:6.13}
we get \eqref{eq:6.35}.
\eqref{eq:6.36} and \eqref{eq:6.37} are shown by the same manner.%
\end{proof}
We can now rephrase Proposition \ref{prop:6.18} as follows.
\begin{cor} \label{cor:6.19}
Let $n \ge 1$, $k \ge 0$, $\ell=0,1$, $1 \le r \le 2$, 
$\nu>0$ and $\sigma \in (\displaystyle{\frac{1}{2}},1]$. 
Then it holds that
\begin{equation} 
\begin{split}
\label{eq:6.38}
& \left\| 
\nabla^{k}_{x} \left(
 K_{1}(t)g
-
\mathcal{F}^{-1} \left[
e^{-\frac{\nu t |\xi|^{2 \sigma}}{2}} \cos (t |\xi|)  \right] \ast g
\right)
\right\|_{2} \\
& \le C 
(1+t)^{-\frac{n}{4 \sigma}-\frac{k}{2 \sigma}-2+\frac{1}{2 \sigma} }  
\| g \|_{1}+ C e^{-ct} 
\| \nabla^{k}_{x} g\|_{2}, 
\end{split}
\end{equation}
\begin{equation} 
\begin{split}
& \left\| 
\nabla^{k}_{x} \left(
K_{3}(t)g - \mathcal{F}^{-1} \left[e^{-\frac{\nu t |\xi|^{2 \sigma}}{2}}
\frac{\sin (t |\xi|)}{|\xi|} \right] \ast g
\right)
\right\|_{2} \\
&  \le C (1+t)^{-\frac{n}{4 \sigma}-1-\frac{k}{2 \sigma} +\frac{1}{\sigma}}
\|  g \|_{1}  +C  e^{-ct} \| \nabla^{(k-2 \sigma)_{+}}_{x} g\|_{2}, 
 \label{eq:6.39}
\end{split}
\end{equation}
\begin{equation} 
\begin{split}
\label{eq:6.40}
& \left\| 
\nabla^{k}_{x} \left(
 \partial_{t} K_{1}(t)g
+
\nabla_{x}
\mathcal{F}^{-1} \left[
e^{-\frac{\nu t |\xi|^{2 \sigma}}{2}} \sin (t |\xi|)
\right] \ast g
\right)
\right\|_{2} \\
& \le C(1+t)^{-\frac{n}{4 \sigma}-1-\frac{k}{2 \sigma}}
\|g \|_{1}
+ C e^{-ct} 
\| \nabla^{(k+2(1-\sigma))}_{x} g\|_{2}, 
\end{split}
\end{equation}
\begin{equation} 
\begin{split}
& \left\| 
\nabla^{k}_{x} \left(
\partial_{t}
K_{3}(t)g
- \mathcal{F}^{-1} \left[
e^{-\frac{\nu t |\xi|^{2 \sigma}}{2}} \cos (t |\xi|)
\right] \ast g
\right)
\right\|_{2} \\
& \le C (1+t)^{-\frac{n}{4 \sigma}-1+\frac{1}{2 \sigma}-\frac{k}{2 \sigma}}
\| g \|_{1} +C  e^{-ct} 
\| \nabla^{k}_{x} g\|_{2}, 
 \label{eq:6.41}
\end{split}
\end{equation}
where $K_{j}(t)g$, $j=1,3$ are defined by \eqref{eq:2.9}.
\end{cor}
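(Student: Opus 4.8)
The plan is to obtain Corollary \ref{cor:6.19} as an immediate specialization of Proposition \ref{prop:6.18}, in exactly the same manner that Corollaries \ref{cor:6.15} and \ref{cor:6.16} were deduced from Propositions \ref{prop:6.13} and \ref{prop:6.14}. Concretely, I would apply each of the four estimates \eqref{eq:6.34}--\eqref{eq:6.37} with the single parameter choice $r_{1}=1$, $r_{2}=2$, $\tilde{k}_{1}=0$ and $\tilde{k}_{2}=k$, and then simply read off the resulting inequalities.

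First I would treat the polynomially decaying leading terms. Taking $r_{1}=1$ turns each prefactor $-\frac{n}{2\sigma}\left(\frac{1}{r_{1}}-\frac{1}{2}\right)$ into $-\frac{n}{4\sigma}$, while $\tilde{k}_{1}=0$ reduces the derivative loss $-\frac{k-\tilde{k}_{1}}{2\sigma}$ to $-\frac{k}{2\sigma}$ and collapses the accompanying norm $\|\nabla_{x}^{\tilde{k}_{1}}g\|_{r_{1}}$ to $\|g\|_{1}$. These substitutions reproduce the leading powers of $(1+t)$ together with the $\|g\|_{1}$ factors displayed on the right-hand sides of \eqref{eq:6.38}--\eqref{eq:6.41}.

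Next I would simplify the exponentially small remainders, where the decisive feature is that the choice $r_{2}=2$ makes the factor $\frac{1}{r_{2}}-\frac{1}{2}$ vanish and the choice $\tilde{k}_{2}=k$ makes the exponent $-\frac{k-\tilde{k}_{2}}{2(1-\sigma)}$ vanish as well. Hence every remainder of the form $Ce^{-ct}\,t^{-\frac{n}{2(1-\sigma)}\left(\frac{1}{r_{2}}-\frac{1}{2}\right)-\frac{k-\tilde{k}_{2}}{2(1-\sigma)}}$ times an $L^{r_{2}}$-Sobolev norm degenerates into a clean $Ce^{-ct}$ times the corresponding $L^{2}$-norm with $\tilde{k}_{2}$ set equal to $k$. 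Carrying $\tilde{k}_{2}=k$ through the surviving Sobolev indices yields $\|\nabla_{x}^{k}g\|_{2}$ in \eqref{eq:6.38} and \eqref{eq:6.41}, $\|\nabla_{x}^{(k-2\sigma)_{+}}g\|_{2}$ in \eqref{eq:6.39}, and $\|\nabla_{x}^{(k+2(1-\sigma))}g\|_{2}$ in \eqref{eq:6.40}.

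Since Proposition \ref{prop:6.18} is already established, there is no genuine analytic obstacle; the whole argument is exponent bookkeeping. The only point that demands a little care is the faithful transfer of the exponentially decaying remainders' Sobolev orders---namely $(\tilde{k}_{2}-2\sigma)_{+}$ from \eqref{eq:6.35} and $\tilde{k}_{2}+2(1-\sigma)$ from \eqref{eq:6.36}---into \eqref{eq:6.39} and \eqref{eq:6.40} under $\tilde{k}_{2}=k$, so that the loss of derivatives in each negligible term is recorded correctly.
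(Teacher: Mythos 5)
Your proposal coincides with the paper's own proof, which deduces Corollary \ref{cor:6.19} from Proposition \ref{prop:6.18} by exactly the specialization $r_{1}=1$, $r_{2}=2$, $\tilde{k}_{1}=0$, $\tilde{k}_{2}=k$ (the paper's proof writes ``$k_{2}=2$'', evidently a misprint for $\tilde{k}_{2}=k$, which you choose correctly), and your tracking of the Sobolev orders $(\tilde{k}_{2}-2\sigma)_{+}$ and $\tilde{k}_{2}+2(1-\sigma)$ in the exponentially small remainders is faithful. One caveat of pure bookkeeping: specializing \eqref{eq:6.34} in this way produces the exponent $-\frac{n}{4\sigma}-\frac{k}{2\sigma}-1+\frac{1}{2\sigma}$ rather than the $-\frac{n}{4\sigma}-\frac{k}{2\sigma}-2+\frac{1}{2\sigma}$ printed in \eqref{eq:6.38} (the ``$-2$'' is a typo in the paper, inherited from the pre-simplified factor $t(1+t)^{\cdots-2+\frac{1}{2\sigma}}$ in the proof of Lemma \ref{lem:6.3}), so your claim that the substitutions reproduce the displayed powers should be amended for that one estimate.
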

\begin{proof}
In Proposition \ref{prop:6.18}, we choose $r_{1}=1$, $r_{2}=2$, $k_{1}=0$ and $k_{2} =2$, so that we have 
\eqref{eq:6.38} - \eqref{eq:6.41}.
\end{proof}
Finally, we deal with the case $\sigma = \displaystyle{\frac{1}{2}}$.
\begin{prop} \label{prop:6.20}
Let $n \ge 1$, $\ell=0,1$, $k+ \ell \ge \tilde{k}_{1},\tilde{k}_{2} \ge 0$, 
$k \ge \tilde{k}_{3},\tilde{k}_{4} \ge 0$,$1 \le r_{1}, r_{2} \le 2$, 
$\nu>0$ and $\sigma=\displaystyle{\frac{1}{2}}$. 
Then it holds that
\begin{equation} 
\begin{split}
\label{eq:6.42}
\left\| 
\partial_{t}^{\ell} 
\nabla_{x}^{k}
\tilde{J}_{1}(t)g
\right\|_{2}
+
\left\| 
\partial_{t}^{\ell} 
\nabla_{x}^{k}
\tilde{J}_{2}(t)g
\right\|_{2}
& \le C(1+t)^{-n(\frac{1}{r_{1}}-\frac{1}{2})-(\ell+ k-\tilde{k}_{1})}
\| \nabla^{\tilde{k}_{1}}_{x} g \|_{r_{1}} \\
&  + C e^{-ct} 
t^{-n(\frac{1}{r_{2}}-\frac{1}{2})-(\ell+k-\tilde{k}_{2}) }
\| \nabla^{\tilde{k}_{2}}_{x} g\|_{r_{2}}, 
\end{split}
\end{equation}
\begin{equation} 
\begin{split}
\label{eq:6.43}
\left\| 
\partial_{t}^{\ell} 
\nabla_{x}^{k}
\tilde{K}_{1}(t)g
\right\|_{2}
+
\left\| 
\partial_{t}^{\ell} 
\nabla_{x}^{k}
\tilde{K}_{2}(t)g
\right\|_{2}
& \le C(1+t)^{-n(\frac{1}{r_{1}}-\frac{1}{2})-(\ell+ k-\tilde{k}_{1})}
\| \nabla^{\tilde{k}_{1}}_{x} g \|_{r_{1}} \\
&  + C e^{-ct} 
t^{-n(\frac{1}{r_{2}}-\frac{1}{2})-(\ell+k-\tilde{k}_{2}) }
\| \nabla^{\tilde{k}_{2}}_{x} g\|_{r_{2}}, 
\end{split}
\end{equation}
\begin{equation} 
\begin{split}
\label{eq:6.44}
\left\| 
\partial_{t}^{\ell} 
\nabla_{x}^{k}
E_{1}(t)g
\right\|_{2}
+
\left\| 
\partial_{t}^{\ell} 
\nabla_{x}^{k}
E_{2}(t)g
\right\|_{2}
& \le C(1+t)^{-n(\frac{1}{r_{1}}-\frac{1}{2})-(\ell+ k-\tilde{k}_{1})}
\| \nabla^{\tilde{k}_{1}}_{x} g \|_{r_{1}} \\
&  + C e^{-ct} 
t^{-n(\frac{1}{r_{2}}-\frac{1}{2})-(\ell+k-\tilde{k}_{2}) }
\| \nabla^{\tilde{k}_{2}}_{x} g\|_{r_{2}}, 
\end{split}
\end{equation}
\begin{equation} 
\begin{split}
\label{eq:6.45}
\left\| 
\partial_{t}^{\ell} 
\nabla_{x}^{k}
\tilde{J}_{3}(t)g
\right\|_{2}
& \le Ct^{1- \ell} (1+t)^{-n(\frac{1}{r_{1}}-\frac{1}{2})-(k-\tilde{k}_{3})}
\| \nabla^{\tilde{k}_{3}}_{x} g \|_{r_{1}} \\
&  + C e^{-ct} 
t^{(1-\ell)-n(\frac{1}{r_{2}}-\frac{1}{2})-(k-\tilde{k}_{4}) }
\| \nabla^{\tilde{k}_{4}}_{x} g\|_{r_{2}}, 
\end{split}
\end{equation}
\begin{equation} 
\begin{split}
\label{eq:6.46}
\left\| 
\partial_{t}^{\ell} 
\nabla_{x}^{k}
\tilde{K}_{3}(t)g
\right\|_{2}
& \le Ct^{1- \ell} (1+t)^{-n(\frac{1}{r_{1}}-\frac{1}{2})-(k-\tilde{k}_{3})}
\| \nabla^{\tilde{k}_{3}}_{x} g \|_{r_{1}} \\
&  + C e^{-ct} 
t^{(1-\ell)-n(\frac{1}{r_{2}}-\frac{1}{2})-(k-\tilde{k}_{4}) }
\| \nabla^{\tilde{k}_{4}}_{x} g\|_{r_{2}}, 
\end{split}
\end{equation}
\begin{equation} 
\begin{split}
\label{eq:6.47}
\left\| 
\partial_{t}^{\ell} 
\nabla_{x}^{k}
E_{3}(t)g
\right\|_{2}
& \le Ct^{1- \ell} (1+t)^{-n(\frac{1}{r_{1}}-\frac{1}{2})-(k-\tilde{k}_{3})}
\| \nabla^{\tilde{k}_{3}}_{x} g \|_{r_{1}} \\
&  + C e^{-ct} 
t^{(1-\ell)-n(\frac{1}{r_{2}}-\frac{1}{2})-(k-\tilde{k}_{4}) }
\| \nabla^{\tilde{k}_{4}}_{x} g\|_{r_{2}}, 
\end{split}
\end{equation}
where $\tilde{J}_{j}(t)g$ and $\tilde{K}_{j}(t)g$ for $j=1,2,3$ are defined by \eqref{eq:2.16}
and
$E_{j}(t)g$ for $j=1,2,3$ are defined by \eqref{eq:2.19}. 
\end{prop}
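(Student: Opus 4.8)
The plan is to derive all six estimates \eqref{eq:6.42}--\eqref{eq:6.47} by the scheme already used for Propositions \ref{prop:6.6}, \ref{prop:6.7}, \ref{prop:6.13} and \ref{prop:6.14}: substitute the pointwise multiplier bounds of Lemmas \ref{lem:4.5} and \ref{lem:4.6} into the frequency-localized $L^{\frac{2r}{2-r}}$ decay estimates of Lemma \ref{lem:5.2}, after peeling off a derivative of $g$ via the H\"older inequality \eqref{eq:5.1}. Because $\sigma=\frac12$ gives $2\sigma=2(1-\sigma)=1$, the two different frequency scalings that occur for $\sigma\ne\frac12$ now coincide, and every application of Lemma \ref{lem:5.2} is made with $\alpha=1$; this is what produces the decay exponents $n(\frac1r-\frac12)+(\ell+k-\tilde k)$ appearing in the statement.

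First I would split each symbol as $\tilde{\J}_j=\tilde{\J}_j\chi_L+\tilde{\J}_j(\chi_M+\chi_H)$, and similarly for $\tilde{\K}_j$ and $\E_j$, so that Plancherel's theorem gives
\[
\|\partial_t^\ell\nabla_x^k \tilde{J}_j(t)g\|_2
\le \bigl\||\xi|^k\partial_t^\ell\tilde{\J}_j\,\chi_L\hat g\bigr\|_2
+\bigl\||\xi|^k\partial_t^\ell\tilde{\J}_j\,(\chi_M+\chi_H)\hat g\bigr\|_2 .
\]
The low-frequency term carries the polynomial-in-$t$ decay and is handled by \eqref{eq:5.2} with $s=1+t$, whereas the middle-high term yields the factor $e^{-ct}$ through \eqref{eq:5.3}; this is exactly why each of \eqref{eq:6.42}--\eqref{eq:6.47} is a sum of a polynomial line and an exponential line. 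On each piece I would insert the relevant bound from Lemma \ref{lem:4.5} or \ref{lem:4.6} (together with the elementary bounds $|\xi|^k|\E_1|,\,|\xi|^k|\E_2|\le Ce^{-ct|\xi|}|\xi|^k$ and $|\xi|^k|\E_3|\le Cte^{-ct|\xi|}|\xi|^k$ needed for the $\ell=0$ cases), and then apply \eqref{eq:5.1} to separate $|\xi|^{\tilde k}\hat g$, estimating $\||\xi|^{\tilde k}\hat g\|_{r'}\le C\|\nabla_x^{\tilde k}g\|_r$ by Hausdorff--Young.

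The only genuine distinction is between the indices $j=1,2$ and $j=3$. For $\tilde{J}_1,\tilde{J}_2$ (and $\tilde{K}_1,\tilde{K}_2,E_1,E_2$) the bounds \eqref{eq:4.22}, \eqref{eq:4.23}, \eqref{eq:4.25}, \eqref{eq:4.26} provide the symbol growth $|\xi|^{\ell+k}$, so Lemma \ref{lem:5.2} with $\alpha=1$ and $\beta=\ell+k-\tilde k_1$ gives the rate $(1+t)^{-n(\frac{1}{r_1}-\frac12)-(\ell+k-\tilde k_1)}$, and the $(\chi_M+\chi_H)$ piece contributes the matching $e^{-ct}$ term; this yields \eqref{eq:6.42}--\eqref{eq:6.44}. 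For $\tilde{J}_3$ (and $\tilde{K}_3,E_3$) the bounds \eqref{eq:4.24} and \eqref{eq:4.27} instead carry only $|\xi|^k$ but an extra prefactor $t^{1-\ell}$, reflecting the $1/|\xi|$ singularity of the symbol; taking $\beta=k-\tilde k_3$ then reproduces the rate $t^{1-\ell}(1+t)^{-n(\frac{1}{r_1}-\frac12)-(k-\tilde k_3)}$ of \eqref{eq:6.45}--\eqref{eq:6.47}, the factor $t^{1-\ell}$ likewise multiplying the exponentially small contribution.

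I expect no conceptual difficulty here; the work is purely bookkeeping. The main point to watch is keeping the two symbol powers ($\ell+k$ for $j=1,2$ versus $k$ for $j=3$) and the extra $t^{1-\ell}$ straight, and justifying the passage from the pointwise weight $e^{-ct|\xi|}$ to the $s=1+t$ form of Lemma \ref{lem:5.2} (for small $t$ this is just uniform boundedness of the operators, and for large $t$ it follows from $\tfrac{t}{1+t}\ge\tfrac12$). No estimate beyond Lemmas \ref{lem:4.5}, \ref{lem:4.6}, \ref{Lem:5.1} and \ref{lem:5.2} is required.
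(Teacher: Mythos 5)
Your proposal matches the paper's own proof essentially verbatim: the authors likewise split each symbol into low and middle--high frequency parts, insert the pointwise bounds of Lemmas \ref{lem:4.5} and \ref{lem:4.6}, and conclude via the H\"older inequality \eqref{eq:5.1} together with \eqref{eq:5.2} (with $s=1+t$, $\alpha=1$, $\beta=\ell+k-\tilde{k}_{1}$, resp.\ $\beta=k-\tilde{k}_{3}$ plus the prefactor $t^{1-\ell}$ for $j=3$) on the low part and \eqref{eq:5.3} on the remainder, proving only \eqref{eq:6.42} in detail just as you propose. Your extra remarks---supplying the elementary $\ell=0$ bounds for $\E_{j}$ not covered by Lemma \ref{lem:4.6} and justifying the replacement of $e^{-ct|\xi|}$ by $e^{-c(1+t)|\xi|}$ on $\supp\chi_{L}$---are correct and, if anything, slightly more careful than the paper's write-up.
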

\begin{proof}
It is easy to see that \eqref{eq:6.42} - \eqref{eq:6.47} are obtained by similar way, so we only show the proof \eqref{eq:6.42}.
Now we use \eqref{eq:4.25}, \eqref{eq:5.1}, \eqref{eq:5.2} 
with $C_{0}=c$, $s=1+t$, $\alpha=1$ and $\beta=\ell+k-\tilde{k}_{1}$, and
\eqref{eq:5.3} with $C_{0}=c$, $s=t$, $\alpha=1$ and $\beta=\ell+k-\tilde{k}_{2}$ to observe
\begin{align*}
& \| \partial_{t}^{\ell} \nabla_{x}^{k} \tilde{J}_{1}(t)  g \|_{2} 
+ \| \partial_{t}^{\ell} \nabla_{x}^{k} \tilde{J}_{2}(t) g \|_{2} 
\\
& \le C \| e^{-c(1+t)|\xi|} |\xi|^{\ell+k-\tilde{k}_{1}} \chi_{L}  |\xi|^{\tilde{k}_{1}}  \hat{g} \|_{2} 
+ C\| e^{-ct|\xi|} |\xi|^{\ell+k-\tilde{k}_{2}} (\chi_{M}+\chi_{H})  |\xi|^{\tilde{k}_{2}}  \hat{g} \|_{2} \\
& \le C \| e^{-c(1+t)|\xi|} |\xi|^{\ell+k-\tilde{k}_{1}} \chi_{L}  \|_{\frac{2r_{1}}{2-r_{1}}} 
\| |\xi|^{\tilde{k}_{1}}  \hat{g} \|_{r_{1}'} 
+ C
\| e^{-ct|\xi|} |\xi|^{\ell+k-\tilde{k}_{2}} (\chi_{M}+\chi_{H})  \|_{\frac{2r_{2}}{2-r_{2}}}  
\| |\xi|^{\tilde{k}_{2}}  \hat{g} \|_{2} \\
& \le C(1+t)^{-n(\frac{1}{r_{1}}-\frac{1}{2})-(\ell+ k-\tilde{k}_{1})}
\| \nabla^{\tilde{k}_{1}}_{x} g \|_{r_{1}} 
+ C e^{-ct} 
t^{-n(\frac{1}{r_{2}}-\frac{1}{2})-(\ell+k-\tilde{k}_{2}) }
\| \nabla^{\tilde{k}_{2}}_{x} g\|_{r_{2}}, 
\end{align*}
which implies \eqref{eq:6.42}.  
\end{proof}
To determine the asymptotic behavior of the solution of \eqref{eq:1.1} with $\sigma=\displaystyle{\frac{1}{2}}$, 
the following estimates are useful. 
\begin{cor} \label{cor:6.21}
Let $n \ge 1$, $\ell=0,1$, $k \ge 0$, 
$\nu>0$ and $\sigma=\displaystyle{\frac{1}{2}}$. 
Then it holds that
\begin{equation} 
\begin{split}
\label{eq:6.48}
\left\| 
\partial_{t}^{\ell} 
\nabla_{x}^{k}
\tilde{J}_{1}(t)g
\right\|_{2}
+
\left\| 
\partial_{t}^{\ell} 
\nabla_{x}^{k}
\tilde{J}_{2}(t)g
\right\|_{2}
& \le C(1+t)^{-\frac{n}{2}-(\ell+ k)}
\| g \|_{1}  + C e^{-ct} 
\| \nabla_{x}^{\ell+k} g\|_{2}, 
\end{split}
\end{equation}
\begin{equation} 
\begin{split}
\label{eq:6.49}
\left\| 
\partial_{t}^{\ell} 
\nabla_{x}^{k}
\tilde{K}_{1}(t)g
\right\|_{2}
+
\left\| 
\partial_{t}^{\ell} 
\nabla_{x}^{k}
\tilde{K}_{2}(t)g
\right\|_{2}
& \le C(1+t)^{-\frac{n}{2}-(\ell+ k)}
\| g \|_{1} + C e^{-ct} 
\| \nabla_{x}^{\ell+k} g\|_{2}, 
\end{split}
\end{equation}
\begin{equation} 
\begin{split}
\label{eq:6.50}
\left\| 
\partial_{t}^{\ell} 
\nabla_{x}^{k}
E_{1}(t)g
\right\|_{2}
+
\left\| 
\partial_{t}^{\ell} 
\nabla_{x}^{k}
E_{2}(t)g
\right\|_{2}
& \le C(1+t)^{-\frac{n}{2}-(\ell+ k)}
\| g \|_{1} + C e^{-ct} 
\| \nabla_{x}^{\ell+k} g\|_{2}, 
\end{split}
\end{equation}
\begin{equation} 
\begin{split}
\label{eq:6.51}
\left\| 
\partial_{t}^{\ell} 
\nabla_{x}^{k}
\tilde{J}_{3}(t)g
\right\|_{2}
& \le Ct^{1- \ell} (1+t)^{-\frac{n}{2}-k}
\| g \|_{1} 
+ C e^{-ct} 
\| \nabla^{(k+\ell-1)_{+}}_{x} g\|_{2}, 
\end{split}
\end{equation}
\begin{equation} 
\begin{split}
\label{eq:6.52}
\left\| 
\partial_{t}^{\ell} 
\nabla_{x}^{k}
\tilde{K}_{3}(t)g
\right\|_{2}
& \le Ct^{1- \ell} (1+t)^{-\frac{n}{2}-k}
\| g \|_{1} 
+ C e^{-ct} 
\| \nabla^{(k+\ell-1)_{+}}_{x} g\|_{2}, 
\end{split}
\end{equation}
\begin{equation} 
\begin{split}
\label{eq:6.53}
\left\| 
\partial_{t}^{\ell} 
\nabla_{x}^{k}
E_{3}(t)g
\right\|_{2}
& \le Ct^{1- \ell} (1+t)^{-\frac{n}{2}-k}
\| g \|_{1} 
+ C e^{-ct} 
\| \nabla^{(k+\ell-1)_{+}}_{x} g\|_{2}, 
\end{split}
\end{equation}
where 
$(k+\ell-1)_{+}:=\max \{k+\ell-1, 0 \}$, 
$\tilde{J}_{j}(t)g$ and $\tilde{K}_{j}(t)g$ for $j=1,2,3$ are defined by \eqref{eq:2.16}
and
$E_{j}(t)g$ for $j=1,2,3$ are defined by \eqref{eq:2.19}. 
\end{cor}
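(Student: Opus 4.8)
The plan is to derive Corollary \ref{cor:6.21} directly from Proposition \ref{prop:6.20} by specializing the free parameters $\tilde{k}_1,\tilde{k}_2,\tilde{k}_3,\tilde{k}_4$ and the Lebesgue exponents $r_1,r_2$; no new estimate is required, and the argument parallels the passage from Propositions \ref{prop:6.6}--\ref{prop:6.7} to Corollaries \ref{cor:6.8}--\ref{cor:6.9}.

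First I would treat the estimates \eqref{eq:6.48}--\eqref{eq:6.50}, which all share the common form coming from \eqref{eq:6.42}--\eqref{eq:6.44}. In each case I would set $r_1=1$, $r_2=2$, $\tilde{k}_1=0$ and $\tilde{k}_2=k+\ell$. With $r_1=1$ the factor $\tfrac{1}{r_1}-\tfrac12=\tfrac12$ turns the leading polynomial rate into $(1+t)^{-n/2-(\ell+k)}$ and $\|\nabla_x^{0}g\|_1=\|g\|_1$, which is exactly the claimed first term. With $r_2=2$ the factor $\tfrac{1}{r_2}-\tfrac12$ vanishes, and the choice $\tilde{k}_2=k+\ell$ kills the power $-(\ell+k-\tilde{k}_2)$ of $t$, so the second term reduces to $Ce^{-ct}\|\nabla_x^{\ell+k}g\|_2$, as required. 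These choices are admissible since $\tilde{k}_2=k+\ell$ meets the constraint $k+\ell\ge\tilde{k}_2\ge 0$.

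Next I would handle \eqref{eq:6.51}--\eqref{eq:6.53}, which descend from \eqref{eq:6.45}--\eqref{eq:6.47} and carry the extra algebraic growth $t^{1-\ell}$. Here I would take $r_1=1$, $r_2=2$, $\tilde{k}_3=0$ and $\tilde{k}_4=(k+\ell-1)_+$. The first term then becomes $Ct^{1-\ell}(1+t)^{-n/2-k}\|g\|_1$ exactly. For the exponential term, after setting $r_2=2$ the $t$-exponent $(1-\ell)-(k-\tilde{k}_4)$ is immaterial: since $t^{m}e^{-ct}\le C_m e^{-ct/2}$ for every fixed $m$, any polynomial power of $t$ is absorbed into $e^{-ct}$ at the cost of shrinking $c$, leaving $Ce^{-ct}\|\nabla_x^{(k+\ell-1)_+}g\|_2$. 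I would then verify that $\tilde{k}_4=(k+\ell-1)_+\le k$ in both cases (for $\ell=0$ it equals $(k-1)_+$, and for $\ell=1$ it equals $k$), so the constraint $k\ge\tilde{k}_4\ge 0$ of Proposition \ref{prop:6.20} is satisfied.

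There is essentially no obstacle here: the entire content is the bookkeeping of the parameter choices together with the harmless absorption of polynomial factors into the exponentially decaying remainders. The only point deserving a word of care is confirming, in the $t^{1-\ell}$ estimates, that the selected $\tilde{k}_4$ stays within the admissible range for both values of $\ell$, which the computation above settles.
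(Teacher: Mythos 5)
Your proposal is correct and coincides with the paper's own proof: the authors likewise obtain Corollary \ref{cor:6.21} by applying Proposition \ref{prop:6.20} with precisely the choices $r_{1}=1$, $r_{2}=2$, $\tilde{k}_{1}=0$, $\tilde{k}_{2}=\ell+k$, $\tilde{k}_{3}=0$ and $\tilde{k}_{4}=(k+\ell-1)_{+}$. Your additional remarks (absorbing the residual polynomial factor $t^{(1-\ell)-(k-\tilde{k}_{4})}$ into $e^{-ct}$ and checking $\tilde{k}_{4}\le k$ for both $\ell=0,1$) only make explicit what the paper leaves implicit.
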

\begin{proof}
The proof is a direct consequence of Proposition \ref{prop:6.20} with $r_{1}=1$, 
$r_{2}=2$, $\tilde{k}_{1}=0$, $\tilde{k}_{2} = \ell+k$, $\tilde{k}_{3} = 0$ and $\tilde{k}_{4} = (k+l-1)_{+}$.
\end{proof}

\section{Asymptotic profiles of solutions}
In this section, we first rephrase the results in section \ref{sec:6} as the solution to the Cauchy problem \eqref{eq:1.1}.
We also observe the upper bound of the decay order of the solution to the Cauchy problem \eqref{eq:1.1}.
Secondly we state the asymptotic expansion formula for the convolution type function in a general setting.
Finally, we complete the proof of main results by a combination of the results obtained in this section.

\subsection{Solution of the Cauchy problem \eqref{eq:1.1}}
We can now reformulate the estimates stated in Proposition \ref{prop:6.6} - Corollary \ref{cor:6.19}
 as a property of the solution to Cauchy problem \eqref{eq:1.1}.
For $\sigma \in (0, 1/2)$, 
one has the following.
\begin{prop} \label{prop:7.1}
Let $n \ge 2$, $\sigma \in (0, \displaystyle{\frac{1}{2}})$, 
$\nu >0$ and $k_{0} \ge 0$. 
If $(u_{0}, u_{1}) \in (H^{k_{0}+1} \cap L^{1}) \times (H^{k_{0}} \cap L^{1})$,
then there exists a unique solution $u(t) \in C([0,\infty);H^{k_{0}+1} ) \cap C^{1}([0, \infty);H^{k_{0}})$ satisfying 
\begin{align*} 
%
\| \partial_{t}^{\ell}  \nabla^{k}_{x} u(t) \|_{2} 
\le C (1+t)^{-\frac{n}{4 (1-\sigma)}+\frac{\sigma}{1-\sigma}-\ell-\frac{k}{2(1-\sigma)}}
\end{align*}
for $\ell=0,1$, $k \in [0,k_{0}+1]$ and $k+ \ell \le k_{0}+1$.
\end{prop}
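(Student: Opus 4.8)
The plan is to read off the decay estimate from the explicit Fourier-multiplier representation of the solution established in Section 2. For $\sigma \in (0,\frac12)$ formula \eqref{eq:2.3} gives
\[
u(t) = J_{1}(t)u_{0} + J_{2}(t)u_{1} + J_{3}(t)u_{0} + J_{4}(t)u_{1},
\]
so existence and uniqueness are inherited from this construction: uniqueness because, after the Fourier transform, \eqref{eq:2.1} is for each fixed $\xi$ a linear second-order ODE with prescribed Cauchy data, and the regularity $u \in C([0,\infty);H^{k_{0}+1}) \cap C^{1}([0,\infty);H^{k_{0}})$ follows from the smoothness in $t$ of the multipliers together with the uniform-in-$t$ bounds supplied below. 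The real content is the decay rate, which I would obtain from the triangle inequality
\[
\| \partial_{t}^{\ell} \nabla_{x}^{k} u(t) \|_{2} \le \sum_{j=1,3} \| \partial_{t}^{\ell} \nabla_{x}^{k} J_{j}(t)u_{0}\|_{2} + \sum_{j=2,4} \| \partial_{t}^{\ell} \nabla_{x}^{k} J_{j}(t)u_{1}\|_{2},
\]
estimating the four terms by Corollaries \ref{cor:6.8} and \ref{cor:6.9}.

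Applying Corollary \ref{cor:6.8} to $J_{1}(t)u_{0}$ and $J_{3}(t)u_{0}$ (the norms $\| u_{0}\|_{1}$ and $\|\nabla_{x}^{k+\ell} u_{0}\|_{2}$ are finite since $k+\ell \le k_{0}+1$ gives $u_{0}\in H^{k+\ell}$) and Corollary \ref{cor:6.9} to $J_{2}(t)u_{1}$ and $J_{4}(t)u_{1}$ (the norms $\| u_{1}\|_{1}$ and $\|\nabla_{x}^{(k+\ell-1)_{+}} u_{1}\|_{2}$ are finite since $(k+\ell-1)_{+} \le k_{0}$), I would show that every resulting polynomial rate is dominated by $R_{2} := -\tfrac{n}{4(1-\sigma)} + \tfrac{\sigma}{1-\sigma} - \ell - \tfrac{k}{2(1-\sigma)}$, the rate of $J_{2}(t)u_{1}$, which is exactly $-\gamma_{\sigma,k}-\ell$. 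For $J_{1}$ the rate equals $R_{2} - \tfrac{\sigma}{1-\sigma} < R_{2}$, and for $J_{3}$ every differing term is nonpositive because $\sigma < 1-\sigma$ and $\sigma < \tfrac12$, so both terms are harmless.

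The main obstacle is $J_{4}(t)u_{1}$, whose rate $R_{4} := -\tfrac{n}{4\sigma} + 1 - \ell - \tfrac{k}{2\sigma}$ carries a positive constant absent from $R_{2}$ and so is not obviously dominated. A short computation gives, for $k=0$,
\[
R_{4} - R_{2} = \frac{1-2\sigma}{1-\sigma}\cdot\frac{4\sigma - n}{4\sigma},
\]
which is negative precisely because $\sigma < \tfrac12$ forces $4\sigma < 2 \le n$; increasing $k$ only adds the nonpositive term $\tfrac{k}{2}\bigl(\tfrac{1}{1-\sigma}-\tfrac{1}{\sigma}\bigr)$, so $R_{4}\le R_{2}$ for all admissible $k$. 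This is exactly where the hypothesis $n \ge 2$ enters. Finally, since $R_{2}<0$ for $n \ge 2$, the exponentially decaying remainders $Ce^{-ct}\|\cdot\|_{2}$ in Corollaries \ref{cor:6.8}--\ref{cor:6.9} are absorbed into $C(1+t)^{R_{2}}$ for all $t\ge 0$, and collecting the four estimates yields the claimed bound with exponent $-\gamma_{\sigma,k}-\ell$.
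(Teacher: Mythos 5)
Your proposal is correct and takes essentially the same route as the paper, whose entire proof of Proposition \ref{prop:7.1} is to combine the solution formula \eqref{eq:2.3} with the estimates \eqref{eq:6.18}--\eqref{eq:6.21} of Corollaries \ref{cor:6.8} and \ref{cor:6.9}, exactly as you do. Your explicit verification that $J_{2}(t)u_{1}$ dominates --- in particular the computation $R_{4}-R_{2}=\frac{1-2\sigma}{1-\sigma}\cdot\frac{4\sigma-n}{4\sigma}$, which pinpoints where the hypothesis $n\ge 2$ is used --- simply makes rigorous the rate comparison the paper only sketches in Remark \ref{rem:6.10}.
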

When $\sigma = \displaystyle{\frac{1}{2}}$, we can deal with all dimension $n \ge 1$ (cf. \cite{D}, \cite{DR}, \cite{NR}). 
\begin{prop} \label{prop:7.2}
Let $n \ge 1$, $\sigma=\displaystyle{\frac{1}{2}}$, 
$\nu >0$ and $k_{0} \ge 0$. 
If $(u_{0}, u_{1}) \in (H^{k_{0}+1} \cap L^{1}) \times (H^{k_{0}} \cap L^{1})$,
then there exists a unique solution $u(t) \in C([0,\infty);H^{k_{0}+1} ) \cap C^{1}([0, \infty);H^{k_{0}})$ satisfying 
\begin{equation*} 
\| \partial_{t}^{\ell}  \nabla^{k}_{x} u(t) \|_{2} 
\le C t^{1-\ell} (1+t)^{-\frac{n}{2}-k}
\end{equation*}
for $\ell=0,1$, $k \in [0,k_{0}+1]$ and $k+ \ell \le k_{0}+1$.
\end{prop}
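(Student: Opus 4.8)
The plan is to start from the explicit solution formula \eqref{eq:2.20}, which expresses $u(t)$ as a sum of three evolution operators acting on the initial data, with the precise form depending on whether $0<\nu<2$, $\nu=2$, or $\nu>2$. In each of the three cases the first two operators act on $u_0$ and the third one acts on $u_1$. First I would simply substitute this representation and apply the decay estimates assembled in Corollary \ref{cor:6.21}, namely \eqref{eq:6.48}--\eqref{eq:6.53}, to each summand separately, taking $g=u_0$ for the first two terms and $g=u_1$ for the last one.

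The key observation is that the slowest-decaying contribution comes from the operator acting on $u_1$, that is $\tilde{K}_3$, $E_3$, or $\tilde{J}_3$, whose bounds \eqref{eq:6.51}--\eqref{eq:6.53} carry the extra factor $t^{1-\ell}$: this term decays like $t^{1-\ell}(1+t)^{-\frac{n}{2}-k}\|u_1\|_1$, whereas the two $u_0$-terms decay like $(1+t)^{-\frac{n}{2}-(\ell+k)}\|u_0\|_1$, i.e.\ faster by one power of $(1+t)$. Hence the $u_1$-term dictates the final rate, which is exactly the claimed $t^{1-\ell}(1+t)^{-\frac{n}{2}-k}$.

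Next I would dispose of the exponentially small remainders. The $u_0$-terms produce $Ce^{-ct}\|\nabla_x^{\ell+k}u_0\|_2$ and the $u_1$-term produces $Ce^{-ct}\|\nabla_x^{(k+\ell-1)_+}u_1\|_2$; these are controlled by the data norms, since $\ell+k\le k_0+1$ gives $\nabla_x^{\ell+k}u_0\in L^2$ from $u_0\in H^{k_0+1}$, and $(k+\ell-1)_+\le k_0$ gives $\nabla_x^{(k+\ell-1)_+}u_1\in L^2$ from $u_1\in H^{k_0}$. As $e^{-ct}$ is dominated by $t^{1-\ell}(1+t)^{-\frac{n}{2}-k}$ for large $t$, these remainders are absorbed into the stated bound.

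Finally, existence and uniqueness of $u\in C([0,\infty);H^{k_0+1})\cap C^1([0,\infty);H^{k_0})$ follow from the Fourier-multiplier representation itself: the multipliers in \eqref{eq:2.14}, \eqref{eq:2.15} and \eqref{eq:2.18} are bounded on $\supp\chi_L$ and on $\supp(\chi_M+\chi_H)$ with the regularity recorded in Lemmas \ref{lem:4.5} and \ref{lem:4.6}, so they define bounded operators on the relevant Sobolev spaces, and continuity in $t$ is inherited from the continuity of the multipliers. I expect no genuine obstacle here: the statement is essentially a bookkeeping corollary of Corollary \ref{cor:6.21}, and the only points requiring care are identifying which of the three summands is dominant and checking the compatibility of the Sobolev indices with the hypotheses $k\le k_0+1$ and $k+\ell\le k_0+1$.
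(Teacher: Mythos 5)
Your proposal is correct and coincides with the paper's own argument: the proof of Proposition \ref{prop:7.2} given there consists precisely in combining the solution formula \eqref{eq:2.20} with the estimates \eqref{eq:6.48}--\eqref{eq:6.53} of Corollary \ref{cor:6.21}, applied with $g=u_{0}$ for the first two operators and $g=u_{1}$ for the third, exactly as you do. Your identification of the $u_{1}$-term ($\tilde{K}_{3}$, $E_{3}$, or $\tilde{J}_{3}$) as the dominant contribution carrying the factor $t^{1-\ell}$, and your check that $\ell+k\le k_{0}+1$ and $(k+\ell-1)_{+}\le k_{0}$ make the exponentially decaying remainders finite, fill in the bookkeeping the paper leaves implicit.
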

The following result implies that the solution to \eqref{eq:1.1} with $\sigma \in (\frac{1}{2}, 1]$ has
a different dissipative structure from the one with $\sigma \in (0, \frac{1}{2}]$, as was pointed out in references 
(see \cite{DR}, \cite{I}, \cite{ITY},
\cite{NR}, \cite{P} and \cite{S}).
\begin{prop} \label{prop:7.3}
Let $n \ge 3$, $\sigma \in (\displaystyle{\frac{1}{2}},1]$, 
$\nu >0$ and $k_{0} \ge 0$. 
If $(u_{0}, u_{1}) \in (H^{k_{0}+2 \sigma} \cap L^{1}) \times (H^{k_{0}} \cap L^{1})$,
then there exists a unique solution $u(t) \in C([0,\infty);H^{k_{0}+2 \sigma} ) \cap C^{1}([0, \infty);H^{k_{0}})$ satisfying 
\begin{align*} 
%
\| \partial_{t}^{\ell} \nabla^{k}_{x} u(t) \|_{2} \le C (1+t)^{-\frac{n}{4\sigma}- \frac{\ell+k-1}{2 \sigma}}
\end{align*}
for $\ell=0,1$ and $k \in [0, k_{0}+ 2 \sigma]$ and $k+ \ell \le k_{0}+2 \sigma$.
\end{prop}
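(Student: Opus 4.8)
The plan is to read the solution directly from the Fourier-multiplier representation \eqref{eq:2.8}--\eqref{eq:2.9}, namely
\[
u(t) = K_{1}(t)u_{0} + K_{2}(t)u_{0} + K_{3}(t)u_{1},
\]
and to estimate the three pieces separately using the decay bounds already assembled in Section~\ref{sec:6}. First I would settle existence, uniqueness and the regularity class. Since for each fixed $t$ the symbols $\K_{jk}(t,\xi)$ are bounded on the Sobolev scales dictated by Lemmas~\ref{lem:3.3} and~\ref{lem:3.4} (with $K_{3}$ gaining $2\sigma$ derivatives at high frequency through its factor $|\xi|^{-2\sigma}$), the solution map carries $(H^{k_{0}+2\sigma}\cap L^{1})\times(H^{k_{0}}\cap L^{1})$ into $C([0,\infty);H^{k_{0}+2\sigma})\cap C^{1}([0,\infty);H^{k_{0}})$ and the resulting $u$ solves \eqref{eq:2.1} with the prescribed data; uniqueness is immediate from solving the scalar ODE \eqref{eq:2.1} pointwise in $\xi$. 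Evaluating at $t=0$ forces $u(0)=u_{0}\in H^{k_{0}+2\sigma}$ and $\partial_{t}u(0)=u_{1}\in H^{k_{0}}$, which is exactly the origin of the two distinct regularity requirements on the data.

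Next I would apply the triangle inequality together with Corollary~\ref{cor:6.15} for $K_{1}(t)u_{0}$ and $K_{2}(t)u_{0}$ and Corollary~\ref{cor:6.16} for $K_{3}(t)u_{1}$. The three polynomial rates so produced are $(1+t)^{-\frac{n}{4\sigma}-\frac{\ell+k}{2\sigma}}$, $(1+t)^{-\frac{n}{4\sigma}-\frac{\ell+k+1}{2\sigma}}$ and $(1+t)^{-\frac{n}{4\sigma}-\frac{\ell+k-1}{2\sigma}}$, weighted by $\|u_{0}\|_{1}$, $\|u_{0}\|_{1}$ and $\|u_{1}\|_{1}$ respectively. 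Since $\ell+k-1<\ell+k<\ell+k+1$, the slowest, hence dominant, contribution for large $t$ is the one from $K_{3}(t)u_{1}$, and its rate $(1+t)^{-\frac{n}{4\sigma}-\frac{\ell+k-1}{2\sigma}}$ is precisely the asserted bound. The hypothesis $n\ge 3$ enters exactly here: it is needed to make the low-frequency multiplier of $K_{3}$, which carries the singular factor $|\xi|^{-1}$, square-integrable near $\xi=0$ (cf. Lemma~\ref{lem:5.7} and the remark following it).

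It then remains to control the exponentially small remainders by the stated data norms. Each remainder is supported on $\supp(\chi_{M}+\chi_{H})$, where $|\xi|$ is bounded below, so the factor $e^{-ct|\xi|^{2(1-\sigma)}}$ provides not only time decay but also absorbs any surplus power of $|\xi|$ at the cost of an extra $e^{-c't}$; hence these terms are dominated by $Ce^{-c't}(\|u_{0}\|_{H^{k_{0}+2\sigma}}+\|u_{1}\|_{H^{k_{0}}})$. I expect the genuine technical obstacle to be the case $\ell=1$ of $K_{3}(t)u_{1}$: a direct appeal to Corollary~\ref{cor:6.16} would demand $\nabla_{x}^{k}u_{1}\in L^{2}$ with $k$ as large as $k_{0}+2\sigma-1$, which exceeds what $u_{1}\in H^{k_{0}}$ provides. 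The remedy is to differentiate through the identity $\partial_{t}\K_{3}(t,\xi)=\K_{1}(t,\xi)-|\xi|^{2(\sigma-1)}\K_{2}(t,\xi)$ established in the proof of Lemma~\ref{lem:4.4}, and then to invoke the high-frequency exponential gain above, which loads the excess derivatives onto $e^{-c't}$ instead of onto the Sobolev index of $u_{1}$; this keeps every remainder bounded by $\|u_{1}\|_{H^{k_{0}}}$ and closes the estimate.
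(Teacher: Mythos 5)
Your main line is exactly the paper's proof: the authors dispose of Proposition \ref{prop:7.3} in one sentence, as a direct consequence of \eqref{eq:6.30}--\eqref{eq:6.32} combined with the solution formula \eqref{eq:2.8}, i.e.\ the triangle inequality applied to $u(t)=K_{1}(t)u_{0}+K_{2}(t)u_{0}+K_{3}(t)u_{1}$ with Corollaries \ref{cor:6.15} and \ref{cor:6.16}, the term $K_{3}(t)u_{1}$ carrying the dominant rate $(1+t)^{-\frac{n}{4\sigma}-\frac{\ell+k-1}{2\sigma}}$ and the hypothesis $n\ge 3$ entering through the $L^{2}$-integrability of the singular factor $|\xi|^{-1}$ near $\xi=0$ (Lemma \ref{lem:5.7} and the remark after it), precisely as you describe. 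Your well-posedness discussion via the pointwise ODE \eqref{eq:2.1} is also unobjectionable.

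The caveat concerns your final paragraph. The difficulty you flag is real: Corollary \ref{cor:6.16} with $\ell=1$ leaves the remainder $Ce^{-ct}\|\nabla_{x}^{k}u_{1}\|_{2}$ with $k$ as large as $k_{0}+2\sigma-1>k_{0}$, which $u_{1}\in H^{k_{0}}$ does not cover — a point the paper's one-line proof silently passes over. But your proposed remedy is not correct as stated at the endpoint $\sigma=1$, which lies in the admissible range: there $e^{-ct|\xi|^{2(1-\sigma)}}=e^{-ct}$ and absorbs no power of $|\xi|$ at all. At $\sigma=1$ the gain must instead be read off from the two characteristic branches in $\partial_{t}\K_{3}=\frac{\lambda_{+}e^{\lambda_{+}t}-\lambda_{-}e^{\lambda_{-}t}}{\lambda_{+}-\lambda_{-}}$: at high frequency $\lambda_{+}\to -1/\nu$, so the first branch contributes $O(|\xi|^{-2}e^{-ct})$ (a genuine two-derivative gain), while the second branch is comparable to $e^{-\nu t|\xi|^{2\sigma}}$ and absorbs $|\xi|^{2\sigma-1}$ only at the cost of $t^{-\frac{2\sigma-1}{2\sigma}}$, harmless for $t\ge 1$ but divergent as $t\to 0^{+}$. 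Indeed no absorption argument can work uniformly down to $t=0$ when $k>k_{0}$, since $\partial_{t}\nabla_{x}^{k}u(0)=\nabla_{x}^{k}u_{1}$ need not lie in $L^{2}$; so for $\ell=1$ and $k\in(k_{0},\,k_{0}+2\sigma-1]$ the stated bound should be read as a large-time estimate (or under extra regularity on $u_{1}$). This blemish is inherited from the proposition itself and from Corollary \ref{cor:6.16} — your proof shares it with the paper's rather than introduces it — but the specific claim that the low-order exponential ``absorbs any surplus power of $|\xi|$'' fails at $\sigma=1$ and should be repaired along the lines above.
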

As an easy consequence of Propositions \ref{prop:7.1} - \ref{prop:7.3}, 
one has a decay property of the solution to problem \eqref{eq:1.1} with a special initial data $u_{1} \equiv 0$.
\begin{prop} \label{prop:7.4}
Let $n \ge 1$, $\sigma \in (0,1]$, 
$\nu >0$ and $k_{0} \ge 0$. 
If $(u_{0}, u_{1}) \in (H^{k_{0}+1} \cap L^{1}) \times (H^{k_{0}} \cap L^{1})$,
then there exists a unique solution $u(t) \in C([0,\infty);H^{k_{0}+1} ) \cap C^{1}([0, \infty);H^{k_{0}})$ satisfying 
\begin{align*} 
%
\| \partial_{t}^{\ell}  \nabla^{k}_{x} u(t) \|_{2}  \le
\begin{cases}
& C (1+t)^{-\frac{n}{4 (1-\sigma)}-\ell-\frac{k}{2(1-\sigma)}}  \quad (\sigma \in (0, \frac{1}{2}) ),\\
& C (1+t)^{-\frac{n}{2}-(\ell+ k)} \quad (\sigma =\frac{1}{2}), \\
&  C (1+t)^{-\frac{n}{4\sigma}- \frac{\ell+k}{2 \sigma}} \quad (\sigma \in (\frac{1}{2}, 1] )
\end{cases}
\end{align*}
for $\ell=0,1$ and $k \in [0, k_{0}]$.
\end{prop}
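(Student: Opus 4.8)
The plan is to read Proposition~\ref{prop:7.4} as the specialization, announced in the sentence immediately preceding it, of the solution decomposition to the datum $u_1\equiv0$, and to deduce the stated rates from the sharp $L^1$--$L^2$ Corollaries~\ref{cor:6.8}, \ref{cor:6.15} and~\ref{cor:6.21}. I would first make explicit that this reduction is \emph{forced} by the statement rather than merely convenient: for genuinely non-trivial $u_1$ the solution decays only at the slower rates of Propositions~\ref{prop:7.1}--\ref{prop:7.3}, the loss being exactly the contribution of $J_2(t)u_1$, respectively $K_3(t)u_1$, singled out as the leading term in Remark~\ref{rem:6.10} and in the discussion following Corollary~\ref{cor:6.16}; moreover the hypothesis $n\ge1$ for every $\sigma\in(0,1]$ is incompatible with a non-zero $u_1$, since the operators $J_2,J_4$ require $n\ge2$ in Lemma~\ref{lem:5.4} and $K_3$ requires $n\ge3$ in Lemma~\ref{lem:5.7}. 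Both the faster rates and the uniform dimension range thus identify $u_1\equiv0$ as the operative assumption.

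Starting from the full formulas and discarding only the $u_1$-operators, I would record the surviving representations. For $\sigma\in(0,\tfrac12)$ formula~\eqref{eq:2.3} leaves $u(t)=J_1(t)u_0+J_3(t)u_0$; for $\sigma=\tfrac12$ formula~\eqref{eq:2.20} leaves $u(t)=\tilde J_1(t)u_0+\tilde J_2(t)u_0$ when $\nu>2$, with the $\tilde K_j$- and $E_j$-versions for $0<\nu<2$ and $\nu=2$ respectively; and for $\sigma\in(\tfrac12,1]$ formula~\eqref{eq:2.8} leaves $u(t)=K_1(t)u_0+K_2(t)u_0$. In each regime the retained operators are governed by Lemmas~\ref{lem:5.3} and~\ref{lem:5.6} and by Proposition~\ref{prop:6.20} for all $n\ge1$, which is precisely what permits the uniform dimension statement.

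Next I would apply the corollaries with $g=u_0$ and isolate the slowest term. For $\sigma\in(0,\tfrac12)$ estimates~\eqref{eq:6.18} and~\eqref{eq:6.19} bound the two contributions; since $\sigma<1-\sigma$ gives $\frac{n}{4\sigma}+\frac{1-2\sigma}{\sigma}>\frac{n}{4(1-\sigma)}$ and $\frac{1}{2\sigma}>\frac{1}{2(1-\sigma)}$, the $J_1$-term dominates and produces the rate $(1+t)^{-\frac{n}{4(1-\sigma)}-\ell-\frac{k}{2(1-\sigma)}}$. For $\sigma=\tfrac12$ estimates~\eqref{eq:6.48}--\eqref{eq:6.50} give directly $(1+t)^{-\frac n2-(\ell+k)}$. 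For $\sigma\in(\tfrac12,1]$ estimates~\eqref{eq:6.30} and~\eqref{eq:6.31} differ only by an extra factor $(1+t)^{-\frac{1}{2\sigma}}$ on the $K_2$-term, so the $K_1$-term dominates and yields $(1+t)^{-\frac{n}{4\sigma}-\frac{\ell+k}{2\sigma}}$. In every case the exponentially small remainders carry the norms $\|\nabla^{k+\ell}_x u_0\|_2$, respectively $\|\nabla^{k+2\ell(1-\sigma)}_x u_0\|_2$, which are finite because $k\le k_0$ and $\ell\le1$ force $k+\ell\le k_0+1$, while $2(1-\sigma)<1$ for $\sigma>\tfrac12$; thus $u_0\in H^{k_0+1}$ suffices and $e^{-ct}$ is absorbed into the algebraic factor.

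Finally, existence and uniqueness in $C([0,\infty);H^{k_0+1})\cap C^1([0,\infty);H^{k_0})$ would follow from the explicit multiplier representation of Section~2: for each fixed $\xi$ the function $\hat u(t,\xi)$ is the unique solution of the scalar Cauchy problem~\eqref{eq:2.1}, and the pointwise bounds of Sections~3--4 for the $u_0$-multipliers, valid for all $n\ge1$, yield via Plancherel and dominated convergence the asserted continuity and $C^1$-regularity in $t$. The main obstacle is not analytic but a matter of careful case-checking: one must confirm, in each sub-case of~\eqref{eq:2.20}, that the leading retained operator is genuinely the slowest, and verify the structural claim underlying the uniform $n\ge1$ conclusion, namely that the operators eliminated by $u_1\equiv0$ are exactly those responsible both for the slower rates of Propositions~\ref{prop:7.1}--\ref{prop:7.3} and for their dimension restrictions.
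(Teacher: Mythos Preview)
Your proposal is correct and follows essentially the same approach as the paper, which proves the result in one line: ``Propositions~\ref{prop:7.1}--\ref{prop:7.3} with $u_1\equiv0$ directly yield Proposition~\ref{prop:7.4}.'' You are in fact more careful than the paper on one point: Propositions~\ref{prop:7.1} and~\ref{prop:7.3} as stated carry the restrictions $n\ge2$ and $n\ge3$, so invoking them literally does not yield the uniform $n\ge1$ claimed here; by descending instead to Corollaries~\ref{cor:6.8}, \ref{cor:6.15} and~\ref{cor:6.21}, which bound the surviving operators $J_1,J_3$, $K_1,K_2$ and $\tilde J_j,\tilde K_j,E_j$ ($j=1,2$) for all $n\ge1$, you make explicit the mechanism the paper leaves implicit.
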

\begin{proof}[Proof of Propositions \ref{prop:7.1} - \ref{prop:7.4}]
Proposition \ref{prop:7.1} is shown by \eqref{eq:6.18} - \eqref{eq:6.21} 
together with the solution formula \eqref{eq:2.3}.
By the similar way,  
Propositions \ref{prop:7.2} and \ref{prop:7.3} are proved by \eqref{eq:6.48} - \eqref{eq:6.53}
with \eqref{eq:2.20}, and 
\eqref{eq:6.30} - \eqref{eq:6.32} with 
\eqref{eq:2.8}, respectively.
Propositions \ref{prop:7.1} - \ref{prop:7.3} with $u_{1} \equiv 0$ directly yield Proposition \ref{prop:7.4}.
This completes the proof of Propositions \ref{prop:7.1} - \ref{prop:7.4}.
\end{proof}

\subsection{General approximation formula}
In this subsection, we show an approximation formula for a convolution type of function in terms of the integral kernel with a suitable constant.
The following proposition plays an important role to prove our main results.
\begin{prop} \label{prop:7.5}
Let $n \ge 1$ and $k \ge 0$.
Suppose that $g \in L^{1}(\R^{n})$ and the smooth function $\K(t,x)$ satisfies  
\begin{align} \label{eq:7.5}
\| \nabla_{x}^{k} \K(t) \|_{2} \le Ct^{-\gamma_{1}}, \quad
\| \nabla_{x}^{k+1}\K(t) \|_{2} \le C t^{-\gamma_{2}} 
\end{align}
with some $C>0$, and $0<\gamma_{1}<\gamma_{2}$.
Then,
it holds that
\begin{equation} \label{eq:7.6}
\begin{split}
& \left\| 
\nabla_{x}^{k} \left(
\K(t) \ast g
- m
\K(t, \cdot) 
\right)
\right\|_{2} =o(t^{-\gamma_{1}})
\end{split}
\end{equation}
as $t \to \infty$, 
where $m:= \displaystyle{\int_{\R^{n}}}g(y)\,dy$.
\end{prop}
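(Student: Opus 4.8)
The plan is to exploit the defining relation $m=\int_{\R^{n}}g(y)\,dy$ to turn the object into a genuine difference. Since $m\,\K(t,x)=\int_{\R^{n}}\K(t,x)g(y)\,dy$, one has
$$\K(t)\ast g-m\,\K(t,\cdot)=\int_{\R^{n}}\bigl(\K(t,\cdot-y)-\K(t,\cdot)\bigr)g(y)\,dy,$$
so, commuting $\nabla_{x}^{k}$ with the integral and applying Minkowski's integral inequality,
$$\bigl\|\nabla_{x}^{k}(\K(t)\ast g-m\,\K(t,\cdot))\bigr\|_{2}\le\int_{\R^{n}}\bigl\|\nabla_{x}^{k}\K(t,\cdot-y)-\nabla_{x}^{k}\K(t,\cdot)\bigr\|_{2}|g(y)|\,dy.$$
The translation difference is then controlled by the fundamental theorem of calculus: writing $F(x)=\nabla_{x}^{k}\K(t,x)$ and using $F(x-y)-F(x)=-\int_{0}^{1}y\cdot\nabla F(x-\theta y)\,d\theta$ together with the invariance of the $L^{2}$-norm under translation, I would obtain $\|F(\cdot-y)-F(\cdot)\|_{2}\le|y|\,\|\nabla_{x}^{k+1}\K(t)\|_{2}\le C|y|\,t^{-\gamma_{2}}$ from the second hypothesis in \eqref{eq:7.5}.

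If $g$ possessed a finite first moment, this would already give the bound $Ct^{-\gamma_{2}}\int|y||g(y)|\,dy=o(t^{-\gamma_{1}})$, since $\gamma_{2}>\gamma_{1}$. The main obstacle is that only $g\in L^{1}$ is assumed, so $\int|y||g(y)|\,dy$ may diverge and the naive moment bound is unavailable. I would circumvent this by a density/splitting argument. Fix $\varepsilon>0$ and decompose $g=g_{1}+g_{2}$ with $g_{1}$ compactly supported (hence of finite first moment) and $\|g_{2}\|_{1}<\varepsilon$; setting $m_{j}=\int g_{j}$ one has $m=m_{1}+m_{2}$ and $|m_{2}|\le\|g_{2}\|_{1}<\varepsilon$. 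Applying the moment estimate above only to the nice piece $g_{1}$ yields $\|\nabla_{x}^{k}(\K(t)\ast g_{1}-m_{1}\K(t,\cdot))\|_{2}\le C(g_{1})\,t^{-\gamma_{2}}$, where $C(g_{1})$ depends on the (finite) first moment of $g_{1}$.

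For the remaining small piece I would abandon the translation difference and instead estimate each term crudely by its own hypothesis. Young's inequality gives $\|\nabla_{x}^{k}(\K(t)\ast g_{2})\|_{2}=\|(\nabla_{x}^{k}\K(t))\ast g_{2}\|_{2}\le\|\nabla_{x}^{k}\K(t)\|_{2}\|g_{2}\|_{1}\le C\varepsilon\,t^{-\gamma_{1}}$, and likewise $|m_{2}|\,\|\nabla_{x}^{k}\K(t)\|_{2}\le C\varepsilon\,t^{-\gamma_{1}}$, both using the first bound in \eqref{eq:7.5}. Collecting the contributions produces
$$t^{\gamma_{1}}\bigl\|\nabla_{x}^{k}(\K(t)\ast g-m\,\K(t,\cdot))\bigr\|_{2}\le C(g_{1})\,t^{-(\gamma_{2}-\gamma_{1})}+2C\varepsilon.$$
Since $\gamma_{2}>\gamma_{1}$, the first term vanishes as $t\to\infty$, whence $\limsup_{t\to\infty}$ of the left-hand side is at most $2C\varepsilon$; letting $\varepsilon\to0$ gives exactly \eqref{eq:7.6}. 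The only points requiring care in the write-up are the justification of differentiating under the integral sign and of Minkowski's integral inequality, both of which follow from the smoothness of $\K$ and the finiteness of the norms in \eqref{eq:7.5}.
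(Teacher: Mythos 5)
Your proof is correct, and it implements the key idea through a different decomposition than the paper. The paper keeps $g$ intact and instead splits the $y$-integral at a $t$-dependent radius: on $\{|y|\le t^{(\gamma_{2}-\gamma_{1})/2}\}$ it uses the mean value theorem together with the $\gamma_{2}$-bound to get a contribution of order $t^{-(\gamma_{1}+\gamma_{2})/2}\|g\|_{1}$, and on $\{|y|\ge t^{(\gamma_{2}-\gamma_{1})/2}\}$ it uses the $\gamma_{1}$-bound times the tail $\int_{|y|\ge t^{(\gamma_{2}-\gamma_{1})/2}}|g(y)|\,dy$, which vanishes as $t\to\infty$ since $g\in L^{1}$. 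You instead split the \emph{function}, $g=g_{1}+g_{2}$ with $g_{1}$ compactly supported and $\|g_{2}\|_{1}<\varepsilon$, apply the first-moment bound only to $g_{1}$, control the $g_{2}$-piece crudely via Young's inequality and the $\gamma_{1}$-bound, and finish with a $\limsup$-then-$\varepsilon\to 0$ argument. The two routes rest on exactly the same two ingredients (the $\gamma_{2}$-estimate for weighted translates, the $\gamma_{1}$-estimate for raw mass), but the paper's moving cutoff yields a quantitative modulus of decay, namely $Ct^{-(\gamma_{1}+\gamma_{2})/2}\|g\|_{1}+Ct^{-\gamma_{1}}\int_{|y|\ge t^{(\gamma_{2}-\gamma_{1})/2}}|g(y)|\,dy$, whereas your density argument gives only the qualitative $o(t^{-\gamma_{1}})$ — which is all the proposition asserts. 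One point where your write-up is actually tighter: you replace the mean value theorem by the integral form $F(x-y)-F(x)=-\int_{0}^{1}y\cdot\nabla F(x-\theta y)\,d\theta$ followed by Minkowski's inequality and translation invariance; in the paper's version the MVT parameter $\theta$ depends on $(x,y)$, so taking the $L^{2}_{x}$-norm of $\nabla^{k+1}_{x}\K(t,x-\theta y)$ as a pure translate is slightly informal, and your formulation is the clean way to justify that step.
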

\begin{proof}[Proof of Proposition \ref{prop:7.5}]
First, we observe that
the mean value theorem yields 
\begin{align} \label{eq:7.7}
\K(t,x-y) 
- \K(t,x) = (-y) \cdot \nabla_{x} 
\K(t,x-\theta y)
\end{align}
for some $\theta \in (0,1)$.
Now, we decompose the integrand in the left hand side of \eqref{eq:7.6} by using \eqref{eq:7.7} 
such as
\begin{equation*}
\begin{split}
(\K(t) \ast g)(x)
- 
m \K(t, x) 
& =\K(t) \ast g
- 
(\int_{\R^{n}} g(y)\,dy)
\K(t, x) \\
& = \int_{|y| \le t^{
\frac{\gamma_{2}-\gamma_{1}
}
{2
}
} }
(-y) \cdot \nabla_{x} 
\K(t,x-\theta y) 
g(y)
dy\,  \\
& \ \ +
\int_{|y| \ge t^{
\frac{\gamma_{2}-\gamma_{1}
}
{2
}
} }
(
\K(t,x-y) 
- \K(t,x)
)
g(y)
dy.
\end{split}
\end{equation*}
Thus, applying $\nabla_{x}^{k}$ and taking $L^{2}$ norm in both sides, 
we easily see that 
\begin{equation} \label{eq:7.8}
\begin{split}
& \left\| \nabla_{x}^{k}
\left(
\K(t) \ast g
- 
m
\K(t, x)
\right)
 \right\|_{2} \\
& \le C 
t^{
\frac{\gamma_{2}-\gamma_{1}
}
{2
}
}
\| \nabla^{k+1}_{x} 
\K(t,x-\theta y) 
\|_{L^{2}_{x}} 
\int_{|y| \le t^{
\frac{\gamma_{2}-\gamma_{1}
}
{2
}
} }
|
g(y)
|
dy\ \\
& \ +(\|
\nabla^{k+1}_{x} \K(t,x-y) \|_{L^{2}_{x}} 
+\|
\nabla^{k+1}_{x} \K(t) \|_{L^{2}_{x}} 
)
\int_{|y| \ge t^{
\frac{\gamma_{2}-\gamma_{1}
}
{2
}
} }
|g(y)| 
dy \\
& \le C t^{-\frac{\gamma_{1} + \gamma_{2}}{2}}
\|g \|_{1}
+
C t^{-\gamma_{1}}
\int_{|y| \ge t^{
\frac{\gamma_{2}-\gamma_{1}
}
{2
}
} }
|g(y)| 
dy\, ,
\end{split}
\end{equation}
where we have just used \eqref{eq:7.5}.
Here, 
we note that $\gamma_{2}-\gamma_{1}>0$ and $g \in L^{1}$, so that we see 
\begin{align} \label{eq:7.9}
\int_{|y| \ge t^{
\frac{\gamma_{2}-\gamma_{1}
}
{2
}
} }
|g(y)| 
dy
\to 0
\end{align}
as $t \to \infty$.
Combining \eqref{eq:7.8} and \eqref{eq:7.9},
we have arrived at the desired estimate \eqref{eq:7.6}.
\end{proof}

\subsection{Kernel estimates}
In this subsection,
we show some estimates for the Fourier multipliers in $L^{2}$-based Sobolev spaces to apply Proposition 
\ref{prop:7.5}, which will be used in the proof of Theorems \ref{thm:1.2} and \ref{thm:1.3}. 

\begin{lem} \label{lem:7.6}
Assume \eqref{eq:1.6}.
Let $\ell=0,1$ and $k \ge 0$.
Then, 
$\mathcal{G}_{\sigma,\nu}(t,\xi)$ defined by \eqref{eq:1.4} satisfies 
$\partial_{t}^{\ell} \mathcal{G}_{\sigma,\nu}(t,\xi) \in L^{1} \cap L^{2}(\R^{n})$, and further $\partial_{t}^{\ell} \mathcal{F}^{-1}[\mathcal{G}_{\sigma,\nu}(t,\xi)]$ are
 well-defined for $\ell=0,1$.
Moreover, the following estimates hold:
\begin{align} \label{eq:7.10}
& \|\partial_{t}^{\ell} \nabla_{x}^{k} \mathcal{F}^{-1}[\mathcal{G}_{\sigma,\nu}(t,\xi)] \| _{2}
=
Ct^{-\gamma_{\sigma,k}-\ell}
\end{align}
for $\sigma \in (0,\displaystyle{\frac{1}{2}}]$ and
\begin{align} \label{eq:7.11}
& Ct^{-\gamma_{\sigma,k}}
\le \| \nabla^{k}_{x} \mathcal{F}^{-1}[\mathcal{G}_{\sigma,\nu}(t,\xi)] \| _{2}
\le Ct^{-\gamma_{\sigma,k}}, \\
& 
Ct^{-\gamma_{\sigma,k}-\frac{1}{2 \sigma}} \le \|\partial_{t}  \nabla^{k}_{x} 
\mathcal{F}^{-1}
[
e^{-\frac{\nu t |\xi|^{2 \sigma}}{2}
} \cos(t |\xi|)
] 
\| _{2}
\le 
Ct^{-\gamma_{\sigma,k}-\frac{1}{2 \sigma}}, 
\label{eq:7.12}
\end{align}
for $\sigma \in (\displaystyle{\frac{1}{2}},1]$.
\end{lem}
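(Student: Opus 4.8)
The plan is to reduce every quantity in the statement to an explicit weighted $L^2$ integral of the Fourier multiplier via Plancherel's theorem, $\|\partial_t^\ell\nabla_x^k\F^{-1}[\G_{\sigma,\nu}(t)]\|_2 = \||\xi|^k\partial_t^\ell\G_{\sigma,\nu}(t,\cdot)\|_2$, and then to extract the exact power of $t$ by a scaling change of variables, supplemented by an averaging (Riemann--Lebesgue) argument whenever a genuine oscillation survives the scaling. First I would write, for each of the five regimes of \eqref{eq:1.4} (and, for \eqref{eq:7.12}, for $|\xi|^k\partial_t$ of the cosine kernel), the multiplier $\partial_t^\ell\G_{\sigma,\nu}(t,\xi)$ explicitly; in every case it is a finite sum of terms of the shape $|\xi|^a e^{-ct|\xi|^\beta}P(t|\xi|)$, where $\beta=2(1-\sigma)$ for $\sigma\in(0,\tfrac12)$, $\beta=1$ for $\sigma=\tfrac12$, $\beta=2\sigma$ for $\sigma\in(\tfrac12,1]$, and $P$ is constant, trigonometric, or hyperbolic; in the hyperbolic case I would first rewrite $e^{-\frac{\nu}{2}t|\xi|}\sinh(\cdots)$ as a difference of two purely decaying exponentials. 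Thus everything is governed by integrals of the form $\int_{\R^n}|\xi|^{2m}e^{-2ct|\xi|^\beta}|P|^2\,d\xi$.

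For the upper bounds and the integrability claim I would bound the trigonometric factors by $1$ and apply the scaling $\eta=(ct)^{1/\beta}\xi$, exactly as in Lemma \ref{lem:5.2} and \eqref{eq:5.2}. This yields the power $t^{-\frac{2m+n}{2\beta}}$ for the norm, provided the resulting $\eta$-integral converges at the origin; the requirement $2m+n>0$ is precisely where the hypotheses \eqref{eq:1.6} enter, since the $|\xi|^{-2\sigma}$ (resp. $|\xi|^{-1}$) singularity of $\G_{\sigma,\nu}$ forces $n\ge2$ (resp. $n\ge3$). Running the same computation with the first power replaced by the first power of the integrand establishes $\partial_t^\ell\G_{\sigma,\nu}\in L^1\cap L^2$ and hence that $\partial_t^\ell\F^{-1}[\G_{\sigma,\nu}]$ is well defined. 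Matching the exponent $\frac{2m+n}{2\beta}$ against $\gamma_{\sigma,k}$ is then a routine identity, e.g. $\frac{n+2k-4\sigma}{4(1-\sigma)}=\gamma_{\sigma,k}$ for $\sigma\in(0,\tfrac12)$ and $\frac{n+2k-2}{4\sigma}=\gamma_{\sigma,k}$ for $\sigma\in(\tfrac12,1]$.

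The lower bounds are the heart of the matter, and here the regimes split. For $\sigma\in(0,\tfrac12]$ the dissipation and the oscillation scale coherently: when $\beta=1$ the factor $\sin(at|\xi|)$ becomes $\sin(a|\eta|)$ after $\eta=(ct)^{1/\beta}\xi$, so in all sub-cases with $\sigma\le\tfrac12$ the remaining $\eta$-integral is a fixed positive constant, and the lower bound—hence the equality in \eqref{eq:7.10}—is immediate. For $\sigma\in(\tfrac12,1]$, however, the dissipation $e^{-ct|\xi|^{2\sigma}}$ and the oscillation $\sin(t|\xi|)$ scale at different rates, and after $\eta=(ct)^{1/(2\sigma)}\xi$ the oscillation frequency $t^{1-1/(2\sigma)}$ tends to infinity, so one cannot pass to a limit inside the integral. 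The decisive step is to show
\begin{equation*}
\int_{\R^n}|\eta|^{2m}e^{-c|\eta|^{2\sigma}}\sin^2\!\big(t^{1-\frac{1}{2\sigma}}|\eta|\big)\,d\eta \;\longrightarrow\; \tfrac12\int_{\R^n}|\eta|^{2m}e^{-c|\eta|^{2\sigma}}\,d\eta\;>\;0
\end{equation*}
as $t\to\infty$, which I would prove by writing $\sin^2=\tfrac12\big(1-\cos(2t^{1-\frac{1}{2\sigma}}|\eta|)\big)$, passing to polar coordinates, and applying the Riemann--Lebesgue lemma to the radial integral against the cosine. This produces a strictly positive lower bound for large $t$ with the same power as the upper bound, completing \eqref{eq:7.11} and \eqref{eq:7.12}.

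The main obstacle is exactly this oscillatory lower bound. The asymmetry is structural: for the upper bound the crude estimate $|\sin|\le1$ suffices, whereas the lower bound demands genuine averaging, and this averaging must be carried out in the presence of the $|\xi|^{-1}$ singularity at the origin. Consequently the convergence of the limiting $\eta$-integral, and with it the dimensional restrictions in \eqref{eq:1.6}, cannot be dispensed with, and verifying that the Riemann--Lebesgue step is legitimate against the weight $|\eta|^{2m}e^{-c|\eta|^{2\sigma}}$ (so that the near-origin behaviour is integrable and the tail is controlled) is the step I expect to require the most care.
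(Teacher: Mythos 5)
Your proposal is correct and follows essentially the same route as the paper's proof: Plancherel plus the scaling $\eta = t^{1/\beta}\xi$ to extract the exact rates (with well-definedness coming from integrability of the $|\xi|^{-2\sigma}$, resp.\ $|\xi|^{-1}$, singularity under \eqref{eq:1.6} together with exponential decay at infinity), crude bounds $|\sin|,|\cos|\le 1$ with Lemma \ref{lem:5.2} for the upper bounds when $\sigma\in(\tfrac12,1]$, and for the lower bound the identity $\sin^2 = \tfrac12(1-\cos(2\cdot))$ followed by polar coordinates and the Riemann--Lebesgue lemma applied to $\tau^{2(k-1)+n-1}e^{-\nu\tau^{2\sigma}}\in L^{1}(0,\infty)$, which is exactly the computation the paper borrows from \cite{I}. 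You also correctly identified the structural asymmetry (coherent scaling for $\sigma\le\tfrac12$ versus incoherent oscillation for $\sigma>\tfrac12$) as the reason the two regimes are treated differently, which matches the paper's organization of the proof.
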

\begin{proof}[Proof of Lemma \ref{lem:7.6}]
The assumption \eqref{eq:1.16} ensures that 
$|\xi|^{-2\sigma} \in L^{1} \cap L^{2}(|\xi| \le 1)$ for $n \ge2$ with $\sigma \in (0, \frac{1}{2})$ and
$|\xi|^{-1} \in L^{1} \cap L^{2}(|\xi| \le 1)$ for $n \ge 3$ with $\sigma \in (\frac{1}{2},1]$.
Then we easily see that $\mathcal{F}^{-1}[\mathcal{G}_{\sigma,\nu}(t,x)]$ is well-defined, since 
$\mathcal{G}_{\sigma,\nu}(t,\xi)$ decays exponentially when $|\xi|$ is large. 
On the other hand, 
the direct calculation shows that 
\begin{align*}
\partial_{t} \mathcal{G}_{\sigma,\nu}(t,\xi) =
\begin{cases}
& \dfrac{e^{-\frac{2}{\nu}t |\xi|^{2(1-\sigma)}}  |\xi|^{2(1-2 \sigma)}}{\nu^{2} }
 \quad \text{for} \ \ 0 < \sigma < \frac{1}{2}, \nu>0, \\
& e^{-\frac{\nu}{2} t |\xi|^{2 \sigma} } 
(-\nu |\xi|^{2 \sigma-1}  \sin(t |\xi|) + 2 \cos (t|\xi|) ) \quad
 \text{\rm for} \ \ \frac{1}{2} < \sigma \le 1,\ \nu>0,
\end{cases}
\end{align*}
and then we also easily see that $\partial_{t} \mathcal{F}^{-1}[\mathcal{G}_{\sigma,\nu}(t,x)]$ is well-defined.
For the case $\sigma=\frac{1}{2}$, the well-definedness of 
$\mathcal{F}^{-1}[\mathcal{G}_{\sigma,\nu}(t,x)]$ 
and
$\partial_{t} \mathcal{F}^{-1}[\mathcal{G}_{\sigma,\nu}(t,x)]$ 
is trivial.
We next show \eqref{eq:7.10}. Indeed, the Plancherel formula and the changing integral variable $\eta=t^{\frac{1}{2(1-\sigma)}}$ yield that
\begin{equation*}
\begin{split}
\| \partial_{t}^{\ell}\nabla_{x}^{k} \mathcal{F}^{-1}[\mathcal{G}_{\sigma,\nu}(t,\xi)] \| _{2}
& = C
\| |\xi|^{k-2 \sigma+ 2\ell (1- \sigma)} e^{-\frac{1}{\nu} t|\xi|^{2(1- \sigma)}} \| _{2} \\
& = 
Ct^{-\frac{n}{4(1-\sigma)}+\frac{\sigma}{(1-\sigma)}-\ell - \frac{k}{2(1-\sigma)}}
\| |\eta|^{k-2 \sigma+ 2\ell (1- \sigma)} e^{-\frac{1}{\nu}|\eta|^{2(1- \sigma)}} \| _{2}, 
\end{split}
\end{equation*}
which is our claim \eqref{eq:7.10}. 
Similar arguments can be applied to the case $\sigma=\frac{1}{2}$
to have  \eqref{eq:7.12}.
Finally, we deal with the case $\sigma \in (\frac{1}{2}, 1]$.
In this case, observing that 
\begin{align*}
\left| \frac{\sin(t|\xi|)}{|\xi|} \right|\le \frac{1}{|\xi|}, \quad |\cos(t|\xi|)| \le 1,
\end{align*}
we have 
\begin{equation*}
\begin{split}
 \| \nabla_{x}^{k} \mathcal{F}^{-1}[\mathcal{G}_{\sigma,\nu}(t,\xi)] \| _{2}
\le C
\| |\xi|^{k-1} e^{-c t|\xi|^{2 \sigma}} \| _{2}
\le 
Ct^{-\frac{n}{4\sigma}+\frac{1}{2\sigma} - \frac{k}{2\sigma}}, 
\end{split}
\end{equation*}
where we have used \eqref{eq:5.2} and \eqref{eq:5.3} with
$C_{0}=c$, $s=t$, $\alpha=2 \sigma$ and $\beta=k-1$.
To show the lower bound of the decay rate in \eqref{eq:7.11}, we apply the argument due to \cite{I}. By the Plancherel formula and
changing the integral variable $\eta= t^{\frac{1}{2 \sigma}} \xi$,
we see   
\begin{equation*}
\begin{split}
\| \nabla_{x}^{k} \mathcal{F}^{-1}[\mathcal{G}_{\sigma,\nu}(t,\xi)] \| _{2}^{2}
& = C \| |\xi|^{k-1}
e^{-\frac{\nu}{2} t |\xi|^{2 \sigma} } 
\sin (t |\xi|) \| _{2}^{2} \\
& = C t^{-\frac{n}{2 \sigma}-\frac{k-1}{\sigma}} 
\| |\eta|^{k-1}
e^{-\frac{\nu}{2} |\eta|^{2 \sigma} } 
\sin (t^{1-\frac{1}{2 \sigma}} |\eta|) \| _{2}^{2} \\
& = C t^{-\frac{n}{2 \sigma}-\frac{k-1}{\sigma}} 
\int_{\R^{n}}
|\eta|^{2(k-1)}
e^{-\nu |\eta|^{2 \sigma} } 
\sin^{2} (t^{1-\frac{1}{2 \sigma}} |\eta|) d \eta \\
& = \frac{1}{2} 
C t^{-\frac{n}{2 \sigma}-\frac{k-1}{\sigma}} 
\int_{\R^{n}}
|\eta|^{2(k-1)}
e^{-\nu |\eta|^{2 \sigma} } d \eta \\
& - \frac{1}{2} C t^{-\frac{n}{2 \sigma}-\frac{k-1}{\sigma}} 
\int_{\R^{n}}
|\eta|^{2(k-1)}
e^{-\nu |\eta|^{2 \sigma} } 
\cos (2 t^{1-\frac{1}{2 \sigma}} |\eta|) d \eta \\
& \ge Ct^{-\frac{n}{2 \sigma}-\frac{k-1}{\sigma}}
 -o(t^{-\frac{n}{2 \sigma}-\frac{k-1}{\sigma}})
\end{split}
\end{equation*}
as $t \to \infty$.
Indeed, 
the polar coordinate transform and the Riemann-Lebesgue theorem 
give rise to
\begin{align*}
& \lim_{t \to \infty}
\int_{\R^{n}}
|\eta|^{2(k-1)}
e^{-\nu |\eta|^{2 \sigma} } 
\cos (2 t^{1-\frac{1}{2 \sigma}} |\eta|) d \eta \\
& =C
\lim_{t \to \infty}
\int_{0}^{\infty}
\tau^{2(k-1)+n-1}
e^{-\nu \tau^{2 \sigma} } 
\cos (2 t^{1-\frac{1}{2 \sigma}} \tau) d \tau=0
\end{align*}
for $n \ge 3$ and $k \ge 0$ because of $\tau^{2(k-1)+n-1}
e^{-\nu \tau^{2 \sigma} } \in L^{1}(0, \infty)$, 
which proves \eqref{eq:7.11}.
The estimate \eqref{eq:7.12} is shown by the same way.
This completes the proof of Lemma \ref{lem:7.6}.
\end{proof}
The estimates for $\mathcal{H}_{\sigma, \nu}(t, \xi)$ are obtained more easily, 
since  $\mathcal{H}_{\sigma, \nu}(t, \xi)$ does not have singularity near $\xi=0$ 
, and decays exponentially as $\vert \xi\vert \to \infty$.
\begin{lem} \label{lem:7.7}
Let $n \ge 1$, $\ell=0,1$,
$k \ge 0$ and $\sigma \in (0, 1]$.
Then,
$\mathcal{H}_{\sigma,\nu}(t,\xi)$ defined by \eqref{eq:1.5} satisfies $\partial_{t}^{\ell} \mathcal{H}_{\sigma,\nu}(t,\xi) \in L^{1} \cap L^{2}(\R^{n})$,
 and $\partial_{t}^{\ell} \mathcal{F}^{-1}[\mathcal{H}_{\sigma,\nu}(t,\xi)]$ are
 well-defined for $\ell=0,1$.
Moreover the following estimates hold good:
for $\nu>0$,
\begin{align} \label{eq:7.13}
& \|\partial_{t}^{\ell} \nabla_{x}^{k} \mathcal{F}^{-1}[\mathcal{H}_{\sigma,\nu}(t,\xi)] \| _{2}
= 
Ct^{-\tilde{\gamma}_{\sigma,k}-\ell}
\end{align}
in the case $\sigma \in (0,\displaystyle{\frac{1}{2}}]$, and
\begin{align} \label{eq:7.14}
&  
Ct^{-\tilde{\gamma}_{\sigma,k}}
\le
\| \nabla^{k}_{x} \mathcal{F}^{-1}[\mathcal{H}_{\sigma,\nu}(t,\xi)] \| _{2}
\le 
Ct^{-\tilde{\gamma}_{\sigma,k}}, \\
&
Ct^{-\tilde{\gamma}_{\sigma,k}-\frac{1}{2 \sigma}}
\le \|\partial_{t}  \nabla^{k+1}_{x} 
\mathcal{F}^{-1}
[
e^{-\frac{\mu t |\xi|^{2 \sigma}}{2}
} \sin(t |\xi|)
] 
\| _{2}
\le 
Ct^{-\tilde{\gamma}_{\sigma,k}-\frac{1}{2 \sigma}}, 
\label{eq:7.15}
\end{align}
in the case for $\sigma \in (\displaystyle{\frac{1}{2}},1]$.
\end{lem}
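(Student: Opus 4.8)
The plan is to follow the template of Lemma \ref{lem:7.6} for $\mathcal{G}_{\sigma,\nu}$, but to exploit the feature announced just before the statement: unlike $\mathcal{G}_{\sigma,\nu}$, the multiplier $\mathcal{H}_{\sigma,\nu}(t,\xi)$ carries no singular factor $|\xi|^{-2\sigma}$ or $|\xi|^{-1}$ at the origin. Inspecting the five branches of \eqref{eq:1.5}, each $\mathcal{H}_{\sigma,\nu}(t,\xi)$ is the product of an exponential ($e^{-ct|\xi|^{2(1-\sigma)}}$ for $\sigma<\tfrac12$, $e^{-ct|\xi|^{2\sigma}}$ for $\sigma>\tfrac12$, $e^{-ct|\xi|}$ for $\sigma=\tfrac12$) with a factor that is smooth and bounded near $\xi=0$. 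Hence $\mathcal{H}_{\sigma,\nu}(t,\cdot)\in L^1\cap L^2$ at once, and $\partial_t\mathcal{H}_{\sigma,\nu}$ merely introduces extra polynomial weights $|\xi|^{2(1-\sigma)}$, $|\xi|^{2\sigma}$ or $|\xi|$ while preserving the exponential decay, so it too lies in $L^1\cap L^2$. This yields well-definedness of $\partial_t^\ell\mathcal{F}^{-1}[\mathcal{H}_{\sigma,\nu}]$ for $\ell=0,1$ with no low-frequency dimension restriction, which is why $n\ge1$ suffices for all $\sigma$.

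For the rates I would work directly in Fourier variables. When $\sigma\in(0,\tfrac12)$ one has $\partial_t^\ell\mathcal{H}_{\sigma,\nu}=(-\tfrac1\nu|\xi|^{2(1-\sigma)})^\ell e^{-\frac1\nu t|\xi|^{2(1-\sigma)}}$, so Plancherel together with the scaling substitution $\eta=t^{1/(2(1-\sigma))}\xi$, exactly as in the derivation of \eqref{eq:7.10}, gives $\|\partial_t^\ell\nabla_x^k\mathcal{F}^{-1}[\mathcal{H}_{\sigma,\nu}]\|_2=Ct^{-\tilde{\gamma}_{\sigma,k}-\ell}$, namely \eqref{eq:7.13}. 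The case $\sigma=\tfrac12$ is identical once the exponent $2(1-\sigma)$ is replaced by $1$ and the three regimes $0<\nu<2$, $\nu=2$, $\nu>2$ are treated separately, the oscillatory and hyperbolic prefactors being harmless bounded functions in the low-frequency scaling. For the upper bound in \eqref{eq:7.14} with $\sigma\in(\tfrac12,1]$ I would discard the oscillation via $|\cos(t|\xi|)|\le1$ and apply \eqref{eq:5.2} and \eqref{eq:5.3} with $\alpha=2\sigma$, $\beta=k$.

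The substantive part is the matching lower bounds in \eqref{eq:7.14} and \eqref{eq:7.15}, for which I would reuse the averaging argument of \cite{I} already invoked in Lemma \ref{lem:7.6}. Writing $\|\nabla_x^k\mathcal{F}^{-1}[\mathcal{H}_{\sigma,\nu}]\|_2^2=C\int|\xi|^{2k}e^{-\nu t|\xi|^{2\sigma}}\cos^2(t|\xi|)\,d\xi$ and rescaling $\eta=t^{1/(2\sigma)}\xi$ turns the oscillation into $\cos^2(t^{1-1/(2\sigma)}|\eta|)$; the identity $\cos^2\theta=\tfrac12(1+\cos2\theta)$ then splits the integral into the strictly positive mean $\tfrac12\int|\eta|^{2k}e^{-\nu|\eta|^{2\sigma}}\,d\eta$, carrying the leading scaling power, plus a remainder. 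Because $1-\tfrac1{2\sigma}>0$ for $\sigma>\tfrac12$ and the weight $|\eta|^{2k}e^{-\nu|\eta|^{2\sigma}}$ is integrable, the Riemann--Lebesgue lemma (after passing to polar coordinates) forces the remainder to be $o(1)$ relative to the mean, whence the sharp two-sided bound follows for large $t$; the $\sin$-kernel estimate \eqref{eq:7.15} is handled the same way with $\sin^2\theta=\tfrac12(1-\cos2\theta)$.

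The main obstacle I anticipate is precisely this bookkeeping in the derivative-weighted estimates on $(\tfrac12,1]$: after applying $\partial_t$ (respectively an extra $\nabla_x$) to the $\sin$-kernel one obtains a sum of a term proportional to $|\xi|\cos(t|\xi|)$ and a term proportional to $|\xi|^{2\sigma}\sin(t|\xi|)$, and I must verify that in the $L^2$ scaling the former dominates. Since $\tfrac1{2\sigma}<1<2\sigma$, the factor $|\xi|^{2\sigma}$ rescales as $t^{-1}|\eta|^{2\sigma}$ and is absorbed as $t\to\infty$, leaving only the leading monomial; the Riemann--Lebesgue cancellation then applies to that surviving term and produces the claimed lower bound. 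Checking that this absorption holds uniformly in the relevant weighted integrals, rather than just pointwise, is the delicate point of the whole lemma.
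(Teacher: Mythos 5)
Your proposal is correct and takes essentially the same route as the paper, whose proof of Lemma \ref{lem:7.7} consists precisely of recording the pointwise exponential bounds on each branch of $\mathcal{H}_{\sigma,\nu}$ from \eqref{eq:1.5} (no singularity at $\xi=0$, which is why $n\ge 1$ suffices) and then invoking the machinery of Lemma \ref{lem:7.6}: Plancherel plus the scaling substitution for the exact rates when $\sigma\in(0,\tfrac12]$, Lemma \ref{lem:5.2} with $|\cos(t|\xi|)|\le 1$ for the upper bounds, and the Riemann--Lebesgue averaging argument of \cite{I} for the lower bounds. Your explicit handling of the two-term structure of $\partial_{t}$ applied to the $\sin$-kernel in \eqref{eq:7.15}, absorbing the $|\xi|^{2\sigma}$ contribution through the extra factor $t^{-1+\frac{1}{2\sigma}}\to 0$ after rescaling, is exactly the detail the paper leaves implicit in ``shown by the same way.''
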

\begin{proof}[Proof of Lemma \ref{lem:7.7}]
It follows from the expression of $\mathcal{H}_{\sigma,\nu}(t, \xi)$ (see \eqref{eq:1.5}) that 
\begin{align*} 
|\mathcal{H}_{\sigma,\nu}(t,\xi)| \le 
\begin{cases}
& Ce^{-\frac{2}{\nu}t |\xi|^{2(1-\sigma) }}
 \ \ \text{for} \ \ 0 < \sigma < \frac{1}{2}, \nu>0, \\
& C
e^{-\frac{\nu}{2} t|\xi| }  \ \text{for} \ \ \sigma =\frac{1}{2},\ 0<\nu<2, \\
&
(1+t |\xi|) e^{-t|\xi|} 
\ \ \text{for} \ \ \sigma =\frac{1}{2},\ \nu=2, \\
& 
e^{-c t|\xi| } \ \text{for} \ \ \sigma =\frac{1}{2},\ \nu>2, \ \\
& 
e^{-\frac{\nu}{2} t |\xi|^{2 \sigma} }
 \ \ \text{for} \ \ \frac{1}{2} < \sigma \le 1,\ \nu>0. 
\end{cases}
\end{align*}
Then by the similar way to the proof of Lemma \ref{lem:7.6},
one has the desired estimates \eqref{eq:7.13} - \eqref{eq:7.15}.
\end{proof}

\subsection{Proof of main results}
\begin{proof}[Proof of Proposition \ref{prop:1.1}]
Proposition \ref{prop:1.1} is proved by Propositions \ref{prop:7.1} - \ref{prop:7.3}. 
\end{proof}

\begin{proof}[Proof of Theorem \ref{thm:1.2}]
Since \eqref{eq:1.9} - \eqref{eq:1.11} are shown by the same way, it suffices to treat \eqref{eq:1.9} for the case $\sigma \in (0, \frac{1}{2})$.
Indeed, by using \eqref{eq:6.18}, \eqref{eq:6.19}, \eqref{eq:6.21}, \eqref{eq:6.26} and \eqref{eq:7.6} 
with 
$\gamma_{1}=\frac{n}{4(1-\sigma)}- \frac{\sigma}{1-\sigma}+\frac{k}{2(1-\sigma)}$
and
$\gamma_{2}=\frac{n}{4(1-\sigma)}- \frac{\sigma}{1-\sigma}+\frac{k+1}{2(1-\sigma)}$,
we arrive at the estimate 
\begin{equation*}
\begin{split}
&  \| \nabla_{x}^{k} (u(t) - m_{1} \mathcal{F}^{-1} [\mathcal{G}_{\sigma, \nu}(t)] ) \|_{2} \\
& \le \sum_{j=1,3} \| \nabla_{x}^{k} J_{j}(t) u_{0}  \|_{2}
+ \| \nabla_{x}^{k} J_{4}(t) u_{0}  \|_{2} \\
& + \left\|
\nabla_{x}^{k}
\left(
J_{2}(t)u_{1} 
-
\mathcal{F}^{-1} 
\left[
\frac{
e^{
-\frac{t}{\nu}
|\xi|^{2(1-\sigma)}
}
}{ \nu |\xi|^{2 \sigma}}
\right] \ast u_{1} 
\right) 
\right\|_{2} \\
& + \left\|
\nabla_{x}^{k}
\left(
\mathcal{F}^{-1} 
\left[
\frac{
e^{
-\frac{t}{\nu}
|\xi|^{2(1-\sigma)}
}
}{ \nu |\xi|^{2 \sigma}}
\right] \ast u_{1} 
-m_{1}\mathcal{F}^{-1} [\mathcal{G}_{\sigma, \nu}(t)]
\right) 
\right\|_{2} =o(t^{-\gamma_{\sigma, k}})
\end{split}
\end{equation*}
as $t \to \infty$, 
which is the desired \eqref{eq:1.9} with $\sigma \in (0, \frac{1}{2})$.  
Next, we show \eqref{eq:1.13}.
To check \eqref{eq:1.13}, 
it suffices to prove the estimate from below of \eqref{eq:1.13},
since we already have the upper bound \eqref{eq:1.7} of the decay order.
By combining \eqref{eq:1.9} with $\sigma \in (0, \frac{1}{2}]$, \eqref{eq:1.10} and \eqref{eq:7.10},
we obtain  
\begin{align*}
 \| \partial_{t}^{\ell} \nabla_{x}^{k} u(t) \|_{2}
& \ge |m_{1}|
\| \partial_{t}^{\ell} \nabla_{x}^{k}  \mathcal{F}^{-1} [\mathcal{G}_{\sigma, \nu}(t)] \|_{2} 
-
\| \partial_{t}^{\ell} \nabla_{x}^{k} (u(t) - m_{1} \mathcal{F}^{-1} [\mathcal{G}_{\sigma, \nu}(t)] ) \|_{2} \\
& = C t^{-\gamma_{\sigma,k}-\ell} -o(t^{-\gamma_{\sigma,k}-\ell})
\end{align*}
as $t \to \infty$, which is the desired estimate \eqref{eq:1.13}.
Now, again we apply this argument using \eqref{eq:1.8}, \eqref{eq:1.9} with $\sigma \in (\frac{1}{2},1]$, \eqref{eq:1.11}, and \eqref{eq:7.12}
to check \eqref{eq:1.14}. This completes the proof of Theorem 1.2.
\end{proof}
\begin{proof}[Proof of Theorem \ref{thm:1.3}]
We can now proceed analogously to the proof of Theorem \ref{thm:1.2} to conclude the statement of Theorem \ref{thm:1.3}.
We shall omit the detail.
\end{proof}
\vspace*{5mm}
\noindent
\textbf{Acknowledgments. }
The work of the first author (R. IKEHATA) was supported in part by Grant-in-Aid for Scientific Research (C)15K04958 of JSPS.
The work of the second author (H. TAKEDA) was supported in part by Grant-in-Aid for Young Scientists (B)15K17581 of JSPS.


\begin{thebibliography}{99}

\bibitem{C}
Cazenave, T.,\
\emph{Semilinear Schr\"{o}dinger equations, }
Courant Lecture Notes in Mathematics, {\bf 10}, 
New York University, Courant Institute of Mathematical Sciences, 
New York, American Mathematical Society, Providence, RI, 2003. 

\bibitem{CLI}
Char\~ao, R. C., da Luz, C. R. and Ikehata, R.,
Sharp decay rates for wave equations with a fractional damping via new method in the Fourier space, J. Math Anal. Appl. 408 (1)(2013), 247-255.

\bibitem{D}
D'Abbicco, M., 
A wave equation with structural damping and nonlinear memory, 
NoDEA Nonlinear Differential Equations Appl. 21 (2014), no. 5, 751-773.

\bibitem{DE1}
D'Abbicco, M., Ebert, M. R., 
An application of $L^{p}$-$L^{q}$ decay estimates to the semi-linear wave equation with parabolic-like structural damping, Nonlinear Anal. 99 (2014), 16-34.

\bibitem{DE2}
D'Abbicco, M., Ebert, M. R., 
Diffusion phenomena for the wave equation with structural damping in the $L^{p}$-$L^{q}$ framework, 
J. Differential Equations 256 (2014), no. 7, 2307-2336.

\bibitem{DE3}
D'Abbicco, M., Ebert, M. R., 
A classification of structural dissipations for evolution operators, 
Math. Methods Appl. Sci. 37(11)(2015), 1570-1592.

\bibitem{DR}
D'Abbicco, M. and Reissig, M.,
Semilinear structural damped waves, Math. Methods Appl. Sci. 37(11)(2014), 1570-1592.

\bibitem{I-2}
Ikehata, R., 
Decay estimates of solutions for the wave equations with strong damping terms in unbounded domains, Math. Methods Appl. Sci. 24 (2001), no. 9, 659-670.

\bibitem{I}
Ikehata, R., 
Asymptotic profiles for wave equations with strong damping, J. Differential Equations 257 (2014), no. 6, 2159-2177.

\bibitem{IN}
Ikehata, R. and Natsume, M., 
Energy decay estimates for wave equations with a fractional  damping, Diff. Int. Eqns 25 (2012), (9-10), 939-956.


\bibitem{IT}
Ikehata, R. and Takeda, H.,
Critical exponent for nonlinear wave equations with frictional and viscoelastic damping terms
(2016),  arXiv:1604.08265[math.AP] 29 Apr. 2016.

\bibitem{ITY}
Ikehata, R., Todorova, G., and Yordanov, B.,
Wave equations with strong damping in Hilbert spaces, J. Differential Equations 254 (2013), no. 8, 3352-3368. 

\bibitem{K}
Karch, G., 
Selfsimilar profiles in large time asymptotics of solutions to damped wave equations, Studia Math. 143 (2000), 175-197.

\bibitem{LR}
Lu, X. and Reissig, M., 
Rates of decay for structural damped models with decreasing in time coefficients, Int. J. Dynamical Systems and Diff. Eqns 2, Nos.1/2 (2009), 21-55.

\bibitem{NR}
Narazaki, T., and Reissig, M.,
$L^{1}$ estimates for oscillating integrals related to structural damped wave models, 
Studies in phase space analysis with applications to PDEs, 215-258, Progr. Nonlinear Differential Equations Appl., 84, Birkh\"auser/Springer, New York, 2013

\bibitem{P}
Ponce, G.,
Global existence of small solutions to a class of nonlinear evolution equations, Nonlinear Anal. 9 (1985), no. 5, 399-418.

\bibitem{S}
Shibata, Y.,
On the rate of decay of solutions to linear viscoelastic equation, Math. Methods Appl. Sci. 23 (2000), no. 3, 203-226.


\bibitem{UKS}
Umeda, T., Kawashima, S., and Shizuta, Y.,
On the decay of solutions to the linearized equations of electro-magneto-fluid dynamics, Japan J. Appl. Math., 1 (1984), 435-457.


\end{thebibliography}
\end{document}